\definecolor{darkred}{rgb}{1,0,0} 
\definecolor{darkgreen}{rgb}{0,0.4,0}
\definecolor{darkblue}{rgb}{0,0,0.8}
\numberwithin{equation}{section}
\newtheorem{thm}{Theorem}[section]
\newtheorem{cor}[thm]{Corollary}
\newtheorem{lem}[thm]{Lemma}
\newtheorem{prop}[thm]{Proposition}
\theoremstyle{definition}
\newtheorem{rem}[thm]{Remark}
\newcommand{\nablah}{\nabla^{\mathrm{h}}}
\newcommand{\nablav}{\nabla^{\mathrm{v}}}
\newcommand{\N}{\mathds{N}}
\newcommand{\Z}{\mathds{Z}}
\newcommand{\R}{\mathds{R}}
\newcommand{\F}{\mathds{F}}
\newcommand{\PPP}{\mathds{P}}
\newcommand{\RP}{\mathds{RP}}
\newcommand{\CP}{\mathds{CP}}
\newcommand{\HP}{\mathds{HP}}
\newcommand{\CaP}{\mathrm{Ca}\mathds{P}}
\newcommand{\PT}{\mathds{P}\Tan}
\newcommand{\Q}{\mathds{Q}}
\newcommand{\K}{\mathds{K}}
\newcommand{\Sp}{\mathrm{Sp}}
\newcommand{\ev}{\mathrm{ev}}
\newcommand{\pr}{\mathrm{pr}}
\newcommand{\diff}{\mathrm{d}}
\newcommand{\dist}{\mathrm{dist}}
\newcommand{\Tan}{\mathrm{T}}
\newcommand{\Ver}{\mathrm{Ver}}
\newcommand{\Emb}{\mathrm{Emb}}
\newcommand{\End}{\mathrm{End}}
\newcommand{\WW}{\mathcal{W}}
\newcommand{\UU}{\mathcal{U}}
\newcommand{\BB}{\mathcal{B}}
\newcommand{\VV}{\mathcal{V}}
\newcommand{\W}{W^{1,2}}
\newcommand{\injrad}{\mathrm{injrad}}
\newcommand{\sys}{\mathrm{sys}}
\newcommand{\xx}{\bm{x}}
\newcommand{\crit}{{\mathrm{crit}}}
\DeclareMathOperator*{\toup}{\longrightarrow} 
\DeclareMathOperator*{\otup}{\longleftarrow}
\renewcommand{\twoheadrightarrow}{\mathrel{\mathrlap{\rightarrow}\mkern-2.3mu\rightarrow}}
\DeclareMathOperator*{\epi}{\twoheadrightarrow}
\DeclareMathOperator*{\diam}{\mathrm{diam}}
\newcommand{\llangle}{\langle\!\langle}
\newcommand{\rrangle}{\rangle\!\rangle}
\newcommand{\lllangle}{\langle\!\langle\!\langle}
\newcommand{\rrrangle}{\rangle\!\rangle\!\rangle}
\newcommand{\Diff}{\mathrm{Diff}}
\newcommand{\sigmas}{\sigma_\mathrm{s}}
\newcommand{\supp}{\mathrm{supp}}
\newcommand{\ind}{\mathrm{ind}}
\newcommand{\nul}{\mathrm{nul}}
\newcommand{\equaldown}{\arrow[d, no tail, no head, shift left =0.4]\arrow[d, no tail, no head, shift right =0.4]}
\begin{document}

\title[Closed geodesics on reversible Finsler 2-spheres]{Closed geodesics on reversible Finsler 2-spheres}

\author[G. De Philippis]{Guido De Philippis}
\address{Guido De Philippis\newline\indent Courant Institute of Mathematical Sciences, New York University\newline\indent 251 Mercer Street, New York, N.Y. 10012-1185, United States}
\email{guido@cims.nyu.edu}

\author[M. Marini]{Michele Marini}
\address{Michele Marini\newline\indent Scuola Internazionale Superiore di Studi Avanzati\newline\indent via Bonomea 265, 34136 Trieste, Italy}
\email{mmarini@sissa.it}

\author[M. Mazzucchelli]{Marco Mazzucchelli}
\address{Marco Mazzucchelli\newline\indent CNRS, \'Ecole Normale Sup\'erieure de Lyon, UMPA\newline\indent  46 all\'ee d'Italie, 69364 Lyon Cedex 07, France}
\email{marco.mazzucchelli@ens-lyon.fr}

\author[S. Suhr]{Stefan Suhr}
\address{Stefan Suhr\newline\indent Ruhr Universit\"at Bochum, Fakult\"at f\"ur Mathematik\newline\indent Universit\"atsstra{\ss}e 150, 44801 Bochum, Germany}
\email{stefan.suhr@rub.de}

\subjclass[2010]{53C22, 58E10, 53C44}
\keywords{Closed geodesics, reversible Finsler metrics, curve shortening flow, Lusternik-Schnirelmann theory}

\date{February 2, 2020. \emph{Revised}: January 14, 2022}

\dedicatory{Dedicated to Claude Viterbo on the occasion of his 60th birthday.}

\begin{abstract}
We extend two celebrated theorems on closed geodesics of Riemannian 2-spheres to the larger class of reversible Finsler 2-spheres: Lusternik-Schnirelmann's theorem asserting the existence of three simple closed geodesics, and Bangert-Franks-Hingston's theorem asserting the existence of infinitely many closed geodesics. In order to prove the first theorem, we employ the generalization of Grayson's curve shortening flow developed by Angenent-Oaks.

\tableofcontents
\end{abstract}

\maketitle


\section{Introduction}

Since the seminal work of Hadamard \cite{Hadamard:1898ww}, Poincar\'e \cite{Poincare:1905jp}, Birkhoff \cite{Birkhoff:1966xb}, and Morse \cite{Morse:1996ua}, it became evident that closed Riemannian manifolds of dimension at least 2 tend to have many closed geodesics (that is, periodic orbits of the geodesic flow). This evidence was supported by celebrated theorems of Gromoll-Meyer \cite{Gromoll:1969gh} and Vigu\'e Poirrier-Sullivan \cite{Vigue-Poirrier:1976ug}, which together assert that simply connected closed Riemannian manifolds with a non-monogenic rational cohomology ring always have infinitely many closed geodesics. This statement covers a large class of simply connected closed manifolds, leaving out those with the cohomology of a compact rank-one symmetric space: $S^n$, $\CP^n, \HP^n$, and $\CaP^2$. As of 2019, it is an open conjecture whether these spaces admit infinitely many closed geodesics for any choice of the Riemannian metric. The only known case is the one of $S^2$, for which the proof required a combination of spectacular work by Bangert \cite{Bangert:1993wo}, Franks \cite{Franks:1992jt}, and Hingston \cite{Hingston:1993ou} (either Franks' or Hingston's work, together with Bangert's one, provide the full result). The starting point for this work is another celebrated result due to Lusternik-Schnirelmann \cite{Lusternik:1929wa}, asserting that every Riemannian 2-sphere possesses at least three simple closed geodesics (that is, closed geodesics that are embedded circle in the Riemannian manifold). For many decades Lusternik-Schnirelmann's theorem was considered controversial due to a gap in their construction of a pseudo-gradient flow for the length function of simple closed curves, that have been subsequently addressed by many authors \cite{Ballmann:1978rw, Jost:1989pi, Jost:1991jt, Taimanov:1992wo, Hass:1994vx, Klingenberg:1995aa}. Nowadays, the gap is considered filled, for instance thanks to the work of Grayson \cite{Grayson:1989ec} on the curve shortening flow.

The closed geodesic problem can be studied on closed Finsler manifolds as well. A Finsler metric on a manifold $M$ is a continuous function $F:\Tan M\to[0,\infty)$, smooth outside the zero-section of $\Tan M$, positively homogeneous of degree $1$ (i.e. $F(x,\lambda v)=\lambda F(x,v)$ for all $(x,v)\in\Tan M$ and $\lambda\geq0$), and such that the restriction of its square $F^2$ to any fiber of $\Tan M$ has positive definite Hessian everywhere outside the origin. In the literature, a more general notion of Finsler metric is sometimes employed, but the one given here is the most appropriate for the study of geodesic flows. Many results, such as Gromoll-Meyer's one, remain valid essentially with the same proof in the Finsler category (see \cite{Lu:2015aa} and references therein). However, a striking example due to Katok \cite{Katok:1973mw}, and further explored by Ziller \cite{Ziller:1983aa}, shows that Lusternik-Schnirelmann's and Bangert-Franks-Hingston's theorems fail: there exists a Finsler metric on $S^2$ having only two closed geodesics. 

A Finsler metric $F:\Tan M\to[0,\infty)$ is called reversible when $F(x,-v)=F(x,v)$ for all $(x,v)\in\Tan M$. The Katok's Finsler metric does not satisfy this property. In the current paper, we show that all the above mentioned results valid for Riemannian 2-spheres remain valid for reversible Finsler 2-spheres.

\subsection{The curve shortening semi-flow}
In \cite{Oaks:1994aa}, Oaks provided a generalization of Grayson's curve shortening flow \cite{Grayson:1989ec}. As remarked by Angenent \cite{Angenent:2008aa}, such generalization allows to provide a curve shortening flow on orientable reversible Finsler surfaces: a tool to shrink embedded circles without creating self-inter\-sections. In this section, we state a theorem that summarizes all the properties of this flow (actually, a semi-flow) that we will need for the application to the closed geodesics problem.

Let $M$ be a closed oriented surface, equipped with a reversible Finsler metric $F$. We denote by $S^1:=\R/\Z$ the 1-periodic circle, and by $\Emb(S^1,M)$ the space of smooth embedded loops $\gamma:S^1\hookrightarrow M$, endowed with the $C^\infty$ topology (that is, the topology whose basis is given by the open sets $\UU\subset\Emb(S^1,M)$ of the $C^k$ topology, for all $k\in\N$).
We consider the Finsler length functional 
\begin{align}
\label{l:length}
L:\Emb(S^1,M)\to(0,\infty),\qquad 
L(\gamma) = \int_0^1 F(\gamma(u),\dot\gamma(u))\,\diff u.
\end{align}
The group of diffeomorphisms $\Diff(S^1)$ acts freely on $\Emb(S^1,M)$ by reparametrizations. Notice that, since the Finsler metric $F$ is homogeneous of degree 1 and reversible, the length functional is invariant by the $\Diff(S^1)$-action, i.e.\
 $L(\gamma)=L(\gamma\circ\theta)$ for all $\gamma\in\Emb(S^1,M)$ and $\theta\in\Diff(S^1)$.

We fix an auxiliary Riemannian metric $g$ on $M$, and we will simply write  $\|\cdot\|$ or $\|\cdot\|_g$ for its associated norm on tangent vectors. Since $(M,g)$ is an orientable Riemannian surface, it admits a canonical complex structure $J\in\End(\Tan M)$, i.e.\ $Jv$ is obtained by rotating $v\in\Tan_xM$ of a positive angle $\pi/2$ measured with $g$.  The positive normal to $\gamma\in\Emb(S^1,M)$ is the vector field
\begin{align*}
N_\gamma(u):=\frac{1}{\|\dot\gamma(u)\|}J\dot\gamma(u),
\end{align*}
where $\|\cdot\|$ is the Riemannian norm associated to $g$.  We set 
\begin{align}
\label{e:V_gamma}
 V_\gamma(u):=\frac{\big(\tfrac{\diff}{\diff u}F_v(\gamma(u),\dot\gamma(u))-F_x(\gamma(u),\dot\gamma(u)) \big)N_\gamma(u)}{\|\dot\gamma(u)\|}.
\end{align}
In the expression of $V_\gamma$, the terms $F_x$ and $F_v$ denote the partial derivatives of $F$ with respect to some local coordinates on $M$ (or, more precisely, local coordinates on $\Tan M$ induced by local coordinates on $M$). However, the covector 
\begin{align}
\label{e:EL_covector} 
\tfrac{\diff}{\diff u}F_v(\gamma(u),\dot\gamma(u)) - F_x(\gamma(u),\dot\gamma(u))\in\Tan_{\gamma(u)}^*M
\end{align} 
is independent of the choice of local coordinates, and vanishes identically if and only if $\gamma$ is a closed geodesic of $(M,F)$. Since $\dot\gamma$ is always in the kernel of this covector, we actually conclude that $V_\gamma$ vanishes identically if and only if $\gamma$ is a closed geodesic of $(M,F)$. Therefore, for each $\ell\geq\injrad(M,F)$ and $\epsilon>0$,  the open subset
\begin{align}
\label{e:U_ell_epsilon}
\UU(\ell,\epsilon):=
\Big\{\gamma\in\Emb(S^1,M)\ \Big|\ L(\gamma)\in(\ell-\epsilon^2,\ell+\epsilon^2),\ \max_{s\in S^1}|V_\gamma(s)|<\epsilon\Big\}.
\end{align}
is a neighborhood of the set of simple closed geodesics of $(M,F)$ with length $\ell$. We will employ the notation
\begin{align*}
\Emb(S^1,M)^{<\ell}:=\{\gamma\in\Emb(S^1,M)\ |\ L(\gamma)<\ell\}
\end{align*}
to denote the sublevel sets of the length functional.

\begin{rem}
In the literature, closed geodesics $\gamma$ are usually required to be pa\-ram\-e\-trized with constant positive speed, that is, the function $t\mapsto F(\gamma(t),\dot\gamma(t))$ is required to be constant and positive. 
In this paper, instead, we allow closed geodesics to be parametrized arbitrarily as immersed curves. Indeed, the equation $V_\gamma\equiv0$ is independent of the parametrization of $\gamma$ (Lemma~\ref{l:invariance}). From Section~\ref{s:critical_point_theory} on, we will often consider closed geodesics that are non-trivial critical points of the energy function, and thus parametrized with constant speed.
\hfill\qed
\end{rem}

We consider the evolution equation
\begin{align*}
\partial_t \gamma_t&=V_{\gamma_t}N_{\gamma_t}
\end{align*}
with prescribed initial condition $\gamma_0\in\Emb(S^1,M)$, where $\gamma_t\in C^\infty(S^1,M)$ for all $t$ for which it is defined. Up to slowing down the time evolution when $\gamma_t$ becomes short, the solutions of this equation give rise to a curve shortening semi-flow, whose properties are summarized in the next statement.

\begin{thm}\label{t:curve_shortening}
Let $(M,F)$ be a closed, orientable, reversible Finsler manifold, and $\rho_0>0$.
There exists a continuous map 
\[\psi:[0,\infty)\times\Emb(S^1,M)\to\Emb(S^1,M),\qquad\psi(t,\gamma)=\psi_t(\gamma),\]
with the following properties:
\begin{itemize}
\item[(i)] It is a semi-flow, i.e. $\psi_0=\mathrm{id}$ and $\psi_{t_2}\circ\psi_{t_1}=\psi_{t_1+t_2}$ for all $t_1,t_2\geq0$.

\item[(ii)] It is equivariant with respect to the action of circle diffeomorphisms, i.e.\ $\psi_t(\gamma\circ\theta)=\psi_t(\gamma)\circ\theta$ for all $\gamma\in\Emb(S^1,M)$ and $\theta\in\Diff(S^1)$.

\item[(iii)] The length function is not increasing along the trajectories of $\psi_t$. More precisely, $\tfrac{\diff}{\diff t}L(\psi_t(\gamma))\leq 0$ with equality if and only if $\gamma$ is a closed geodesic or $L(\gamma)\leq \rho_0$. 

\item[(iv)] For each $\ell>2\rho_0$ and $\epsilon>0$ there exists $\delta>0$ and a continuous function
\[\tau:\Emb(S^1,M)^{<\ell+\delta}\to(0,\infty)\] 
such that
\begin{align*}
\qquad\quad\
 \psi_{t}(\gamma)\in \UU(\ell,\epsilon)\cup\Emb(S^1,M)^{<\ell-\delta},\quad
 \forall \gamma\in \Emb(S^1,M)^{<\ell+\delta},\, t\geq\tau(\gamma).
\end{align*}

\item[(v)] If there are no simple closed geodesics with length in $[\ell_1,\ell_2]\subset(2\rho_0,\infty)$, then there exists a continuous function 
\[\tau:\Emb(S^1,M)^{<\ell_2}\to(0,\infty)\] 
such that
\begin{align*}
 \psi_t(\gamma)\subset \Emb(S^1,M)^{<\ell_1},
 \qquad
 \forall \gamma\in\Emb(S^1,M)^{<\ell_2},\,t\geq\tau(\gamma).
\end{align*}

\end{itemize}
\end{thm}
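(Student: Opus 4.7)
The plan is to construct $\psi$ as a time-reparametrized version of the curve shortening flow for reversible Finsler surfaces due to Oaks \cite{Oaks:1994aa} and Angenent \cite{Angenent:2008aa}, slowed to a halt once the length of the evolving curve drops to the threshold $\rho_0$. The Oaks-Angenent flow $\tilde\psi$ solves the geometric evolution equation $\partial_t\tilde\psi_t(\gamma) = V_{\tilde\psi_t(\gamma)} N_{\tilde\psi_t(\gamma)}$; being a quasilinear parabolic PDE that depends covariantly on the immersion, it is smooth in positive time, $\Diff(S^1)$-equivariant, and preserves embeddedness (the latter via Angenent's intersection principle, which extends to this Finsler setting). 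However, $\tilde\psi_t(\gamma)$ may become extinct in finite time by collapsing to a point, so $\tilde\psi$ is not globally defined.

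To obtain a global semi-flow, fix a smooth cutoff $\varphi:[0,\infty)\to[0,1]$ with $\varphi\equiv 0$ on $[0,\rho_0]$ and $\varphi\equiv 1$ on $[2\rho_0,\infty)$, and define $\psi$ as the flow of $\partial_t\gamma_t = \varphi(L(\gamma_t))\,V_{\gamma_t} N_{\gamma_t}$. Since this is a time reparametrization of $\tilde\psi$ (with a scaling depending on $\gamma$), curves of length at most $\rho_0$ become stationary, the flow is globally defined and continuous on $[0,\infty)\times\Emb(S^1,M)$, and properties (i)--(ii) transfer from $\tilde\psi$. For (iii), the first variation of length along $\psi$ yields
\begin{equation*}
\tfrac{\diff}{\diff t} L(\psi_t(\gamma)) = -\varphi(L(\psi_t(\gamma)))\int_{S^1} V_{\psi_t(\gamma)}(u)^2\,\|\dot\psi_t(\gamma)(u)\|\,\diff u,
\end{equation*}
which is non-positive and vanishes precisely when $\psi_t(\gamma)$ is a closed geodesic or has length at most $\rho_0$.

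For (iv) and (v), I would carry out a standard Lusternik-Schnirelmann deformation argument. Choose $\delta > 0$ with $[\ell-\delta,\ell+\delta]\subset(2\rho_0,\infty)$, so that $\varphi(L(\psi_t(\gamma)))$ is bounded below by a positive constant along any trajectory whose length stays in this window. Argue by contradiction: if (iv) failed, there would exist sequences $\gamma_n\in\Emb(S^1,M)^{<\ell+\delta}$ and $t_n\to\infty$ with $\psi_{t_n}(\gamma_n)\notin\UU(\ell,\epsilon)\cup\Emb(S^1,M)^{<\ell-\delta}$. Along each trajectory the length would stay in $[\ell-\delta,\ell+\delta]$; but outside $\UU(\ell,\epsilon)$ the integrand in the identity above is uniformly bounded below by a positive constant, so $L$ would drop below $\ell-\delta$ in bounded time, a contradiction. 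The continuous function $\tau$ is then extracted from the continuity of $\psi$ together with this uniform exit estimate. Property (v) follows by the same mechanism, applied to the region $\Emb(S^1,M)^{<\ell_2}\setminus\Emb(S^1,M)^{<\ell_1}$, which by hypothesis contains no simple closed geodesics.

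The main obstacle is the quantitative Palais-Smale-type lower bound needed above: that $\max_{S^1}|V_\gamma|$ is bounded away from zero uniformly in $\gamma$ on the set $\{L(\gamma)\in[\ell-\delta,\ell+\delta]\}\setminus\UU(\ell,\epsilon)$, and in fact that the $L^2$-type integral appearing in the length-derivative identity is bounded below. This amounts to a compactness result for simple closed loops of bounded, non-degenerate length (collapse being prevented by $L\geq 2\rho_0$), together with elliptic regularity for the Euler-Lagrange equation \eqref{e:EL_covector}; both are classical but must be executed carefully given the $\Diff(S^1)$-reparametrization freedom and the Finsler (rather than Riemannian) setting.
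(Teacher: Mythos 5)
Your construction of $\psi$ (slowing down the Oaks--Angenent flow by a cutoff in the length) and your treatment of (i)--(iii) coincide with the paper's. The gap is in (iv), which is precisely the delicate point. Your contradiction argument needs a uniform positive lower bound on the dissipation $\int_{S^1}V_\gamma^2\,\|\dot\gamma\|\,\diff u$ over the set $\{L(\gamma)\in[\ell-\delta,\ell+\delta]\}\setminus\UU(\ell,\epsilon)$, and you propose to get it from a "Palais--Smale-type" compactness statement. No such bound holds: $\UU(\ell,\epsilon)$ is defined by the \emph{sup} norm of $V_\gamma$, while the length decreases at the rate of its \emph{$L^2$} norm, and a curve can lie outside $\UU(\ell,\epsilon)$ (i.e.\ $\|V_\gamma\|_{L^\infty}\geq\epsilon$) with $\|V_\gamma\|_{L^2}$ arbitrarily small, e.g.\ by concentrating the defect of the geodesic equation on a very short arc of a curve that is otherwise close to a closed geodesic of length $\ell$. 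Note that (iv) must hold even when closed geodesics of length $\ell$ exist, so you cannot exclude such curves; and the set in question is not compact in any topology in which $\gamma\mapsto V_\gamma$ is continuous, so "compactness plus elliptic regularity" does not rescue the infimum. Moreover, even if the lower bound were available, your argument would only show that the trajectory cannot stay in the shell and outside $\UU(\ell,\epsilon)$ for an unbounded total amount of time; the statement requires a (continuous) $\tau(\gamma)$ such that $\psi_t(\gamma)\in\UU(\ell,\epsilon)\cup\Emb(S^1,M)^{<\ell-\delta}$ for \emph{all} $t\geq\tau(\gamma)$, i.e.\ persistence inside the union, which an "eventually enters" contradiction does not deliver.

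The paper's proof of (iv) is instead dynamical: it establishes parabolic a priori estimates along the flow (Lemmas~\ref{l:L2_estimates} and~\ref{l:L2_estimates_derivative}). First, within time $2$ either the length drops below $\ell-\delta$ or $\|W_{t_0}\|_{L^2}\leq\sqrt\delta$ at some $t_0\leq 2$ (this part matches your dissipation identity); then Lemma~\ref{l:L2_estimates} shows the $L^2$ smallness of $W_t$ \emph{persists} for all later times at which the length is still $\geq\ell-\delta$, Lemma~\ref{l:L2_estimates_derivative} upgrades this after a further controlled time to $L^2$ smallness of $\dot W_t$, and a one-dimensional interpolation inequality converts these into the $L^\infty$ bound defining $\UU(\ell,\epsilon)$; the explicit time $\tau(\gamma)=2+c\log(\|\dot W_2\|_{L^2}^2c^{-2}\delta^{-1})$ gives continuity. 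Your statement (v) then should not be proved by the same static mechanism (which has the same flaw), but as in the paper: combine (iv) with the compactness result of Lemma~\ref{l:neighborhoods}, which shows that in the absence of simple closed geodesics of length near $\ell$ the sets $\UU(\ell,\epsilon)$ are empty for small $\epsilon$, and cover $[\ell_1,\ell_2]$ by finitely many such windows.
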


Most of the points in this theorem follow from Oaks \cite{Oaks:1994aa}, except point (iv), which is crucial for the applications.

\subsection{Closed geodesics on Finsler 2-spheres}

We already anticipated that the semi-flow of Theorem~\ref{t:curve_shortening} allows to extend the celebrated Lusternik-Schnirelmann's theorem \cite{Lusternik:1929wa} to the reversible Finsler setting. Actually, it will also allow to extend the characterization of simple Zoll geodesic flows on the 2-sphere, originally claimed in the Riemannian case by Lusternik \cite{Ljusternik:1966tk} and rigorously proved in \cite{Mazzucchelli:2018ek}. We recall that a Finsler manifold is called Zoll when all its unit-speed geodesics are closed with the same minimal period, and simple Zoll if, in addition, all the geodesics are simple closed. We denote by $\sigmas(S^2,F)$ the simple length spectrum of a Finsler 2-sphere, which is the set of lengths of its simple closed geodesics.

\begin{thm}
\label{t:LS}
Every reversible Finsler 2-sphere $(S^2,F)$ has at least three geometrically distinct simple closed geodesics. More precisely: 
\begin{itemize}
\item[(i)] If $\sigmas(S^2,F)$ is a singleton, then $(S^2,F)$ is simple Zoll.
\item[(ii)] If $\sigmas(S^2,F)$ contains exactly two elements, then there exists $\ell\in\sigmas(S^2,F)$ such that every point of $S^2$ lies on a simple closed geodesic  of length $\ell$.
\item[(iii)] Assume that, for any compact interval $[\ell_1,\ell_2]\subset(0,\infty)$, $(S^2,F)$ has only finitely many simple closed geodesics with length in $[\ell_1,\ell_2]$. Then, $(S^2,F)$ has three simple closed geodesics $\gamma_1,\gamma_2,\gamma_3$ with lengths $L(\gamma_1)< L(\gamma_2)< L(\gamma_3)$ and such that, for each $i=1,2,3$, $\gamma_i$ has non-trivial local homology in degree $i$ with $\Z_2$ coefficients.
\end{itemize}
\end{thm}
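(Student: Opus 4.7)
The strategy is to run Lusternik-Schnirelmann critical point theory for the length functional $L$ on the quotient space $\Emb(S^1,S^2)/\Diff(S^1)$ of unparametrized simple closed curves, using the curve shortening semi-flow $\psi_t$ of Theorem~\ref{t:curve_shortening} in place of a negative gradient flow. By item~(ii) of that theorem $\psi_t$ descends to the quotient, and items~(iii)--(v) supply the monotonicity and sub-level deformation properties needed for minimax arguments; the reversibility of $F$ is essential here, as it is what makes $L$ well defined on the unoriented quotient.

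To define the three minimax values, I would enlarge the configuration space by adjoining the constant ``point loops'' as in the classical Lusternik-Schnirelmann scheme on $S^2$. The resulting space carries three natural classes of sweepouts of $S^2$ by simple loops: a 1-parameter family joining two point loops, a 2-parameter sphere of such families, and a 3-parameter sphere; these correspond, up to homotopy, to the cup-length-three structure of the relevant configuration space. Letting $\Sigma_i$ denote the $i$-th class of sweepouts, set
\[
 c_i \;:=\; \inf_{\sigma\in\Sigma_i}\;\max_{z\in\mathrm{dom}(\sigma)}\; L(\sigma(z)),\qquad i=1,2,3,
\]
so that $0<c_1\leq c_2\leq c_3<\infty$.

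The next step is to extract simple closed geodesics at each level. If $c_i$ were not attained by a simple closed geodesic, item~(v) of Theorem~\ref{t:curve_shortening} would allow us to push a near-optimal sweepout $\sigma\in\Sigma_i$ strictly below $c_i$, contradicting the minimax definition. Under the finiteness hypothesis of (iii) the critical set at each level $c_i$ is a finite union of $\Diff(S^1)$-orbits, and item~(iv) combined with a local deformation lemma near an isolated critical orbit shows that at least one orbit at level $c_i$ has non-trivial local homology in degree $i$ with $\Z_2$ coefficients. If $c_1<c_2<c_3$ we obtain the three geometrically distinct geodesics directly; if some $c_i$ coincide, a single $\Diff(S^1)$-orbit cannot carry non-trivial local homology in two different degrees, so three distinct orbits must share the common minimax value.

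Parts~(i) and~(ii) follow by rigidity. If $\sigmas(S^2,F)=\{\ell\}$ then necessarily $c_1=c_2=c_3=\ell$, and the finiteness-based analysis above must fail: the critical set at level $\ell$ has to be large enough to detect sweepout classes of all three degrees, which forces simple closed geodesics of length $\ell$ through every point of $S^2$ and in every tangent direction, i.e.\ simple Zoll. Similarly, if $\sigmas(S^2,F)$ has exactly two elements then $c_i=c_{i+1}$ for some $i$, and the coincidence forces one of the two lengths to be realized by a family of simple closed geodesics covering all of $S^2$. The principal obstacle is the Lusternik-Schnirelmann multiplicity step when several $c_i$ coincide: one must build the local deformation near a critical orbit from item~(iv) of Theorem~\ref{t:curve_shortening}, since $\psi_t$ is only a semi-flow rather than a flow, and then deduce via finiteness that a single orbit cannot account for non-trivial local homology in multiple degrees.
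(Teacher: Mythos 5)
Your overall frame — min--max for the length functional on $\Pi=\Emb(S^1,S^2)/\Diff(S^1)$ driven by the curve shortening semi-flow, with three classes coming from the $3$-parameter family of round circles adjoined to the point curves — is the same as the paper's, but the steps that carry the actual content are missing or incorrect. First, the non-triviality of the three min--max classes is not automatic: one must show that the family of round circles injects homologically into the space of embedded loops, which the paper does via the two-fold ``filling disk'' covering of $\overline\Pi$ (Lemma~\ref{l:injectivity_pi1}) together with the fact that below the systole the sublevel set deformation retracts onto the point curves (Lemma~\ref{l:remove_bullet}); the ``cup-length-three structure of the relevant configuration space'' you invoke is precisely what has to be proved. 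Second, and more seriously, your mechanism for coinciding min--max values is wrong: a single degenerate critical $\Diff(S^1)$-orbit can perfectly well have non-trivial local homology in several degrees, so ``one orbit cannot account for two degrees'' is not a valid multiplicity argument, and it is not how the paper proceeds. The paper proves a cap-product Lusternik--Schnirelmann theorem (Theorem~\ref{t:LS_minmax}): if $w\smallfrown h\neq0$ and $\ell(w\smallfrown h)=\ell(h)$, then $w$ is non-zero on every neighborhood $\WW(\ell,\epsilon)$ of the critical set. Statements (i) and (ii) are then derived by concrete geometric arguments: for (ii), the space of embedded loops avoiding a fixed point is contractible, so the degree-one class $w$ must vanish there, forcing a geodesic of length $\ell$ through every point; for (i), one needs a specific degree-two class $z=\pr_*\ev^*m$ built from the evaluation map of the tautological circle bundle over $\Pi$ into $\PT S^2$ (a Gysin-sequence computation), satisfying $z\smallfrown h_3=h_1$ (Lemma~\ref{l:z}), whose non-vanishing on $\WW(\ell,\epsilon)$ together with Lemma~\ref{l:neighborhoods} forces every projectivized tangent direction to be tangent to a simple closed geodesic of length $\ell$, i.e.\ simple Zoll. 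Your ``rigidity'' paragraph asserts these conclusions without supplying any such mechanism.

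For part (iii) there is a further gap: the local homology in the statement is that of the energy functional $E$ on $\Lambda=W^{1,2}(S^1,S^2)$ (Section~\ref{ss:local_homology}), not a homology localized in the $C^\infty$ quotient $\Pi$ where the semi-flow acts. Passing from ``the class $h_i$ is detected near a simple closed geodesic in $\Pi$'' (the moreover part of Lemma~\ref{l:l_critical}, which itself requires an argument isolating one component $\VV(\gamma,\epsilon)$ of $\WW(\ell,\epsilon)$ under the finiteness hypothesis) to ``$C_i(\gamma;\Z_2)\neq0$ for $E$'' is the technical heart of (iii), carried out in Proposition~\ref{p:local_homology_min_max} via convolution smoothing, broken-geodesic finite-dimensional approximations, and Gromoll--Meyer neighborhoods; compressing this to ``item (iv) combined with a local deformation lemma'' ignores the change of functional and of topology, which is exactly the point the paper says is missing from the published literature. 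Finally, the strict inequalities $L(\gamma_1)<L(\gamma_2)<L(\gamma_3)$ do not come from any claim about a single orbit carrying homology in only one degree: they follow because, under the finiteness hypothesis, a coincidence $\ell_i=\ell_{i+1}$ would, by the arguments proving (i) and (ii), produce simple closed geodesics of that length through every point of $S^2$, contradicting finiteness.
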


For the definition of the local homology of a closed geodesic, we refer the reader to Section~\ref{ss:local_homology}. Point (iii) in Theorem~\ref{t:LS} may look technical, but it is a crucial ingredient for the proof of Theorem~\ref{t:multiplicity} here below. Even though it is claimed in \cite{Hingston:1993ou}, it does not have a proper proof in the published literature.

\begin{rem}
As it was pointed out in \cite{Gromoll:1981ty} and \cite[Remark~5.3]{Frauenfelder:2019wh}, a Zoll reversible Finsler 2-sphere is actually simple Zoll provided it has a simple closed geodesic. This, together with Theorem~\ref{t:LS}, implies that any Zoll reversible Finsler 2-sphere is automatically simple Zoll.
\hfill\qed
\end{rem}

Finally, we can state the last result, that generalizes Bangert-Franks-Hingston's theorem.

\begin{thm}
\label{t:multiplicity}
Every reversible Finsler 2-sphere $(S^2,F)$ has infinitely many geometrically distinct closed geodesics. 
\end{thm}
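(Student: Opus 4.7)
The plan is to argue by contradiction, assuming that $(S^2,F)$ has only finitely many geometrically distinct closed geodesics. Under this finiteness hypothesis the assumption of Theorem~\ref{t:LS}(iii) is satisfied, so we obtain three simple closed geodesics $\gamma_1,\gamma_2,\gamma_3$ with $L(\gamma_1)<L(\gamma_2)<L(\gamma_3)$ and non-trivial local $\Z_2$-homology in degrees $1$, $2$, and $3$ respectively. The middle geodesic $\gamma_2$ plays the role of a Birkhoff-type min-max geodesic: $L(\gamma_2)$ is realized as a min-max value over a family of sweepouts of $S^2$ by embedded loops, deformable by the curve shortening semi-flow of Theorem~\ref{t:curve_shortening} whenever no critical value is crossed.

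Following Bangert's strategy, I would then study the iterates $\gamma_2^n$ as critical points of the energy functional on the free loop space $\Lambda(S^2)$. The key step is to show that, under the contradictory finiteness hypothesis, each $\gamma_2^n$ realizes a genuine min-max critical value arising from an $n$-fold iterated sweepout construction, and that this forces controlled behavior of the Morse indices $\ind(\gamma_2^n)$ and of the local homology of the iterates. The reversibility of $F$ is essential here, both for the full $O(2)$-invariance of the variational setup and for the applicability of Theorem~\ref{t:curve_shortening} to embedded representatives of the sweepouts.

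The contradiction is then obtained by one of two complementary routes. The Hingston route combines the index iteration formulas for reversible Finsler metrics with the computation of local homology of iterates of a closed geodesic having non-trivial local homology in middle degree; this forces infinitely many iterates $\gamma_2^n$ to contribute non-trivially to a fixed-degree local homology group, which is incompatible with having only finitely many prime closed geodesics. The Franks route instead uses a simple closed geodesic from Theorem~\ref{t:LS} to build a Birkhoff global surface of section for the geodesic flow on the unit tangent bundle; its first-return map is an area- and orientation-preserving homeomorphism of an open annulus, and one of the other simple closed geodesics provides a periodic orbit, so Franks' theorem yields infinitely many periodic orbits. Ruling out that these are all iterates of finitely many prime closed geodesics again uses the index bounds from the previous step.

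The hardest step I expect is the Bangert-type construction in the second paragraph: the iterates $\gamma_2^n$ are immersed but not embedded, so Theorem~\ref{t:curve_shortening} does not apply to them directly, and one must transfer the sweepout deformation argument from $\Emb(S^1,S^2)$ to a suitable subspace of $\Lambda(S^2)$ or to a higher cyclic cover where the relevant loops become embedded. Verifying that the resulting min-max values coincide with $L(\gamma_2^n)=nL(\gamma_2)$, rather than with the lengths of iterates of $\gamma_1$ or $\gamma_3$ or of some other closed geodesic produced along the way, is the main technical point and the place where the finiteness assumption must be leveraged most carefully.
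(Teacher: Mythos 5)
Your outline does not match the argument that actually closes the proof, and the places where it diverges are genuine gaps rather than alternative routes. First, the geodesic that drives the paper's proof is the one with non-trivial local homology in degree $3$, not the middle one $\gamma_2$: from $C_3(\gamma;\Z_2)\neq0$ one gets $\ind(t\cdot\gamma)\le 3\le \ind(t\cdot\gamma)+\nul(t\cdot\gamma)$, hence via Proposition~\ref{p:indices}(iv) the key inequality $\ind_\Omega(t\cdot\gamma)+\nul_\Omega(t\cdot\gamma)\ge 2$ for every $t$. This inequality is what feeds the dichotomy: either $\ind_\Omega(t\cdot\gamma)\ge2$ for all $t$, in which case the Birkhoff return map is a \emph{twist} map (Theorem~\ref{t:twist}) and infinitely many periodic points come from the elementary Poincar\'e--Birkhoff/Neumann argument (no Franks needed), or $\ind_\Omega=\nul_\Omega=1$ somewhere, in which case Corollary~\ref{c:Hingston} (Hingston's Theorem~\ref{t:Hingston}) applies precisely because of the degree-$3$ local homology. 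A degree-$2$ class attached to $\gamma_2$ gives you none of these index relations, so both of your ``routes'' lose their input. Second, your Franks route presupposes that the Birkhoff annulus is a global surface of section, i.e.\ that the first-return map is everywhere defined; this is exactly what can fail, and the failure case is not a footnote but a separate theorem (Theorem~\ref{t:Bangert}): one must produce, via Lemma~\ref{l:no_conjugate_points} and a minimization in the region trapped by a non-returning geodesic ray, a simple closed geodesic \emph{without conjugate points}, and then invoke Corollary~\ref{c:conjugate_points} (again Hingston). Your proposal never addresses this case.

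Finally, the step you yourself flag as hardest --- realizing each iterate $\gamma_2^n$ as a min-max value of an ``$n$-fold iterated sweepout'' and transferring the curve-shortening deformation to non-embedded loops or cyclic covers --- is not how the iterates enter the argument, and as stated it is not an argument at all: the curve shortening semi-flow of Theorem~\ref{t:curve_shortening} is only defined on $\Emb(S^1,S^2)$, and no mechanism is offered to deform multiply covered sweepouts or to pin the resulting min-max values at $nL(\gamma_2)$. In the actual proof the iterates appear only inside Hingston's theorem, where the point is the opposite of a min-max realization: one shows that the explicit cycles $B_m$ built from the degree-$(\ind+\nul)$ local homology of $\gamma$ become \emph{null-homologous} in $H_*(\Lambda^{<m\ell+\epsilon/\ell},\Lambda^{<m\ell})$, which forces new closed geodesics with lengths in $(m\ell,m\ell+\epsilon/\ell]$, and an arithmetic lemma rules out that these are iterates of finitely many prime geodesics. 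Without that mechanism (or a worked-out substitute), your proposal remains a plan with its central step missing.
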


The main ideas for this theorem remain the same as in the Riemannian case, but nevertheless we provide a full and rather self-contained account, which insures that certain arguments of the original proof that looked Riemannian can indeed be carried over in the Finsler case. At the same time, our treatment fills some expository gaps present in the literature.

Finally, we would like to mention a related problem that saw major advances in recent years. Closed geodesics on Riemannian surfaces are in particular minimal hypersurfaces. In 1982, Yau \cite{Yau:1982ta} conjectured that every closed Riemannian 3-manifold has infinitely many  smooth, closed, immersed minimal hypersurfaces. An even stronger statement was proved by Irie-Marques-Neves \cite{Irie:2018vd}: on any closed $n$-manifold, with $3\leq n\leq 7$, equipped with a $C^\infty$-generic Riemannian metric, the union of all smooth, closed, embedded minimal hypersurfaces is dense. We refer the reader to the survey \cite{Marques:2019ww} for more background and details.

\subsection{Organization of the paper}
In Section~\ref{s:curve_shortening} we provide a construction of the curve shortening semi-flow, and prove Theorem~\ref{t:curve_shortening}. In Section~\ref{s:simply_closed_geodesics} we prove Theorem~\ref{t:LS}, except the technical point~(iii). In Section~\ref{s:critical_point_theory}, we provide the background on the classical critical point theory for the Finsler energy function, and we will prove Theorem~\ref{t:LS}(iii) at the end of the section. Finally, in Section~\ref{s:infinitely_many} we prove Theorem~\ref{t:multiplicity}.

\subsection{Acknowledgments}
We are grateful to Egor Shelukhin and Vuka\v{s}in Stojisavljevi\'c for pointing out a mistake in the original proof of Theorem~\ref{t:Hingston}. We are also grateful to the anonymous referee, who did a tremendous work with the first draft of the manuscript: in particular, for pointing out a mistake in the original proof of Theorem~\ref{t:curve_shortening}(iv), and for providing many valuable corrections and suggestions throughout the whole manuscript.

Part of this work was carried out while Marco Mazzucchelli was visiting the Scuola Internazionale Superiore di Studi Avanzati (Trieste, Italy) in May 2018, and the Ruhr-Universit\"at Bochum (Germany) in September-November 2019. We wish to thank both institutions for providing an excellent working environment. Marco Mazzucchelli and Stefan Suhr are supported by the SFB/TRR 191 “Symplectic Structures in Geometry, Algebra and Dynamics”, funded by the Deutsche Forschungsgemeinschaft.

\section{The curve shortening semi-flow}
\label{s:curve_shortening}

\subsection{The evolution equation}
We consider a 1-parameter family of curves $\gamma_t\in\Emb(S^1,M)$ evolving according to the partial differential equation
\begin{align}
\label{e:PDE}
\partial_t \gamma_t(u)&=w_{t}(u)n_{t}(u) 
\end{align}
where $w_{t}:=V_{\gamma_t}$ and $n_t:= N_{\gamma_t}$. For every $\gamma_0\in\Emb(S^1,M)$, we denote by $\tau_{\gamma_0}\in[0,\infty]$ the largest extended real number such that there is a well defined solution $\gamma_t\in\Emb(S^1,M)$ of~\eqref{e:PDE} for all $t\in[0,\tau_{\gamma_0})$, with $\gamma_t|_{t=0}=\gamma_0$. We set
\begin{align*}
 \UU:=\big\{(t,\gamma_0)\ \big|\ \gamma_0\in\Emb(S^1,M),\ t\in[0,\tau_{\gamma_0})\big\}.
\end{align*}

\begin{thm}
\label{t:properties}
There is a unique map 
\begin{align*}
\phi:\UU\to\Emb(S^1,M),
\qquad
\phi(t,\gamma_0)=\phi_t(\gamma_0)=\gamma_t,
\end{align*}
where $\gamma_t$ is the solution of~\eqref{e:PDE} with initial condition $\gamma_0$, satisfying the following properties:

\begin{itemize}
\item[(i)] The subset $\UU\subset[0,\infty)\times\Emb(S^1,M)$ is an open neighborhood of $\{0\}\times\Emb(S^1,M)$, and $\phi$ is continuous.

\item[(ii)] The map $\phi$ is equivariant under the action of $\Diff(S^1)$ on $\Emb(S^1,M)$, i.e.\ $\phi_t(\gamma\circ\theta)=\phi_t(\gamma)\circ\theta$ for all $\gamma\in\Emb(S^1,M)$ and $\theta\in\Diff(S^1)$.

\item[(iii)] For each $\gamma\in\Emb(S^1,M)$ we have $\tfrac{\diff}{\diff t} L(\phi_t(\gamma))\leq 0$, with equality if and only if $\gamma$ is a closed geodesic of $(M,F)$.

\item[(iv)] For each $\gamma\in\Emb(S^1,M)$, if 
\[\ell_\gamma:=\lim_{t\to\tau_\gamma^-} L(\phi_t(\gamma))>0\]
then $\tau_\gamma=\infty$.
\end{itemize}
\end{thm}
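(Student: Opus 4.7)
The plan is to interpret equation \eqref{e:PDE} as a quasilinear parabolic PDE on $S^1$ and obtain (i)--(iii) by direct PDE and variational arguments, while reducing (iv) to the a priori estimates of Oaks and Angenent. Near any fixed embedded loop $\gamma_\star$, every sufficiently nearby embedded loop can be written uniquely as
\[
u \mapsto \exp_{\gamma_\star(u)}\bigl(f(u)\, N_{\gamma_\star}(u)\bigr),
\]
where $\exp$ is the Riemannian exponential of the auxiliary metric $g$ and $f$ is a small scalar function on $S^1$. Substituting this ansatz into \eqref{e:PDE} and using the definition \eqref{e:V_gamma} of $V_\gamma$, I would verify that the equation takes the form
\[
\partial_t f = a(u,f,\partial_u f)\,\partial_u^2 f + b(u,f,\partial_u f),
\]
with $a>0$ everywhere, since the highest-order term in $V_\gamma$ comes from $\tfrac{d}{du}F_v(\gamma,\dot\gamma)$ and the fiberwise Hessian of $F^2$ is positive definite. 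Standard short-time existence, uniqueness and continuous dependence theory for scalar quasilinear parabolic equations on $S^1$ (as developed in \cite{Oaks:1994aa}) then yields openness of $\UU$ and continuity of $\phi$, i.e.\ (i).

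Property (ii) follows from the uniqueness in (i) together with the parametrization invariance of the vector field $V_\gamma N_\gamma$ (Lemma \ref{l:invariance}): under an orientation-preserving $\theta\in\Diff(S^1)$, the Euler--Lagrange covector \eqref{e:EL_covector} and $\|\dot\gamma\|$ transform as 1-form densities that cancel, and $N_{\gamma\circ\theta}(u)=N_\gamma(\theta(u))$, whereas for orientation-reversing $\theta$ the two sign changes are compensated using the reversibility of $F$. Consequently $t\mapsto \phi_t(\gamma)\circ\theta$ solves \eqref{e:PDE} with initial condition $\gamma_0\circ\theta$, and uniqueness forces $\phi_t(\gamma\circ\theta)=\phi_t(\gamma)\circ\theta$. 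Property (iii) is the standard first variation of length. Writing $\alpha_\gamma:=\tfrac{d}{du}F_v(\gamma,\dot\gamma)-F_x(\gamma,\dot\gamma)$ for the Euler--Lagrange 1-form along $\gamma$, integration by parts and the identity $\alpha_\gamma(N_\gamma)=V_\gamma\|\dot\gamma\|$ give
\[
\frac{d}{dt}L(\gamma_t)
= -\int_{S^1} \alpha_{\gamma_t}(\partial_t\gamma_t)\,du
= -\int_{S^1} V_{\gamma_t}(u)^2\,\|\dot\gamma_t(u)\|\,du \le 0,
\]
with equality if and only if $V_{\gamma_t}\equiv 0$, i.e.\ iff $\gamma_t$ is a closed geodesic.

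The main technical work lies in (iv). Here I would adapt the Grayson-type long-time existence argument for reversible Finsler surfaces established in \cite{Oaks:1994aa, Angenent:2008aa}. The ingredients are: (a) the flow preserves embeddedness, via a Sturm-type intersection principle asserting that the number of intersections between two smooth solutions of \eqref{e:PDE} is finite and non-increasing in $t$, which rules out the development of self-intersections; and (b) if $\tau_\gamma<\infty$ and $\ell_\gamma>0$, then the length bound $L(\phi_t(\gamma))\ge \ell_\gamma$ from (iii) feeds into the a priori curvature and higher-derivative estimates of \cite{Oaks:1994aa, Angenent:2008aa}, yielding uniform $C^k$ bounds on $\gamma_t$ for $t\in[0,\tau_\gamma)$. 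Consequently $\gamma_t$ extends smoothly and as an embedding to $t=\tau_\gamma$, contradicting maximality of $\tau_\gamma$. The hard part is checking that those a priori bounds, originally worked out in specific Finsler or weighted-Euclidean settings, apply verbatim under our general reversibility and regularity hypotheses; the rest is bookkeeping.
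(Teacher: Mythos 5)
Your route is the paper's: (i) via normal graphs and standard quasilinear parabolic theory, (ii) via uniqueness plus Lemma~\ref{l:invariance}, (iii) via the first variation of length (the paper packages it as $\nabla L(\gamma)=-V_\gamma N_\gamma$ for the $L^2$ metric), and embeddedness together with (iv) delegated to Angenent and Oaks. But there are two places where you skip exactly the work the paper does. First, the graph ansatz as you state it does not literally solve \eqref{e:PDE}: a family $u\mapsto\exp_{\gamma_\star(u)}\bigl(f(t,u)N_{\gamma_\star}(u)\bigr)$ has velocity along the unit field of the tubular fibration over $\gamma_\star$, which is \emph{not} the normal $n_t$ of the evolving curve, so the scalar parabolic equation for $f$ only captures the solution of \eqref{e:PDE} up to a time-dependent reparametrization. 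Since the theorem concerns the parametrized semi-flow on $\Emb(S^1,M)$ — with uniqueness, continuity and $\Diff(S^1)$-equivariance — you must recover the genuine solution by additionally solving an ODE for the tangential reparametrization (the paper's map $\Psi(z,\theta)$ together with $\partial_t\theta=-\tau_z(\theta)$). This is fixable, but without it your uniqueness and equivariance claims for $\phi$ are not yet justified.

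Second, for (iv) the step you defer as ``checking that the a priori bounds apply'' is precisely the substantive content of the paper's proof. One must verify that the normal speed has the structural form $w_t=V(\kappa_t,\gamma_t,\tau_t)$ with $V(\kappa,x,v)=A(x,v)\kappa+B(x,v)$, where $A(x,v)=F_{vv}(x,v)[Jv,Jv]>0$ is smooth on the unit tangent bundle and $V(\kappa,x,v)=-V(-\kappa,x,-v)$ by reversibility of $F$; see \eqref{e:V(kappa,x,v)}. This computation (which involves rewriting $\tfrac{\diff}{\diff u}F_v-F_x$ in terms of the $g$-curvature $\kappa_t$ and the Christoffel symbols, and checking coordinate independence of the remainder) is what licenses the citation of \cite{Angenent:1991aa} for preservation of embeddedness and of \cite[Corollary~6.2]{Oaks:1994aa}, which then yields (iv) directly without re-running a Grayson-type blow-up analysis. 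So your outline is correct in shape, but the verification you label bookkeeping is the actual proof of (iv), and the reparametrization correction in (i) cannot be omitted.
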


The proof of this theorem will be carried out in the rest of the section: point~(i) will be proved in Subsection~\ref{ss:short_time}; point~(ii) is a consequence of Lemma~\ref{l:invariance}; point~(iii) will be proved in Subsection~\ref{ss:antigradient}. The fact that $\phi$ is well-defined as a map of the above form (i.e.\ mapping the space $\Emb(S^1,M)$ into itself) and point~(iv) will be proved in Section~\ref{ss:long_time}.
In analogy with the Riemannian case, we call $\phi_t$ the \textbf{curve shortening semi-flow} of $(M,F)$. Notice that $\phi_t$ is not a flow (despite in the Riemannian literature it is often called a flow): indeed, it is only defined for $t\geq0$, and thus satisfies $\phi_{t_1}\circ\phi_{t_2}=\phi_{t_1+t_2}$ only for $t_1,t_2\geq0$.

All closed geodesics of a closed Finsler surface $(M,F)$ have length strictly larger than the injectivity radius $\injrad(M,F)$. It is sometimes convenient to have a well defined curve shortening semi-flow defined for all positive times even for those trajectories that are not converging to a closed geodesic. We can achieve this by slowing down the curve shortening semi-flow lines in the sublevel set $\{L<\injrad(M,F)\}$, as follows. We fix a constant 
\begin{align}
\label{e:rho_0}
\rho_0>0,
\end{align}
which will be chosen smaller than $\injrad(M,F)$ in the applications. We consider a monotone increasing smooth function $\chi:[0,\infty)\to[0,1]$ such that $\mathrm{supp}(\chi)=[\rho_0,\infty)$ and $\chi(\ell)=1$ for all $\ell\in[2\rho_0,\infty)$. We define 
\begin{align*}
\psi:[0,\infty)\times\Emb(S^1,M)\to\Emb(S^1,M),
\qquad
\psi(t,\gamma_0)=\psi_t(\gamma_0)=\gamma_t, 
\end{align*}
where $\gamma_t$ is the solution of the partial differential equation
\begin{align}\label{e:PDE_slowed_down}
\partial_t \gamma_t(u)=\chi(L(\gamma_t))V_{\gamma_t}(u)N_{\gamma_t}(u)
\end{align}
The semi-flow $\psi_t$ is the one that we employ for Theorem~\ref{t:curve_shortening}. Its properties, except Theorem~\ref{t:curve_shortening}(iv) and (v), will be direct consequences of the above Theorem~\ref{t:properties} by means of the following lemma.

\begin{lem}
There exists a smooth function 
$T:\Emb(S^1,M)\times[0,\infty)\to[0,\infty)$ 
monotone increasing in the second variable such that $T(\gamma,\cdot)<\tau_\gamma$ and  
\[\psi_t(\gamma)=\phi_{T(\gamma,t)}(\gamma),\qquad \forall \gamma\in\Emb(S^1,M),\ t\in[0,\infty).\] 
Moreover
\begin{itemize}
\item[(i)] $T(\gamma,t_1+t_2)=T(\phi_{T(\gamma,t_1)}(\gamma),t_2)$,

\item[(ii)] $T(\gamma,t)=t$ if $L(\phi_t(\gamma))\geq2\rho_0$,

\item[(iii)] $T(\gamma,t)=0$ if $L(\gamma)\leq\rho_0$,

\item[(iv)] $T(\gamma\circ\theta,t)=T(\gamma,t)$ for all $\theta\in\Diff(S^1)$.
\end{itemize}
\end{lem}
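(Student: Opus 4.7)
The plan is to define $T(\gamma,t)$ by a scalar ODE dictated by the chain rule. If one wants $\psi_t(\gamma) = \phi_{T(\gamma,t)}(\gamma)$, then differentiating in $t$ and comparing with~\eqref{e:PDE_slowed_down} forces
\[
\partial_t T(\gamma,t) = \chi\bigl(L(\phi_{T(\gamma,t)}(\gamma))\bigr),\qquad T(\gamma,0) = 0.
\]
I would take this as the \emph{definition} of $T$, verify that it produces a well-defined object, and only then check that $\phi_{T(\gamma,t)}(\gamma)$ coincides with $\psi_t(\gamma)$ and satisfies the listed properties.

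First I would handle well-posedness. Theorem~\ref{t:properties}(i) gives smoothness of $s \mapsto L(\phi_s(\gamma))$ on $[0,\tau_\gamma)$, so the right-hand side of the ODE is smooth in $T$ and local existence and uniqueness are standard. Since $0 \le \chi \le 1$, any solution is non-decreasing and satisfies $T(\gamma,t) \le t$. The crucial point is to rule out $T(\gamma,t)\uparrow\tau_\gamma$ in finite time. When $\tau_\gamma=\infty$ this is immediate. When $\tau_\gamma<\infty$, Theorem~\ref{t:properties}(iv) forces $L(\phi_s(\gamma))\to 0$ as $s\to\tau_\gamma^-$, so continuity yields $\sigma^*<\tau_\gamma$ with $L(\phi_s(\gamma))\le\rho_0$ for $s\in[\sigma^*,\tau_\gamma)$. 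Since $\supp(\chi)=[\rho_0,\infty)$, the right-hand side of the ODE vanishes once $T$ reaches $\sigma^*$, trapping $T(\gamma,t)\le\sigma^*<\tau_\gamma$ for every $t\ge 0$. Smooth dependence on parameters then gives the regularity of $T$ in $\gamma$. Finally, setting $\tilde\psi_t(\gamma):=\phi_{T(\gamma,t)}(\gamma)$ and differentiating shows $\tilde\psi$ solves~\eqref{e:PDE_slowed_down} with initial datum $\gamma$, so uniqueness for that equation gives $\tilde\psi_t=\psi_t$.

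The listed properties then follow directly. For (iv), equivariance of $\phi$ (Theorem~\ref{t:properties}(ii)) and reparametrization-invariance of $L$ imply that $T(\gamma\circ\theta,\cdot)$ and $T(\gamma,\cdot)$ solve identical ODEs with identical initial conditions. For (ii), the hypothesis $L(\phi_t(\gamma))\ge 2\rho_0$ combined with monotonicity of length along $\phi$ (Theorem~\ref{t:properties}(iii)) gives $\chi(L(\phi_s(\gamma)))=1$ for $s\in[0,t]$, so the ODE yields $T(\gamma,s)=s$ on that interval. For (iii), if $L(\gamma)\le\rho_0$ then $L(\phi_s(\gamma))\le\rho_0$ for all $s$, hence $\chi\equiv 0$ along the trajectory and $T\equiv 0$. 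For (i), the cocycle identity follows by uniqueness: fixing $\gamma$ and $t_1$, the maps $t_2\mapsto T(\gamma,t_1+t_2)$ and $t_2\mapsto T(\gamma,t_1)+T(\phi_{T(\gamma,t_1)}(\gamma),t_2)$ both satisfy the ODE $u'=\chi(L(\phi_u(\gamma)))$ with the common initial value $T(\gamma,t_1)$ (the $\phi$-semi-flow property being used to rewrite $\phi_{v+T(\gamma,t_1)}(\gamma)=\phi_v(\phi_{T(\gamma,t_1)}(\gamma))$), so ODE uniqueness closes the argument.

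I do not expect any serious obstacle in this lemma; the only delicate step is the bound $T(\gamma,t)<\tau_\gamma$ when $\tau_\gamma$ is finite, and this is precisely what the damping $\supp(\chi)=[\rho_0,\infty)$ is designed to achieve: the slowed-down flow halts completely before the underlying $\phi$-trajectory can contract the curve to a point.
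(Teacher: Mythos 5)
Your proof follows essentially the same route as the paper: define $T$ as the solution of the scalar ODE $\partial_t T(\gamma,t)=\chi\bigl(L(\phi_{T(\gamma,t)}(\gamma))\bigr)$, $T(\gamma,0)=0$, and deduce (i)--(iv) from ODE uniqueness, monotonicity of the length along $\phi$, and equivariance of $\phi$. Your trapping argument via Theorem~\ref{t:properties}(iv) for the bound $T(\gamma,\cdot)<\tau_\gamma$ is correct and in fact spells out a point that the paper's proof leaves implicit.
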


\begin{proof}
We denote $\gamma_0:=\gamma$ and $\gamma_t:=\phi_t(\gamma_0)$. The smooth map $(s,t)\mapsto\gamma_{T(\gamma,t)}(s)$ is a solution of~\eqref{e:PDE_slowed_down} if and only if
\begin{align*}
\chi(L(\gamma_{T(\gamma,t)}))V_{\gamma_{T(\gamma,t)}}N_{\gamma_{T(\gamma,t)}}
=
\partial_t \gamma_{T(\gamma,t)}
=
(\partial_t T(\gamma,t))V_{\gamma_{T(\gamma,t)}}N_{\gamma_{T(\gamma,t)}}.
\end{align*}
Therefore, the desired function $t\mapsto T(\gamma,t)$ is a solution of the ordinary differential equation
\begin{equation}
\label{e:ODE_T}
\begin{split}
\partial_t T(\gamma,t)
&=
\chi(L(\gamma_{T(\gamma,t)})),\\
T(\gamma,0)&=0.
\end{split}
\end{equation}
This readily implies that $T$ is smooth as a function of $(\gamma,t)$, and not decreasing. Point~(i) readily follows from the semi-flow property $\phi_{t_1+t_2}=\phi_{t_1}\circ\phi_{t_2}$ of the curve shortening.
If $L(\gamma_{T(\gamma,t)})\geq2\rho_0$, then $L(\gamma_{T(\gamma,t')})\geq2\rho_0$ and $\chi(L(\gamma_{T(\gamma,t')}))=1$ for all $t'\in[0,t]$, which implies point~(ii). If $L(\gamma)\leq\rho_0$, then $L(\gamma_t)\leq\rho_0$ and $\chi(L(\gamma_{T(\gamma,t)}))=0$ for all $t\in(0,\tau_\gamma)$, which implies point~(iii). Finally, if we set $T_\theta(\gamma,t):=T(\gamma\circ\theta,t)$ for some $\theta\in\Diff(S^1)$, we readily see that $T_\theta$ is also a solution of the ordinary differential equation~\eqref{e:ODE_T}. Since such equation has a unique solution, we have point~(iv).
\end{proof}

The function $V_\gamma$ is a generalization of the Riemannian curvature of immersed curves in oriented Riemannian surfaces. Theorem~\ref{t:properties}(ii) readily follows from the following statement.

\begin{lem}
\label{l:invariance}
For each $\theta\in\Diff(S^1)$, we have 
\[N_{\gamma\circ\theta}=\mathrm{sign}(\dot\theta)N_\gamma\circ\theta,
\qquad
V_{\gamma\circ\theta}=\mathrm{sign}(\dot\theta)V_\gamma\circ\theta.\]
\end{lem}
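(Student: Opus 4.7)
The plan is to compute both $N_{\gamma\circ\theta}$ and $V_{\gamma\circ\theta}$ directly from the definitions, exploiting the chain rule together with the homogeneity and reversibility properties of $F$. Throughout, I will use that $\theta\in\Diff(S^1)$ implies $\dot\theta$ has constant sign, so that $\mathrm{sign}(\dot\theta)$ is a well-defined global constant $\pm 1$.

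First I would handle $N_{\gamma\circ\theta}$. Setting $\tilde\gamma:=\gamma\circ\theta$, the chain rule gives $\dot{\tilde\gamma}(u)=\dot\theta(u)\,\dot\gamma(\theta(u))$, hence $\|\dot{\tilde\gamma}(u)\|=|\dot\theta(u)|\,\|\dot\gamma(\theta(u))\|$ and $J\dot{\tilde\gamma}(u)=\dot\theta(u)\,J\dot\gamma(\theta(u))$ by $\R$-linearity of $J$. Plugging into the definition of $N$ yields $N_{\tilde\gamma}(u)=\mathrm{sign}(\dot\theta)\,N_\gamma(\theta(u))$, which is the first identity.

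Next I would extract the transformation rules for $F_v$ and $F_x$ along a curve. Since $F(x,\lambda v)=\lambda F(x,v)$ for $\lambda\geq 0$ (1-homogeneity) and $F(x,-v)=F(x,v)$ (reversibility), one obtains $F(x,\lambda v)=|\lambda|F(x,v)$ for every $\lambda\in\R$. Differentiating this identity in $x$ gives $F_x(x,\lambda v)=|\lambda|F_x(x,v)$, while differentiating in $v$ gives $F_v(x,\lambda v)=\mathrm{sign}(\lambda)F_v(x,v)$ for $\lambda\neq 0$. Applying these with $\lambda=\dot\theta(u)$ and writing $p(u):=F_v(\gamma(u),\dot\gamma(u))$, I find $F_v(\tilde\gamma(u),\dot{\tilde\gamma}(u))=\mathrm{sign}(\dot\theta)\,p(\theta(u))$ and $F_x(\tilde\gamma(u),\dot{\tilde\gamma}(u))=|\dot\theta(u)|\,F_x(\gamma(\theta(u)),\dot\gamma(\theta(u)))$. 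Differentiating the first of these in $u$ (and using that $\mathrm{sign}(\dot\theta)$ is constant) gives $\tfrac{\diff}{\diff u}F_v(\tilde\gamma,\dot{\tilde\gamma})=\mathrm{sign}(\dot\theta)\,\dot\theta(u)\,p'(\theta(u))=|\dot\theta(u)|\,p'(\theta(u))$.

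Combining these, the Euler--Lagrange covector \eqref{e:EL_covector} for $\tilde\gamma$ at $u$ equals $|\dot\theta(u)|$ times the Euler--Lagrange covector for $\gamma$ evaluated at $\theta(u)$. Dividing by $\|\dot{\tilde\gamma}(u)\|=|\dot\theta(u)|\,\|\dot\gamma(\theta(u))\|$ cancels the factor $|\dot\theta(u)|$, and finally pairing with $N_{\tilde\gamma}(u)=\mathrm{sign}(\dot\theta)\,N_\gamma(\theta(u))$ yields $V_{\tilde\gamma}(u)=\mathrm{sign}(\dot\theta)\,V_\gamma(\theta(u))$, as claimed.

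This proof is essentially a direct calculation, with no real obstacle beyond bookkeeping: the one point that requires a moment of care is extracting the correct homogeneity behaviour of $F_v$ when $\dot\theta<0$, where one must use reversibility of $F$ to produce the factor $\mathrm{sign}(\dot\theta)$ rather than $|\dot\theta|/\dot\theta$ ambiguities; once that is done, every other factor matches up cleanly and gives precisely the claimed equivariance.
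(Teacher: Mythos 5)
Your proof is correct and takes essentially the same route as the paper's: a direct chain-rule computation of $N_{\gamma\circ\theta}$ and of the Euler--Lagrange covector, using the homogeneity of $F$, $F_x$, $F_v$ in the fiber variable. If anything, you are a bit more explicit than the paper in deriving $F_v(x,\lambda v)=\mathrm{sign}(\lambda)F_v(x,v)$ and $F_x(x,\lambda v)=|\lambda|F_x(x,v)$ from reversibility, which is precisely what is needed to handle orientation-reversing $\theta$ (the paper's displayed identities are stated only for $\lambda>0$).
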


\begin{proof}
The statement concerning the normal vector $N_\gamma$ is clear.
Since the Finsler metric $F$ is 1-homogeneous in the fibers $\Tan_xM$, we have  $F_v(x,\lambda v)=F_v(x,v)$ for all $\lambda>0$.
Moreover, we have $F_x(x,\lambda v)=\lambda F_x(x,v)$, $F_{xv}(x,\lambda v)=F_{xv}(x,v)$, $F_{vv}(x,v)=\lambda F_{vv}(x,\lambda v)$. Therefore, if we set $r=\theta(u)$, 
\begin{align*}
V_{\gamma\circ\theta}(u)
:=\,
&
\frac{\big(\tfrac{\diff}{\diff u}F_v(\gamma(\theta(u)),\dot\gamma(\theta(u))) - \dot\theta(u)F_x(\gamma(\theta(u)),\dot\gamma(\theta(u))) \big)N_{\gamma\circ\theta}(u)}{\|\dot\gamma(\theta(u))\|\,|\dot\theta(u)|}
\\
=\,
&
\frac{\dot\theta(u)\big( \tfrac{\diff}{\diff r}F_v(\gamma(r),\dot\gamma(r)) - F_x(\gamma(r),\dot\gamma(r)) \big)N_{\gamma\circ\theta}(u)}{\|\dot\gamma(r)\|\,|\dot\theta(u)|}
\\
=\,
&
\mathrm{sign}(\dot\theta(u))\frac{\big(\tfrac{\diff}{\diff r}F_v(\gamma(r),\dot\gamma(r)) - F_x(\gamma(r),\dot\gamma(r)) \big)N_{\gamma}(r)}{\|\dot\gamma(r)\|}
\\
=\,
&
\mathrm{sign}(\dot\theta(s))V_{\gamma}(\theta(u)).
\qedhere
\end{align*}
\end{proof}

\subsection{The anti-gradient of the length}
\label{ss:antigradient}

The space $\Emb(S^1,M)$, equipped with the $C^\infty$ topology, is a Fr\'echet manifold (indeed, it is an open subset of the Fr\'echet manifold $C^\infty(S^1,M)$). The tangent space $\Tan_\gamma\Emb(S^1,M)$ is precisely the space of smooth 1-periodic vector field $X$ along $\gamma$. The length function 
\[
L:\Emb(S^1,M)\to(0,\infty),\qquad 
L(\gamma) = \int_0^1 F(\gamma(u),\dot\gamma(u))\,\diff u
\]
is Gateaux differentiable (it is actually smooth, but we will not need it throughout this paper). Its differential can be computed as
\begin{align}
\label{e:diff_L}
\diff L(\gamma)X
=
\int_0^1 \big(F_x(\gamma(u),\dot\gamma(u))-\tfrac{\diff}{\diff u}F_v(\gamma(u),\dot\gamma(u)) \big)\,X(u)\,\diff u.
\end{align}

\begin{lem}
\label{l:dot_gamma}
For each $a\in C^{\infty}(S^1,\R)$, we have $\diff L(\gamma)a\dot\gamma=0$.
\end{lem}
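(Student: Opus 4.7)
The plan is to verify this by a direct computation, which amounts to the infinitesimal version of the $\Diff(S^1)$-invariance of $L$ noted earlier in the paper. First, I would rewrite the differential in its pre-integration-by-parts form
\[
\diff L(\gamma)X = \int_0^1 \big[F_x(\gamma(u),\dot\gamma(u))\,X(u) + F_v(\gamma(u),\dot\gamma(u))\,\dot X(u)\big]\,\diff u,
\]
which agrees with \eqref{e:diff_L} after an integration by parts exploiting the 1-periodicity of $X$. Substituting $X=a\dot\gamma$, so that $\dot X=\dot a\,\dot\gamma + a\,\ddot\gamma$, the integrand becomes
\[
a\bigl(F_x(\gamma,\dot\gamma)\dot\gamma + F_v(\gamma,\dot\gamma)\ddot\gamma\bigr) + \dot a\,F_v(\gamma,\dot\gamma)\dot\gamma.
\]

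Second, I would invoke two consequences of the 1-homogeneity of $F$ in the fiber variable (these were already used in the proof of Lemma~\ref{l:invariance}). Euler's identity yields $F_v(x,v)\,v = F(x,v)$, hence $F_v(\gamma,\dot\gamma)\dot\gamma = F(\gamma,\dot\gamma)$. The ordinary chain rule gives $F_x(\gamma,\dot\gamma)\dot\gamma + F_v(\gamma,\dot\gamma)\ddot\gamma = \tfrac{\diff}{\diff u}F(\gamma,\dot\gamma)$. Combining these two identities, the integrand above collapses to the total derivative $\tfrac{\diff}{\diff u}\bigl[a(u)\,F(\gamma(u),\dot\gamma(u))\bigr]$, whose integral over $S^1$ vanishes because both $a$ and $u\mapsto F(\gamma(u),\dot\gamma(u))$ are 1-periodic.

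There is no serious obstacle here; the only mild care needed is to keep the boundary terms straight, which is why I prefer starting from the pre-IBP expression. Conceptually, one could instead prove the lemma by integrating the vector field $a\,\partial_u$ on $S^1$ to a family $\theta_\epsilon\in\Diff(S^1)$ with $\theta_0=\mathrm{id}$ and $\partial_\epsilon\theta_\epsilon|_{\epsilon=0}=a$, observing that $\partial_\epsilon(\gamma\circ\theta_\epsilon)|_{\epsilon=0}=a\dot\gamma$, and differentiating the identity $L(\gamma\circ\theta_\epsilon)=L(\gamma)$ at $\epsilon=0$; but the direct calculation above is just as short and avoids invoking the flow of $a\,\partial_u$.
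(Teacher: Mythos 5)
Your argument is correct, but it takes a different route from the paper. You prove the identity by a direct first-variation computation: starting from the pre-integration-by-parts expression for $\diff L(\gamma)X$, substituting $X=a\dot\gamma$, and using the two consequences of fiberwise $1$-homogeneity (Euler's identity $F_v(x,v)v=F(x,v)$ and the chain rule for $\tfrac{\diff}{\diff u}F(\gamma,\dot\gamma)$) to recognize the integrand as the exact derivative $\tfrac{\diff}{\diff u}\bigl[a\,F(\gamma,\dot\gamma)\bigr]$ of a $1$-periodic function; all steps check out, and since $\gamma$ is an immersion the derivatives $F_x,F_v$ are evaluated away from the zero section, so no regularity issue arises. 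The paper instead uses the soft argument you relegate to your closing remark, in an even simpler form: it sets $\gamma_\epsilon(u):=\gamma(u+\epsilon a(u))$ directly (no need to integrate the vector field $a\,\partial_u$ to a flow, since $1+\epsilon\dot a>0$ for small $|\epsilon|$ already makes this a reparametrization), notes $\partial_\epsilon\gamma_\epsilon|_{\epsilon=0}=a\dot\gamma$ and $L(\gamma_\epsilon)=L(\gamma)$, and differentiates at $\epsilon=0$. The paper's proof is shorter and makes transparent that the lemma is nothing but the infinitesimal reparametrization invariance of $L$; your computation is more self-contained, isolates exactly which homogeneity identities are responsible, and incidentally shows that neither reversibility nor the embeddedness of $\gamma$ beyond $\dot\gamma\neq0$ is needed. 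Both are valid; the conclusions agree.
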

\begin{proof}
If we set $\gamma_\epsilon(u):=\gamma(u+\epsilon a(u))$, we have $a\dot\gamma=\partial_\epsilon\gamma_\epsilon|_{\epsilon=0}$. Since, for all $|\epsilon|$ small enough, $\gamma_\epsilon$ is an embedded curve obtained by reparametrization of $\gamma$, we have $L(\gamma_\epsilon)=L(\gamma)$ and
$\diff L(\gamma)a\dot\gamma=\tfrac{\diff}{\diff \epsilon}\big|_{\epsilon=0}L(\gamma_\epsilon)=0$.
\end{proof}
The Riemannian metric $g$ introduces an $L^2$ Riemannian metric on $\Emb(S^1,M)$ given by
\begin{align}\label{e:L2_inner_product}
\llangle X,Y\rrangle_\gamma
=
\int_{S^1}
g(X(u),Y(u))\|\dot\gamma(u)\|\,\diff u,\qquad\forall X,Y\in\Tan_\gamma\Emb(S^1,M).
\end{align}
Thanks to the factor $\|\dot\gamma(u)\|$ in the integrand, the inner product is invariant under the action of $\Diff(S^1)$, i.e.
\begin{align}
\label{e:invariance}
\llangle X\circ\theta,Y\circ\theta\rrangle_{\gamma\circ\theta}
=
\llangle X,Y\rrangle_{\gamma},
\qquad
\forall \theta\in\Diff(S^1).
\end{align}
We denote by $\nabla L$ the gradient of the length functional with respect to this inner product. Namely, $\nabla L(\gamma)$ is the 1-periodic vector field along $\gamma$ defined by
\begin{align*}
 \diff L(\gamma)X = \llangle \nabla L(\gamma),X\rrangle_\gamma.
\end{align*}
\begin{lem}
\label{l:gradient}
$\nabla L(\gamma)=-V_{\gamma}N_{\gamma}$.
\end{lem}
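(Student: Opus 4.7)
The plan is to verify that for every $X \in \Tan_\gamma \Emb(S^1,M)$ one has $\diff L(\gamma)X = \llangle -V_\gamma N_\gamma, X\rrangle_\gamma$, and then conclude by the defining property of $\nabla L$. The key observation is that at each $u \in S^1$ the pair $\{\dot\gamma(u), N_\gamma(u)\}$ is a basis of $\Tan_{\gamma(u)} M$ (since $N_\gamma$ is obtained by rotating $\dot\gamma$ by $\pi/2$ via the almost complex structure $J$). Hence any smooth 1-periodic vector field $X$ along $\gamma$ admits a unique decomposition
\begin{align*}
X(u) = a(u)\dot\gamma(u) + b(u) N_\gamma(u), \qquad a,b \in C^\infty(S^1,\R).
\end{align*}

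Next I would reduce the computation to the normal component. By Lemma~\ref{l:dot_gamma}, $\diff L(\gamma) a\dot\gamma = 0$, so $\diff L(\gamma) X = \diff L(\gamma) bN_\gamma$. Substituting $bN_\gamma$ into the formula~\eqref{e:diff_L} yields
\begin{align*}
\diff L(\gamma) X
&= \int_0^1 \bigl( F_x(\gamma,\dot\gamma) - \tfrac{\diff}{\diff u} F_v(\gamma,\dot\gamma) \bigr) \, b(u) N_\gamma(u) \, \diff u \\
&= -\int_0^1 \frac{\bigl(\tfrac{\diff}{\diff u} F_v(\gamma,\dot\gamma) - F_x(\gamma,\dot\gamma)\bigr) N_\gamma(u)}{\|\dot\gamma(u)\|} \, b(u) \, \|\dot\gamma(u)\| \, \diff u \\
&= -\int_0^1 V_\gamma(u) \, b(u) \, \|\dot\gamma(u)\| \, \diff u,
\end{align*}
where in the last step I used the definition~\eqref{e:V_gamma} of $V_\gamma$.

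For the other side, I compute the $L^2$ inner product using the decomposition of $X$. Since $J$ is a $g$-isometry and $J\dot\gamma \perp_g \dot\gamma$, one has $g(N_\gamma, \dot\gamma) = 0$ and $g(N_\gamma, N_\gamma) = 1$. Therefore
\begin{align*}
\llangle -V_\gamma N_\gamma, X\rrangle_\gamma
&= \int_0^1 g\bigl(-V_\gamma N_\gamma, a\dot\gamma + bN_\gamma\bigr) \|\dot\gamma\| \, \diff u
= -\int_0^1 V_\gamma(u) \, b(u) \, \|\dot\gamma(u)\| \, \diff u,
\end{align*}
which matches the expression for $\diff L(\gamma) X$ obtained above. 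Since this identity holds for every $X \in \Tan_\gamma \Emb(S^1,M)$, the defining property of the gradient with respect to the inner product~\eqref{e:L2_inner_product} forces $\nabla L(\gamma) = -V_\gamma N_\gamma$.

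There is no serious obstacle: the proof is a direct manipulation of the definitions, the only substantive ingredient being the decomposition of an arbitrary variation into tangential and normal components and the invariance $\diff L(\gamma) a\dot\gamma = 0$ already established in Lemma~\ref{l:dot_gamma}.
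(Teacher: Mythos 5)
Your proposal is correct and follows essentially the same route as the paper: decompose $X=a\dot\gamma+bN_\gamma$, kill the tangential part via Lemma~\ref{l:dot_gamma}, substitute the normal part into~\eqref{e:diff_L}, and recognize $V_\gamma$ from~\eqref{e:V_gamma} to match the $L^2$ pairing~\eqref{e:L2_inner_product}. You merely spell out the inner-product verification ($g(N_\gamma,\dot\gamma)=0$, $g(N_\gamma,N_\gamma)=1$) that the paper leaves implicit.
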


\begin{proof}
Consider an arbitrary  $X\in\Tan_\gamma\Emb(S^1,M)$, which we can uniquely write as $X(u)=a(u)\dot\gamma+b(u)N_\gamma$, where 
$b(u)=g(X(u),N_\gamma(u))$.
By Lemma~\ref{l:dot_gamma} and Equation~\eqref{e:diff_L}, we compute
\begin{align*}
\diff L(\gamma)X
&=
\diff L(\gamma)a\dot\gamma+\diff L(\gamma)bN_\gamma
=
\diff L(\gamma)bN_\gamma\\
&=
\int_0^1 \big(F_x(\gamma(u),\dot\gamma(u))-\tfrac{\diff}{\diff u}F_v(\gamma(u),\dot\gamma(u)) \big)\,b(u)\,N(u)\,\diff u\\
&=
\int_0^1  g( -V_\gamma(u)N_\gamma(u), X(u))\,\|\dot\gamma(u)\|\,\diff u. 
\qedhere
\end{align*}
\end{proof}
Therefore, the curve shortening equation \eqref{e:PDE} can be seen as the anti-gradient flow equation of $L$ associated to the $L^2$-Riemannian metric on $\Emb(S^1,M)$, i.e.
\begin{align}\label{e:anti_grad_flow}
\partial_t\gamma_t=-\nabla L(\gamma_t).
\end{align} 
The invariance~\eqref{e:invariance}, together with Lemma~\ref{l:gradient}, provides an alternative proof of Lemma~\ref{l:invariance}. Moreover, if a solution $\gamma_t$ is well defined for $t\in[a,b]$, then
\begin{align}
\label{e:length_decreasing}
L(\gamma_a)-L(\gamma_b)
=
\int_a^b \|\nabla L(\gamma_t)\|^2\diff t
=
\int_{a}^b \!\!\int_{S^1} V_{\gamma_t}(u)^2\,\|\dot\gamma_t(u)\|\,\diff u\,\diff t.
\end{align}
It is well known that the closed geodesics of $(M,F)$ are critical points of $L$, that is, those $\gamma$ such that $V_\gamma\equiv0$. Therefore, $\partial_t L(\gamma_t)\leq0$ with equality if and only if $\gamma_t$ is a closed geodesic of $(M,F)$. This settles Theorem~\ref{t:properties}(iii).

\begin{rem}[Alternative curve shortening]
The PDE \eqref{e:PDE} of the curve shortening is not canonically associated to the Finsler metric $F$, as it also involves the auxiliary Riemannian metric $g$. This choice of curve shortening semi-flow turns out to be the most convenient for the later computations. Alternatively, one could also study a curve shortening semi-flow whose definition does not involve an auxiliary Riemannian metric: this is done by replacing, in~\eqref{e:anti_grad_flow}, the gradient $\nabla$ with the one induced by the following Riemannian metric on $\Emb(S^1,M)$
\begin{align*}
\llangle X,Y\rrangle_\gamma' = \int_{S^1}  F(\gamma(u),\dot\gamma(u)) (\tfrac12 F^2)_{vv}(\gamma(u),\dot\gamma(u))[X(u),Y(u)]\,\diff u,\\
\forall X,Y\in\Tan_\gamma\Emb(S^1,M).
\end{align*}
For each $v\in\Tan_qM$, we define $v^F$ to be the positive orthogonal to $v$ with respect to the inner product $(F^2)_{vv}(q,v)[\cdot,\cdot]$ with norm $F(q,v^F)=F(q,v)$. If we set 
\begin{align*}
 Z_{\gamma}(u):=\frac{\big(\tfrac{\diff}{\diff u}F_v(\gamma(u),\dot\gamma(u))-F_x(\gamma(u),\dot\gamma(u)) \big)\dot\gamma(u)^F}{F(\gamma(u),\dot\gamma(u))},
\end{align*}
the alternative curve shortening semi-flow is precisely given by
\[
\tag*{\qed}
\partial_t \gamma_t(u)= \frac{Z_{\gamma_t}(u)}{F(\gamma_t(u),\dot\gamma_t(u))} \dot\gamma_t(u)^F.
\]
\end{rem}

\subsection{Short-time existence} 
\label{ss:short_time}
In order to prove Theorem~\ref{t:properties}(i), it is convenient to work in suitable local coordinates around a fixed curve $\gamma_0\in\Emb(S^1,M)$. We denote by $\exp:\Tan M\to M$ the exponential map of $(M,g)$. There exists $\rho>0$ and an open set $U\subset M$ of $\gamma_0(S^1)$ such that the map 
\begin{align*}
\xi:S^1\times(-\rho,\rho)\to U,\qquad
\xi(u,r)=\exp_{\gamma_0(u)}(r\,N_{\gamma_0}(u))
\end{align*}
is a diffeomorphism. 

We define the smooth map
\begin{align*}
\Xi: C^\infty(S^1,(-\rho,\rho))\to\Emb(S^1,M),\qquad
\Xi(z)(u)=\xi(u,z(u)).
\end{align*}
Let us show that this map is open and injective. We first define the vector field $N$ on $U$ by
\begin{align*}
 N(\xi(u,r))= \frac{\diff}{\diff r}\xi(u,r) = \diff \exp_{\gamma_0(u)}(r\,N_{\gamma_0}(u))N_{\gamma_0}(u),
\end{align*}
and notice that $\|N(q)\|=1$ for all $q\in U$. Thus, we have
$\diff\Xi(z)w=W$,
where
\[W(u)=w(u)N(\Xi(z)(u)),\]
and this latter vector field  along $\Xi(z)$ is non-zero provided the function $w$ is non-zero. Hence $\Xi$ is an immersion. Clearly, $\Xi$ is injective, for $\xi$ is a diffeomorphism. Finally, the equality
\begin{align*}
\dist(\Xi(z)(u),\gamma_0(u))
=|z(u)|,\qquad
\forall z\in C^\infty(S^1,(-\rho,\rho)),\ u\in S^1
\end{align*}
implies that $\Xi$ is an open map onto its image. 

Since $\Diff(S^1)$ acts freely on $\Emb(S^1,M)$, the map 
\begin{gather*}
\Psi:C^\infty(S^1,(-\rho,\rho))\times\Diff(S^1)\to\Emb(S^1,M),\\
\Psi(z,\theta)(u)=\Xi(z)(\theta(u))=\exp_{\gamma_0(\theta(u))}\big(z(\theta(u)) N_{\gamma_0}(\theta(u))\big)
\end{gather*}
is open and injective onto a neighborhood of $\gamma_0$. The differential of $\Psi$ is given by
\[\diff\Psi(z,\theta)(v,\tau)=V,\]
where
\begin{align*}
V(u)=v(\theta(u))N(\Psi(z,\theta)(u)) + \tau(\theta(u)) \Xi(z)^{\cdot}(\theta(u)).
\end{align*}
Here, we have denoted $\Xi(z)^\cdot(u):=\tfrac{\partial}{\partial u}\Xi(z)(u)$

The map $\Psi$ pulls-back the $L^2$ inner product~\eqref{e:L2_inner_product} to
\begin{align*}
&\lllangle (v,\tau),(w,\sigma)\rrrangle_{(z,\theta)}\\
&\quad:= \llangle \diff\Psi(z,\theta)(v,\tau),\diff\Psi(z,\theta)(w,\sigma)\rrangle_{\Psi(z,\theta)}\\
 &\quad=  \int_{S^1} 
\Big( v(u)\,w(u) + v(u)\,\sigma(u)\, a_z(u) + w(u)\,\tau(u)\, a_z(u)\\
 &\qquad\qquad + \tau(u)\,\sigma(u)\, b_z(u)^2\Big)\,b_z(u)\,\diff u,
\end{align*}
where
\begin{align*}
a_z(u) & :=g( N(\Xi(z)(u)),\Xi(z)^\cdot(u)),\\
b_z(u) & :=\|\Xi(z)^\cdot(u)\|.
\end{align*}
Notice that this inner product is actually independent of $\theta\in\Diff(S^1)$, and therefore we will simply write
\begin{align}\label{e:local_L2}
 \lllangle (v,\tau),(w,\sigma)\rrrangle_{(z,\theta)}
 =
 \lllangle (v,\tau),(w,\sigma)\rrrangle_{z}
\end{align}

In order to write expressions in local coordinates, let us pull-back the Finsler metric $F$ by $\xi$. We obtain the Finsler metric $G:=\xi^*F$ on $S^1\times(-\epsilon,\epsilon)$ given by
\begin{align*}
G(q,v)=F(\xi(q),\diff\xi(q)v),\qquad\forall q\in S^1\times(-\rho,\rho),\ v\in\R^2.
\end{align*}
The composition of the length functional $L$ with $\Psi$ reads
\begin{align*}
L\circ\Psi(z,\theta)
=
L\circ\Xi(z)
=
\int_{S^1}
F\big(\tfrac{\diff}{\diff u}\Xi(z)(u)\big)\,\diff u
=
\int_{S^1}
G(\underbrace{(u,z(u))}_q,\underbrace{(1,\dot z(u))}_v)\,\diff u.
\end{align*}
Let us compute the derivative
\begin{align}
\label{e:derivative_L_Phi}
\begin{split}
\diff (L\circ\Xi)(z)w
&=
\int_{S^1}
\big(G_{q_2}\,w+\partial_{v_2}G\,\dot w\big)\,\diff u\\
&=
\int_{S^1}
\big(G_{q_2}-G_{q_1v_2}-G_{q_2v_2}\,\dot z-G_{v_2v_2}\,\ddot z\big) w\,\diff u. 
\end{split}
\end{align}
We denote by $(v_z,\tau_z):=\nabla(L\circ\Psi)(z)$ the gradient of $L\circ\Psi$ with respect to the inner product~\eqref{e:local_L2}, i.e.
\begin{align*}
\lllangle (v_z,\tau_z),(w,\sigma)\rrrangle_{z}=\diff(L\circ\Psi)(z,\theta)(w,\sigma).
\end{align*}
Since $L\circ\Psi(z,\theta)$ is independent of $\theta\in\Diff(S^1)$, we have
\begin{align*}
0  
= \lllangle(v_z,\tau_z),(0,\sigma)\rrrangle_{z}
=  \int_{S^1} \Big(v_z(u)\, a_z(u)\, b_z(u)
+  \tau_z(u)\,b_z(u)^3\Big)\,\sigma(u)\,\diff u,
\end{align*}
which implies that
\begin{align*}
\tau_z(u)= - v_z(u)\,\frac{a_z(u)}{b_z(u)^2},
\qquad\forall u\in S^1.
\end{align*}
On the other hand, we have
\begin{equation}
\begin{split}\label{e:v_z}
\lllangle(v_z,\tau_z),(w,0)\rrrangle_{z}
&=
\int_{S^1} \Big(v_z(u)+\tau_z(u)\,a_z(u)\Big)\,b_z(u)\,w(u)\,\diff u\\
&=
\int_{S^1} \Big( 1-\tfrac{a_z(u)^2}{b_z(u)^2} \Big)v_z(u)\,b_z(u)\,w(u)\,\diff u\\
&=
\diff (L\circ\Xi)(z)w.
\end{split} 
\end{equation}
Notice that the quotient $a_z(u)/b_z(u)$ is well defined. Indeed, the curve $s\mapsto \Xi(z)(u)$ is transverse to the vector field $N$, and therefore
\begin{align*}
\tfrac{a_z(u)^2}{b_z(u)^2}
=
g\big( N(\Xi(z)(u)),\tfrac{\Xi(z)^\cdot(u)}{|\Xi(z)^\cdot(u)|_{\Xi(z)(u)}}\big)^2
<1.
\end{align*}
Equations~\eqref{e:derivative_L_Phi} and~\eqref{e:v_z} imply that
\begin{align*}
v_z= \Big( 1-\tfrac{a_z^2}{b_z^2} \Big)^{-1}\,b_z(u)^{-1} \big(
G_{q_2}-G_{q_1v_2}-G_{q_2v_2}\,\dot z-G_{v_2v_2}\,\ddot z\big).
\end{align*}
The integral curves of the anti-gradient $-\nabla(L\circ\Xi)$ are solutions \[(z,\theta):[0,T)\times S^1\to (-\rho,\rho)\times S^1\] of the partial differential equation 
\begin{align}\label{e:curve_shortening_local}
\partial_t(z,\theta)=(-v_z,v_z a/b^2). 
\end{align}
In particular, $z$ is a solution of the partial differential equation
\begin{equation}\label{e:curve_shortening_z}
\begin{split}
\partial_t z
=\,&
\tfrac{b_z}{b_z^2-a_z^2}
\Big(
G_{v_2v_2}\,\partial^2_u z
+
G_{q_2v_2}\,\partial_u z
+
G_{q_1v_2}
-
G_{q_2}\Big).
\end{split}
\end{equation}
Since $G$ is a Finsler metric, the second derivative $G_{vv}(q,v)$ is positive semidefinite and its kernel is generated by $v$. Therefore, $G_{v_2v_2}((u,z(u)),(1,\dot z(u)))\neq0$, and~\eqref{e:curve_shortening_z} is a parabolic partial differential equation. The local theory for this class of equations (see, e.g., \cite{Mantegazza:2012zp}) provides the following statement.

\begin{prop}
\label{p:local_existence_normal}
For each $z_0\in C^\infty(S^1,(-\rho,\rho))$ there exists $\epsilon>0$ and a unique smooth solution $z:[0,\epsilon)\times S^1\to (-\rho,\rho)$ of~\eqref{e:curve_shortening_z} such that $z(0,\cdot)=z_0$. Moreover, $z$ depends continuously on the initial condition $z_0$ in the $C^\infty$ topology.
\hfill\qed
\end{prop}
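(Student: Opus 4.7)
The plan is to recognize~\eqref{e:curve_shortening_z} as a quasilinear parabolic PDE on the compact one-dimensional manifold $S^1$ with smooth coefficients, and then invoke the classical short-time existence, uniqueness, smoothness, and continuous dependence theory for such equations, as developed for instance in the reference to Mantegazza already cited in the text.

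The first step is to verify uniform parabolicity near $z_0$. The coefficient of $\partial_u^2 z$ equals
\[
A(u,z,\dot z):=\frac{b_z(u)\,G_{v_2v_2}((u,z(u)),(1,\dot z(u)))}{b_z(u)^2-a_z(u)^2}.
\]
The denominator is strictly positive, as observed just before~\eqref{e:v_z}, since $\Xi(z)^{\cdot}$ is transverse to $N$. The factor $G_{v_2v_2}$ is strictly positive: $G$ being a Finsler metric, $G_{vv}(q,v)$ is positive semi-definite with one-dimensional kernel $\R v$, and at $v=(1,\dot z(u))$ the coordinate direction $(0,1)$ is linearly independent from $v$. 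Since $z_0\in C^\infty(S^1,(-\rho,\rho))$ and $S^1$ is compact, the graph of $z_0$ lies in $S^1\times[-\rho',\rho']$ with $\rho'<\rho$, and the $C^1$-norm of $z_0$ is finite. Equation~\eqref{e:curve_shortening_z} therefore has the form
\[
\partial_t z = A(u,z,\partial_u z)\,\partial_u^2 z + B(u,z,\partial_u z),
\]
with $A,B$ smooth in their arguments and $A\geq \lambda>0$ throughout a $C^1$-neighborhood of $z_0$ consisting of functions valued in $[-\rho',\rho']$.

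The second step applies the standard quasilinear parabolic theory on $S^1$. Linearizing the operator around $z_0$ gives a linear uniformly parabolic equation on $S^1\times[0,\epsilon]$ with smooth coefficients, for which Schauder theory provides the fundamental solution and a priori estimates in parabolic H\"older spaces $C^{2+\alpha,1+\alpha/2}$; a contraction mapping argument in such a space then produces a short-time solution of the nonlinear problem, staying valued in $(-\rho,\rho)$ by continuity. Uniqueness follows from the equation for the difference of two such solutions: it is linear and parabolic with smooth coefficients and vanishing initial data, hence only admits the trivial solution, either by the maximum principle or by a Gr\"onwall-type energy estimate.

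Smoothness and continuous dependence follow by a standard bootstrap. Once $z\in C^{2+\alpha,1+\alpha/2}$, the coefficients $A(u,z,\partial_u z)$ and $B(u,z,\partial_u z)$ gain regularity and Schauder estimates promote $z$ to $C^{3+\alpha,(3+\alpha)/2}$; iterating yields $z\in C^\infty([0,\epsilon)\times S^1)$. The same Schauder estimates give continuity of the solution map $z_0\mapsto z$ from $C^{k+\alpha}$ into $C^{k+\alpha,(k+\alpha)/2}$ for every $k$; since the $C^\infty$ topology is the projective limit of the $C^k$ topologies, this yields continuity of $z_0\mapsto z$ in the $C^\infty$ topology, which is precisely the statement required. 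The main obstacle is essentially bibliographic rather than mathematical: the individual steps are classical, but one must either cite a version of the quasilinear short-time existence theorem formulated directly on the compact manifold $S^1$, or pull the equation back to $\R$ via the covering $\R\to S^1$, work with $1$-periodic coefficients, and appeal to the standard theory on $\R$.
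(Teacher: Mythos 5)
Your argument is correct and follows essentially the same route as the paper: the paper likewise verifies that the denominator $b_z^2-a_z^2$ is positive and that $G_{v_2v_2}\neq 0$ (since the kernel of $G_{vv}$ is spanned by $v=(1,\dot z)$), concludes that~\eqref{e:curve_shortening_z} is a quasilinear parabolic equation, and then simply invokes the classical local theory (citing Mantegazza) for existence, uniqueness, and continuous dependence. Your additional Schauder/contraction and bootstrap details are a fleshed-out version of that same citation, not a different approach.
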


Assume that $z:[0,\epsilon)\times S^1\to (-\rho,\rho)$ is the smooth solution given by Proposition~\ref{p:local_existence_normal}. Up to reducing $\epsilon>0$, we can easily find a smooth $\theta:[0,\epsilon)\times S^1\to S^1$ such that $(z,\theta)$ is a solution of the curve shortening equation~\eqref{e:curve_shortening_local} with $\theta(0,\cdot)=\mathrm{id}$. Indeed, for each $s\in S^1$, such a $\theta$ is the unique smooth solution of the ordinary differential equation
\begin{align*}
\partial_t\theta(t,s)=-\tau_z(\theta(t,s)).
\end{align*}
The smooth map
\begin{align*}
\gamma:[0,\epsilon)\times S^1\to M,
\qquad
\gamma(t,s)=\Psi(z(t,\cdot),\theta(t,\cdot))(s)=\xi(\theta(t,s),z(t,\theta(t,s)))
\end{align*}
is thus the unique smooth solution of the curve shortening equation~\eqref{e:PDE} with $\gamma(0,\cdot)=\Xi(z)=\gamma_0$. Summing up, we have proved the following statement, which implies Theorem~\ref{t:properties}(i).

\begin{thm}[Local existence and uniqueness]
For each $\gamma_0\in \Emb(S^1,M)$ there exists $\epsilon>0$ and a unique smooth solution $\gamma:[0,\epsilon)\times S^1\to M$ of the curve shortening equation~\eqref{e:PDE} such that $\gamma(0,\cdot)=\gamma_0$. Moreover, $\gamma$ depends continuously on the initial condition $\gamma_0$ in the $C^\infty$ topology.
\hfill\qed
\end{thm}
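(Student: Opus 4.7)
My plan is to reduce the PDE~\eqref{e:PDE} on $\Emb(S^1,M)$ to the decoupled system~\eqref{e:curve_shortening_local} in the local chart $\Psi(z,\theta)$ already set up in the subsection, and then apply the parabolic theory already cited. Concretely, pick any $\gamma_0\in\Emb(S^1,M)$, build the tubular neighborhood coordinate $\xi$ and the parametrization $\Psi$ around it, and observe that since $\Psi$ is a diffeomorphism onto an open $C^\infty$-neighborhood $\VV\subset\Emb(S^1,M)$ of $\gamma_0$, any smooth $\gamma:[0,\epsilon)\times S^1\to M$ of embedded loops starting at $\gamma_0$ factors, for $\epsilon$ small enough, uniquely as $\gamma(t,\cdot)=\Psi(z(t,\cdot),\theta(t,\cdot))$ with $z(0,\cdot)=z_0$ (the $z$-coordinate of $\gamma_0$, which equals $0$ if $\gamma_0=\Xi(0)$, or the corresponding function otherwise) and $\theta(0,\cdot)=\mathrm{id}$.

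Next, I would argue that $\gamma$ solves~\eqref{e:PDE} if and only if $(z,\theta)$ solves~\eqref{e:curve_shortening_local}. One direction is the computation just done in the subsection: $\nabla(L\circ\Psi)=(v_z,\tau_z)$, and $\Psi_*$ intertwines the $L^2$ inner products, so $-\nabla L$ pulls back to $(-v_z,v_z a_z/b_z^2)$, and~\eqref{e:PDE} is the anti-gradient equation by Lemma~\ref{l:gradient}. The other direction is identical since all steps are reversible.

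Now I solve~\eqref{e:curve_shortening_local}. The $z$-component is the scalar quasilinear parabolic PDE~\eqref{e:curve_shortening_z}; its parabolicity is exactly the already-noted fact that $G_{v_2v_2}\bigl((u,z(u)),(1,\dot z(u))\bigr)>0$, which follows from the strict convexity of $G^2|_{\Tan_q(S^1\times(-\rho,\rho))}$ together with the observation that $(1,\dot z)$ is not collinear with the kernel direction of $G_{vv}$ (which is $(1,\dot z)$ itself only up to scalar, and the prefactor $b_z/(b_z^2-a_z^2)$ is bounded away from zero). Thus Proposition~\ref{p:local_existence_normal} applies and yields, on some $[0,\epsilon)$, a unique smooth $z$ with $z(0,\cdot)=z_0$, depending continuously in $C^\infty$ on $z_0$. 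Plugging this $z$ into the second component of~\eqref{e:curve_shortening_local} produces a smooth time-dependent vector field on $S^1$, so standard ODE theory gives a unique smooth $\theta(t,\cdot)\in\Diff(S^1)$ with $\theta(0,\cdot)=\mathrm{id}$ on the same interval (shrinking $\epsilon$ if needed to keep $\theta(t,\cdot)$ a diffeomorphism), depending continuously on $z$ and hence on $z_0$.

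Setting $\gamma(t,\cdot):=\Psi(z(t,\cdot),\theta(t,\cdot))$ gives the desired solution. For uniqueness, any other smooth solution $\tilde\gamma$ starting at $\gamma_0$ lies in $\VV$ for short time by continuity, factors through $\Psi$, hence produces a pair $(\tilde z,\tilde\theta)$ solving the same initial-value problem for~\eqref{e:curve_shortening_local}; by uniqueness in Proposition~\ref{p:local_existence_normal} and the ODE, $(\tilde z,\tilde\theta)=(z,\theta)$, so $\tilde\gamma=\gamma$. Continuous dependence of $\gamma$ on $\gamma_0$ in $C^\infty$ follows by composing the $C^\infty$-continuous dependence of $z$ (from Proposition~\ref{p:local_existence_normal}), of $\theta$ (from smooth dependence on parameters in ODE theory), and of $\Psi$. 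The only delicate point is the first one, namely verifying that~\eqref{e:curve_shortening_z} is genuinely a quasilinear parabolic equation to which the cited local theory applies; this reduces to checking the sign and regularity of the coefficient $\tfrac{b_z}{b_z^2-a_z^2}G_{v_2v_2}$, both of which are immediate from the Finsler axioms and the construction of $\xi$.
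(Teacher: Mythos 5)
Your proposal is correct and follows essentially the same route as the paper: reduce to the chart $\Psi(z,\theta)$ built around $\gamma_0$, identify~\eqref{e:PDE} with the anti-gradient system~\eqref{e:curve_shortening_local}, solve the quasilinear parabolic equation~\eqref{e:curve_shortening_z} for $z$ via Proposition~\ref{p:local_existence_normal}, recover $\theta$ from the ODE $\partial_t\theta=-\tau_z(\theta)$, and deduce existence, uniqueness, and $C^\infty$-continuous dependence by composing. Only note that your parabolicity remark is misphrased: the relevant point is that the vertical direction $(0,1)$ is not collinear with $(1,\dot z)$, which spans the kernel of $G_{vv}$, so $G_{v_2v_2}((u,z),(1,\dot z))>0$ exactly as the paper states.
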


\subsection{Long-time existence}
\label{ss:long_time}
We denote by $SM$ the unit tangent bundle of $M$ with respect to the auxiliary Riemannian metric $g$, i.e.
\begin{align}
\label{e:SM}
SM=\big\{(x,v)\in \Tan M\  \big|\ \|v\|=1  \big\}.
\end{align}
In order to prove that $\phi$ is well-defined as a map onto $\Emb(S^1,M)$, and that there is long-time existence of solutions of the curve shortening equation (Theorem~\ref{t:properties}(iv)), it suffices to show that the factor $w_t$ in the right-hand side of~\eqref{e:PDE} can be expressed by means of a suitable smooth function  
\begin{align*}
 V:\R\times SM\to\R,
 \qquad
 V(\kappa_t(u),\gamma_t(u),\tau_t(u)),
\end{align*}
and invoke the general results of Angenent \cite{Angenent:1990aa} and Oaks \cite{Oaks:1994aa}. Here, $\kappa_t$ denotes the Riemannian curvature of $\gamma_t$ measured with respect to the auxiliary Riemannian metric $g$, and $\tau_t(u):=\dot\gamma_t(u)/\|\dot\gamma_t(u)\|$ its unit tangent vector. By expanding the definition of $w_t$, we have
\begin{align*}
w_t
& =
\frac{\big(\tfrac{\diff}{\diff u}F_v(\gamma_t,\dot\gamma_t)-F_x(\gamma_t,\dot\gamma_t) \big)n_t}{\|\dot\gamma_t\|}\\
& =
F_{vv}(\gamma_t,\tau_t)[\ddot\gamma_t/\|\dot\gamma_t\|^2,n_t ] + F_{xv}(\gamma_t,\tau_t)[\tau_t,n_t] - F_x(\gamma_t,\tau_t)n_t.
\end{align*}
Since $F_{vv}(x,v)v=0$, the first summand in the last line can be rewritten as
\begin{align*}
F_{vv}(\gamma_t,\tau_t)[\ddot\gamma_t/\|\dot\gamma_t\|^2,n_t ]
&=
F_{vv}(\gamma_t,\tau_t)[n_t,n_t]\, g(\ddot\gamma_t/\|\dot\gamma_t\|^2,n_t)\\
&=
F_{vv}(\gamma_t,\tau_t)[n_t,n_t] \kappa_t
- F_{vv}(\gamma_t,\tau_t)[n_t,n_t] g(\Gamma_{\gamma_t}[\tau_t,\tau_t],n_t).
\end{align*}
Here, 
\[\Gamma_x[v,w]=\Gamma_{ij}^k(x)v_iw_j\partial_{x_k},\]
where $\Gamma_{ij}^k$ are the Christoffel symbols of the metric $g$ with respect to the local coordinates employed in the above expression. Inserting this into the expression of $w_t$, we obtain
\begin{align*}
w_t
& =
F_{vv}(\gamma_t,\tau_t)[n_t,n_t] \kappa_t - F_{vv}(\gamma_t,\tau_t)[n_t,n_t] g(\Gamma_{\gamma_t}[\tau_t,\tau_t],n_t)\\
& \quad 
+  F_{xv}(\gamma_t,\tau_t)[\tau_t,n_t] - F_x(\gamma_t,\tau_t)n_t.
\end{align*}
Notice that the first summand $F_{vv}(\gamma_t,\tau_t)[n_t,n_t] \kappa_t$ is well defined independently of the local coordinates, as $F_{vv}$ is simply the fiberwise Hessian of $F$. Therefore, since $w_t$ is also well defined, the remaining summands 
\[-F_{vv}(\gamma_t,\tau_t)[J\tau_t,J\tau_t] g(\Gamma_{\gamma_t}[\tau_t,\tau_t],J\tau_t) +F_{xv}(\gamma_t,\tau_t)[\tau_t,J\tau_t] - F_x(\gamma_t,\tau_t)\] 
are well defined independently of the local coordinates as well. The expression above shows that $w_t$ is of the form $w_t=V(\kappa_t,\gamma_t,\tau_t)$, where $V:\R\times SM\to\R$ is the smooth function
\begin{equation}
\label{e:V(kappa,x,v)}
\begin{split}
 V(\kappa,x,v)&=F_{vv}(x,v)[Jv,Jv] \kappa - F_{vv}(x,v)[Jv,Jv] g(\Gamma_{x}[v,v],Jv)\\
& \quad 
+  F_{xv}(x,v)[v,Jv] - F_x(x,v)Jv\\
&=: A(x,v)\kappa + B(x,v).
\end{split}
\end{equation}
The reversibility of $F$ readily imply that $V(\kappa,x,v)=-V(-\kappa,x,-v)$. The function $V$ thus satisfies in particular the assumptions required in \cite{Angenent:1990aa,Oaks:1994aa}. By \cite[Theorem~1.3]{Angenent:1991aa}, the map $\phi$ takes values inside $\Emb(S^1,M)$. Finally, Theorem~\ref{t:properties}(iv) follows from \cite[Corollary~6.2]{Oaks:1994aa}.

\subsection{$L^\infty$ bounds on $V_\gamma$}
For any $\gamma_0\in\Emb(S^1,M)$, we will write the corresponding solution of~\eqref{e:PDE} by
\[\gamma_t=\phi_t(\gamma_0)\] 
and its length by 
\[\ell_t:=L(\gamma_t).\] 
We denote by $\nabla_t$, $\nabla_u$, and $\nabla_s$ the covariant derivatives associated with the Levi-Civita connection of $g$.
It is convenient to introduce the vector field 
\[D_s=\|\dot\gamma_t(u)\|^{-1}\partial_u\] 
on $\R\times S^1$, which acts on smooth real-valued functions $f:\R\times S^1\to\R$, $f(t,u)=f_t(u)$ by
\begin{align*}
D_sf_t(u) = \frac{\partial_u f_t(u)}{\|\dot\gamma_t(u)\|}.
\end{align*}
We recall the classical Frenet formulas from plane Riemannian geometry:
\begin{align*}
 \nabla_u\tau_t = \kappa_t \|\dot\gamma_t\|n_t,
 \qquad
 \nabla_u n_t = -\kappa_t \dot\gamma_t.
\end{align*}
By means of the PDE~\eqref{e:PDE}, we also have the following formulas.

\begin{lem}
$\nabla_t\tau_t = (D_s w_t)n_t$, $\nabla_tn_t = -(D_s w_t)\tau_t$.
\end{lem}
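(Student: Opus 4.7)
The plan is to compute $\nabla_t \tau_t$ directly from the evolution equation and the Frenet formulas, and then obtain the formula for $\nabla_t n_t$ from the identity $n_t = J\tau_t$. Since $\{\tau_t, n_t\}$ is an oriented orthonormal frame along $\gamma_t$, the two quantities $\nabla_t\tau_t$ and $\nabla_tn_t$ are each proportional to the other frame vector, and the proportionality constants are opposite; so everything reduces to computing $g(\nabla_t\tau_t, n_t)$.

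The first step is to swap the order of covariant derivatives. Since $\partial_t$ and $\partial_u$ are coordinate vector fields on $\R\times S^1$, the torsion-freeness of the Levi-Civita connection gives $\nabla_t\dot\gamma_t = \nabla_u\partial_t\gamma_t$. Using the PDE $\partial_t\gamma_t = w_t n_t$ together with the Frenet formula $\nabla_u n_t = -\kappa_t\dot\gamma_t$, I would obtain
\begin{align*}
\nabla_t\dot\gamma_t = \nabla_u(w_t n_t) = (\partial_u w_t) n_t - w_t\kappa_t\dot\gamma_t.
\end{align*}

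The second step is to differentiate the norm. From
$\partial_t\|\dot\gamma_t\|^2 = 2g(\dot\gamma_t, \nabla_t\dot\gamma_t)$
and the expression above, only the $\dot\gamma_t$-component survives, yielding $\partial_t\|\dot\gamma_t\| = -w_t\kappa_t\|\dot\gamma_t\|$. Writing $\tau_t = \dot\gamma_t/\|\dot\gamma_t\|$ and differentiating gives
\begin{align*}
\nabla_t\tau_t
= \frac{\nabla_t\dot\gamma_t}{\|\dot\gamma_t\|} - \frac{\partial_t\|\dot\gamma_t\|}{\|\dot\gamma_t\|}\,\tau_t
= \frac{\partial_u w_t}{\|\dot\gamma_t\|}\,n_t - w_t\kappa_t\tau_t + w_t\kappa_t\tau_t
= (D_sw_t)\,n_t,
\end{align*}
which is the first formula.

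For the second formula, since $M$ is an oriented Riemannian surface, the complex structure $J$ is parallel, so $\nabla_t n_t = \nabla_t(J\tau_t) = J\nabla_t\tau_t = (D_sw_t)Jn_t = -(D_sw_t)\tau_t$, using $J^2 = -\mathrm{id}$. I do not expect a real obstacle in this computation; the only point that requires a little care is the cancellation of the two $w_t\kappa_t\tau_t$ terms, which is what makes the tangential component of $\nabla_t\tau_t$ vanish, as it must by orthonormality.
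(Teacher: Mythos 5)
Your proof is correct and follows essentially the same route as the paper: both hinge on the swap $\nabla_t\dot\gamma_t=\nabla_u\partial_t\gamma_t$, the PDE $\partial_t\gamma_t=w_tn_t$, and the Frenet formulas, the only cosmetic differences being that you compute the tangential terms explicitly and watch them cancel (in passing re-deriving $\partial_t\|\dot\gamma_t\|=-\kappa_tw_t\|\dot\gamma_t\|$, which is the paper's next lemma) instead of killing them at the outset via orthonormality, and you get $\nabla_tn_t$ from $\nabla J=0$ rather than from $g(n_t,\tau_t)=0$. Both of those substitutions are legitimate, so there is nothing to fix.
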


\begin{proof}
 Let us compute the covariant derivative $\nabla_t\tau_t$. Since $\|\tau_t\|=\|n_t\|\equiv1$, we have
\begin{align*}
g(\nabla_t\tau_t,\tau_t)=g(\nabla_t n_t,n_t)=g(\nabla_u n_t,n_t)=g(\nabla_u\tau_t,\tau_t)=0.
\end{align*}
Moreover
\begin{align*}
g(\nabla_t\dot\gamma_t, n_t) 
=
g(\nabla_u\partial_t\gamma_t,n_t)
=
g(\nabla_u(w_tn_t),n_t)
=
\dot w_t,
\end{align*}
which readily implies
\begin{align*}
\nabla_t\tau_t&=g(\nabla_t\tau_t,n_t)n_t=
\frac{\dot w_t}{\|\dot\gamma_t\|} n_t = (D_s w_t)n_t,\\
\nabla_tn_t&
=g(\nabla_t n_t,\tau_t)\tau_t
=-\frac{g( n_t, \nabla_t\dot\gamma_t)}{\|\dot\gamma_t\|}\tau_t
=
-\frac{\dot w_t}{\|\dot\gamma_t\|} \tau_t = -(D_s w_t)\tau_t.
\qedhere 
\end{align*}
\end{proof}

\begin{lem}
$\partial_t \|\dot\gamma_t(u)\|=-\kappa_t(u)w_t(u)\|\dot\gamma_t(u)\|$.
\end{lem}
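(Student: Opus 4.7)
The plan is to differentiate $\|\dot\gamma_t(u)\|^2 = g(\dot\gamma_t, \dot\gamma_t)$ in time and then use the evolution equation \eqref{e:PDE} together with the Frenet formula for $\nabla_u n_t$.

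First I would write
\begin{align*}
\partial_t \|\dot\gamma_t\|^2 = 2\, g(\nabla_t \dot\gamma_t, \dot\gamma_t).
\end{align*}
Since the Levi-Civita connection of $g$ is torsion-free, $\nabla_t \dot\gamma_t = \nabla_u \partial_t \gamma_t$, and by \eqref{e:PDE} the right-hand side equals $\nabla_u(w_t n_t) = (\partial_u w_t)\, n_t + w_t\, \nabla_u n_t$. Using the Frenet formula $\nabla_u n_t = -\kappa_t \dot\gamma_t$ recalled just before the lemma, this becomes
\begin{align*}
\nabla_t \dot\gamma_t = (\partial_u w_t)\, n_t - w_t\, \kappa_t\, \dot\gamma_t.
\end{align*}

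Taking the inner product with $\dot\gamma_t$ and using $g(n_t, \dot\gamma_t) = 0$, the first summand drops and I obtain $g(\nabla_t \dot\gamma_t, \dot\gamma_t) = -w_t\, \kappa_t\, \|\dot\gamma_t\|^2$. Hence
\begin{align*}
2\,\|\dot\gamma_t\|\,\partial_t\|\dot\gamma_t\| = \partial_t\|\dot\gamma_t\|^2 = -2\, w_t\, \kappa_t\, \|\dot\gamma_t\|^2,
\end{align*}
and dividing by $2\|\dot\gamma_t\|$ gives the claimed identity. There is no real obstacle here: the only thing to keep in mind is the torsion-free swap $\nabla_t\dot\gamma_t = \nabla_u\partial_t\gamma_t$, which justifies bringing $\nabla_u$ onto $w_t n_t$ so that the Frenet formula applies.
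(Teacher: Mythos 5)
Your proof is correct and follows essentially the same route as the paper: commute $\nabla_t\partial_u=\nabla_u\partial_t$, insert the evolution equation, and use the Frenet formula $\nabla_u n_t=-\kappa_t\dot\gamma_t$ together with $g(n_t,\dot\gamma_t)=0$. Differentiating $\|\dot\gamma_t\|^2$ instead of $\|\dot\gamma_t\|$ is only a cosmetic difference.
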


\begin{proof}
By means of the commutativity $\nabla_t\partial_u=\nabla_u\partial_t$ and of the PDE~\eqref{e:PDE}, we compute
\begin{align*}
 \partial_t \|\dot\gamma_t(u)\|
 & = 
 \frac{g(\nabla_t\dot\gamma_t(u),\dot\gamma_t(u))}{\|\dot\gamma_t(u)\|}
 = 
 \frac{g(\nabla_u\partial_t\gamma_t(u),\dot\gamma_t(u))}{\|\dot\gamma_t(u)\|}
 = 
 \frac{g(\nabla_u (w_t n_t),\dot\gamma_t(u))}{\|\dot\gamma_t(u)\|}\\
 & =
 w_t(u)\frac{g(\nabla_u n_t,\dot\gamma_t(u))}{\|\dot\gamma_t(u)\|}
 =
 -w_t(u)\frac{g(\nabla_u\dot\gamma_t,n_t(u))}{\|\dot\gamma_t(u)\|}
 =-\kappa_t(u)w_t(u)\|\dot\gamma_t(u)\|.
\end{align*}
\end{proof}

\begin{lem}
The curvature $\kappa_t$ evolves according to the PDE
\begin{align*}
\partial_t \kappa_t(u) = D_s^2 w_t(u) + w_t(u)\kappa_t^2(u)+w_t(u)k_g(\gamma_t(u)),
\end{align*}
where $k_g$ denotes the Gaussian curvature of $(M,g)$, i.e.\ $k_g(x)=g(R(v,Jv)v,Jv)$ for all 
$v\in S_xM$.
\end{lem}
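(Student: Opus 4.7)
The plan is to differentiate the Frenet identity $\nabla_s\tau_t=\kappa_t n_t$ in $t$, where $\nabla_s:=\|\dot\gamma_t\|^{-1}\nabla_u$ (which agrees with $D_s$ on scalars). Pairing with $n_t$ gives $\kappa_t=g(\nabla_s\tau_t,n_t)$, so
\begin{equation*}
\partial_t\kappa_t = g(\nabla_t\nabla_s\tau_t,n_t) + g(\nabla_s\tau_t,\nabla_tn_t).
\end{equation*}
The second term vanishes: by the preceding lemma $\nabla_s\tau_t=\kappa_t n_t$ points along $n_t$ whereas $\nabla_tn_t=-(D_sw_t)\tau_t$ points along $\tau_t$. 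So the whole computation boils down to identifying the $n_t$-component of $\nabla_t\nabla_s\tau_t$.

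The principal step is to commute $\nabla_t$ past $\nabla_s$. From $\partial_t\|\dot\gamma_t\|=-\kappa_t w_t\|\dot\gamma_t\|$ we get $\partial_t\|\dot\gamma_t\|^{-1}=\kappa_t w_t\|\dot\gamma_t\|^{-1}$, and the standard commutator identity on the parametrized surface $(t,u)\mapsto\gamma_t(u)$ gives $[\nabla_t,\nabla_u]X=R(\partial_t\gamma_t,\partial_u\gamma_t)X=w_t\|\dot\gamma_t\|\,R(n_t,\tau_t)X$ for any vector field $X$ along $\gamma_t$. Combining the two yields
\begin{equation*}
[\nabla_t,\nabla_s]X = \kappa_t w_t\,\nabla_s X + w_t\,R(n_t,\tau_t)X,
\end{equation*}
so that $[\nabla_t,\nabla_s]\tau_t=\kappa_t^2 w_t\,n_t + w_t\,R(n_t,\tau_t)\tau_t$.

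On the other hand, using $\nabla_t\tau_t=(D_sw_t)n_t$ and the Frenet formula $\nabla_sn_t=-\kappa_t\tau_t$,
\begin{equation*}
\nabla_s\nabla_t\tau_t = \nabla_s\bigl((D_sw_t)n_t\bigr) = (D_s^2w_t)n_t - \kappa_t(D_sw_t)\tau_t.
\end{equation*}
Assembling $\nabla_t\nabla_s\tau_t=\nabla_s\nabla_t\tau_t+[\nabla_t,\nabla_s]\tau_t$ and extracting the $n_t$-component kills the $\tau_t$ terms and leaves
\begin{equation*}
\partial_t\kappa_t = D_s^2 w_t + \kappa_t^2 w_t + w_t\,g\bigl(R(n_t,\tau_t)\tau_t,n_t\bigr),
\end{equation*}
which is the stated formula once the curvature scalar is identified with $k_g(\gamma_t)$ via the definition $k_g(x)=g(R(v,Jv)v,Jv)$ together with the antisymmetry of $R$ in its first pair of arguments. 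The only step that requires care is the sign bookkeeping in this last identification; everything else is a routine chain-rule application of the three preceding lemmas.
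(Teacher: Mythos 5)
Your computation is structurally the same as the paper's: write $\kappa_t=g(\nabla_s\tau_t,n_t)$, differentiate in $t$, discard $g(\nabla_s\tau_t,\nabla_t n_t)$, and extract the three contributions coming from $\partial_t\|\dot\gamma_t\|^{-1}$ (via the preceding lemma), from $\nabla_s\nabla_t\tau_t=\nabla_s\big((D_sw_t)n_t\big)$, and from the curvature commutator; the paper organizes this as a product rule on $g(\nabla_u\tau_t,n_t)/\|\dot\gamma_t\|$ rather than through the operator $[\nabla_t,\nabla_s]$, but the content is identical, and everything up to the final line is correct.

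The last identification, however --- precisely the ``sign bookkeeping'' you flag --- is wrong as stated. With the commutator convention you quote, $[\nabla_t,\nabla_u]X=R(\partial_t\gamma_t,\partial_u\gamma_t)X$, your curvature term is $w_t\,g(R(n_t,\tau_t)\tau_t,n_t)$, and antisymmetry of $R$ in its first pair gives $g(R(n_t,\tau_t)\tau_t,n_t)=-\,g(R(\tau_t,n_t)\tau_t,n_t)=-\,k_g(\gamma_t)$ with the lemma's definition $k_g(x)=g(R(v,Jv)v,Jv)$ evaluated at $v=\tau_t$, $Jv=n_t$; so your argument, taken literally, ends with $-w_t\,k_g$, not $+w_t\,k_g$. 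The discrepancy is a convention mismatch: the lemma's formula for $k_g$ (which must be the genuine Gaussian curvature, positive on the round sphere, so that for $F=g$, $w_t=\kappa_t$ one recovers the classical $\partial_t\kappa=\kappa_{ss}+\kappa^3+k_g\kappa$) and the paper's own computation are written in the convention for which the commutator on a parametrized surface reads $\nabla_t\nabla_u-\nabla_u\nabla_t=R(\partial_u\gamma_t,\partial_t\gamma_t)$, as in the paper's line $g(R(\dot\gamma_t,\partial_t\gamma_t)\tau_t,n_t)$. In that convention your commutator becomes $[\nabla_t,\nabla_s]X=\kappa_tw_t\nabla_sX+w_t\,R(\tau_t,n_t)X$, and the curvature term is directly $w_t\,g(R(\tau_t,n_t)\tau_t,n_t)=w_t\,k_g(\gamma_t)$, with no symmetry manipulation at all. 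So the fix is to swap the arguments of $R$ in the commutator identity (i.e.\ adopt the paper's curvature convention consistently), not to invoke antisymmetry as if it were an equality; as written, that one step does not go through.
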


\begin{proof}
The lemma follows by direct computation:
\begin{align*}
\partial_t\kappa_t
&=
\partial_t\frac{g(\nabla_u\tau_t,n_t)}{\|\dot\gamma_t\|}\\
&=
\left(\partial_t \frac1{\|\dot\gamma_t\|}\right)\kappa_t\|\dot\gamma_t\| + \frac1{\|\dot\gamma_t\|} g(\nabla_t\nabla_u\tau_t,n_t) + \frac1{\|\dot\gamma_t\|} \underbrace{g(\nabla_u\tau_t,\nabla_t n_t)}_{=0}\\
&=
\frac{\kappa_t w_t \|\dot\gamma_t\|}{\|\dot\gamma_t\|^2}\kappa_t\|\dot\gamma_t\|
+ \frac1{\|\dot\gamma_t\|} g(\nabla_u\nabla_t\tau_t,n_t)
+ \frac1{\|\dot\gamma_t\|} g(R(\dot\gamma_t,\partial_t\gamma_t)\tau_t,n_t)\\
&= 
w_t\kappa_t^2 + D_s^2 w_t + w_t k_g\circ\gamma_t.
\qedhere
\end{align*}
\end{proof}

 We set $\ell_t:=L(\gamma_t)$, and denote by $\Gamma_t:\R/\ell_t\Z\to M$ the reparametrized $\gamma_t$ with unit speed with respect to the auxiliary Riemannian metric $g$. Namely $\Gamma_t(s)=\gamma_t\circ\sigma_t^{-1}(s)$, where
\begin{align*}
 \sigma_t(u) = \int_0^u \|\dot\gamma_t(r)\|\,\diff r,
\end{align*}
and therefore $\ell_t=\sigma_t(1)$.
We also set 
\begin{align}
\label{e:W_N_K}
\begin{split}
 W_t(s) &:= w_t\circ\sigma_t^{-1}(s),\\
 N_t(s) &:= n_t\circ\sigma_t^{-1}(s),\\
 K_t(s) &:= \kappa_t\circ\sigma_t^{-1}(s).
\end{split} 
\end{align}
Notice that
\begin{align*}
K_t(s)=g(\nabla_s\dot\Gamma_t,N_t),\qquad
W_t(s) = V(K_t(s),\dot\Gamma_t(s)).
\end{align*}
Moreover
\begin{align*}
\dot W_t \circ\sigma_t = D_s w_t,
\qquad
\dot K_t \circ\sigma_t = D_s \kappa_t.
\end{align*}

If $f:SM\to\R$ is a smooth function, we will denote by $\nablah f$ and $\nablav f$ the duals of the horizontal and vertical projections respectively of its gradient with respect to the Sasaki metric of $SM$ induced by $g$. These operators allow to express $\partial_t w_t$ as
\begin{align*}
\partial_tw_t 
& = 
(\partial_\kappa V) \partial_t\kappa_t + (\nablah V)\partial_t\gamma_t + (\nablav V)\nabla_t\tau_t\\
& =
(\partial_\kappa V) \big( D_s^2 w_t + w_t\kappa_t^2+w_t k_g(\gamma_t) \big) + (\nablah V)n_t\,w_t + (\nablav V)n_t\, D_sw_t.
\end{align*}
We set 
\[A(x,v):=\partial_\kappa V(x,v)=F_{vv}(x,v)[Jv,Jv].\] 
Notice that $A$ is uniformly bounded from below by a positive constant. From now on, we will consider it evaluated at $(\gamma_t(u),\tau_t(u))$. Notice that $A\, D_s^2w_t=D_s(A\, D_sw_t)-(D_sA)(D_sw_t)$, and 
\begin{align*}
D_s A
=
\tfrac{1}{\|\dot\gamma_t\|} \big((\nablah A)\dot\gamma_t + (\nablav A) \nabla_u\tau_t\big)
=
(\nablah A)\tau_t + (\nablav A) n_t\,\kappa_t.
\end{align*}
Therefore, $\partial_tw_t$ can be written as
\begin{align*}
 \partial_tw_t  
 &=
A\, D_s^2 w_t + A w_t\kappa_t^2 + B\,D_sw_t + C\, w_t\\
 &= 
 D_s(A\, D_s w_t)+   A w_t\kappa_t^2 + E\,D_sw_t + H\,\kappa_t D_sw_t + C\, w_t,
\end{align*}
where $B$, $C$, $E$, and $H$ are smooth functions on $SM$ evaluated at $(\gamma_t(u),\tau_t(u))$.

We are now going to employ the open sets $\UU(\ell,\epsilon)$ defined in~\eqref{e:U_ell_epsilon}.

\begin{lem}
\label{l:L2_estimates}
For all $\ell>2\rho_0$ there exists a constant $c\geq1$ with the following properties: for all $\epsilon>0$ small enough,  $\gamma_0\in\Emb(S^1,M)$, and $t\geq0$ such that 
\begin{align*}
 \|W_0\|_{L^2} \leq \epsilon,
 \qquad
 \ell-\epsilon^2 \leq \ell_t \leq \ell_0 \leq \ell+\epsilon^2,
\end{align*}
we have $\|W_t\|_{L^2}\leq c\,\epsilon$.
\end{lem}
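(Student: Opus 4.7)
I will establish a differential inequality for the function
\[
f(t) := \|W_t\|_{L^2}^2 = \int_0^1 w_t(u)^2\,\|\dot\gamma_t(u)\|\,du,
\]
and combine it with the integrated \emph{a priori} bound which follows directly from the length identity~\eqref{e:length_decreasing} and the hypothesis $\ell_0-\ell_t\leq 2\epsilon^2$:
\[
\int_0^t f(s)\,ds \;=\; \ell_0 - \ell_t \;\leq\; 2\epsilon^2.
\]
Throughout, the uniform control $\ell_t\in[\ell-\epsilon^2,\ell+\epsilon^2]$ ensures $\ell_t$ is bounded below by a positive constant depending only on $\ell$, and the function $A(x,v)=F_{vv}(x,v)[Jv,Jv]$ is bounded above and below by positive constants, so that Sobolev constants on the circle $\R/\ell_t\Z$ are uniform.

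The strategy for the inequality is standard: differentiate $f(t)$, using the PDE for $\partial_t w_t$ just derived together with $\partial_t\|\dot\gamma_t\|=-\kappa_t w_t\|\dot\gamma_t\|$. An integration by parts on the diffusive contribution $D_s(A\,D_s w_t)$ produces the coercive term $-2\int A(D_s w_t)^2\|\dot\gamma_t\|\,du\leq -c_0\|\dot W_t\|_{L^2}^2$. The remaining summands either involve $\kappa_t$ (appearing quadratically in $Aw_t\kappa_t^2$, linearly in $w_t^3\kappa_t$ and $w_t H\kappa_t D_s w_t$) or only lower-order powers of $w_t$ and $D_s w_t$. The key algebraic observation is that the relation $w_t=A\kappa_t+B$ from~\eqref{e:V(kappa,x,v)} yields $|\kappa_t|\leq K(|w_t|+1)$, which converts every occurrence of $\kappa_t$ into a bounded function times $(|w_t|+1)$. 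After repeated applications of Young's inequality $ab\leq \eta a^2+b^2/(4\eta)$ with $\eta$ chosen small enough to absorb all $D_s w_t$ factors into the coercive term, one is left with
\[
f'(t) \;\leq\; -\tfrac{c_0}{2}\|\dot W_t\|_{L^2}^2 \;+\; K \|W_t\|_{L^4}^4 \;+\; K f(t).
\]
Then the Gagliardo--Nirenberg inequality on $\R/\ell_t\Z$, namely $\|W_t\|_{L^\infty}^2\leq K(\|W_t\|_{L^2}\|\dot W_t\|_{L^2}+\ell_t^{-1}\|W_t\|_{L^2}^2)$, combined with $\|W_t\|_{L^4}^4\leq \|W_t\|_{L^\infty}^2 f(t)$, yields $\|W_t\|_{L^4}^4\leq K(f(t)^{3/2}\|\dot W_t\|_{L^2}+f(t)^2)$. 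A final Young step absorbs the $\|\dot W_t\|_{L^2}$ factor into the coercive term, producing
\[
f'(t) \;\leq\; K\bigl(f(t)+f(t)^2+f(t)^3\bigr).
\]

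The conclusion then follows by a bootstrap argument. Fix $c:=\sqrt{2}\,e^{3K/2}$ and restrict to $\epsilon>0$ small enough that $c^2\epsilon^2<1$. I claim $f(t)\leq c^2\epsilon^2$ for all admissible $t$. While $f(t)\leq 1$, the previous inequality simplifies to $f'(t)\leq 3K f(t)$. For $t\leq 1$, Gr\"onwall from $f(0)\leq\epsilon^2$ yields $f(t)\leq e^{3K}\epsilon^2\leq c^2\epsilon^2$. For $t>1$, the integrated bound supplies some $s_0\in[t-1,t]$ with $f(s_0)\leq\int_{t-1}^{t}f\leq 2\epsilon^2$, and Gr\"onwall on $[s_0,t]$ delivers $f(t)\leq 2e^{3K}\epsilon^2=c^2\epsilon^2$. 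Since $f(0)\leq\epsilon^2<1$ and $f$ is continuous, the bootstrap closes. The main obstacle is the super-linear cubic term $f^3$ coming from the quartic contribution $\|W_t\|_{L^4}^4$ (itself arising from the $Aw_t\kappa_t^2$ term in $\partial_t w_t$, which has no analogue in linear heat equations); it is dealt with precisely by the interplay between the coercive diffusive term, the Gagliardo--Nirenberg interpolation, and the bootstrap assumption $f\leq 1$, after which the integrated bound on $f$ makes Gr\"onwall over unit-length time windows sufficient to propagate smallness.
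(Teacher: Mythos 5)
Your proposal is correct and follows essentially the same route as the paper: an energy estimate for $\|W_t\|_{L^2}^2$ using the evolution equation for $w_t$, the affine relation between $\kappa_t$ and $W_t$, Peter--Paul absorption into the coercive term $-\|\dot W_t\|_{L^2}^2$, an $L^\infty$--$L^2$ interpolation on the circle, and a smallness bootstrap closed by Gr\"onwall together with the integrated length-drop bound $\int f\,\diff t=\ell_0-\ell_t\leq2\epsilon^2$ over unit time windows. The only difference is cosmetic: you package the nonlinearity as $f'\leq K(f+f^2+f^3)$ with bootstrap threshold $f\leq1$, whereas the paper keeps the form $\partial_t f\leq cf+c\|W_t\|_{L^\infty}^2(f-c^{-1})$ and rules out $f$ reaching $c^{-1}$ by the same unit-window contradiction argument.
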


\begin{proof}
We require $\epsilon>0$ to be small enough so that $\ell-\epsilon^2>2\rho_0$, where $\rho_0>0$ is the constant fixed in~\eqref{e:rho_0}.
We consider an arbitrary $\gamma_0\in\Emb(S^1,M)$ such that $\|W_0\|_{L^2}\leq \epsilon$ and $\ell_0\in(\ell-\epsilon^2,\ell+\epsilon^2)$, and its evolution $\gamma_t$. We recall that the corresponding $w_t$ has the form $w_t=V(\kappa_t,\gamma_t,\tau_t)$, where $\tau_t:=\dot\gamma_t/\|\dot\gamma_t\|$. 

We compute
\begin{align*}
\partial_t \|W_t\|_{L^2}^2
&=
\int_0^1 \Big( 2 w_t \,(\partial_t w_t) \, \dot\sigma_t + w_t^2 (\partial_t\dot \sigma_t) \Big)\diff u\\
&=
\int_0^1 \Big(
2  w_t\,D_s(AD_s w_t) +  2 A w_t^2\kappa_t^2 + 2E\,w_t\,D_sw_t + 2H\,\kappa_t w_t D_sw_t\\
&\qquad\qquad + 2C w_t^2  
- \kappa_t w_t^3
\Big)
\dot\sigma_t\,\diff u\\
&=
\int_0^{\ell_t} \!\! \Big(
-2 A (\dot W_t)^2 +  2 A W_t^2K_t^2 + 2EW_t \dot W_t + 2H K_t W_t\dot W_t\\
&\qquad\qquad + 2C W_t^2  
- K_t W_t^3
\Big)
\diff s.
\end{align*}
From now on, we will denote by $c\geq1$ a positive constant (independent of $\gamma_t$), that may increase along the different inequalities. The above expression for $\partial_t \|W_t\|_{L^2}^2$ readily implies
\begin{equation}
\label{e:estimate_W_t_1}
\begin{split}
\partial_t \|W_t\|_{L^2}^2
& \leq 
- c^{-1} \|\dot W_t\|_{L^2}^2 + c \,\big( \|W_t\dot W_t\|_{L^1}+\|K_t W_t\dot W_t\|_{L^1}+\|W_t^2K_t^2\|_{L^1}\\
& 
\quad\
+\|K_tW_t^3\|_{L^1}+\|W_t\|_{L^2}^2 \big). 
\end{split}
\end{equation}

By the Peter-Paul inequality, for every $\rho>0$, the term $\|W_t\dot W_t\|_{L^1}$ can be bounded as
\begin{align*}
 \|W_t\dot W_t\|_{L^1} \leq \rho^2 \|\dot W_t\|_{L^2}^2 + \tfrac{1}{4\rho^2} \|W_t\|_{L^2}^2,
\end{align*}
and the term $\|K_tW_t\dot W_t\|_{L^1}$  as
\begin{align*}
 \|K_t W_t\dot W_t\|_{L^1} 
 \leq 
 \rho^2 \|\dot W_t\|_{L^2}^2 + \tfrac{1}{4\rho^2} \|K_tW_t\|_{L^2}^2
 \leq
 \rho^2 \|\dot W_t\|_{L^2}^2 + \tfrac{1}{4\rho^2} \|K_t\|_{L^\infty}^2 \|W_t\|_{L^2}^2.
\end{align*}
We will fix a sufficiently small constant $\rho>0$ so that, in the inequality~\eqref{e:estimate_W_t_1}, the term $- c^{-1} \|\dot W_t\|_{L^2}^2$ will be able to absorb the terms $\rho^2 \|\dot W_t\|_{L^2}^2$, still producing a negative factor in front of $\|\dot W_t\|_{L^2}^2$.

Equation~\eqref{e:V(kappa,x,v)} readily implies that the curvature $K_t$ is related to $W_t$ by
$K_t=A^{-1}W_t + P$, where, once again, $P$ is a smooth function on $SM$ evaluated at $(\Gamma_t(s),\dot\Gamma_t(s))$. Therefore $\|K_t\|_{L^\infty} \leq
c\, (\|W_t\|^2_{L^\infty}+1)$.

Inserting these estimates in~\eqref{e:estimate_W_t_1}, we obtain
\begin{align*}
\partial_t \|W_t\|_{L^2}^2 \leq  c\|W_t\|_{L^2}^2  + c \|W_t\|^2_{L^\infty}\| W_t\|_{L^2}^2 - c^{-1}\|\dot W_t\|_{L^2}^2 .
\end{align*}
We require $c>0$ to be large enough so that, since $\ell-\epsilon^2\leq\ell_t\leq\ell+\epsilon^2$, we have
\[
c^{-1} \leq \ell_t \leq c.
\]
If we bound from above the term $- c^{-1}\|\dot W_t\|_{L^2}^2$ by means of the inequality
\begin{equation*}
\|W_t\|_{L^\infty}^2 
\leq 
2 \ell_t^{-1} \|W_t\|_{L^2}^2 + 2\ell_t\|\dot W_t\|_{L^2}^2 
\leq 
c \|W_t\|_{L^2}^2 + c\|\dot W_t\|_{L^2}^2,
\end{equation*}
we further obtain
\begin{align*}
 \partial_t \|W_t\|_{L^2}^2 
 \leq 
 c\| W_t\|_{L^2}^2
 +
 c \|W_t\|^2_{L^\infty} ( \|W_t\|_{L^2}^2 - c^{-1} ).
\end{align*}

We claim that, if $\epsilon>0$ is small enough (independently of  $\gamma$), then $\|W_t\|_{L^2}^2 < c^{-1}$ for all $t\geq0$ such that $\ell_t\geq \ell-\epsilon^2$. Indeed, assume that this is not the case. If $\epsilon^2<1/c$, since $\|W_0\|_{L^2}^2 \leq \epsilon^2 < c^{-1}$, there must be $\tau>0$ such that $\|W_t\|_{L^2}^2 < c^{-1}$ for all $t\in[0,\tau)$, $\|W_\tau\|_{L^2}^2 = c^{-1}$, and $\ell_\tau\geq\ell-\epsilon^2$. For all $t\in[0,\tau]$ we have the inequality $\partial_t \|W_t\|_{L^2}^2 \leq c\|W_t\|_{L^2}^2$, and thus
\begin{align*}
c^{-1} = \|W_{\tau}\|_{L^2}^2 \leq e^{c\tau} \|W_{0}\|_{L^2}^2 \leq e^{c\tau} \epsilon^2 .
\end{align*}
If $\epsilon^2\leq e^{-c} c^{-1}$, then $\tau\geq1$. Therefore, since $\|W_{\tau}\|_{L^2}^2 \leq e^{c(\tau-t)} \|W_{t}\|_{L^2}$ for all $t\in[0,\tau]$, by~\eqref{e:length_decreasing} we have
\begin{align*}
c^{-1}=\|W_{\tau}\|_{L^2}^2 
\leq 
e^{c} \int_{\tau-1}^\tau \|W_{t}\|_{L^2}^2\diff t
=
e^{c} (\ell_{\tau-1}-\ell_\tau) \leq 2\epsilon^2e^{c},
\end{align*}
which is impossible if $\epsilon^2<1/(2e^{c} c)$.

Summing up, we showed that $\partial_t\|W_t\|_{L^2}^2\leq c\|W_t\|_{L^2}^2$ provided $\ell_t\geq\ell-\epsilon^2$, and therefore 
\begin{align*}
\|W_t\|_{L^2}^2\leq e^{ct}\int_{0}^{t} \|W_r\|_{L^2}^2\,\diff r \leq 
e^{c} (\ell_0-\ell_t)\leq 2\epsilon^2e^{c},&\\
\forall t\geq0\mbox{ such that }\ell_t\geq\ell-\epsilon^2.& 
\qedhere
\end{align*}
\end{proof}

\begin{lem}
\label{l:L2_estimates_derivative}
For all $\ell>2\rho_0$ there exists a constant $c\geq1$ with the following properties:
for all $\epsilon>0$ small enough,  $\gamma_0\in\Emb(S^1,M)$, and $t\geq c\log(\|\dot W_0\|_{L^2}^2 \epsilon^{-2})$ such that 
\begin{align*}
 \|W_0\|_{L^2} \leq \epsilon,
 \qquad
 \ell-\epsilon^2 \leq \ell_t \leq \ell_0 \leq \ell+\epsilon^2,
\end{align*}
we have $\|\dot W_t\|_{L^2}\leq c\,\epsilon$.
\end{lem}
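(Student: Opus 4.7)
The plan is to derive a Gronwall-type differential inequality
$$\partial_t \|\dot W_t\|_{L^2}^2 \leq -c^{-1}\|\dot W_t\|_{L^2}^2 + c\,\epsilon^2,$$
from which the result follows by direct integration: the quantity $\Phi(t) := \|\dot W_t\|_{L^2}^2$ then satisfies $\Phi(t) \leq e^{-c^{-1} t}\Phi(0) + c^2\epsilon^2$, and the right-hand side is at most $(c^2+1)\epsilon^2$ as soon as $t \geq c\log(\Phi(0)/\epsilon^2)$, which matches the claim up to renaming of constants.

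To derive the differential inequality, I would start from the evolution equation
$$\partial_t w_t = D_s(A\,D_s w_t) + A\,w_t\kappa_t^2 + E\,D_s w_t + H\,\kappa_t D_s w_t + C\,w_t$$
established earlier in the section, and use the commutator identity $[\partial_t,D_s] = \kappa_t w_t D_s$ (which follows from $\partial_t\|\dot\gamma_t\| = -\kappa_t w_t\|\dot\gamma_t\|$) to obtain an analogous parabolic equation for $\dot W_t$. Pairing this equation with $\dot W_t$ in $L^2(\R/\ell_t\Z)$, integrating by parts, and accounting for the moving measure $\partial_t\,ds = -K_t W_t\,ds$ produces a principal dissipative term $-\int A(\ddot W_t)^2\,ds \leq -c^{-1}\|\ddot W_t\|_{L^2}^2$, together with lower-order polynomial expressions in $W_t,\dot W_t,\ddot W_t,K_t,\dot K_t$.

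These lower-order contributions are then estimated by the same toolbox used in the preceding lemma: Peter--Paul to absorb $\ddot W_t$-quadratic terms into the dissipation; the relation $K_t = A^{-1}W_t + P$ to convert curvature factors into $W_t$ factors modulo smooth bounded terms; the one-dimensional Poincar\'e inequality $\|\dot W_t\|_{L^2}^2 \leq c\|\ddot W_t\|_{L^2}^2$, valid because $\dot W_t$ has zero mean by periodicity; and the Sobolev embedding $\|W_t\|_{L^\infty}^2, \|\dot W_t\|_{L^\infty}^2 \leq c(\|W_t\|_{L^2}^2 + \|\dot W_t\|_{L^2}^2 + \|\ddot W_t\|_{L^2}^2)$. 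Substituting the bound $\|W_t\|_{L^2}\leq c\epsilon$ from Lemma~\ref{l:L2_estimates} then yields the target Gronwall inequality, provided the auxiliary bound $\|\dot W_t\|_{L^2}\leq M$ is available for a universal constant $M$.

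The main obstacle is precisely securing this auxiliary bound: the nonlinear term $A w_t\kappa_t^2$, after differentiation, produces a contribution of order $\|W_t\|_{L^\infty}^2\|\dot W_t\|_{L^2}^2 \leq c(\epsilon^2+\|\dot W_t\|_{L^2}^2)\|\dot W_t\|_{L^2}^2$, which is not absorbed unconditionally by the dissipation. To handle it I would run a bootstrap analogous to the one in Lemma~\ref{l:L2_estimates}: working on the maximal time interval on which $\|\dot W_t\|_{L^2}^2$ stays below a threshold that makes the nonlinear term dominated by the dissipation, propagating the Gronwall inequality there, and using the integrated length-decrease identity $\int_0^T\|W_t\|_{L^2}^2\,dt = \ell_0-\ell_T \leq 2\epsilon^2$ from~\eqref{e:length_decreasing} together with continuity in time to rule out an early exit of this region for $\epsilon$ small enough.
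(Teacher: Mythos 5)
Your computation of $\partial_t\|\dot W_t\|_{L^2}^2$ (via the commutator $[\partial_t,D_s]=\kappa_t w_t D_s$ and the moving measure), the dissipative term $-c^{-1}\|\ddot W_t\|_{L^2}^2$, and the toolbox of Peter--Paul, $K_t=A^{-1}W_t+P$, and the zero-mean Poincar\'e inequality all match the paper's proof. The genuine gap is in your last step, the bootstrap securing $\|\dot W_t\|_{L^2}\leq M$. The lemma makes \emph{no} smallness (or even boundedness) assumption on $\|\dot W_0\|_{L^2}$; it may be arbitrarily large, which is exactly why the conclusion only holds after the waiting time $c\log(\|\dot W_0\|_{L^2}^2\epsilon^{-2})$. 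Hence your ``maximal time interval on which $\|\dot W_t\|_{L^2}^2$ stays below the threshold'' need not contain $t=0$, and the continuity argument that worked in Lemma~\ref{l:L2_estimates} (where $\|W_0\|_{L^2}\leq\epsilon$ is assumed at the start) has no starting point here. Worse, the decay of $\Phi(t)=\|\dot W_t\|_{L^2}^2$ from its possibly huge initial value down to $O(\epsilon^2)$ --- the whole content of the logarithmic waiting time, and of the factor $e^{-c^{-1}t}\Phi(0)$ in your integrated inequality --- must take place precisely in the regime where $\Phi$ exceeds any universal threshold, i.e.\ where your conditional differential inequality is unavailable; and the identity $\int_0^T\|W_t\|_{L^2}^2\,\diff t=\ell_0-\ell_T\leq2\epsilon^2$ controls $W_t$, not $\dot W_t$, so it cannot force entry into the threshold region either. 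As written, the argument does not close.

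The paper avoids any a priori bound on $\|\dot W_t\|_{L^2}$ by a different mechanism: it introduces the set $I=\{t:\ \ell_t\geq\ell-\epsilon^2,\ \|\dot W_t\|_{L^2}^2\geq d\|W_t\|_{L^2}^2\}$. On $I$, integration by parts gives $\|\dot W_t\|_{L^2}\lesssim d^{-1/2}\|\ddot W_t\|_{L^2}$, and every nonlinear term is estimated by placing the $\dot W_t$ factors in $L^\infty$ (controlled by $\|\ddot W_t\|_{L^2}$, since $\dot W_t$ vanishes somewhere on the circle) and the $W_t$ factors in $L^1$ or $L^2$ (controlled by $c\,\epsilon$ via Lemma~\ref{l:L2_estimates}); thus all of them are of size $c\,\epsilon^a\|\ddot W_t\|_{L^2}^2$ and are absorbed by the dissipation \emph{regardless of the size of} $\|\dot W_t\|_{L^2}$, yielding $\partial_t\|\dot W_t\|_{L^2}^2\leq-b\|\dot W_t\|_{L^2}^2$ on $I$. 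Off $I$ one has $\|\dot W_t\|_{L^2}^2<d\|W_t\|_{L^2}^2\leq dc^2\epsilon^2$ outright, and the lemma follows from the dichotomy: either $[0,t]\subset I$ and pure exponential decay from $\|\dot W_0\|_{L^2}^2$ applies (whence the logarithmic waiting time), or one decays from the last exit time $t_0$, where $\|\dot W_{t_0}\|_{L^2}^2\leq dc^2\epsilon^2$. Your target inequality $\partial_t\Phi\leq-c^{-1}\Phi+c\epsilon^2$ is in fact attainable if you replace the split $\|W_t\|_{L^\infty}^2\leq c(\|W_t\|_{L^2}^2+\|\dot W_t\|_{L^2}^2)$ by this sharper placement of norms, which makes the problematic term absorbable unconditionally; but the route you propose, through an a priori $L^2$ bound on $\dot W_t$ obtained by bootstrap, is the step that fails.
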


\begin{proof}
We require $\epsilon>0$ to be small enough so that Lemma~\ref{l:L2_estimates} holds, and in particular so that $\ell-\epsilon^2>2\rho_0$.
We consider an arbitrary $\gamma_0\in\Emb(S^1,M)$ such that $\|W_0\|_{L^2}\leq \epsilon$ and $\ell_0\in(\ell-\epsilon^2,\ell+\epsilon^2)$, and its evolution $\gamma_t$. Once again, we will denote by $c\geq 1$ a large enough constant independent of $\gamma_t$ and $\epsilon$, possibly growing throughout the computations.

We estimate
\begin{align*}
\partial_t \|\dot W_t\|_{L^2}^2
&=
\int_0^1 \Big(2 (D_s w_t) \partial_t (D_s w_t) \, \dot\sigma_t - (D_s w_t)^2 \kappa_t w_t\dot\sigma_t\Big)\diff u\\
&=
\int_0^1 \Big(2 (D_s w_t) \partial_t \dot w_t + (D_s w_t)^2 \kappa_t w_t\dot\sigma_t\Big)\diff u\\
&=
\int_0^1 \Big(2 (D_s w_t) D_s\big(A\, D_s^2 w_t + A w_t\kappa_t^2 + B\,D_sw_t + C\, w_t\big)\\
&\qquad\qquad + (D_s w_t)^2 \kappa_t w_t\Big) \dot\sigma_t\diff u\\
&=
\int_0^{\ell_t}\!\! \Big(-2A (\ddot W_t)^2 - 2A \ddot W_t W_t K_t^2 -2 B\ddot W_t\dot W_t - C \ddot W_t W_t \\
&\qquad\qquad + (\dot W_t)^2 K_t W_t\Big) \diff s\\
&\leq
-c^{-1} \|\ddot W_t\|_{L^2}^2 + c\big( \|\ddot W_t W_t K_t^2\|_{L^1} + \|\ddot W_t\dot W_t\|_{L^1} +\|\ddot W_t W_t\|_{L^1}\\
&\qquad\qquad + \|(\dot W_t)^2 K_t W_t\|_{L^1}  \big).
\end{align*}
By employing the Peter-Paul inequality and expressing $K_t$ as an affine function of $W_t$, we have
\begin{equation}
\label{e:terms_to_bound_dot_W_t}
\begin{split}
\partial_t \|\dot W_t\|_{L^2}^2
& \leq 
-c^{-1} \|\ddot W_t\|_{L^2}^2 + c\big( \|W_t^3\|_{L^2}^2 +\|W_t^2\|_{L^2}^2 + \|W_t\|_{L^2}^2 + \|\dot W_t\|_{L^2}^2 \\
&\qquad  + \|(\dot W_t)^2W_t^2\|_{L^1} + \|(\dot W_t)^2W_t\|_{L^1}\big)\\
& = 
-c^{-1} \|\ddot W_t\|_{L^2}^2 + c\big( \|W_t^6\|_{L^1} +\|W_t^4\|_{L^1} + \|W_t\|_{L^2}^2 + \|\dot W_t\|_{L^2}^2 \\
&\qquad  + \|(\dot W_t)^2W_t^2\|_{L^1} + \|(\dot W_t)^2W_t\|_{L^1}\big). 
\end{split}
\end{equation}
We require $c>1$ to be large enough so that $c^{-1}\leq \ell_t\leq c$.

By Lemma~\ref{l:L2_estimates}, we have $\|W_t\|_{L^2}^2<c\,\epsilon^2$ for all $t>0$ such that $\ell_t>\ell-\epsilon^2$. We introduce a large constant $d\geq 1$ that we will fix later.
We introduce the set
\begin{align*}
I
:=
\big\{t\in[0,\infty)\ \big|\ \ell_t\geq \ell-\epsilon^2,\ \|\dot W_t\|_{L^2}^2\geq d \|W_t\|_{L^2}^2 \big\}.
\end{align*}
and consider $t\in I$. By means of an integration by parts and Cauchy-Schwarz's inequality, we have  
\[
\|\dot W_t\|_{L^2}^2 
\leq
-\int_0^{\ell_t} W_t\ddot W_t\,\diff s
\leq
\|W_t\|_{L^2} \|\ddot W_t\|_{L^2}
\leq
d^{-1}
\|\dot W_t\|_{L^2} \|\ddot W_t\|_{L^2}
,
\]
and thus
\begin{align*}
 \|\dot W_t\|_{L^2}\leq d^{-1} \|\ddot W_t\|_{L^2}.
\end{align*}
We employ this inequality to bound from above the positive terms in~\eqref{e:terms_to_bound_dot_W_t} as follows.
\begin{align*}
\|W_t^4\|_{L^1}
&\leq
\|W_t\|_{L^2}^2\|W_t\|_{L^\infty}^2
\leq
c\,\epsilon^2 \big( \|W_t\|_{L^1}^2 + \|\dot W_t\|_{L^1}^2 \big)
\leq
c\,\epsilon^2 \|\dot W_t\|_{L^2}^2,\\
\|W_t^6\|_{L^1}
&\leq
\|W_t\|_{L^2}^2\|W_t\|_{L^\infty}^4
\leq
c\,\epsilon^2 \big( \|W_t\|_{L^1}^4 + \|\dot W_t\|_{L^1}^4 \big)\\
&
\leq
c\,\epsilon^2 \big( \|W_t\|_{L^1}^4 + \|W_t\ddot W_t\|_{L^1}^2 \big)
\leq
c\,\epsilon^2 \big( \|W_t\|_{L^1}^4 + \|W_t\|_{L^2}^2\|\ddot W_t\|_{L^2}^2 \big)\\
&
\leq
c\,\epsilon^4 \big( \|\dot W_t\|_{L^2}^2 + \|\ddot W_t\|_{L^2}^2 \big),\\
\|(\dot W_t)^2W_t\|_{L^1}
&\leq
\|W_t\|_{L^1} \|\dot W_t\|_{L^\infty}^2
\leq
c \|W_t\|_{L^2} \|\ddot W_t\|_{L^2}^2
\leq
c\,\epsilon \|\ddot W_t\|_{L^2}^2,\\
\|(\dot W_t)^2W_t^2\|_{L^1}
&\leq
\|W_t\|_{L^2}^2 \|\dot W_t\|_{L^\infty}^2
\leq
c\,\epsilon^2 \|\ddot W_t\|_{L^2}^2.
\end{align*}
We require $\epsilon>0$ to be small enough so that the negative term $-c^{-1} \|\ddot W_t\|_{L^2}^2$ can absorb the terms $c\,\epsilon \|\ddot W_t\|_{L^2}^2$, $c\,\epsilon^2 \|\ddot W_t\|_{L^2}^2$, $c\,\epsilon^4 \|\ddot W_t\|_{L^2}^2$, thus obtaining
\begin{align*}
\partial_t \|\dot W_t\|_{L^2}^2 
\leq -c^{-1} \|\ddot W_t\|_{L^2}^2 + c \|\dot W_t\|_{L^2}^2
\leq (-c^{-1}d + c) \|\dot W_t\|_{L^2}^2,
\qquad\forall t\in I.
\end{align*}
We now fix $d>c^2$, so that $-b:=(cd^{-1}-c^{-1})<0$ and
\begin{align}
\label{e:Gronwall}
 \partial_t \|\dot W_t\|_{L^2}^2 \leq -b \|\dot W_t\|_{L^2}^2,\qquad\forall t\in I.
\end{align}

Now, consider a time value $t\geq b^{-1}\log(\|\dot W_0\|_{L^2}^2 \epsilon^{-2})$ such that $\ell_t\geq\ell-\epsilon^2$. If $[0,t]\subset I$, then~\eqref{e:Gronwall} and Gronwall's lemma imply
\begin{align*}
 \|\dot W_t\|_{L^2}^2 
 \leq 
 e^{-bt} \|\dot W_0\|_{L^2}^2 
 \leq
 \epsilon^2.
\end{align*}
If instead $[0,t]\setminus I\neq\varnothing$, we set $t_0:=\sup ([0,t]\setminus I)$, and we have
\[
 \|\dot W_t\|_{L^2}^2 
 \leq 
 e^{-b(t-t_0)} \|\dot W_{t_0}\|_{L^2}^2
 \leq
 \|\dot W_{t_0}\|_{L^2}^2
 \leq 
 d \| W_{t_0}\|_{L^2}^2
 \leq dc^2\epsilon^2.
 \qedhere
\]
\end{proof}

The analogue of Theorem~\ref{t:curve_shortening}(iv) is well known in the classical setting of Lusternik-Schnirelmann theory: it is based on the fact that a smooth function drops of at least a fixed amount along a gradient flow line that crosses a given shell around a critical set at a given level. In the context of the Riemannian curve shortening semi-flow, the analogous property is claimed\footnote{Quoted from the last sentence in \cite[page~109]{Grayson:1989ec}: \emph{``Any curve leaving a small neighborhood of a geodesic shortens a fixed amount before moving very far.''}} by Grayson \cite{Grayson:1989ec}. We now employ the bounds of Lemmas~\ref{l:L2_estimates} and~\ref{l:L2_estimates_derivative} to provide a complete proof of Theorem~\ref{t:curve_shortening}(iv) in our general reversible Finsler setting.

\begin{proof}[Proof of Theorem~\ref{t:curve_shortening}(iv)]
Let $\ell>2\rho_0$ and $\epsilon>0$ be given. For any embedded circle $\gamma_0\in\Emb(S^1,M)$, we denote by $\gamma_t=\phi_t(\gamma_0)$ the corresponding evolution under the curve shortening semi-flow, by $\ell_t=L(\gamma_t)$ the length, and by $W_t$ the associated functions as defined in~\eqref{e:W_N_K}. We denote by $c\geq1$ the maximum among the two constants $c$ given by Lemmas~\ref{l:L2_estimates} and~\ref{l:L2_estimates_derivative}.

Assume now that $\gamma_0\in\Emb(S^1,M)$ has length $\ell_0\in(\ell-\delta,\ell+\delta)$ for some constant $\delta\in(0,\ell-2\rho_0)$ that we will fix later. We claim that there exists $t_0\in[0,2]$ such that 
\[\ell_{t_0}<\ell-\delta\quad\mbox{or}\quad \|W_{t_0}\|_{L^2}\leq\sqrt\delta.\] 
Indeed, if $\|W_t\|_{L^2}>\sqrt\delta$ for all $t\in[0,2]$, we  have
\begin{align*}
 \ell_2 
 = 
 \ell_0 - \int_{0}^2 \|W_t\|_{L^2}^2\diff t
 <
 \ell_0 - 2\delta
 <
 \ell - \delta.
\end{align*}
By Lemma~\ref{l:L2_estimates}, for each $t\geq t_0$ such that $\ell_{t}\geq\ell-\delta$, we have
\[\|W_{t}\|_{L^2}\leq c\,\sqrt\delta.\] 
In particular, since $t_0\leq2$, this holds for $t=2$. Therefore, we can apply Lemma~\ref{l:L2_estimates_derivative}: for each $t\geq 2+c\log(\|\dot W_{2}\|_{L^2}^2 c^{-2}\delta^{-1})$  such that $\ell_{t}\geq\ell-\delta$, we have
\[\|\dot W_{t}\|_{L^2}\leq c^2\sqrt\delta,\] 
and in particular
\begin{align*}
 \|W_t\|_{L^\infty}
 &
 \leq
 \|\dot W_t\|_{L^1}+\min|W_t|
 \leq
 \|\dot W_t\|_{L^1} + \ell_t^{-1}\| W_t\|_{L^1}\\
 &
 \leq
 \ell_t^{1/2}\|\dot W_t\|_{L^2} + \ell_t^{-1/2}\| W_t\|_{L^2}
 \leq \big( (\ell+\delta)^{1/2} + (\ell-\delta)^{-1/2} \big)c^2\sqrt\delta;
\end{align*}
we now fix $\delta\in(0,\ell-2\rho_0)$ small enough so that $\epsilon\geq\big( (\ell-\delta)^{-1/2} + (\ell+\delta)^{1/2} \big)c^2\sqrt\delta$, which together with the previous $L^\infty$ bound implies $\gamma_t\in\UU(\ell,\epsilon)$.
The desired positive function $\tau$ is therefore given by
\[
 \tau:\Emb(S^1,M)^{<\ell+\delta}\to(0,\infty),\qquad
 \tau(\gamma_0) = 2+c\log(\|\dot W_{2}\|_{L^2}^2 c^{-2}\delta^{-1}).
 \qedhere
\]
\end{proof}

\subsection{Compactness}
Finally Theorem~\ref{t:curve_shortening}(v) will be a consequence of the following compactness result. We denote by $\PT M\to M$ the projectivized tangent bundle of $M$, whose fiber over any $x\in M$ is the real projective space  
\[\PPP(\Tan_xM)=\frac{\Tan_xM\setminus\{0\}}{\sim},\]
where $v\sim\lambda v$ for all $v\in\Tan_xM\setminus\{0\}$ and $\lambda\in\R$.

\begin{lem}
\label{l:neighborhoods}
Let $K\subseteq \PT M$ be a compact subset. If no element of $K$ is tangent to a simple closed geodesic of $(M,F)$ of length $\ell$, then for all $\epsilon>0$ small enough no element in $K$ is tangent to some curve $\gamma\in\UU(\ell,\epsilon)$.
\end{lem}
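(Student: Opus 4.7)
The plan is to argue by contradiction, extracting a limiting simple closed geodesic that is tangent to some element of $K$ and has length $\ell$. Suppose the conclusion fails: there exist $\epsilon_n\downarrow 0$, loops $\gamma_n\in\UU(\ell,\epsilon_n)$, and $k_n=(x_n,[v_n])\in K$ tangent to $\gamma_n$. By compactness of $K$, up to a subsequence $k_n\to k_\infty=(x_\infty,[v_\infty])\in K$. My first step is to reparametrize each $\gamma_n$ by $g$-arclength, obtaining $\Gamma_n:\R/\ell_n'\Z\to M$ with $\|\dot\Gamma_n\|_g\equiv 1$; the lengths $\ell_n'$ stay uniformly bounded since $F$ and $g$ are equivalent on $SM$ and $L(\gamma_n)\to\ell$.

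The key estimate comes from the affine relation $W_n=A(\Gamma_n,\dot\Gamma_n)K_n+B(\Gamma_n,\dot\Gamma_n)$ of~\eqref{e:V(kappa,x,v)}, together with the uniform positivity $A\geq a_0>0$ (a consequence of the positive definiteness of $F_{vv}$ transverse to $v$ and compactness of $SM$). The hypothesis $\|W_n\|_\infty=\|V_{\gamma_n}\|_\infty<\epsilon_n\to 0$ then forces $\|K_n\|_\infty\leq C$ uniformly, hence $\|\nabla_s\dot\Gamma_n\|_g$ is uniformly bounded. By Arzel\`a--Ascoli, after shifting parameters so that $s=0\mapsto x_n$ and extracting a further subsequence, $\Gamma_n$ converges in $C^1$ to a limit $\Gamma_\infty:\R/\ell\Z\to M$ with $\Gamma_\infty(0)=x_\infty$ and $[\dot\Gamma_\infty(0)]=[v_\infty]$. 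Since $V_{\gamma_n}\to 0$ uniformly, the Euler--Lagrange covector~\eqref{e:EL_covector} of $\Gamma_\infty$ vanishes identically, so $\Gamma_\infty$ is a smooth closed geodesic of length $\ell$ tangent to $k_\infty\in K$.

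The main obstacle is to show that $\Gamma_\infty$ is \emph{simple}, which will contradict the hypothesis. Write $\Gamma_\infty=\beta^k$ where $\beta$ is the prime closed geodesic underlying $\Gamma_\infty$, of length $\ell/k$. If $\beta$ is simple and $k\geq 2$, then for $n$ large $\Gamma_n$ lies in an annular tubular neighborhood $U$ of $\beta(\R/(\ell/k)\Z)$ and, being homotopic to $\beta^k$ in $U$, represents the class $\pm k\in\pi_1(U)\cong\Z$; this contradicts the fact that embedded loops in an annulus represent a class in $\{0,\pm 1\}$. Otherwise $\beta$ has a self-intersection $\beta(t_1)=\beta(t_2)$ with $t_1\neq t_2\pmod{\ell/k}$, and I would argue by a trichotomy on the tangent vectors: (i) if $\dot\beta(t_1),\dot\beta(t_2)$ are linearly independent, $\Gamma_\infty$ inherits a transverse self-intersection that persists under $C^1$ perturbation, so $\Gamma_n$ would itself self-intersect, contradicting $\Gamma_n\in\Emb(S^1,M)$; (ii) if $\dot\beta(t_1)=\dot\beta(t_2)$, uniqueness of geodesics forces $\beta$ to be periodic with period $|t_2-t_1|<\ell/k$, contradicting primality; (iii) if $\dot\beta(t_1)=-\dot\beta(t_2)$, then by reversibility of $F$ the curve $s\mapsto\beta(t_1+t_2-s)$ is a unit-speed geodesic coinciding with $\beta$ at $s=t_2$ in position and velocity, hence coinciding with $\beta$ everywhere, and differentiating at the fixed point $s^*=(t_1+t_2)/2$ gives $\dot\beta(s^*)=0$, contradicting unit speed.

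Thus $\Gamma_\infty$ is forced to be simple, giving a simple closed geodesic of length $\ell$ tangent to $k_\infty\in K$ and contradicting the hypothesis. The essential ingredient throughout is the affine form $V=A\kappa+B$ with $A$ bounded below, which converts the $C^0$-smallness of $V_{\gamma_n}$ into $C^{1,1}$ precompactness of the reparametrized curves; reversibility is used only to rule out the opposite-tangency case, where it is indispensable.
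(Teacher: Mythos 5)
Your proposal is correct and follows essentially the same route as the paper: argue by contradiction, use the affine relation $V=A\kappa+B$ from~\eqref{e:V(kappa,x,v)} with $A$ bounded below to get curvature bounds and $C^1$-convergence to a closed geodesic of length $\ell$ tangent to an element of $K$, and rule out non-simplicity by the same three mechanisms (persistence of transverse self-intersections, reversibility excluding opposite self-tangencies, and the winding-number argument in an annular tubular neighborhood excluding iterates). The only step you gloss is the passage from $C^1$-convergence plus $\|V_{\gamma_n}\|_{L^\infty}\to0$ to the vanishing of the Euler--Lagrange covector~\eqref{e:EL_covector}, which involves second derivatives; this is fixed exactly as in the paper by testing the first variation of the energy (or length) against smooth vector fields, an expression involving only first derivatives of $\gamma_n$, and then invoking regularity.
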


\begin{proof}
If the Lemma does not hold, then there exists a sequence  $\gamma_n\in\UU(\ell,1/n)$ such that
$F(\gamma_n,\dot\gamma_n)\equiv L(\gamma_n)$ and $[\dot\gamma_n(0)]\in K$.
The lifted curves $(\gamma_n,\dot\gamma_n/L(\gamma_n))$ are contained in the Finsler unit tangent bundle $\{(x,v)\in\Tan M\ |\ F(x,v)=1\}$, which is a compact subset of $\Tan M$. 
We consider the function $V_\gamma$ defined in~\eqref{e:V_gamma}.
Since $\|V_{\gamma_n}\|_{L^\infty}<1/n\to 0$ as $n\to\infty$, the sequence $\gamma_n$ is bounded in the $C^2$-topology. Therefore, up to a subsequence, $\gamma_n$ converges in the $C^1$-topology to some $\gamma$ such that $F(\gamma,\dot\gamma)\equiv L(\gamma)=\ell$ and $\dot\gamma(0)\in K$. Now, consider the Finsler energy
\begin{align*}
E:\W(S^1,M)\to[0,\infty),\qquad E(\zeta)= \int_{S^1}\!\! F(\zeta(u),\dot\zeta(u))^2\,\diff u,
\end{align*}
Since each $\gamma_n$ has constant speed, we have
\begin{align*}
\diff E(\gamma_n) X & = 2 \int_{S^1} F(\gamma_n(u),\dot\gamma_n(u)) \big(F_x(\gamma_n(u),\dot\gamma_n(u))-\tfrac{\diff}{\diff u}F_v(\gamma_n(u),\dot\gamma_n(u)) \big)\,X(u)\,\diff u\\
& = 2 L(\gamma_n)\,\diff L(\gamma_n)X
= 2L(\gamma_n) \int_{S^1} V_{\gamma_n}(u)\,g( N_{\gamma_n}(u),X(u))\,\|\dot\gamma
_n(u)\|_{g}\,\diff u.
\end{align*}
This, together with $\|V_{\gamma_n}\|_{L^\infty}\to0$ and the fact that $E$ is a $C^{1,1}$ function, readily implies that the limit curve $\gamma$ is a critical point of $E$, and therefore a closed geodesic. In order to reach a contradiction, we simply have to show that $\gamma$ is simple closed.

On an orientable reversible Finsler surface, a closed geodesic that is the $C^1$-limit of embedded circles is itself simple. Indeed, $\gamma$ cannot have a transverse self-intersection, for the same would be true for $\gamma_n$ for $n$ large enough. Moreover, $\gamma$ cannot have a self-tangency with opposite orientation, i.e. of the form $\gamma(u_1)=\gamma(u_2)$ and $\dot\gamma(u_1)=-\dot\gamma(u_2)$ for some $u_1<u_2$; indeed, since $F$ is reversible, we would have $\dot\gamma(u_1+r)=\dot\gamma(u_2-r)$ for all $r>0$, and then $\dot\gamma(\tfrac{u_1+u_2}2)=0$, contradicting the fact that $\gamma$ is a geodesic. Finally, $\gamma$ cannot be an iterated curve, i.e.\ of the form $\gamma(u)=\zeta(mu)$ for some simple closed geodesic $\zeta:S^1\to M$ and $m\geq2$; otherwise, since $M$ is an orientable surface, a tubular neighborhood of $\zeta$ would be diffeomorphic to the annulus $S^1\times(-1,1)$, $\zeta$ being its zero section $S^1\times\{0\}$; any closed curve sufficiently $C^1$-close to $\gamma$ would wind $m\geq2$ times around the annulus $S^1\times(-1,1)$, and therefore would have self-intersections.
\end{proof}

\begin{proof}[Proof of Theorem~\ref{t:curve_shortening}(v)]
By choosing $K=\PT M$ in Lemma~\ref{l:neighborhoods}, we infer that, for each $\ell\in[\ell_1,\ell_2]$, there exists $\epsilon>0$ such that $\UU(\ell,\epsilon)=\varnothing$;  notice that this readily implies that $\UU(\ell',\epsilon/\sqrt2)=\varnothing$ for all $\ell'\in[\ell-\tfrac12\epsilon^2,\ell+\tfrac12\epsilon^2]$.  Theorem~\ref{t:curve_shortening}(v) is a direct consequence of this fact, together with Theorem~\ref{t:curve_shortening}(iv) and the compactness of the interval $[\ell_1,\ell_2]$.
\end{proof}

\section{Existence of simple closed geodesics}
\label{s:simply_closed_geodesics}

\subsection{Lusternik-Schnirelmann theory}
Let $(M,F)$ be a closed, orientable, reversible, Finsler surface. We consider the space of embedded loops $\Emb(S^1,M)$ and the space of circle diffeomorphisms $\Diff(S^1)$, both endowed with the $C^\infty$ topology. We introduce the space of unparametrized embedded loops
\begin{align*}
 \Pi=\frac{\Emb(S^1,M)}{\Diff(S^1)},
\end{align*}
endowed with the quotient topology.
Here, $\Diff(S^1)$ acts by reparametrization on $\Emb(S^1,M)$. 
From now on, the length functional~\eqref{l:length} will be considered as a continuous function on $\Pi$, i.e.
\begin{align*}
L:\Pi\to[0,\infty).
\end{align*}
For any subset $\WW\subset\Pi$ and $\ell\in(0,\infty]$, we set 
\begin{align}
\label{e:W<ell}
\WW^{<\ell}:=\{ \gamma\in\WW\ |\ L(\gamma)<\ell \}. 
\end{align}
Throughout this paper, we shall denote by $H_*(\,\cdot\,;\F)$ the singular homology with coefficients in a field $\F$; we shall remove $\F$ from the notation whenever the arguments will not require a specific field. If $\sigma$ is a singular chain in $\Pi$, we denote by $\supp(\sigma)$ its support, which is a compact subset of $\Pi$. 
Each non-zero homology class $h\in H_*(\Pi^{<b},\Pi^{<a})$, where $0<a<b\leq\infty$, defines a min-max value
\begin{align*}
 \ell(h) := \inf_{[\sigma]=h}\ \max L|_{\supp(\sigma)}\in[a,b).
\end{align*}
Such value turns out to be the (positive) length of a simple closed geodesic of $(M,F)$. This will be a rather direct consequence of the existence of the semi-flow of Theorem~\ref{t:curve_shortening} and of the following statement. We will employ the open subsets $\UU(\ell,\epsilon)\subset\Emb(S^1,M)$ defined in~\eqref{e:U_ell_epsilon}, which depend on an auxiliary Riemannian metric $g$ on $M$. Since such open subsets are invariant under the action of $\Diff(S^1)$, we can consider their quotients 
\[\WW(\ell,\epsilon):=\frac{\UU(\ell,\epsilon)}{\Diff(S^1)},\] 
which are open subset in $\Pi$.

\begin{lem}
\label{l:l_critical}
For each non-zero $h\in H_*(\Pi^{<b},\Pi^{<a})$, the associated min-max $\ell=\ell(h)$ is the length of a simple closed geodesic of $(M,F)$. For each $\epsilon>0$ there exists $\delta\in(0,\epsilon^2)$ such that $h$ can be represented by a relative cycle $\sigma$ with 
\begin{align}
\label{e:supp_cycle}
\supp(\sigma)\subset\Pi^{<\ell-\delta}\cup\WW(\ell,\epsilon).
\end{align} 
Moreover, if there are only finitely many simple closed geodesics with length in $(\ell-\epsilon^2,\ell+\epsilon^2)$, there exists a simple closed geodesic $\gamma$ of length $\ell$ such that, if we denote by $\VV(\gamma,\epsilon)$ the connected component of $\WW(\ell,\epsilon)$ containing $\gamma$, the inclusion induces a non-zero homomorphism
$H_*(\VV(\gamma,\epsilon),\VV(\gamma,\epsilon)^{<\ell-\delta})\to H_*(\Pi,\Pi^{<\ell})$.
\end{lem}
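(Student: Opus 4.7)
The plan is to apply the standard Lusternik-Schnirelmann min-max argument to the curve shortening semi-flow of Theorem \ref{t:curve_shortening}, transferred to the quotient space $\Pi$. By the $\Diff(S^1)$-equivariance in Theorem \ref{t:curve_shortening}(ii), $\psi_t$ descends to a continuous semi-flow $\bar\psi_t$ on $\Pi$; by Theorem \ref{t:curve_shortening}(iii), $L$ is non-increasing along its orbits, so each $\bar\psi_T$ is a self-map of every pair $(\Pi^{<c_2},\Pi^{<c_1})$, homotopic to the identity through such maps via $s\mapsto\bar\psi_{sT}$. In particular, pushing any representative of $h$ by $\bar\psi_T$ yields another representative of $h$ in $H_*(\Pi^{<b},\Pi^{<a})$.

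To prove that $\ell=\ell(h)$ is the length of a simple closed geodesic, choose $\rho_0<\tfrac12\injrad(M,F)$ in the construction of $\psi_t$. Assume for contradiction that no simple closed geodesic has length $\ell$. Then for some small $\eta>0$, the interval $[\ell-\eta,\ell+\eta]\subset(2\rho_0,\infty)$ contains no length of a simple closed geodesic, and Theorem \ref{t:curve_shortening}(v) supplies a continuous, reparametrization-invariant function $\tau:\Emb(S^1,M)^{<\ell+\eta}\to(0,\infty)$ such that $\psi_t(\gamma)\in\Emb(S^1,M)^{<\ell-\eta}$ for $t\geq\tau(\gamma)$. Picking a representative $\sigma$ of $h$ with $\max L|_{\supp\sigma}<\ell+\eta$ and setting $T:=\max\{\tau(\gamma):\gamma\in\supp\sigma\}<\infty$, the cycle $\bar\psi_T\sigma$ represents $h$ with support in $\Pi^{<\ell-\eta}$, contradicting $\ell(h)=\ell$. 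The representation \eqref{e:supp_cycle} follows by the same scheme, invoking Theorem \ref{t:curve_shortening}(iv) in place of (v) to produce $\delta\in(0,\epsilon^2)$ and to push a representative of $h$ with $\max L<\ell+\delta$ into the open set $\Pi^{<\ell-\delta}\cup\WW(\ell,\epsilon)$.

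For the final claim, assume only finitely many simple closed geodesics $\gamma_1,\ldots,\gamma_n$ have length in $(\ell-\epsilon^2,\ell+\epsilon^2)$. Shrinking $\epsilon$ (and correspondingly $\delta$) if necessary, $\WW(\ell,\epsilon)$ splits as the disjoint union $\bigsqcup_i\VV(\gamma_i,\epsilon)$, one open component per geodesic of length exactly $\ell$. Via a barycentric subdivision of the representative $\sigma$ from the previous paragraph adapted to the open cover $\{\Pi^{<\ell-\delta}\}\cup\{\VV(\gamma_i,\epsilon)\}_i$ of $\supp\sigma$, I would decompose $\sigma=\sigma_0+\sum_i\sigma_i$ with $\supp\sigma_0\subset\Pi^{<\ell-\delta}$ and $\supp\sigma_i\subset\VV(\gamma_i,\epsilon)$. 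Since the $\VV(\gamma_i,\epsilon)$ are pairwise disjoint and $\partial\sigma\in C_*(\Pi^{<a})\subset C_*(\Pi^{<\ell-\delta})$ (for $\delta$ small), each $\partial\sigma_i$ is forced to lie in $C_*(\VV(\gamma_i,\epsilon)^{<\ell-\delta})$, so $\sigma_i$ defines a local class $[\sigma_i]\in H_*(\VV(\gamma_i,\epsilon),\VV(\gamma_i,\epsilon)^{<\ell-\delta})$ realizing the excision isomorphism
\[
H_*(\Pi^{<\ell-\delta}\cup\WW(\ell,\epsilon),\Pi^{<\ell-\delta})\cong\bigoplus_i H_*(\VV(\gamma_i,\epsilon),\VV(\gamma_i,\epsilon)^{<\ell-\delta}).
\]
The sum of the images $\Phi_i([\sigma_i])$ under the inclusion-induced maps into $H_*(\Pi,\Pi^{<\ell})$ equals the image of $h$ there. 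The main technical obstacle is to verify that this image is non-trivial: if it vanished, one could combine null-homologies of the $\sigma_i$ (using a further application of the semi-flow to keep the bounding chains inside $\Pi^{<b}$) to exhibit $\sigma$ as homologous to a cycle in $\Pi^{<\ell}$, contradicting $\ell(h)=\ell$. Consequently at least one $\Phi_i$ is non-zero, and $\gamma=\gamma_i$ witnesses the claim.
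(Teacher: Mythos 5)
Your treatment of the first two assertions follows the paper's scheme (descend $\psi_t$ to $\Pi$ by equivariance, push a representative with $\max L<\ell+\delta$ using Theorem~\ref{t:curve_shortening}(iv), argue criticality by contradiction), but with two repairable slips: the choice $\rho_0<\tfrac12\injrad(M,F)$ does not by itself guarantee $\ell>2\rho_0$, which is what Theorem~\ref{t:curve_shortening}(iv)--(v) require at the level $\ell$; since $\ell(h)\geq a$ you should take $\rho_0$ smaller than $a/2$ (the paper takes $\rho_0=a/3$). Also, your use of point (v) presupposes that if $\ell$ is not the length of a simple closed geodesic then a whole interval $[\ell-\eta,\ell+\eta]$ contains no such lengths; this closedness of the simple length spectrum is true but itself needs the $C^1$-compactness argument, and the paper avoids it by applying Lemma~\ref{l:neighborhoods} with $K=\PT M$ to get $\WW(\ell,\epsilon)=\varnothing$ and then invoking the already established \eqref{e:supp_cycle}.

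The genuine gap is in the ``moreover'' part: you assert that, after shrinking $\epsilon$, $\WW(\ell,\epsilon)$ splits as the disjoint union of the components $\VV(\gamma_i,\epsilon)$ containing the finitely many simple closed geodesics of length $\ell$. Nothing supports this: $\UU(\ell,\epsilon)$ in \eqref{e:U_ell_epsilon} is cut out by the open conditions $L\in(\ell-\epsilon^2,\ell+\epsilon^2)$ and $\max|V_\gamma|<\epsilon$, and at any fixed $\epsilon$ it may have connected components containing no geodesic at all; the compactness behind Lemma~\ref{l:neighborhoods} only says that curves in $\WW(\ell,\epsilon_n)$ with $\epsilon_n\to0$ subconverge in $C^1$ to simple closed geodesics, which gives no control on the components of $\WW(\ell,\epsilon)$ for a fixed $\epsilon$. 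Without that splitting, $\{\Pi^{<\ell-\delta}\}\cup\{\VV(\gamma_i,\epsilon)\}_i$ need not cover $\supp\sigma$ and your decomposition $\sigma=\sigma_0+\sum_i\sigma_i$ (whose boundary bookkeeping is in any case delicate; the paper works instead with excision for the pair $(\Pi^{<\ell-\delta}\cup\WW,\Pi^{<\ell-\delta})$) breaks down. The paper handles exactly this point with a two-parameter argument: it shows $\epsilon':=\inf\{\|V_\gamma\|_{L^\infty}:\gamma\in\WW(\ell,\epsilon)\setminus\bigcup_i\VV(\gamma_i,\epsilon)\}>0$, so that $\WW(\ell,\epsilon')\subset\bigcup_i\VV(\gamma_i,\epsilon)$, and then pushes the cycle into $\Pi^{<\ell-\delta}\cup\WW(\ell,\epsilon')$, which makes the nonzero map factor through $\bigoplus_i H_*(\VV(\gamma_i,\epsilon),\VV(\gamma_i,\epsilon)^{<\ell-\delta})$; you need this step or an equivalent. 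Concerning what you call the remaining obstacle, the nonvanishing of the image of $h$ in $H_*(\Pi,\Pi^{<\ell})$ is read off from the definition of $\ell(h)$ (when $b=\infty$, via the exact sequence of the triple $(\Pi,\Pi^{<\ell},\Pi^{<a})$ a vanishing image would produce a representative with maximal length below $\ell$); your parenthetical about flowing bounding chains back into $\Pi^{<b}$ would not work as stated, but no such device is needed. Finally, note that the paper proves more than your argument yields: a single geodesic $\gamma_i$ working for all sufficiently small $\epsilon$ (via the monotonicity $I_{\epsilon_1}\subseteq I_{\epsilon_2}$ and a nested intersection), which is the form used later in Proposition~\ref{p:local_homology_min_max}.
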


\begin{proof}
We set $\rho_0:=a/3$, and consider the semi-flow $\psi_t$ of Theorem~\ref{t:curve_shortening}. Since, by Theorem~\ref{t:curve_shortening}(ii), $\psi_t$ is equivariant with respect to the action of $\Diff(S^1)$, it induces a well-defined continuous semi-flow on the quotient of its domain, which we still denote by $\psi_t:\Pi\to\Pi$. Given $\epsilon>0$, we consider the associated $\delta\in(0,\epsilon^2)$ provided by Theorem~\ref{t:curve_shortening}(iv). By the definition of the min-max value $\ell:=\ell(h)$, we can find a relative cycle $\sigma$ representing $h$ and such that $\max L|_{\supp(\sigma)}<\ell+\delta$. For each $t>0$, the relative cycle $(\psi_t)_*\sigma$ still represents $h$. Since $\supp(\sigma)$ is compact, Theorem~\ref{t:curve_shortening}(iv) implies that, if we choose $t>0$ large enough, we have $\supp((\psi_t)_*\sigma)\subset\Pi^{<\ell-\delta}\cup\WW(\ell,\epsilon)$. This proves~\eqref{e:supp_cycle}.

Now, assuming by contradiction that $\ell$ is not the length of a simple closed geodesic of $(M,F)$, by choosing $K=\PT M$ in Lemma~\ref{l:neighborhoods} we would have that $\WW(\ell,\epsilon)=\varnothing$ for all $\epsilon>0$ small enough. However, by the result of the previous paragraph, this would allow us to find a relative cycle $\sigma$ representing $h$ and such that $\supp(\sigma)\subset\Pi^{<\ell-\delta}$, contradicting the definition of $\ell=\ell(h)$.

We are left to prove the moreover part of the statement. For that, notice that we can assume that $\epsilon>0$ is arbitrarily small (if the theorem holds for some $\epsilon$, it also holds for larger values of $\epsilon$). In particular, we assume that $\epsilon$ is small enough so that, by our assumptions, there are only finitely many simple closed geodesics $\gamma_1,...,\gamma_k$ with length in the interval $[\ell-\epsilon^2,\ell+\epsilon^2]$, and they all have length $\ell$. We denote by $\VV_{\epsilon,i}$ the connected component of $\WW_\epsilon:=\WW(\ell,\epsilon)$ containing $\gamma_i$. If needed, we further lower $\epsilon>0$, so that $\VV_{\epsilon,i}\cap \VV_{\epsilon,j}=\varnothing$ if $i\neq j$. We set
\begin{align*}
\VV_\epsilon:= \VV_{\epsilon,1}\cup...\cup\VV_{\epsilon,k}.
\end{align*}
We can also lower $\delta>0$ so that $a\leq\ell-\delta$. The inclusions induce the commutative diagram
\begin{equation*}
\begin{tikzcd}  
  H_*(\Pi^{<\ell-\delta}\cup\WW_\epsilon,\Pi^{<a}) 
  \arrow[r,"\quad i_*"]
  \arrow[d] &
  H_*(\Pi,\Pi^{<a})
  \arrow[d,"j_*"]
  \\
  H_*(\Pi^{<\ell-\delta}\cup\WW_\epsilon,\Pi^{<\ell-\delta}) 
  \arrow[r] &
  H_*(\Pi,\Pi^{<\ell})\\
  H_*(\WW_\epsilon,\WW_\epsilon^{<\ell-\delta}) 
  \arrow[u,"\cong"]
  \arrow[ur,"k_*"'] &
\end{tikzcd}
\end{equation*}
The homology class $h$ is contained in the image of $i_*$ according to~\eqref{e:supp_cycle}, and the lower vertical arrow is an isomorphism by excision. Moreover, by the very definition of the min-max value $\ell=\ell(h)$, we have that $j_*(h)\neq0$. Overall, this shows that the homomorphism $k_*$ is non-zero.

We set $\WW_\epsilon':=\WW_\epsilon\setminus(\VV_{\epsilon,1}\cup...\cup \VV_{\epsilon,k})$, and claim that 
\[
\epsilon':=\inf\big\{ \|V_{\gamma}\|_{L^\infty}\ \big|\  \gamma\in\WW_\epsilon'\big\}>0,
\]
where $V_\gamma$ is the function defined in~\eqref{e:V_gamma}. Otherwise, we could find a sequence $\gamma_n\in \WW_\epsilon'$ with $\|V_{\gamma_n}\|_{L^\infty}<1/n$. As in the proof of Lemma~\ref{l:neighborhoods}, one can easily show that, up to extracting a subsequence, $\gamma_n$ converges to a simple closed geodesic $\gamma$ with length $L(\gamma)\in[\ell-\epsilon^2,\ell+\epsilon^2]$. But this would imply that $V_\gamma\equiv0$ and $L(\gamma)=\ell$, and thus that $\gamma\in\VV_{\epsilon,1}\cup...\cup \VV_{\epsilon,k}$, which is impossible since $\gamma_n\in\WW_\epsilon'$ for all $n\in\N$.

Notice that $\WW_{\epsilon'}\subset\VV_\epsilon$, and once again the inclusion induces a non-zero homomorphism $H_*(\WW_{\epsilon'},\WW_{\epsilon'}^{<\ell-\delta})\to H_*(\Pi,\Pi^{<\ell})$, and therefore a non-zero homomorphism
\begin{align*}
\bigoplus_{i=1}^k H_d(\VV_{\epsilon,i},\VV_{\epsilon,i}^{<\ell-\delta}) \to
H_d(\Pi,\Pi^{<\ell}).
\end{align*}
We denote by $I_\epsilon\subseteq\{1,...,k\}$ the subset of those $i$ such that the homomorphism $H_d(\VV_{\epsilon,i},\VV_{\epsilon,i}^{<\ell-\delta}) \to H_d(\Pi,\Pi^{<\ell})$ is non-zero. Notice that $I_{\epsilon_1}\subseteq I_{\epsilon_2}$ if $0<\epsilon_1<\epsilon_2$. Therefore, there exists
\begin{align*}
 i\in\bigcap_{\epsilon\in(0,\epsilon_0]} I_\epsilon,
\end{align*}
and the simple closed geodesic $\gamma_i$ satisfies the desired properties.
\end{proof}

Assume now to have a homology class $h\in H_{d+i}(\Pi,\Pi^{<\rho})$ and a cohomology class $w\in H^{i}(\Pi)$ whose cap product $w\smallfrown h\in H_{d}(\Pi,\Pi^{<\rho})$ is non-zero. Given any relative cycle $\sigma$ representing $h$ we can produce a relative cycle $\sigma'$ representing $w\smallfrown h$ and such that $\supp(\sigma')\subset\supp(\sigma)$. This readily implies that \[\ell(w\smallfrown h)\leq\ell(h).\]
We can now state a version of the classical Lusternik-Schnirelmann theorem for the length functional.  

\begin{thm}
\label{t:LS_minmax}
If $w\smallfrown h\neq0$ and $\ell(w\smallfrown h)=\ell(h)$, then for every $\epsilon>0$ we have $w|_{\WW(\ell(h),\epsilon)}\neq 0$ in $H^{*}(\WW(\ell(h),\epsilon))$ .
\end{thm}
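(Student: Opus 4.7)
The plan is to argue by contradiction. Set $\ell:=\ell(h)$ and $B:=\WW(\ell,\epsilon)$, and assume that $w|_B=0$ in $H^i(B)$. From the cohomology long exact sequence of the pair $(\Pi,B)$ one lifts $w$ to a class $\tilde w\in H^i(\Pi,B)$. I will pick a cocycle $\omega$ representing $\tilde w$ inside the pair cochain complex $C^*(\Pi,B)$; by definition $\omega\in C^i(\Pi)$ is a cocycle that literally vanishes on every singular chain supported in $B$.

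Next I will invoke Lemma~\ref{l:l_critical} to produce $\delta\in(0,\epsilon^2)$ and a relative cycle $\sigma$ representing $h$ with $\supp(\sigma)\subset\Pi^{<\ell-\delta}\cup B$; shrinking $\delta$ further ensures $\ell-\delta>\rho$, so that $\Pi^{<\rho}\subset\Pi^{<\ell-\delta}$. Applying iterated barycentric subdivision (chain-homotopic to the identity), combined with Lebesgue's covering lemma on each standard simplex $\Delta^{d+i}$ endowed with the open cover pulled back from $\{\Pi^{<\ell-\delta},B\}$, I can replace $\sigma$ by a homologous relative cycle $\sigma'$ each of whose singular simplices has image entirely in $\Pi^{<\ell-\delta}$ or entirely in $B$.

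The heart of the argument is the chain-level evaluation
\[
\omega\smallfrown\sigma'=\sum_\tau\pm\,\omega\big(\tau|_{[v_0,\dots,v_i]}\big)\,\tau|_{[v_i,\dots,v_{d+i}]}.
\]
For every simplex $\tau$ whose image lies in $B$, the front $i$-face $\tau|_{[v_0,\dots,v_i]}$ also has image in $B$, so $\omega$ evaluates to zero and the summand vanishes; for every remaining $\tau$, the back $d$-face $\tau|_{[v_i,\dots,v_{d+i}]}$ has image inside $\Pi^{<\ell-\delta}$. Hence $\omega\smallfrown\sigma'$ is supported in $\Pi^{<\ell-\delta}$. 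Since $\delta\omega=0$ and $\partial\sigma'\in C_{d+i-1}(\Pi^{<\rho})\subset C_{d+i-1}(\Pi^{<\ell-\delta})$, this chain is a relative cycle modulo $\Pi^{<\rho}$, and by naturality of cap product under the forgetful map $H^i(\Pi,B)\to H^i(\Pi)$ it represents $w\smallfrown h$ in $H_d(\Pi,\Pi^{<\rho})$. This forces $\ell(w\smallfrown h)\leq\ell-\delta<\ell=\ell(h)$, contradicting the hypothesis.

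The step I expect to be most delicate is the identification $[\omega\smallfrown\sigma']=w\smallfrown h$ in the precise group $H_d(\Pi,\Pi^{<\rho})$, rather than in a coarser relative group such as $H_d(\Pi,B\cup\Pi^{<\rho})$. The key point is that the forgetful inclusion $C^*(\Pi,B)\hookrightarrow C^*(\Pi)$ is the identity on cochains, so the chain-level cap product computed with $\omega$ viewed either as a relative or an absolute cocycle produces the same chain in $C_d(\Pi)$; combined with the fact that the subdivided chain $\sigma'$ still represents $h$ in $H_{d+i}(\Pi,\Pi^{<\rho})$, this yields the desired identification and closes the argument.
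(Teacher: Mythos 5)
Your argument is correct and follows essentially the same route as the paper's proof: represent $h$ by a relative cycle supported in $\Pi^{<\ell-\delta}\cup\WW(\ell,\epsilon)$ via Lemma~\ref{l:l_critical}, subdivide so each simplex lies in one of the two open sets, use the long exact sequence of the pair to choose a cocycle for $w$ vanishing on simplices contained in $\WW(\ell,\epsilon)$, and conclude that the chain-level cap product represents $w\smallfrown h$ by a cycle supported in a strict sublevel set of $L$, contradicting $\ell(w\smallfrown h)=\ell(h)$. Your extra care with the chain-level cap formula and the identification of $[\omega\smallfrown\sigma']$ in $H_d(\Pi,\Pi^{<\rho})$ merely makes explicit steps the paper leaves implicit.
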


\begin{proof}
Let $\epsilon>0$ be given, and set $\ell:=\ell(h)$ and $\WW:=\WW(\ell,\epsilon)$. By Lemma~\ref{l:l_critical}, $h$ can be represented by a relative cycle $\sigma$ such that $\supp(\sigma)\subset\WW\cup\Pi^{<\ell}$.
By applying sufficiently many barycentric subdivisions to the singular simplexes in $\sigma$, we can assume that the relative cycle decomposes as $\sigma=\sigma'+\sigma''$, where $\sigma'$ and $\sigma''$ are chains with $\supp(\sigma')\subset\Pi^{<\ell}$ and $\supp(\sigma'')\subset\WW$. Let $w\in H^{*}(\Pi)$ be a cohomology class such that $w|_{\WW}=0$ in $H^{*}(\WW)$ and $w\smallfrown h\neq0$. The cohomology long exact sequence of the pair $\WW\subset\Pi$ provides a relative cocycle $\mu$ representing $w$ that vanishes on all singular simplexes contained in $\WW$. This implies
\begin{align*}
w\smallfrown h=[\mu\smallfrown(\sigma'+\sigma'')]=[\mu\smallfrown\sigma'].
\end{align*}
Namely, $w\smallfrown h$ is represented by the relative cycle $\mu\smallfrown\sigma'$ whose support is contained in the sublevel set $\Pi^{<\ell}$, which implies that $\ell(w\smallfrown h)<\ell$.
\end{proof}

\subsection{Topology of the space of embedded circles on the 2-sphere}
Once the results of the previous subsection are established, the proofs of points (i) and (ii) in Theorem~\ref{t:LS} are  analogous to ones of the Riemannian case in \cite{Mazzucchelli:2018ek}. In this subsection, we provide the arguments for the reader's convenience. We will adopt the notation of the previous section, with $M$ equal to the unit sphere $S^2\subset\R^3$. It will be crucial to consider the singular homology $H_*$ with coefficients in $\Z_2=\Z/2\Z$, and therefore we will specify the coefficients in the notation.

We first recall, from \cite{Ballmann:1978rw,Mazzucchelli:2018ek}, some basic information concerning the topology of the space of its unparametrized embedded loops $\Pi$. It is convenient to slightly enlarge this space as follows: we denote by $\Pi_0$ the space of constant loops on $S^2$, and set $\overline\Pi:=\Pi\cup\Pi_0$. We endow $\overline\Pi$ with the quotient $C^\infty$-topology as a subspace of $C^\infty(S^1,M)/\Diff(S^1)$. The relevant topology of $\overline\Pi$, at least for what concerns the application to Theorem~\ref{t:LS}, is provided by the subspace of round circles. More precisely, let 
\begin{align*}
E=\big\{([x],\lambda x)\in\RP^2\times\R^3\ |\ x\in S^2,\ \lambda\in[-1,1]\big\}.
\end{align*}
Namely, $E$ is the total space of the canonical line bundle $\pi:E\to\RP^2$ with fiber $[-1,1]$. We consider the embedding 
\begin{align*}
 \iota:E\to\overline\Pi, \qquad\iota(e)=\gamma_{e},
\end{align*}
where, if $e=([x],y)$,  $\gamma_{e}\in\overline\Pi$ is the (possibly constant) loop in the intersection of $S^2$ with the affine plane orthogonal to $x$ and passing through $y$. The fundamental group of this space is 
\[\pi_1(E)\cong\pi_1(\RP^2)\cong\Z_2.\] 
Its cohomology ring with $\Z_2$ coefficient is given by 
\[H^*(E;\Z_2)=\Z_2[u]/(u^3),\] 
where $u$ is the generator of $H^1(E;\Z_2)\cong H^1(\RP^2;\Z_2)\cong\Z_2$. Moreover, by the Thom isomorphism theorem, 
\[H^*(E,\partial E;\Z_2)\cong H^{*-1}(E;\Z_2)=\langle v,v\smallsmile u,v\smallsmile u^2\rangle,\]
where $v\in H^1(E,\partial E)$ denotes the Thom class of the bundle $\pi:E\to\RP^2$. Since we work with $\Z_2$ coefficients, the homology is simply the dual of the cohomology, and in particular there exists $k_3\in H_3(E,\partial E;\Z_2)$ such that $(v\smallsmile u^2)k_3=1$. Therefore, we also have the classes $k_2:=u\smallfrown k_3$ and $k_1:=u \smallfrown k_2$, and overall we have 
\[H_*(E,\partial E;\Z_2)=\langle k_1,k_2,k_3\rangle.\]

\begin{lem}
\label{l:injectivity_pi1}
The map $\iota$ induces injective homomorphisms
\begin{align*}
\iota_*:\pi_1(E)\hookrightarrow\pi_1(\overline\Pi),
\qquad
\iota_*:H_1(E;\Z_2)\hookrightarrow H_1(\overline\Pi;\Z_2),
\end{align*}
and a surjective homomorphism
\begin{align*}
 \iota^*:H^1(\overline\Pi;\Z_2)\epi H^1(E;\Z_2).
\end{align*}
\end{lem}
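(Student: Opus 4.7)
The plan is to construct a cohomology class $\omega\in H^1(\overline\Pi;\Z_2)$ whose pullback satisfies $\iota^*\omega=u$, the generator of $H^1(E;\Z_2)=\Z_2$. This single computation yields all three claims. Indeed, surjectivity of $\iota^*$ on $H^1(-;\Z_2)$ is the third assertion; $\Z_2$-duality (universal coefficients, since $\Z_2$ is a field) converts it into injectivity of $\iota_*$ on $H_1(-;\Z_2)$; and naturality of the Hurewicz homomorphism, which is an isomorphism $\pi_1(E)=\Z_2\xrightarrow{\sim}H_1(E;\Z_2)$ since $\pi_1(E)=\pi_1(\RP^2)$ is abelian, forces injectivity of $\iota_*$ on $\pi_1$ as well.

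The class $\omega$ will be the classifying class of a topological double cover $p:\widetilde{\overline\Pi}\to\overline\Pi$. Its restriction to $\Pi\subset\overline\Pi$ is the natural $\Z_2$-cover
\[
\widetilde\Pi:=\Emb(S^1,S^2)/\Diff^+(S^1)\longrightarrow\Emb(S^1,S^2)/\Diff(S^1)=\Pi
\]
of oriented unparametrized embedded loops over unoriented ones, whose deck transformation reverses orientation. To extend this $\Z_2$-cover across the stratum $\Pi_0\cong S^2$ of constant loops, observe that near any constant loop $x\in\Pi_0$ every sufficiently $C^\infty$-close non-constant embedded loop $\gamma$ bounds a small Jordan disk $D_\gamma$ near $x$, and the ambient orientation of $S^2$ canonically orients $\gamma$ by requiring $D_\gamma$ to lie on its left. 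Declaring the two sheets of $\widetilde{\overline\Pi}$ over a small neighborhood of $x$ to correspond to ``canonical'' versus ``opposite'' orientation, so that $p^{-1}(x)=\{x_+,x_-\}$, and checking continuity in a local chart at $x$ (e.g.\ geodesic normal coordinates, which realize loops near $x$ as small embedded loops in $\R^2$) gives a bona fide topological double cover of $\overline\Pi$. Verifying continuity of this gluing at the stratum of constant loops is the only non-formal step in the argument.

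The main computation identifies the pullback $\iota^*p:\iota^*\widetilde{\overline\Pi}\to E$. Its total space is the space of (possibly degenerate) oriented round circles in $S^2$, naturally parametrized by $S^2\times[-1,1]$ via $(x,c)\mapsto\{z\in S^2\,:\,z\cdot x=c\}$ oriented counter-clockwise from the $+x$ direction; the covering map is $(x,c)\mapsto([x],cx)\in E$, with deck transformation $(x,c)\mapsto(-x,-c)$. Restricting to the zero section $\{c=0\}\subset E$ the cover becomes the antipodal double cover $S^2\to\RP^2$, which is non-trivial. Since $E$ deformation retracts to its zero section $\RP^2$, the whole pullback cover is non-trivial, and therefore $\iota^*\omega$ equals the generator $u\in H^1(E;\Z_2)$. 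This completes the argument, all remaining claims being formal consequences of $\iota^*\omega=u$ by the duality and Hurewicz considerations of the first paragraph.
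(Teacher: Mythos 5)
Your proof is correct and is essentially the paper's argument in different packaging: your orientation double cover of $\overline\Pi$ is canonically isomorphic (send an oriented loop to the complementary disk lying on its left) to the covering $p\colon C\to\overline\Pi$ by filling disks that the paper uses, extended over constant loops in the same way, and your key computation -- that the pullback under $\iota$ restricts to the antipodal cover over the zero section of $E$ -- is exactly the paper's verification that the monodromy homomorphism $A\colon\pi_1(\overline\Pi)\to\Z_2$ satisfies $A\circ\iota_*\neq0$, the remaining deductions (you pass from $H^1$ to $H_1$ to $\pi_1$ by field duality and Hurewicz naturality, the paper goes in the opposite direction) being equivalent bookkeeping. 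The continuity of the cover across the constant-loop stratum, which you flag as the one unverified step, is likewise asserted without detailed proof in the paper (``the obvious topology''), so your write-up matches its level of rigor.
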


\begin{proof}
We introduce the following double covering map $p:C\to\overline\Pi$. The preimage of an embedded circle $\gamma\in\Pi$ is the two-element set $p^{-1}(\gamma)=\{D_1,D_2\}$, where $D_1$ and $D_2$ are the connected components of $S^2\setminus\gamma$; namely, $p^{-1}(\Pi)$ is the space of the interiors of embedded compact disks in $S^2$. The preimage of a constant $\gamma\in\Pi_0$ is the two-element set $p^{-1}(\gamma)=\{\varnothing_\gamma,S^2\setminus\gamma\}$; intuitively, $\varnothing_\gamma$ is the ``empty filling disk'' of the constant $\gamma$. The space $C$, endowed with the obvious topology, makes $p:C\to\overline\Pi$ a covering map.

Notice that both $E$ and $\overline\Pi$ are path-connected. We fix an arbitrary base-point $e_0=([x_0],0)\in E$ and a corresponding base-point $\gamma_{e_0}:=\iota(e_0)\in\Pi$ for the fundamental groups of $E$ and $\overline\Pi$ respectively. We define a homomorphism
\begin{align*}
 A:\pi_1(\overline\Pi)\to\Z_2
\end{align*}
as follows. For any continuous path $\mu:[0,1]\to\overline\Pi$ such that $\mu(0)=\mu(1)=\gamma_{e_0}$, we consider an arbitrary continuous lift $\widetilde\mu:[0,1]\to C$, i.e.~$p\circ\widetilde\mu=\mu$. The homomorphism $A$ is defined by
\begin{align*}
 A([\mu])=
 \left\{  
  \begin{array}{@{}cc}
    0, & \mbox{if }\widetilde\mu(0)=\widetilde\mu(1),\vspace{5pt} \\ 
    1, & \mbox{if }\widetilde\mu(0)\neq\widetilde\mu(1). \\ 
  \end{array}
 \right.
\end{align*}
For the generator $k\in\pi_1(E)\cong\Z_2$, we readily see that $A\circ\iota_*(k)=1$. Namely, the composition 
\[\displaystyle A\circ\iota_*:\pi_1(E)\toup^{\cong}\Z_2\] 
is an isomorphism. In particular, $\iota$ induces an injective homomorphism
\begin{align*}
 \iota_*:\pi_1(E)\hookrightarrow \pi_1(\overline\Pi).
\end{align*}
Since $\Z_2$ is abelian, the commutator subgroup $[\pi_1(\overline\Pi),\pi_1(\overline\Pi)]$ is contained in the kernel of $A$. This readily implies that $\iota$ induces an injective homomorphism
\begin{align*}
 \iota_*:H_1(E;\Z_2)\hookrightarrow H_1(\overline\Pi;\Z_2).
\end{align*}
Indeed, we have
\begin{align*}
H_1(E;\Z_2)\cong \pi_1(E,e_0)\cong\Z_2,
\qquad
H_1(\overline\Pi;\Z_2)\cong\frac{\pi_1(\overline\Pi)}{[\pi_1(\overline\Pi),\pi_1(\overline\Pi)]}\otimes\Z_2.
\end{align*}
Assume by contradiction that $\iota_*(k)=0$ in $H_1(\overline\Pi;\Z_2)$, where $k$ is the generator of $\pi_1(E,e_0)\cong H_1(E;\Z_2)$. This is equivalent to $\iota_*(k)=a*a*b$ in $\pi_1(\overline\Pi)$ for some $a\in\pi_1(\overline\Pi)$ and $b\in [\pi_1(\overline\Pi),\pi_1(\overline\Pi)]$; here, $*$ denotes the group multiplication in $\pi_1(\overline\Pi)$. However, this implies $A\circ\iota_*(k)=2A(a)+A(b)=0$, contradicting the fact that $A\circ\iota_*$ is an isomorphism.

Finally, since the homomorphism $\iota_*:H_1(E;\Z_2)\hookrightarrow H_1(\overline\Pi;\Z_2)$ is injective, its dual $\iota^*:H^1(\overline\Pi;\Z_2)\epi H^1(E;\Z_2)$ is surjective.  
\end{proof}

We fix, once for all, a cohomology class $w\in H^1(\overline\Pi;\Z_2)$ such that $\iota^*w=u$.
Since $\partial E=\Pi_0\cong S^2$ is simply connected, the long exact sequence of homology groups readily implies that $H_1(E,\partial E;\Z_2)\cong H_1(E;\Z_2)$ and $H_1(\overline\Pi,\Pi_0;\Z_2)\cong H_1(\overline\Pi;\Z_2)$.
This, together with Lemma~\ref{l:injectivity_pi1}, implies that $\iota$ induces an injective homomorphism of relative homology groups \[\iota_*:H_1(E,\partial E;\Z_2)\hookrightarrow H_1(\overline\Pi,\Pi_0;\Z_2).\] Therefore, $\iota_*k_2$ and $\iota_*k_3$ are both non-zero in $H_*(\overline\Pi,\Pi_0;\Z_2)$, since
\begin{align*}
w^2\smallfrown \iota_*k_3
=
w\smallfrown\iota_*(u\smallfrown k_3)
=
w\smallfrown\iota_*k_2
=
\iota_*(u\smallfrown k_2)
=
\iota_*k_1\neq0.
\end{align*}

Now, let us get rid of the space of constant curves $\Pi_0$. We recall that the systole $\sys(S^2,F)$ is the length of the shortest closed geodesic of $(S^2,F)$.

\begin{lem}
\label{l:remove_bullet}
For all $\rho\in(0,\sys(S^2,F))$, the inclusion $\Pi_0\subset\overline\Pi{}^{<\rho}$ is a homotopy equivalence. Therefore, the inclusions induce the homology isomorphisms
\begin{align*}
H_*(\overline\Pi,\Pi_0;\Z_2)
\toup^{j_*}_{\cong}
H_*(\overline\Pi,\overline\Pi{}^{<\rho};\Z_2)
\otup^{l_*}_{\cong}
H_*(\Pi,\Pi^{<\rho};\Z_2).
\end{align*}
\end{lem}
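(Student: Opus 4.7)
The plan is to construct a strong deformation retract of $\overline\Pi^{<\rho}$ onto $\Pi_0$ using the curve shortening semi-flow $\phi_t$ of Theorem~\ref{t:properties}, and then derive the two homology isomorphisms from standard algebraic topology.

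First I would show that for any $\gamma_0 \in \Emb(S^1, M)$ with $L(\gamma_0) < \rho$, the trajectory $\phi_t(\gamma_0)$ shrinks to a single point of $M$. By Theorem~\ref{t:properties}(iii) the length $L(\phi_t(\gamma_0))$ is strictly decreasing (since the initial curve cannot be a closed geodesic) and bounded in $(0, \rho)$, so the limit $\ell_{\gamma_0}$ exists. If one had $\ell_{\gamma_0} > 0$, then Theorem~\ref{t:properties}(iv) would force $\tau_{\gamma_0} = \infty$, and the length-drop formula~\eqref{e:length_decreasing} would produce a sequence $t_n \to \infty$ along which $\|\nabla L(\phi_{t_n}(\gamma_0))\|_{L^2} \to 0$; by parabolic regularity and an argument parallel to Lemma~\ref{l:neighborhoods}, a subsequence would converge in $C^1$ to a closed geodesic of length $\leq \rho < \sys(S^2, F)$, a contradiction. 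Hence $\ell_{\gamma_0} = 0$, and the Grayson--Angenent--Oaks convergence-to-a-round-point theory provides a continuous limit map $\gamma_0 \mapsto p(\gamma_0) \in M$ with $\phi_t(\gamma_0) \to p(\gamma_0)$ in $C^0$ as $t \to \tau_{\gamma_0}^-$. I would then assemble
\[
H : [0, 1] \times \overline\Pi^{<\rho} \to \overline\Pi^{<\rho}
\]
by rescaling the flow time to $[0, 1)$, setting $H(1, \gamma) = p(\gamma) \in \Pi_0$, and fixing $\Pi_0$ pointwise. The $\Diff(S^1)$-equivariance of $\phi_t$ from Theorem~\ref{t:properties}(ii) lets $H$ descend to the quotient, producing a strong deformation retract of $\overline\Pi^{<\rho}$ onto $\Pi_0$.

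Once the inclusion $\Pi_0 \hookrightarrow \overline\Pi^{<\rho}$ is a homotopy equivalence, the isomorphism $j_*$ follows immediately from the five-lemma applied to the long exact sequences of the pairs $(\overline\Pi, \Pi_0)$ and $(\overline\Pi, \overline\Pi^{<\rho})$. The isomorphism $l_*$ follows from excision: $\Pi_0$ is a closed subspace of $\overline\Pi$ lying in the interior of $\overline\Pi^{<\rho}$, because any sufficiently small $C^\infty$-neighborhood of a constant loop consists of constants together with embedded loops of arbitrarily small length (in particular $<\rho$). Excising $\Pi_0$ from both spaces then yields the pair $(\Pi, \Pi^{<\rho})$ and the desired isomorphism.

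The main obstacle will be the continuity of the blow-up data $\gamma \mapsto (\tau_\gamma, p(\gamma))$ at the singular time, i.e., showing that the trajectory converges in $C^0$ to a point depending continuously on the initial loop and that the time parameter admits a continuous rescaling across $s = 1$. This rests on the convergence-to-a-round-point analysis of Grayson extended by Angenent--Oaks to the reversible Finsler setting, and is the deepest input used.
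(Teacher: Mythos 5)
Your reduction of the two homology isomorphisms to the statement that $\Pi_0\subset\overline\Pi{}^{<\rho}$ is a homotopy equivalence (long exact sequence plus excision of the closed set $\Pi_0$ inside the open set $\overline\Pi{}^{<\rho}$) matches the paper. The gap is in how you produce the homotopy equivalence. Your deformation retraction runs the unslowed flow $\phi_t$ all the way to extinction and needs the map $\gamma\mapsto(\tau_\gamma,p(\gamma))$, together with the rescaled homotopy $H$, to be continuous up to and across the singular time $s=1$ (including at points of $\{s\}\times\Pi_0$, where $\phi_t$ is not even defined and nearby embedded loops have extinction times tending to $0$). None of the results you can cite provide this: Theorem~\ref{t:properties} only gives continuity of $\phi$ on the open set $\UU$, i.e.\ strictly before the extinction time; Oaks' theorem gives that a non-converging trajectory shrinks to a point in finite time, but says nothing about continuous dependence of the extinction time or extinction point on the initial curve; and a ``convergence to a round point'' statement in the Grayson sense is not available for the anisotropic flows of Angenent--Oaks used here. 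So the step you flag as ``the deepest input'' is in fact an unproved (and nontrivial) assertion, not a citable one — it would require its own comparison/avoidance and localization arguments.

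The paper avoids exactly this difficulty by a two-stage argument that never follows the flow into the singularity. First, since $\rho<\sys(S^2,F)$ there are no simple closed geodesics with length in $[\rho_1,\rho]$ for any $\rho_1\in(0,\rho)$, so Theorem~\ref{t:curve_shortening}(v) (for the slowed-down semi-flow $\psi_t$, with $\rho_0<\rho_1/2$) gives a continuous map $\gamma\mapsto\psi_{\tau(\gamma)}(\gamma)$ which is a homotopy inverse of the inclusion $\overline\Pi{}^{<\rho_1}\subset\overline\Pi{}^{<\rho}$; the semi-flow is only used on a compact range of lengths where it is globally defined and continuous. Second, for $\rho_1$ sufficiently small, very short loops are contracted to points by the completely explicit homotopy $r_t(\gamma)(u)=\exp_{\widehat\gamma}\big((1-t)\exp_{\widehat\gamma}^{-1}(\gamma(u))\big)$, where $\exp$ is the exponential map of the round metric and $\widehat\gamma$ is the radial projection of the average of $\gamma$; continuity, the behavior at constant loops, and the (bounded) length growth are all elementary there. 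If you want to keep your flow-to-extinction strategy, you must supply the continuity of the extinction data yourself; otherwise the paper's route is the way to make the lemma rigorous with the tools actually established.
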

\begin{proof}
We fix $\rho=\rho_2\in(0,\sys(S^2,F))$. We claim that, for any $\rho_1\in(0,\rho_2)$, the inclusion 
$\overline\Pi{}^{<\rho_1} \subset \overline\Pi{}^{<\rho_2}$ is a homotopy equivalence. Indeed, since there are no simple closed geodesics of $(S^2,F)$ with length in $[\rho_1,\rho_2]$, Theorem~\ref{t:curve_shortening}(v) with choice of parameter $\rho_0\in(0,\rho_1/2)$ implies that there exists a continuous function $\tau:\overline\Pi{}^{<\rho_2}\to(0,\infty)$ and a continuous map
\begin{align*}
\kappa:\overline\Pi{}^{<\rho_2}\to\overline\Pi{}^{<\rho_1},
\qquad
\kappa(\gamma):=\psi_{\tau(\gamma)}(\gamma).
\end{align*}
The map $\kappa$ is a homotopy inverse of the inclusion $\overline\Pi{}^{<\rho_1} \subset \overline\Pi{}^{<\rho_2}$.

We consider $S^2$ as the unit sphere in $\R^3$, equipped with its round metric $g_0$ induced by the Euclidean metric of $\R^3$. We denote by $\|\cdot\|$ the Euclidean norm on $\R^3$, and by $\exp_x$ the exponential maps of $(S^2,g_0)$.
We fix a sufficiently large constant $a\geq1$ so that
\begin{align*}
a^{-1}\|v\|\leq F(x,v) \leq a\|v\|,
\qquad\forall (x,v)\in\Tan S^2.
\end{align*}
If $\rho_1>0$ is small enough, each $\gamma\in\overline\Pi{}^{<\rho_2}$ has average outside the origin; namely, if we parametrize $\gamma$ with constant speed $F(\gamma,\dot\gamma)\equiv L(\gamma)$, we have
\begin{align*}
\int_{S^1} \gamma(u)\,\diff u\neq0.
\end{align*}
We denote by $\pi:\R^3\setminus\{0\}\to S^2$ the radial projection $\pi(x)=x/\|x\|$, and set
\begin{align*}
\widehat\gamma
:=
\pi\left(\int_{S^1} \gamma(u)\,\diff u\right)\in S^2. 
\end{align*}
Notice that 
\begin{align}
\label{e:bound_distance_barycenter}
 \max_{u\in S^1}\|\gamma(u)-\widehat\gamma\|\leq a L(\gamma)< a\rho_1.
\end{align}
For each $x\in S^2$, we set 
\[B_x:=\big\{v\in T_xS^2\ \big|\ \|v\|<\pi\big\},
\qquad
U_x:=\exp_x(B_x)\subset S^2.\]
From now on we will restrict the exponential map of $(S^2,g_0)$ as a diffeomorphism of the form $\exp_x:B_x\to U_x$. We require $\rho_1<\pi/a$ so that, by~\eqref{e:bound_distance_barycenter}, we have $\gamma\subset U_{\widehat\gamma}$ whenever $L(\gamma)<\rho_1$.

We  define the continuous homotopy
\begin{align*}
r_t:\overline\Pi{}^{<\rho_1}\to\overline\Pi,
\qquad
r_t(\gamma)(u)=\exp_{\widehat\gamma}\big( (1-t) \exp_{\widehat\gamma}^{-1}(\gamma(u)) \big).
\end{align*}
Notice that the time-1 map is a retraction \[r_1:\overline\Pi{}^{<\rho_1}\to\Pi_0.\] 
Moreover, if $b>0$ is a constant larger than $\|\diff\exp_x(v)\|$ and $\|\diff\exp_x^{-1}(y)\|$ for all $x\in S^2$, $v\in B_x$, and $y\in U_x$, we have
\begin{align*}
L(r_t(\gamma))
\leq
a^2 b^2
\int_{S^1} \|\dot\gamma(u)\|\,\diff u
\leq
a^2 b^2  L(\gamma)
<
a^2b^2\rho_1.
\end{align*}
We require $\rho_1<\rho_2 a^{-2}b^{-2}$, so that every $r_t$ is a map of the form \[r_t:\overline\Pi{}^{<\rho_1}\to\overline\Pi{}^{<\rho_2}.\]

Overall, this implies that the inclusion $\Pi_0\subset\overline\Pi{}^{<\rho}$ is a homotopy equivalence. The homology long exact sequence of the triple $\Pi_0\subset\overline\Pi{}^{<\rho}\subset\overline\Pi$ readily implies that $j_*$ is an isomorphism. Finally, the excision property implies that $l_*$ is an isomorphism as well.
\end{proof}

We consider the isomorphisms $j_*$ and $l_*$ provided by Lemma~\ref{l:remove_bullet}, and define the non-zero relative homology classes
\begin{align}
\label{e:h_i}
h_i:=l_*^{-1}j_*\iota_* k_i\in H_*(\Pi,\Pi^{<\rho};\Z_2),
\qquad i=1,2,3.
\end{align}
Notice that
\begin{align*}
 w|_{\Pi} \smallfrown h_{i+1} = h_i.
\end{align*}
We denote by $E_0\subset E$ the zero-section of the line bundle $\pi:E\to\RP^2$. Notice that $\iota$ restricts as a map of the form 
$\iota_0:=\iota|_{E_0}:E_0\to\Pi$.

\begin{lem}
\label{l:z}
For each $z\in H^2(\Pi;\Z_2)$ such that $\iota_0^*z\neq0$ in $H^2(E_0;\Z_2)$, we have \[z\smallfrown h_3= h_1.\]
\end{lem}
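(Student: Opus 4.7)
The plan is to reduce the statement to the identity $u^2 \smallfrown k_3 = k_1$ inside $(E,\partial E)$ by repeated application of the naturality of the cap product. The definition $h_i=l_*^{-1}j_*\iota_*k_i$ is precisely set up so that cohomology classes on $\Pi$ which pull back nicely under $\iota$ can be transported to cohomology operations on $E$, where the Thom-isomorphism description of $H_*(E,\partial E;\Z_2)$ gives an explicit answer.

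First I would identify $\iota^*z\in H^2(E;\Z_2)$. Since $\pi\colon E\to\RP^2$ is a line-bundle (with fiber $[-1,1]$), the zero-section inclusion $E_0\hookrightarrow E$ is a deformation retract, hence a homotopy equivalence; it therefore induces an isomorphism $H^*(E;\Z_2)\xrightarrow{\cong} H^*(E_0;\Z_2)$ under which $\iota^*z$ maps to $\iota_0^*z$. Since $H^2(E_0;\Z_2)\cong H^2(\RP^2;\Z_2)\cong\Z_2$ is generated by $u^2|_{E_0}$, the hypothesis $\iota_0^*z\neq 0$ forces
\[
\iota^*z=u^2\quad\text{in }H^2(E;\Z_2).
\]

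Next I would apply naturality of the cap product to the map of pairs $\iota\colon (E,\partial E)\to(\overline\Pi,\Pi_0)$. Since $z$ is an absolute cohomology class on $\overline\Pi$, this gives
\[
z\smallfrown \iota_*k_3 \;=\; \iota_*(\iota^*z\smallfrown k_3) \;=\; \iota_*(u^2\smallfrown k_3) \;=\; \iota_*\bigl(u\smallfrown(u\smallfrown k_3)\bigr) \;=\; \iota_*(u\smallfrown k_2) \;=\; \iota_*k_1
\]
in $H_1(\overline\Pi,\Pi_0;\Z_2)$, using associativity of cap products together with the definitions of $k_2$ and $k_1$.

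Finally I would transport this identity into $H_*(\Pi,\Pi^{<\rho};\Z_2)$ using the two isomorphisms of Lemma~\ref{l:remove_bullet}. Naturality with respect to the inclusion of pairs $j\colon(\overline\Pi,\Pi_0)\hookrightarrow(\overline\Pi,\overline\Pi^{<\rho})$ yields $z\smallfrown j_*\iota_*k_3 = j_*\iota_*k_1$; and naturality with respect to $l\colon(\Pi,\Pi^{<\rho})\hookrightarrow(\overline\Pi,\overline\Pi^{<\rho})$ yields
\[
l_*(z\smallfrown h_3) \;=\; z\smallfrown l_*h_3 \;=\; z\smallfrown j_*\iota_*k_3 \;=\; j_*\iota_*k_1 \;=\; l_*h_1.
\]
Since $l_*$ is an isomorphism by Lemma~\ref{l:remove_bullet}, the desired equality $z\smallfrown h_3=h_1$ follows. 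I do not foresee a genuine obstacle: the argument is essentially diagram-chasing with naturality of the cap product, and the only nontrivial ingredient is the homotopy-theoretic observation that $\iota^*z$ is determined by $\iota_0^*z$ through the deformation retraction $E\simeq E_0$.
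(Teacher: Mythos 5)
There is a genuine gap at the very first step, and it propagates through the whole argument: you treat $z$ as ``an absolute cohomology class on $\overline\Pi$'' and pull it back along $\iota\colon E\to\overline\Pi$, but the hypothesis only gives $z\in H^2(\Pi;\Z_2)$, where $\Pi$ is the space of \emph{embedded} (hence non-constant) loops. The map $\iota$ sends $\partial E$ to constant loops in $\Pi_0\subset\overline\Pi\setminus\Pi$, so $\iota^*z$ is simply not defined, and neither is the cap product $z\smallfrown l_*h_3$ in $H_*(\overline\Pi,\overline\Pi^{<\rho};\Z_2)$ that you use in the last display; the naturality formula $l_*(z\smallfrown h_3)=z\smallfrown l_*h_3$ would require a class on $\overline\Pi$ restricting to $z$, and nothing guarantees that $z$ extends over the constant loops. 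This is not a cosmetic issue: in the application of the lemma, $z=\pr_*\ev^*m$ is built from a circle bundle $P\to\Pi$ and an evaluation map to $\PT S^2$, neither of which makes sense over $\Pi_0$, so one cannot simply assume extendability.

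The paper's proof circumvents exactly this point. For $r\in(0,1)$ it considers $E_r=\{([x],\lambda x)\ |\ \lambda\in[-r,r]\}$ and chooses $r$ close to $1$ so that $\iota(\partial E_r)\subset\Pi^{<\rho}$; since $r<1$, the restriction $\iota_r=\iota|_{E_r}$ maps $E_r$ into $\Pi$ (all the circles are non-constant), so $\iota_r^*z$ \emph{is} defined. By deformation and excision, $H_*(E,\partial E;\Z_2)\cong H_*(E_r,\partial E_r;\Z_2)$, the classes $k_i$ correspond to classes $k_i'$ with $k_1'=u|_{E_r}\smallfrown k_3'$, and one checks $h_i=(\iota_r)_*k_i'$. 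Then your cap-product computation goes through verbatim, but applied to the map of pairs $\iota_r\colon(E_r,\partial E_r)\to(\Pi,\Pi^{<\rho})$: the hypothesis $\iota_0^*z\neq0$ gives $\iota_r^*z\smallfrown k_3'=k_1'$ (using that $E_0\subset E_r$ is a homotopy equivalence, as you correctly observed for $E_0\subset E$), and naturality yields $z\smallfrown h_3=(\iota_r)_*(\iota_r^*z\smallfrown k_3')=(\iota_r)_*k_1'=h_1$. So your overall strategy (reduce to $u^2\smallfrown k_3=k_1$ via naturality of the cap product) is the right one, but it must be implemented on $E_r$ inside $\Pi$ rather than on $E$ inside $\overline\Pi$.
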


\begin{proof}
For each $r\in[0,1]$, we consider the subset
\begin{align*}
 E_r=\big\{ ([x],\lambda x)\in E\ |\ \lambda\in[-r,r] \big\}.\subset E,
\end{align*}
Notice that this notation agrees with the definition of $E_0$ as the zero-section of the line bundle $\pi:E\to\RP^2$. We fix $r\in(0,1)$ sufficiently close to $1$ so that $\iota(\partial E_r)\subset\Pi^{<\rho}$.
By deformation and excision, we have that the inclusions induce isomorphisms
\begin{align*}
 H_*(E,\partial E;\Z_2)\toup^{\cong} H_*(E,E\setminus E_0;\Z_2)\otup^{\cong} H_*(E_r,E_r\setminus E_0;\Z_2)\otup^{\cong} H_*(E_r,\partial E_r;\Z_2).
\end{align*}
We denote by $k_i'\in H_*(E_r,\partial E_r;\Z_2)$ the image of $k_i\in H_*(E,\partial E;\Z_2)$ under the composition of the above isomorphisms. Notice that $k_1'=u|_{E_r}\smallfrown k_3'$.
The restriction $\iota_r=\iota|_{E_r}:E_r\to\Pi$ induces a homomorphism 
\[(\iota_r)_*:H_*(E_r,\partial E_r;\Z_2)\to H_*(\Pi,\Pi^{<\rho};\Z_2),\] 
which allows to express the homology classes~\eqref{e:h_i} as $h_i=(\iota_r)_*k_i'$.
Since the inclusion $E_0\subset E_r$ is a homotopy equivalence, a cohomology class $z\in H^2(\Pi;\Z_2)$ satisfies $\iota_0^*z\neq0$ if and only if $\iota_r^*z\neq0$, and thus if and only if $\iota_r^*z\smallfrown k_3'=k_1'$. Therefore, if this is the case, we have
\[
z\smallfrown h_3= z\smallfrown (\iota_r)_*k_3'= (\iota_r)_*(\iota_r^*z\smallfrown k_3') = (\iota_r)_*k_1'=h_1.
\qedhere
\]

\end{proof}

We set
\begin{align}
\label{e:ell_h_i} 
\ell_i:=\ell(h_i),\qquad i=1,2,3.
\end{align}
Since every $\ell_i$ is the length of a simple closed geodesic of $(S^2,F)$, if the simple length spectrum $\sigmas(S^2,F)$ is a singleton we have $\ell_1=\ell_2=\ell_3$. In this case Theorem~\ref{t:LS}(i) is a consequence of the following statement.

\begin{thm}
If $\ell_1=\ell_2=\ell_3$, then $(S^2,F)$ is simple Zoll.
\end{thm}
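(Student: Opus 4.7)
The strategy is to combine the Lusternik–Schnirelmann min-max principle of Theorem~\ref{t:LS_minmax} with the neighborhood compactness of Lemma~\ref{l:neighborhoods} and the topological rigidity of embedded loops on $S^2$. Denote $\ell:=\ell_1=\ell_2=\ell_3$ and let $K_\ell\subset\PT S^2$ be the set of projectivized directions tangent to some simple closed geodesic of length $\ell$. By Arzelà–Ascoli combined with the simplicity criterion used at the end of the proof of Lemma~\ref{l:neighborhoods}, $K_\ell$ is closed. Since two unit-speed geodesics agreeing at one initial direction coincide, $(S^2,F)$ is simple Zoll if and only if $K_\ell=\PT S^2$, so the goal reduces to showing $K_\ell=\PT S^2$.

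The first step is cohomological. The identity $h_i=w|_\Pi\smallfrown h_{i+1}$ from after \eqref{e:h_i} yields $w\smallfrown h_2=h_1\neq 0$ and $w^2\smallfrown h_3=h_1\neq 0$, with min-max values $\ell(w\smallfrown h_2)=\ell_1=\ell_2$ and $\ell(w^2\smallfrown h_3)=\ell_1=\ell_3$ by hypothesis. Applying Theorem~\ref{t:LS_minmax} twice gives both $w|_{\WW(\ell,\epsilon)}\neq 0$ and $w^2|_{\WW(\ell,\epsilon)}\neq 0$ in $H^*(\WW(\ell,\epsilon);\Z_2)$ for every $\epsilon>0$.

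Now argue $K_\ell=\PT S^2$ by contradiction, in two substeps. Substep A (every point is covered): if some $x_0\in S^2$ lies on no simple closed geodesic of length $\ell$, then the entire fiber $\PT_{x_0}S^2$ is disjoint from $K_\ell$, so Lemma~\ref{l:neighborhoods} applied with the compact $K:=\PT_{x_0}S^2$ forces $\WW(\ell,\epsilon)\subset\Pi_{\neq x_0}$ for small $\epsilon$, where $\Pi_{\neq x_0}\subset\Pi$ is the open set of circles avoiding $x_0$. By Jordan–Schoenflies, every embedded loop in $S^2\setminus\{x_0\}\cong\R^2$ has a canonical bounded disk complement; this yields a continuous section over $\Pi_{\neq x_0}$ of the double cover $C\to\overline\Pi$ from Lemma~\ref{l:injectivity_pi1}, hence $w|_{\Pi_{\neq x_0}}=0$, contradicting $w|_{\WW(\ell,\epsilon)}\neq 0$. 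Substep B (every direction is tangent): if some $[v_0]\in\PT S^2\setminus K_\ell$, choose a compact neighborhood $K\subset\PT S^2\setminus K_\ell$ of $[v_0]$; by Lemma~\ref{l:neighborhoods}, $\WW(\ell,\epsilon)\subset\Pi_K$, the open subset of circles with all tangent directions outside $K$. The target contradiction is $w^2|_{\Pi_K}=0$.

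The principal obstacle lies in Substep B: unlike the vanishing of $w$ on $\Pi_{\neq x_0}$, which follows from the canonical inside/outside dichotomy of loops in $\R^2$, the vanishing of $w^2$ on $\Pi_K$ for a mere cone $K$ of tangent directions is subtler. The natural approach is to pull back via the embedding $\iota_0:E_0\cong\RP^2\hookrightarrow\Pi$ of round great circles, on which $\iota_0^*w^2=u^2$ generates $H^2(\RP^2;\Z_2)$, and to construct a deformation retraction of $\Pi_K$ onto $\iota_0(\RP^2)\cap\Pi_K$ via a Birkhoff/Ballmann-type curve shortening semi-flow for the auxiliary round metric $g$ (whose simple closed geodesics are precisely the great circles). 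Since a non-empty open cone $K\subset\PT S^2$ generically meets the tangent $\RP^1$ of some round great circle, $\iota_0^{-1}(\Pi_K)$ is a proper open subset of the closed surface $\RP^2$ and thus has vanishing $H^2$ with $\Z_2$ coefficients; combined with the retraction, this forces $w^2|_{\Pi_K}=0$ and yields the desired contradiction.
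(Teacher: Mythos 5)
Your reduction, the application of Theorem~\ref{t:LS_minmax} to $w\smallfrown h_2=h_1$ and $w^2\smallfrown h_3=h_1$, and Substep A are essentially sound (though note that $w$ was only chosen so that $\iota^*w=u$, so a section of the double cover $C\to\overline\Pi$ over $\Pi_{\neq x_0}$ does not by itself kill $w$ there; the paper instead uses that the set of loops avoiding a point is contractible, hence has trivial $H^1$). The genuine gap is exactly where you locate the ``principal obstacle'', in Substep B: you need $w^2|_{\Pi_K}=0$, and the mechanism you propose does not deliver it. A curve-shortening semi-flow for the auxiliary round metric does not preserve $\Pi_K$: along the flow the tangent directions of the evolving curves are not controlled and can sweep through the cone $K$, so the flow gives no deformation retraction of $\Pi_K$ onto its intersection with the great-circle family $\iota_0(E_0)$, and there is no reason why the restriction map $H^2(\Pi_K;\Z_2)\to H^2(\iota_0^{-1}(\Pi_K);\Z_2)$ should be injective. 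The set $\Pi_K$ contains many curves far from round circles, and nothing in your argument shows that $w^2$ vanishes on it; this is the step that actually carries the theorem.

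The paper sidesteps precisely this difficulty. Lemma~\ref{l:z} shows that \emph{any} class $z\in H^2(\Pi;\Z_2)$ with $\iota_0^*z\neq0$ satisfies $z\smallfrown h_3=h_1$, so one is free to replace $w^2$ by a degree-two class tailored to the problem. The paper takes $z:=\pr_*\ev^*m$, where $P=\{(\gamma,x)\in\Pi\times S^2\ |\ x\in\gamma\}$ is the incidence circle bundle with projection $\pr$, $\ev:P\to\PT S^2$ is the evaluation $(\gamma,x)\mapsto\Tan_x\gamma$, and $m$ generates $H^3(\PT S^2;\Z_2)$; that $\iota_0^*z\neq0$ is checked via the Gysin sequence of the pulled-back bundle over $E_0\cong\RP^2$. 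With this choice the required vanishing is automatic: if $\WW=\WW(\ell,\epsilon)$ omits some tangent direction, then $\ev$ restricted to $P'=\pr^{-1}(\WW)$ is not surjective onto the closed $3$-manifold $\PT S^2$, hence $\ev|_{P'}^*m=0$ and $z|_{\WW}=(\pr|_{P'})_*\ev|_{P'}^*m=0$, contradicting Theorem~\ref{t:LS_minmax} applied to $z\smallfrown h_3=h_1$ with $\ell_1=\ell_3$. To repair your proof you would have to either establish $w^2|_{\Pi_K}=0$ by some other means or, as the paper does, work with the class $z$ instead of $w^2$.
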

\begin{proof}
We consider a circle bundle $\pr:P\to\Pi$, whose total space is given by $P=\{(\gamma,x)\in\Pi\times S^2\ |\ x\in\gamma\}$ and whose projection is $\pr(\gamma,x)=\gamma$. We consider the projectivized tangent bundle 
\[\PT S^2=\big\{V_x\ \big|\ x\in S^2,\ V_x\mbox{ 1-dimensional vector subspace of }\Tan_x S^2\big\},\]
and define the continuous evaluation map $\ev:P\to \PT S^2$, $\ev(\gamma,x)=\Tan_x\gamma$. Since $\PT S^2$ is a closed 3-manifold, we have $H^3(\PT S^2;\Z_2)\cong\Z_2$, and we denote by $m$ a generator of $H^3(\PT S^2;\Z_2)$. We consider the pull-back bundle 
\[
P_0=\iota_0^*P=\big\{ (e,p)\in E_0\times P\ |\ \iota_0(e)=\pr(p) \big\}
,
\] and the commutative diagram
\begin{equation*}
\begin{tikzcd}
  P_0 \arrow[r,"\tilde \iota_0", hookrightarrow] \arrow[d,"\pr|_{P_0}"] &
  P \arrow[r,"\ev"] \arrow[d,"\pr"] &
  \PT S^2\\
  E_0\, \arrow[r,"\iota_0", hookrightarrow] & \Pi  &  
\end{tikzcd}
\end{equation*}
Here, $\tilde\iota_0(e,p)=p$ is the projection onto the second factor. Notice that $\ev\circ\tilde\iota_0$ is a homeomorphism. Moreover, since $H^3(E_0;\Z_2)$ and $H^4(E_0;\Z_2)$ are trivial, the Gysin sequence of the pull-back bundle $\pr|_{P_0}:P_0\to E_0$ readily implies that \[(\pr|_{P_0})_*:H^3(P_0;\Z_2)\to H^2(E_0;\Z_2)\] 
is an isomorphism. This implies that $(\pr|_{P_0})_*\tilde\iota_0^*\ev^*m\neq 0$ in $H^2(E_0;\Z_2)$. We set 
\[z:=\pr_*\ev^*m\in H^2(\Pi;\Z_2).\] 
Since $\iota_0^*z=(\pr|_{P_0})_*\tilde\iota_0^*\ev^*m\neq 0$,  Lemma~\ref{l:z} implies that $h_1=z\smallfrown h_3$.

Now, assume by contradiction that $\ell_1=\ell_2=\ell_3=:\ell$, but there exists $(x,v)\in SS^2$ such that the geodesic $\gamma_{x,v}(t):=\exp_{x}(tv)$ is not a simple closed geodesic of minimal period $\ell$ (namely, $\gamma_{x,v}$ is not a closed geodesic, or it is closed but not simple closed, or it is simple closed but its length is not $\ell$). By Lemma~\ref{l:neighborhoods} there exists $\epsilon>0$ small enough so that $v$ is not tangent to any curve $\gamma\in\WW:=\WW(\ell,\epsilon)$ passing through $x$. Namely, if we set $P':=\pr^{-1}(\WW)$, the restriction $\ev|_{P'}:P'\to\PT S^2$ is not surjective. Since $\ell_1=\ell_3$ and  $h_1=z\smallfrown h_3$,  Theorem~\ref{t:LS_minmax} implies that $z|_{\WW}\neq 0$ in $H^2(\WW;\Z_2)$. However, since $z|_{\WW}=(\pr|_{P'})_*\ev|_{P'}^*m$, this implies that the homomorphism \[\ev|_{P'}^*:H^3(\PT S^2;\Z_2)\to H^3(P';\Z_2)\] is non-zero, which is impossible since $\ev|_{P'}$ is not surjective.
\end{proof}

If the simple length spectrum $\sigmas(S^2,F)$ contains exactly two elements, we must have $\ell_1=\ell_2$ or $\ell_2=\ell_3$. In this case Theorem \ref{t:LS}(i) is a consequence of the following statement.

\begin{thm}
If $\ell_i=\ell_{i+1}$ for some $i\in\{1,2\}$, then every point of $S^2$ lies on a simple closed geodesic of $(S^2,F)$ of length  $\ell_i$.
\end{thm}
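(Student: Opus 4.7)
The plan is to exhibit a class in $H^1(\Pi;\Z_2)$ whose vanishing on $\WW(\ell_i,\epsilon)$ is equivalent to the existence of a ``coherent choice of complementary disk'' for every curve in that neighbourhood: Lusternik-Schnirelmann will force this class to be non-zero on $\WW(\ell_i,\epsilon)$, while an $x_0$ missing every simple closed geodesic of length $\ell_i$ will manufacture such a coherent choice, producing a contradiction. The natural candidate is the classifying class $z_0\in H^1(\overline\Pi;\Z_2)$ of the double cover $p:C\to\overline\Pi$ from the proof of Lemma~\ref{l:injectivity_pi1}, whose fibre over $\gamma\in\Pi$ is the two-element set of components of $S^2\setminus\gamma$. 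The $A\circ\iota_*$ computation of that lemma shows $\iota^*z_0=u$, so one may (and does) take the fixed cohomology class of the paper's set-up to be $w:=z_0$; the earlier parts of the paper used only the property $\iota^*w=u$, so this choice remains consistent with them.

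With $w$ so chosen, the cap-product identity $w|_{\Pi}\smallfrown h_{i+1}=h_i$ recorded after~\eqref{e:h_i}, combined with the hypothesis $\ell_i=\ell_{i+1}$ and Theorem~\ref{t:LS_minmax}, yields $w|_{\WW(\ell_i,\epsilon)}\neq 0$ for every $\epsilon>0$. I would then argue by contradiction: suppose that some $x_0\in S^2$ lies on no simple closed geodesic of length~$\ell_i$, and first establish via a compactness argument copied from Lemma~\ref{l:neighborhoods} (reparametrize each $\gamma_n\in\UU(\ell_i,1/n)$ passing through $x_0$ with constant Finsler speed so that $\gamma_n(0)=x_0$, then extract a $C^2$-convergent subsequence to a simple closed geodesic through $x_0$) that for $\epsilon>0$ small enough no curve in $\WW(\ell_i,\epsilon)$ passes through $x_0$.

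For such $\epsilon$ the assignment
\[
s:\WW(\ell_i,\epsilon)\to C,\qquad s(\gamma):=\text{connected component of }S^2\setminus\gamma\text{ containing }x_0,
\]
is a continuous section of $p$: continuity is simply the fact that ``which component contains $x_0$'' is locally constant in the $C^\infty$ topology on the open set of curves avoiding $x_0$. The existence of a global section forces the classifying class of the cover to vanish on its domain, i.e.\ $w|_{\WW(\ell_i,\epsilon)}=0$, contradicting the previous paragraph and completing the proof.

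The main obstacle is conceptual rather than computational: one must identify the abstract fixed class $w$, known a priori only to satisfy $\iota^*w=u$, with the concrete classifying class of the double cover $p$, so that its restriction to $\WW(\ell_i,\epsilon)$ acquires the geometric interpretation needed to exploit $x_0$. Once this identification is in place, both the adaptation of Lemma~\ref{l:neighborhoods} to point-incidence (rather than tangency) and the continuity of the section $s$ are routine variants of arguments already present in the paper.
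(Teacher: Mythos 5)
Your argument is correct, and its skeleton coincides with the paper's: both use the identity $w|_{\Pi}\smallfrown h_{i+1}=h_i$ together with Theorem~\ref{t:LS_minmax} and $\ell_i=\ell_{i+1}$ to conclude $w|_{\WW(\ell_i,\epsilon)}\neq0$ for every $\epsilon>0$, and both use the compactness statement of Lemma~\ref{l:neighborhoods} to find an $\epsilon$ for which no curve of $\WW(\ell_i,\epsilon)$ passes through the exceptional point $x_0$ (no ``adaptation'' of that lemma is actually needed: taking $K=\PPP(\Tan_{x_0}S^2)$, the whole fibre of the projectivized tangent bundle, converts tangency into point incidence, which is exactly how the paper applies it). Where you genuinely diverge is the final vanishing step. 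The paper keeps $w$ abstract (any class with $\iota^*w=u$) and observes that the set $\UU=\{\gamma\in\overline\Pi\ |\ x_0\notin\gamma\}$ is contractible, via the radial contraction in a chart $S^2\setminus\{x_0\}\cong B^2\subset\R^2$; hence $H^1(\UU;\Z_2)=0$, and since $\WW(\ell_i,\epsilon)\subset\UU$ the restriction of $w$ vanishes, giving the contradiction. You instead pin $w$ down as the classifying class $z_0$ of the double cover $p:C\to\overline\Pi$ of complementary disks --- a legitimate choice, since $\iota^*z_0=u$ follows from the $A\circ\iota_*$ computation in Lemma~\ref{l:injectivity_pi1} and nothing in the paper uses more about $w$ than $\iota^*w=u$ --- and you trivialize that cover over the curves avoiding $x_0$ by the continuous section ``component of the complement containing $x_0$''. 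Both mechanisms are sound: the paper's is a bit more economical (one explicit contraction, no need to fix $w$ concretely, no appeal to the correspondence between $H^1(\cdot;\Z_2)$ and double covers over the possibly wild open set $\WW$), while yours is more geometric and makes transparent exactly which monodromy is being killed by the existence of the point $x_0$; the contractibility of $\UU$ used by the paper encodes the same information in one stroke.
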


\begin{proof}
Assume by contradiction that $\ell:=\ell_i=\ell_{i+1}$, but that some point $x\in S^2$ does not lie on a simple closed geodesic of length $\ell$. We consider the subset $\UU=\{\gamma\in\overline\Pi\ |\ x\not\in\gamma\}$. It is easy to see that $\UU$ is contractible: if we denote by $B^2\subset\R^2$ the unit open ball, and we consider a homeomorphism $\theta:S^2\setminus\{x\}\to B^2$, the homotopy $r_t:\UU\to\UU$, $t\in[0,1]$, given by
\[\qquad r_t(\gamma)=\theta^{-1}((1-t)\theta(\gamma))\]
defines a contraction of $\UU$ onto a point curve in $\Pi_0\cap\UU$. In particular $H^1(\UU;\Z_2)$ is trivial.
 
By applying Lemma~\ref{l:neighborhoods} with $K=\PPP(\Tan_xS^2)$, we infer that there exists $\WW=\WW(\ell,\epsilon)$, for $\epsilon>0$ small enough, such that none of the curves $\gamma\in\WW$ passes through $x$. Since $h_i=w|_{\Pi}\smallfrown h_{i+1}$ and $\ell_i=\ell_{i+1}$, Theorem~\ref{t:LS_minmax} implies that $w|_{\WW}\neq0$ in $H^1(\WW;\Z_2)$. However, since $\WW\subset\UU$, we have $w|_{\UU}\neq0$ in $H^1(\UU;\Z_2)$ as well, contradicting the conclusion of the previous paragraph.
\end{proof}

\section{Critical point theory of the energy functional}
\label{s:critical_point_theory}

In this section we shall recall the background on the variational theory of Finsler closed geodesics. The reader can find more details and proofs in \cite{Rademacher:1992sw,Bangert:2010ak,Caponio:2011aa,Asselle:2018aa} and references therein. Throughout the section, we shall consider a closed Finsler manifold $(M,F)$ of arbitrary dimension, except in certain statements where we will assume $M$ to be a surface. The Finsler metric $F$ is not required to be reversible, unless specifically stated.

\subsection{The energy functional}
\label{ss:energy}
We denote by $\Lambda=W^{1,2}(S^1,M)$ the free loop space of $M$ of regularity $W^{1,2}$, and consider the energy functional
\begin{align*}
E:\Lambda\to[0,\infty),\qquad E(\gamma)=\int_{S^1} F(\gamma(u),\dot\gamma(u))^2\,\diff u.
\end{align*}
Unlike in the Riemannian case, in the Finsler setting $E$ is $C^{1,1}$, but possibly not $C^2$. Its critical points with positive critical value are precisely those $\gamma\in\Lambda $ that are closed geodesics of $(M,F)$ parametrized with constant speed $F(\gamma,\dot\gamma)\equiv E(\gamma)^{1/2}$. For each $\gamma\in\Lambda$, we denote by 
\[\gamma^m\in\Lambda,\qquad\gamma^m(t)=\gamma(mt)\] its $m$-th iterate, whose energy is $E(\gamma^m)=m^2E(\gamma)$. Clearly, iterates of critical points of $E$ are again critical points. Identifying different iterates of the same closed geodesic detected with global variational methods is the crux of the matter in the closed geodesics problem.

The $C^1$ regularity of $E$ is actually enough to define a smooth pseudo-gradient flow of $E$ on $\Lambda$. It is well known that $E$ satisfies the Palais-Smale condition with respect to a suitable complete Riemannian metric on $\Lambda$, and therefore we can perform the usual deformations of critical point theory. Since $E$ is even $C^{1,1}$, it has a well define Gateaux Hessian $\diff^2E(\gamma)$ at every critical point. However, the $C^2$ regularity would be needed in order to apply the classical Morse-Gromoll-Meyer lemma \cite{Gromoll:1969jy}. A simple way to circumvent the potential lack of $C^2$ regularity and, at the same time, work in a finite dimensional setting consists in employing Morse's finite dimensional approximations of $\Lambda$. We consider the (non-symmetric) Finsler distance
\begin{align}
\label{e:Finsler_distance}
d:M\times M\to[0,\infty),
\qquad
d(x,y) = \min_{\gamma} \int_0^1 F(\gamma(u),\dot\gamma(u))\,\diff u,
\end{align}
where the minimum ranges over all absolutely continuous curves $\gamma:[0,1]\to M$ joining $x$ and $y$.
For each integer $k\geq 2$, we consider the space
\begin{align*}
\Lambda_k=\left\{\xx=(x_0,...,x_{k-1})\in M^{\times k}\ \left|\ \sum_{i\in\Z_k} d(x_i,x_{i+1})^2<\injrad(M,F)^2\quad \forall i\in\Z_k \right.\right\}.
\end{align*}
There is a smooth embedding $\iota:\Lambda_k\hookrightarrow\Lambda$ defined as follows: every $\xx\in\Lambda_k$ is mapped to the curve $\gamma_{\xx}:=\iota(\xx)\in\Lambda$ such that each restriction $\gamma_{\xx}|_{[i/k,(i+1)/k]}$, for $i\in\Z_k$, is the shortest geodesic parametrized with constant speed joining $x_i$ and $x_{i+1}$. In the following, we will identify $\Lambda_k$ with it image $\iota(\Lambda_k)\subset\Lambda$, and indistinctively write $\xx$ or $\gamma_{\xx}$ for the same object. The restriction of the energy to $\Lambda_k$ has the form
\begin{align*}
 E_k=E|_{\Lambda_k}:\Lambda_k\to\big[0,k\,\injrad(M,F)^2\big),
 \qquad
 E_k(\xx)= k \sum_{i\in\Z_k} d(x_i,x_{i+1})^2.
\end{align*}
Since the distance $d$ is smooth away from the diagonal, $E_k$ is smooth on the subspace of those $\xx$ with $x_i\neq x_{i+1}$ for all $i\in\Z_k$. The critical points of $E_k$ are precisely those $\xx$ such that $\gamma_{\xx}$ is a closed geodesic of $(M,F)$ parametrized with constant speed and having energy $E_k(\xx)=E(\gamma_{\xx})< k\,\injrad(M,F)^2$. In particular $E_k$ is smooth on a sufficiently small neighborhood of its critical points with positive energy. For each compact interval $[a,b]\subset (-\infty,k\,\injrad(M,F)^2\big)$, the preimage $E_k^{-1}[a,b]$ is compact, which allows us to apply the gradient flow deformations from critical point theory. For each $a>0$, up to choosing $k\in\N$ large enough, the inclusion of the energy sublevel sets $E_k^{-1}(-\infty,a)\hookrightarrow E^{-1}(-\infty,a)$ admits the homotopy inverse 
\begin{align*}
 r:E^{-1}(-\infty,a)\to E_k^{-1}(-\infty,a), 
 \qquad
 r(\gamma)=(\gamma(0),\gamma(\tfrac1k),...,\gamma(\tfrac{k-1}{k})).
\end{align*}

\subsection{The Morse index and nullity}\label{ss:index}

Let $h:V\times V\to\R$ be a symmetric bilinear form on a vector space $V$. Its index $\ind(h)$ is defined as the supremum of the dimension of the subspaces $W\subset V$ such that $h|_W$ is negative definite. Its nullity $\nul(h)$ is defined as the dimension of $\ker(h)=\{v\in V\ |\ h(v,\cdot)=0\}$. Notice that the sum $\ind(h)+\nul(h)$ is the supremum of the dimension of the subspaces $Z\subset V$ such that $h|_Z$ is negative semi-definite.

Let us consider a closed geodesic $\gamma\in\crit(E)\cap E^{-1}(0,\infty)$, and the associated Gateaux Hessian $h:=\diff^2E(\gamma)$. The Morse index and nullity of $\gamma$ are defined by
\begin{align*}
\ind(\gamma):=\ind(h),\qquad
\nul(\gamma):=\nul(h)-1.
\end{align*}
It is well known that the indices are always finite. The reason for the $-1$ appearing in the definition of the nullity is that $\nul(h)$ is always larger than or equal to 1, as the vector field $\dot\gamma$ belongs to $\ker(h)$. If $x_0:=\gamma(0)$, we denote by \[\Omega:=\{\zeta\in\Lambda\ |\ \zeta(0)=x_0\}\] the space of loops based at $x_0$. The critical points of $E|_\Omega$ are the geodesic loops, that is, those $\zeta\in\Lambda$ whose restriction $\zeta|_{(0,1)}$ is a geodesic parametrized with constant speed. The Morse index and nullity of $\gamma$ in the based loop space are defined as
\begin{align*}
\ind_\Omega(\gamma):=\ind(h|_{\Tan_\gamma\Omega}),\qquad
\nul_\Omega(\gamma):=\nul(h|_{\Tan_\gamma\Omega}).
\end{align*}
The behavior of the Morse indices under iteration of the closed geodesic has been thoroughly studied since the seminal work of Bott \cite{Bott:1956sp}. Without invoking Bott's theory, one has the following properties, which are rather immediate or can be proved as an exercise.

\begin{lem}
\label{l:elementary_ind}
Let $(M,F)$ be a Finsler manifold with a closed geodesic $\gamma\in\crit(E)$. The Morse indices of $\gamma$ satisfy the following properties.
\begin{itemize}

\item[(i)] $\ind(\gamma)\geq\ind_\Omega(\gamma)$.

\item[(ii)] $\ind(\gamma)+\nul(\gamma)\geq\ind_\Omega(\gamma)+\nul_\Omega(\gamma)$.

\item[(iii)] If $\ind(\gamma)>0$, then $\ind(\gamma^m)\to\infty$ as $m\to\infty$.

\item[(iv)] $\ind(\gamma^m)\geq\ind(\gamma)$ and $\nul(\gamma^m)\geq\nul(\gamma)$ for all $m\in\N$.

\item[(v)] $\ind_\Omega(\gamma^m)\geq m\,\ind_\Omega(\gamma)$ and $\ind_\Omega(\gamma^m)+\nul_\Omega(\gamma^m)\geq m\,(\ind_\Omega(\gamma)+\nul_\Omega(\gamma))$ for all $m\in\N$. \hfill\qed

\end{itemize}
\end{lem}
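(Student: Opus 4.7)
The plan is to address the five items separately, with (iii) being the main obstacle.

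I will first handle (i) and (ii) together. Write $h := \diff^2 E(\gamma)$ for the Gateaux Hessian at $\gamma$. Item (i) is immediate from the inclusion $\Tan_\gamma\Omega \subset \Tan_\gamma\Lambda$: any subspace on which $h|_{\Tan_\gamma\Omega}$ is negative definite remains so as a subspace of $\Tan_\gamma\Lambda$. For (ii), the invariance of $E$ under the translation $S^1$-action on $\Lambda$ implies $\dot\gamma \in \ker h$, while every element of $\Tan_\gamma\Omega$ vanishes at $0$, so $\dot\gamma \notin \Tan_\gamma\Omega$. Hence any subspace $W \subset \Tan_\gamma\Omega$ of dimension $\ind_\Omega(\gamma) + \nul_\Omega(\gamma)$ on which $h$ is negative semi-definite enlarges to $W \oplus \R\dot\gamma \subset \Tan_\gamma\Lambda$, one dimension larger and still negative semi-definite. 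Using $\nul(\gamma) = \nul(h) - 1$, this yields $\ind(\gamma) + \nul(\gamma) \geq \ind_\Omega(\gamma) + \nul_\Omega(\gamma)$.

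Items (iv) and (v) both rest on the scaling of the second variation under the iteration map $\iota_m: \Lambda \to \Lambda$, $(\iota_m \zeta)(t) := \zeta(mt)$, which satisfies $E \circ \iota_m = m^2 E$. For (iv), the injective linear map $\diff\iota_m(\gamma): \Tan_\gamma\Lambda \hookrightarrow \Tan_{\gamma^m}\Lambda$ given by $V \mapsto V(m\,\cdot\,)$ scales $h$ by a factor $m^2$, and therefore sends negative (resp.\ null) subspaces to negative (resp.\ null) subspaces; this gives both $\ind(\gamma^m) \geq \ind(\gamma)$ and $\nul(\gamma^m) \geq \nul(\gamma)$. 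For (v) I would use a concatenation-style embedding instead. Since every $V \in \Tan_\gamma\Omega$ satisfies $V(0) = V(1) = 0$, one can define, for $j = 0, \ldots, m-1$, the vector field $V_j \in \Tan_{\gamma^m}\Omega$ by setting $V_j(t) := V(mt - j)$ on $[j/m, (j+1)/m]$ and $V_j(t) := 0$ elsewhere. A change of variables in the second variation integral, together with the fiberwise $0$-homogeneity of $F^2_{vv}$, gives $\diff^2 E(\gamma^m)(V_j, V_j) = m\, \diff^2 E(\gamma)(V, V)$, and $\diff^2 E(\gamma^m)(V_j, V_k) = 0$ for $j \neq k$ since the supports overlap only in a set of measure zero. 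An $r$-dimensional negative definite subspace of $\Tan_\gamma\Omega$ therefore yields an $mr$-dimensional negative definite subspace of $\Tan_{\gamma^m}\Omega$, and the analogous argument applied to a negative semi-definite subspace of dimension $\ind_\Omega(\gamma) + \nul_\Omega(\gamma)$ delivers the second inequality.

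The hardest point is (iii), which I expect to be the real obstacle. Here I would invoke the Bott iteration calculus, extended to the Finsler setting by Rademacher. To $\gamma$ one associates a symplectic path arising from the linearization of the geodesic flow and a Bott-type function $\beta_\gamma: S^1 \to \Z_{\geq 0}$ satisfying the iteration formula
\[
\ind(\gamma^m) = \sum_{\zeta^m = 1} \beta_\gamma(\zeta), \qquad \beta_\gamma(1) = \ind(\gamma).
\]
Lower semi-continuity of $\beta_\gamma$ at $1$ forces $\beta_\gamma \geq 1$ on an open arc of $S^1$ around $1$ whenever $\ind(\gamma) > 0$. Since the number of $m$-th roots of unity contained in any fixed arc grows linearly in $m$, this forces $\ind(\gamma^m) \to \infty$. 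The real work lies in setting up the symplectic path, establishing the Bott splitting of the index form in the Finsler case, and verifying the semi-continuity of $\beta_\gamma$ at $1$; I would defer to the standard references on the iteration theory for Finsler closed geodesics for these prerequisites.
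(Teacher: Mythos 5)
The paper offers no proof of this lemma at all: it is stated with a \qed, prefaced by the remark that the properties hold ``without invoking Bott's theory'' and are ``rather immediate or can be proved as an exercise.'' Your treatment of (i), (ii) and (v) is exactly in that elementary spirit and is correct: the inclusion $\Tan_\gamma\Omega\subset\Tan_\gamma\Lambda$, the enlargement by $\R\dot\gamma$ (using $\dot\gamma\in\ker\diff^2E(\gamma)$ and the $-1$ in the definition of $\nul(\gamma)$), and the concatenated fields $V_j$ with the factor $m$ coming from the homogeneities of $G_{xx},G_{xv},G_{vv}$ all check out. In (iv), note that the identity $\diff^2E(\gamma^m)(\diff\iota_m V,\diff\iota_m W)=m^2\,\diff^2E(\gamma)(V,W)$ only controls the Hessian of $\gamma^m$ on the image of $\diff\iota_m$, so it yields $\ind(\gamma^m)\geq\ind(\gamma)$ but not by itself that null vectors map into $\ker\diff^2E(\gamma^m)$; for the nullity you should invoke the identification of the kernel with the $1$-periodic Jacobi fields (established in the paper's proof of Proposition~\ref{p:indices}), which reparametrize to $1$-periodic Jacobi fields along $\gamma^m$ -- an easy fix, but it is a different mechanism than pure scaling. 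For (iii) you take a genuinely different route from the one the paper advertises: you rely on Bott's iteration formula and the lower semicontinuity of the Bott index function, deferring their Finsler versions to the literature. This is a correct and standard argument (and the paper itself appeals to Bott's theory later, in the proof of Lemma~\ref{l:Cgamma_emb_CS1gamma}), but it is precisely the machinery the authors claim is avoidable here, and the prerequisites you defer are the real content of that route. An elementary alternative in the intended spirit: pick a $1$-periodic $V$ with $\diff^2E(\gamma)(V,V)<0$, and on $[0,N]$ consider $\psi V$ with a cutoff $\psi$ vanishing at $0,N$ and equal to $1$ on $[1,N-1]$; the negative contribution grows linearly in $N$ while the cutoff error is bounded, so $\ind_\Omega(\gamma^N)>0$ for some $N$. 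Then (v) gives $\ind_\Omega(\gamma^{kN})\geq k$, extension by zero shows the based index is monotone under further iteration, and (i) yields $\ind(\gamma^m)\to\infty$. This keeps the whole lemma at the exercise level the paper intends, whereas your proof buys brevity for (iii) at the cost of importing the full iteration theory.
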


The following proposition summarizes those subtler results concerning the Morse indices of closed geodesics that we will need in the proof of Theorem~\ref{t:multiplicity}. In the literature, most of these results are proved in the Riemannian setting: points~(i--iv) can be found in \cite{Ballmann:1982rz}, point (vi) in \cite{Klingenberg:1995aa}, and point~(vii) in \cite{Bangert:1993wo}. In the Finsler setting, the differences in the proofs are essentially cosmetic, but we include them for the reader's convenience. 

\begin{prop}\label{p:indices}
Let $(M,F)$ be an orientable Finsler manifold, and $\gamma\in\crit(E)\cap E^{-1}(0,\infty)$ a closed geodesic. The indices of $\gamma$ satisfy the following properties.
\begin{itemize}

\item[(i)] $\nul(\gamma)\leq2\dim(M)-2$.

\item[(ii)] $\nul_\Omega(\gamma)\leq\dim(M)-1$.

\item[(iii)] $\ind(\gamma)\leq\ind_\Omega(\gamma)+\dim(M)-1$. \vspace{2pt}

\item[(iv)] $\ind(\gamma)+\nul(\gamma) \leq \ind_\Omega(\gamma)+\nul_\Omega(\gamma)+\dim(M)-1$.

\end{itemize}
Moreover, if $M$ is an orientable surface, they further satisfy the following properties.

\begin{itemize}

\item[(v)] If $\nul_\Omega(\gamma)=1$ then $\ind_\Omega(\gamma^m)=m\,\ind_\Omega(\gamma)+m-1$ and $\nul_\Omega(\gamma^m)=\nul_\Omega(\gamma)$ for all $m\in\N$, \vspace{2pt}

\item[(vi)] If $\nul(\gamma)=2$ then $\nul(\gamma^m)=2$ and $\ind(\gamma^m)$ is odd for all $m\in\N$.

\item[(vii)] If $\ind_\Omega(\gamma^m)>0$ for some $m\geq1$, then $\ind(\gamma)>0$ and indeed there exists a nowhere vanishing smooth vector field $\zeta$ along $\gamma$ that is 1-periodic, everywhere transverse to $\dot\gamma$, and such that $\diff^2E(\gamma)(\zeta,\zeta)<0$.

\end{itemize}
\end{prop}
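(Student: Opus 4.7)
The strategy is to identify the kernel of the Gateaux Hessian $h:=\diff^2E(\gamma)$ with the space of smooth periodic Jacobi fields along $\gamma$ (respectively, with fields vanishing at both endpoints for the based loop version), and for the last three items to exploit the one-dimensionality of the normal bundle by passing to a scalar Sturm--Liouville problem. Set $n:=\dim(M)$.

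For \textbf{(i)--(ii)}, I would decompose Jacobi fields into tangential components $(\alpha t+\beta)\dot\gamma$ and normal components. The 1-periodic tangential fields are spanned by $\dot\gamma$, and no nonzero tangential field vanishes at both endpoints. The $2(n-1)$-dimensional normal Jacobi space has 1-periodic subspace of dimension at most $2(n-1)$ and subspace with $J(0)=J(1)=0$ of dimension at most $n-1$ (the kernel of evaluation at $t=1$ on the $(n-1)$-dimensional space of normal fields with $J(0)=0$), which gives (i) and (ii). For \textbf{(iii)--(iv)}, since $\dot\gamma\in\ker h\setminus \Tan_\gamma\Omega$, the subspace $\Tan_\gamma\Omega\oplus\R\dot\gamma$ has codimension $n-1$ in $\Tan_\gamma\Lambda$, the same index as $\Tan_\gamma\Omega$, and nullity one more; the max--min principle then bounds $\ind(h)$ and $\ind(h)+\nul(h)$ by the corresponding restricted quantities plus $n-1$.

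For \textbf{(v)}, when $n=2$ the normal Jacobi equation reduces to a scalar Sturm--Liouville equation, and conjugate points to $\gamma(0)$ are exactly the zeros of the unique-up-to-scale solution $\tilde J$ with $\tilde J(0)=0$. The hypothesis $\nul_\Omega(\gamma)=1$ gives $\tilde J(1)=0$, so by uniqueness of the initial value problem $\tilde J(\cdot+1)=\lambda\tilde J$ for some $\lambda\neq 0$, meaning the zeros of $\tilde J$ in each $(k,k+1]$ are the translates of those in $(0,1]$. Counting zeros in $(0,m)$ and applying the Morse index theorem for Sturm--Liouville yields $\ind_\Omega(\gamma^m)=m\,\ind_\Omega(\gamma)+(m-1)$ and $\nul_\Omega(\gamma^m)=1$. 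For \textbf{(vi)}, $\nul(\gamma)=2$ saturates the bound of (i), forcing the normal monodromy to be the identity. By the standard interlacing of periodic Sturm--Liouville eigenvalues $\lambda_0<\lambda_1\leq\lambda_2<\lambda_3\leq\cdots$, a double eigenvalue at $0$ forces $0=\lambda_{2k-1}=\lambda_{2k}$ for some $k\geq 1$, giving $\ind(\gamma)=2k-1$ (odd). Identity monodromy persists under iteration so $\nul(\gamma^m)=2$; combining (iii)--(iv) with $\nul_\Omega(\gamma^m)=1$ then gives $\ind(\gamma^m)=\ind_\Omega(\gamma^m)=m(\ind_\Omega(\gamma)+1)-1$, preserving odd parity.

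For \textbf{(vii)}, the hypothesis together with the Morse index theorem gives $\mu_0^{\mathrm{D},[0,m]}<0$ for the lowest Dirichlet eigenvalue of the normal Sturm--Liouville operator on $[0,m]$. Since $H^1_0([0,m])\subset H^1_{\mathrm{per}}([0,m])$, the variational principle yields $\lambda_0^{\mathrm{per},[0,m]}\leq\mu_0^{\mathrm{D},[0,m]}<0$. The positive lowest eigenfunction $\phi_0$ of the $m$-periodic problem is unique up to scalar, and since the coefficients of the operator are $1$-periodic the translate $\phi_0(\cdot+1)$ is another positive ground state with the same eigenvalue; uniqueness therefore forces $\phi_0$ to itself be $1$-periodic, whence $\lambda_0^{\mathrm{per},[0,1]}=\lambda_0^{\mathrm{per},[0,m]}<0$. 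The vector field $\zeta:=\phi_0\, N$ is then smooth, $1$-periodic, everywhere transverse to $\dot\gamma$, and satisfies $\diff^2E(\gamma)(\zeta,\zeta)=\lambda_0^{\mathrm{per},[0,1]}\|\phi_0\|^2<0$, as required. The main expected obstacle is setting up the normal scalar Sturm--Liouville operator so that its Dirichlet and periodic spectra correctly encode $\ind_\Omega$ and $\nul$ in the Finsler (rather than Riemannian) setting, which is what makes the passage from the based-loop index hypothesis to the free-loop periodic negativity in (vii) nontrivial.
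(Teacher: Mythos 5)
Your plan is correct, and on several items it takes a genuinely different route from the paper's proof. Items (i), (ii) and (v) are essentially the paper's arguments (identification of the kernels with periodic, resp.\ endpoint-vanishing, Jacobi fields, and zero-counting of the scalar normal Jacobi solution via Morse's index theorem). For (iii)--(iv) the paper reduces to the finite-dimensional broken-geodesic space and uses the index-of-a-restriction formula together with a dimension bound on the $h$-orthogonal complement $V_0^h$; you instead use that $\dot\gamma\in\ker \diff^2E(\gamma)$, so on $\Tan_\gamma\Omega\oplus\R\dot\gamma$ (codimension $\dim M-1$) the form has index $\ind_\Omega(\gamma)$ and nullity $\nul_\Omega(\gamma)+1$, and a codimension bound for $\ind$ and $\ind+\nul$ finishes — softer and shorter, though you should justify that $\ind+\nul$ equals the maximal dimension of a negative semidefinite subspace for this Hessian (spectral splitting of the Fredholm form, or work in $\Lambda_k$). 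For (vi) the paper shows $h$ vanishes on $V_0^h$ when $\nul(\gamma)=2$, so $\ind(\gamma)$ equals the number of zeros in $(0,1)$ of a $1$-periodic scalar Jacobi solution with simple zeros, which is odd; you instead invoke the classical ordering $\lambda_0<\lambda_1\le\lambda_2<\lambda_3\le\cdots$ of periodic Sturm--Liouville eigenvalues, so a double eigenvalue at $0$ sits at an odd position — equally valid (and applicable directly to the period-$m$ problem), but it imports Hill-operator theory where the paper stays with elementary quadratic-form manipulations. The largest divergence is (vii): the paper constructs an explicit negative test field by cutting the Jacobi field at a conjugate point and adding $\epsilon\mu$ (or translating a one-period segment when the first conjugate time exceeds $1$), whereas you compare Dirichlet and periodic eigenvalues on $[0,m]$ and use simplicity and positivity of the periodic ground state plus the $1$-periodicity of the coefficients to force the ground state itself to be $1$-periodic (from $\phi_0(\cdot+1)=c\,\phi_0$, positivity and $m$-periodicity give $c^m=1$, hence $c=1$), then take $\zeta=\phi_0\,N$; this is slicker and automatically nowhere vanishing, at the cost of standard spectral facts. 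In (vi)--(vii) the one piece of work you must still supply — and which you correctly flag — is the paper's $Z\oplus Z^h$ splitting: tangential fields $f\dot\gamma$ are $h$-orthogonal to the fields with $G_v(\gamma,\dot\gamma)\zeta\equiv0$ and carry no index, so that $\ind$, $\nul$, $\ind_\Omega$, $\nul_\Omega$ are indeed computed by the scalar Sturm--Liouville problem obtained by trivializing the Finsler-normal line bundle (using orientability).
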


\begin{proof}
We can assume without loss of generality that $E(\gamma)=1$, so that $F(\gamma,\dot\gamma)\equiv1$. We set 
\[G:\Tan M\to[0,\infty),\qquad G(x,v)=\tfrac12F(x,v)^2,\] which is a $C^{1,1}$ function, smooth outside the zero-section, and fiberwise positively homogeneous of degree 2. The function $G$ defines a 1-form $\lambda$ on $\Tan M$ by
\begin{align*}
\lambda_{(x,v)}(w)=G_v(x,v)\circ\diff\pi(x,v)w,\qquad\forall w\in\Tan_{(x,v)}(TM).
\end{align*}
The 2-form $-\diff\lambda$ is a symplectic form on the complement of the zero-section in $\Tan M$. We treat $G$ as a Hamiltonian, and consider its associated Hamiltonian vector field $X$ defined by $-\diff\lambda(X,\cdot)=\diff G$. We denote by $\phi_t:\Tan M\to\Tan M$ the associated Hamiltonian flow of $X$. Its flow lines are the speed vectors of the geodesics of $(M,F)$ parametrized with constant speed. In particular, the curve $\tilde\gamma(t):=(\gamma(t),\dot\gamma(t))$ is the periodic orbit of $\phi_t$ corresponding to the closed geodesic $\gamma$. 
Since $G$ is autonomous, the Hamiltonian flow $\phi_t$ preserves each level set $G^{-1}(\ell^2)$. The energy level of $\tilde\gamma$ is  
\[G(\tilde\gamma(t))=\tfrac12 F(\tilde\gamma(t))^2=1/2,\] 
and we denote by $SM:=G^{-1}(1/2)=F^{-1}(1)$ the corresponding energy hypersurface, which is the unit tangent bundle of $(M,F)$. The 1-form $\lambda$ restricts to a contact form $\alpha:=\lambda|_{SM}$, and $X$ restricts to the Reeb vector field of $(SM,\alpha)$. Namely $\alpha(X)=1$ and $\diff\alpha(X,\cdot)=0$. In particular $\phi_t^*\alpha=\alpha$. 
We denote by \[\xi:=\ker(\alpha)\subset\Tan(SM)\] 
the contact distribution of $\alpha$. 
Notice that
\begin{align*}
 \diff\pi(\tilde\gamma(t))\xi_{\tilde\gamma(t)} = \ker(G_v(\tilde\gamma(t))).
\end{align*}
Let $L$ be the vector field on $TM$ defined by
\begin{align*}
 L(x,v)=\tfrac{\diff}{\diff t}\big|_{t=1} (x,tv).
\end{align*}
This is a Liouville vector field, meaning that 
$\diff\lambda(L,\cdot)=\lambda$,
and is transverse to $SM$. Over $SM$, the vector bundle $\Tan(\Tan M)$ splits as a direct sum
\begin{align}
\label{e:splitting}
\Tan(\Tan M)|_{SM}=\mathrm{span}\{X,L\}\oplus\xi,
\end{align}
and this decomposition is symplectically orthogonal, i.e.
\begin{align*}
 \diff\lambda(V,W)=0,\qquad\forall V\in\mathrm{span}\{X,L\},\ W\in \xi.
\end{align*}

We recall that a Jacobi vector field $\zeta:\R\to\gamma^*\Tan M$ is a solution of the Jacobi equation, which is the linearization of the Hamiltonian equation $\dot{\tilde\gamma}=X\circ\tilde\gamma$ at $\tilde\gamma$. In local coordinates, the Jacobi equation reads
\begin{align*}
\partial_t(G_{vv}\,\dot\zeta + G_{xv}\,\zeta) - G_{xx}\,\zeta - G_{vx}\,\dot\zeta=0.
\end{align*}
Here and in the following, the second derivatives $G_{xx}$, $G_{xv}$, $G_{vx}$, $G_{vv}$ are meant to be evaluated at $\tilde\gamma(t)$.
We denote by $\Phi_t:=\diff\phi_t(\tilde\gamma(0)):\Tan_{\tilde\gamma(0)}\Tan M\to\Tan_{\tilde\gamma(t)}\Tan M$ the linearized Hamiltonian flow along $\tilde\gamma$. Its flow lines are lifts of Jacobi vector fields $\zeta$, that is, in local coordinates they can be written as
\begin{align*}
\tilde\zeta(t)=\Phi_t(\tilde\zeta(0))=(\zeta(t),\dot\zeta(t)).
\end{align*}
The linearized flow $\Phi_t$ preserves the splitting~\eqref{e:splitting}. Indeed, $\phi_t^*\alpha=\alpha$ implies that $\Phi_t(\xi)=\xi$. Moreover, $\Phi_t(X)=X$ and $\Phi_t(L)=tX+L$, that is, $\Phi_t|_{\mathrm{span}\{X,L\}}$ can be written in the frame $X,L$ as the symplectic matrix
\begin{align}
\label{e:Phi_t_X_L}
\Phi_t|_{\mathrm{span}\{X,L\}}=
\left(
\begin{array}{@{}cc@{}}
    1 & t \\ 
    0 & 1  
\end{array}
\right)
\in\Sp(2).
\end{align}

Let $W^{1,2}(S^1,\gamma^*\Tan M)$ be the Hilbert space of 1-periodic $W^{1,2}$-vector fields along $\gamma$.
The Hessian $h:=\diff^2E(\gamma)$ is the symmetric bilinear form on $W^{1,2}(S^1,\gamma^*\Tan M)$ given by
\begin{align}
\label{e:index_form}
h(\zeta,\eta)
=
2\int_{S^1}
\!\!
\Big(\langle G_{xx}\,\zeta,\eta\rangle + \langle G_{vx}\,\dot\zeta,\eta\rangle + \langle G_{xv}\,\zeta,\dot\eta\rangle + \langle G_{vv}\,\dot\zeta,\dot\eta\rangle \Big)\diff t.
\end{align}
In this expression, we adopt a common abuse of notation: we write the integrand in local coordinates (this can be made precise by splitting the domain of integration $S^1$ as a finite union of intervals over which the local coordinates are available).
A bootstrap argument, together with an integration by parts, implies that the kernel of $h$ is precisely given by the 1-periodic Jacobi vector fields. In particular $\nul(h)=\dim\ker(\Phi_1-I)$, and therefore $\nul(\gamma)=\nul(h)-1=\dim\ker(\Phi_1|_{\xi_{\tilde\gamma(0)}}-I)$, 
which implies point (i).

We consider the subspace 
\begin{align*}
Z=\big\{\zeta\in W^{1,2}(S^1,\gamma^*\Tan M)\ \big|\ G_v(\gamma,\dot\gamma)\zeta\equiv0 \big\}.
\end{align*}
We claim that
\begin{align*}
\ind(h|_{Z})=\ind(h),\qquad
\nul(h|_Z)=\nul(h)-1.
\end{align*}
Indeed, a straightforward computation shows that the $h$-orthogonal 
\[Z^h:=\big\{\zeta\in W^{1,2}(S^1,\gamma^*\Tan M)\ \big|\ h(\zeta,\cdot)|_{Z}=0\big\}\] 
is precisely the space of those $\zeta\in W^{1,2}(S^1,\gamma^*\Tan M)$ of the form $\zeta(t)=f(t)\dot\gamma(t)$ for some $f:S^1\to\R$, and we have $W^{1,2}(S^1,\gamma^*\Tan M)=Z\oplus Z^h$, $\ind(h|_{Z^h})=0$, $\ker(h|_{Z^h})=\mathrm{span}_{\R}\{\dot\gamma\}$, and $\nul(h|_{Z^h})=1$. From now on, we will simply write $h$ for the restriction $h|_Z$, so that in particular
\begin{align*}
\nul(\gamma)=\nul(h)=\dim\ker(\Phi_1|_{\xi_{\tilde\gamma(0)}}-I). 
\end{align*}
Analogously, if we set
\[Z_0=\big\{\zeta\in Z\ \big|\ \zeta(0)=\zeta(1)=0  \big\},\]
we have 
\begin{align*}
\ind_\Omega(\gamma)=\ind(h|_{Z_0}),
\qquad
\nul_\Omega(\gamma)=\nul(h|_{Z_0}).
\end{align*} 
The kernel of $h|_{Z_0}$ is the space of Jacobi fields $\zeta$ such that $\zeta(0)=\zeta(1)=0$ and $G_v(\gamma,\dot\gamma)\zeta\equiv0$, and thus
\begin{align*}
\nul(h|_{Z_0})
\leq
\dim\ker G_v(\gamma(t),\dot\gamma(t))
=
\dim(M)-1,
\end{align*}
which proves point (ii).

Let us reduce the setting to finite dimension. Let $k\geq2$ be a large enough integer such that no restriction $\gamma|_{[a,b]}$ with $b-a<k^{-1}$ contains conjugate points; namely, there are no Jacobi vector fields along $\gamma$ vanishing on more than one point of $[a,b]$. We consider the finite dimensional vector space $V\subset Z$ of those vector fields $\zeta\in Z$ such that, for all $i=0,...,k-1$, each restriction $\zeta|_{[i/k,(i+1)/k]}$ is a Jacobi vector field. The Morse indices of $h$ and $h|_{V}$ are the same
\begin{align*}
 \ind(h|_V)=\ind(h),\qquad\nul(h|_V)=\nul(h).
\end{align*}
Indeed, an integration by parts in~\eqref{e:index_form} shows that the $h$-orthogonal to $V$ is the subspace
\begin{align*}
V^h
=
\big\{\zeta\in Z\ \big|\ h(\zeta,\cdot)|_{V}=0\big\}
=
\big\{\zeta\in Z\ \big|\ \zeta(\tfrac ik)=0\quad \forall i=0,...,k-1\big\},
\end{align*}
and we have $Z=V\oplus V^h$ and $\ind(h|_{V^h})+\nul(h|_{V^h})=0$. Analogously, if we set $V_0:=V\cap Z_0$, we have
\begin{align}
\label{e:Morse_index_thm}
\ind(h|_{V_0})
&=
\ind(h|_{Z_0}) \\
\nonumber
\nul(h|_{V_0})
&=
\nul(h|_{Z_0}) 
\end{align}
From now on, we will simply write $h$ for the restriction $h|_V$.

The $h$-orthogonal $V_0^h=\big\{\zeta\in V\ \big|\ h(\zeta,\cdot)|_{V_0}=0\big\}$ is precisely the space of vector fields $\zeta\in Z$ such that $\zeta|_{(0,1)}$ is a Jacobi vector field. 
We denote by 
$\Ver:=\ker\diff\pi\subset \Tan\Tan M$
the vertical sub-bundle of $\Tan\Tan M$. Each intersection 
\[(\xi\cap\Ver)_{\tilde\gamma(t)}=(SM\cap\Ver)_{\tilde\gamma(t)}\] has dimension $\dim(M)-1$.
For each $\zeta\in V_0^h$, we set
\begin{align*}
\tilde\zeta(t):=(\zeta(t),\dot\zeta(t))=\Phi_t(\tilde\zeta(0^+)),
\qquad
\forall t\in(0,1).
\end{align*}
Notice that there is an isomorphism 
\begin{align*}
 V_0^h\to (\Phi_1-I)|_{\xi_{\tilde\gamma(0)}}^{-1}(\xi\cap\Ver)_{\tilde\gamma(0)},
 \quad
 \zeta\mapsto\tilde\zeta(0^+).
\end{align*}
In particular
\begin{equation}
\label{e:dim_V_0^h}
\begin{split}
\dim(V_0^h) & \leq \dim\ker(\Phi_1-I)|_{\xi_{\tilde\gamma(0)}} + \dim(\xi\cap\Ver)_{\tilde\gamma(0)}\\
& = \nul(h) + \dim(M)-1. 
\end{split}
\end{equation}
Moreover, the evaluation map $V_0^h\to\ker(G_v(\tilde\gamma(0)))$, $\zeta\mapsto\zeta(0)$ is surjective, and its kernel is precisely $\ker(h|_{V_0})$. Therefore
\begin{equation}
\label{e:dim_V_0^h_precise}
\begin{split}
\dim(V_0^h) & = \dim\ker(h|_{V_0}) + \dim \ker(G_v(\gamma(0),\dot\gamma(0)))\\
& = \nul(h|_{V_0}) + \dim(M)-1. 
\end{split}
\end{equation}
The general formula relating the Morse indices of a quadratic form to the ones of its restriction to a subspace (see, e.g., \cite[Section~A.2]{Mazzucchelli:2015zc}) gives
\begin{align}
\label{e:index_subspaces}
\ind(h)=\ind(h|_{V_0}) + \ind(h|_{V_0^h}) + \nul(h|_{V_0^h}) - \nul(h).
\end{align}
In particular, by~\eqref{e:dim_V_0^h}, we have
\begin{align*}
\ind(h) \leq \ind(h|_{V_0}) + \dim(V_0^h) - \nul(h) \leq \ind(h|_{V_0}) + \dim(M) - 1,
\end{align*}
which proves point~(iii). By~\eqref{e:dim_V_0^h_precise} and~\eqref{e:index_subspaces}, we have
\begin{align*}
\ind(h) + \nul(h) \leq \ind(h|_{V_0}) + \dim(V_0^h) \leq \ind(h|_{V_0}) + \nul(h|_{V_0}) + \dim(M) - 1,
\end{align*}
which proves point~(iv). 

From now on, let us now assume that $M$ is an orientable surface. 
The classical index theorem of Morse \cite{Morse:1996ua} allows us to express $\ind(h|_{Z_0})$ and $\nul(h|_{Z_0})$ as
\begin{align*}
\ind(h|_{V_0}) &= \sum_{t\in(0,1)} \dim\big(\Phi_t(\Ver_{\tilde\gamma(0)})\cap \Ver_{\tilde\gamma(t)}\big),\\
\nul(h|_{V_0}) &= \dim\big(\Phi_1(\Ver_{\tilde\gamma(0)})\cap \Ver_{\tilde\gamma(1)}\big).
\end{align*}
Notice that the Liouville vector field $L$ takes values in the vertical sub-bundle $\Ver$, and Equation~\eqref{e:Phi_t_X_L} implies that
\begin{align*}
\Phi_t(\Ver_{\tilde\gamma(0)})\cap \Ver_{\tilde\gamma(t)}
=
\Phi_t((\xi\cap\Ver)_{\tilde\gamma(0)})\cap (\xi\cap\Ver)_{\tilde\gamma(t)},
\qquad
\forall t\neq0.
\end{align*}
Since the fibers of the bundle $\xi\cap\Ver$ have dimension 1, we can express these index formulas by means of a single vector field $\eta$,  as follows. Let us fix an arbitrary non-zero vector $\tilde\eta_0\in (\xi\cap\Ver)_{\tilde\gamma(0)}$, and define
\begin{align*}
\tilde\eta(t)=(\eta(t),\dot\eta(t)):=\Phi_t(\tilde\eta_0),
\end{align*}
so that $\eta$ is a Jacobi field along $\gamma$ such that $\eta(0)=0$ and $G_v(\gamma,\dot\gamma)\eta\equiv0$. Since $M$ is an orientable surface, the normal bundle of $\gamma$ is trivial, and we can find a nowhere-vanishing 1-periodic smooth vector field $\mu$ along $\gamma$ such that $G_v(\gamma,\dot\gamma)\mu\equiv0$, so that we can express $\eta$ as 
\begin{align*}
\eta(t)=f(t)\mu(t)
\end{align*}
for some smooth function $f:\R\to\R$. Notice that, since $\eta$ is a Jacobi vector field that does not vanish identically, it has isolated zeroes, and in particular $\dot f(t)\neq0$ whenever $f(t)=0$. The index theory of Morse reduces to
\begin{align*}
\ind(h|_{Z_0}) 
&= 
\# \{t\in(0,1)\ |\ f(t)=0\}
,\qquad
\nul(h|_{Z_0})
=
\left\{
  \begin{array}{@{}ll}
    1 &   \mbox{if }f(1)=0, \\ 
    0 &  \mbox{if }f(1)\neq0. \\ 
  \end{array}
\right. 
\end{align*}
If $\nul(h|_{Z_0})=1$, then $\eta(1)=0$, and therefore 
\[\tilde\eta(t+1)= \tfrac{\dot f(1)}{\dot f(0)} \tilde\eta(t),\qquad\forall t\in\R \]
This readily implies that $\nul_\Omega(\gamma^m)=1$ and 
\begin{align*}
\ind_\Omega(\gamma^m)
=
\# \{t\in(0,m)\ |\ f(t)=0\}
=
m\,\ind_\Omega(\gamma)+m-1.
\end{align*}
This proves point (v).

With an integration by parts in~\eqref{e:index_form}, we readily see that the quadratic form $h$ on the space $V_0^h$ can be expressed in local coordinates as 
\begin{equation}
\label{e:h_V_0^h}
\begin{split}
h(\zeta,\zeta)
&=\langle G_{vv}\dot\zeta(1^-) + G_{xv}\zeta(1) - G_{vv}\dot\zeta(0^+) - G_{xv}\zeta(0),\zeta(0)\rangle\\
&= \diff\lambda((\Phi_1-I)\tilde\zeta(0^+),\tilde\zeta(0^+)). 
\end{split}
\end{equation}
Let us assume that $\nul(\gamma)=2$, so that $\Phi_1|_{\xi_{\tilde\gamma(0)}}=I$, $(\Phi_1-I)\tilde\zeta(0^+)=0$ for all $\zeta\in V_0^h$, and $\nul(h|_{V_0})=1$. Equation~\eqref{e:h_V_0^h} implies that $h|_{V_0^h}=0$. Since $\dim(V_0^h)=1+\nul(h|_{V_0})=2$, this implies that $\ind(h|_{V_0^h})=0$ and $\nul(h|_{V_0^h})=2$. Therefore, Equation~\eqref{e:index_subspaces} becomes $\ind(h)=\ind(h|_{V_0})$. Since $\Phi_1|_{\xi_{\tilde\gamma(0)}}=I$, in particular the above Jacobi field $\eta$ is (smoothly) 1-periodic, and so is the function $f$. Therefore, since $f$ has non-zero derivative at its zeroes, it must vanish an odd number of times in the open interval $(0,1)$. Equation~\eqref{e:Morse_index_thm} allows to conclude that $\ind(\gamma)=\ind(h)=\ind(h|_{V_0})$ is odd. We can now repeat the same argument for all the iterates $\gamma^m$, since
\[\nul(\gamma^m)=\dim\ker(\Phi_1|_{\xi_{\tilde\gamma(0)}}^m-I)=2,\]
and conclude that $\ind(\gamma)^m$ is odd as well for all $m\in\N$. This proves point (vi).

Finally, let us assume that $\ind_\Omega(\gamma^m)>0$ for some $m\geq1$, which is equivalent to the fact that the Jacobi field $\eta$ introduced above vanishes at some positive time. Let $\tau>0$ be the minimum $t>0$ such that $\eta(t)=0$. Up to replacing $\mu$ with $-\mu$, we can assume that $f|_{(0,\tau)}>0$, so that $\dot f(0)>0$ and $\dot f(\tau)<0$. 
If $\tau\leq1$, we consider the 1-periodic vector field along $\gamma$
\begin{align*}
\theta(t)=
\left\{
  \begin{array}{@{}ll}
    \eta(t), & \mbox{if }t\in[0,\tau], \\ 
    0, & \mbox{if }t\in[\tau,1],
  \end{array}
\right.
\end{align*}
which satisfies $h(\theta,\theta) = 0$ and
\begin{align*}
h(\theta,\mu) 
& = \langle G_{vv}\,\dot\eta(\tau) - G_{vv}\,\dot\eta(0),\mu(0) \rangle\\
& = 
\dot f(\tau)\, \langle G_{vv}\,\mu(\tau),\mu(\tau)\rangle
-
\dot f(0)\,\langle G_{vv}\,\mu(0),\mu(0)\rangle\\
& < 0.
\end{align*}
For each $\epsilon>0$ the piecewise smooth vector field $\theta+\epsilon\mu$ is 1-periodic and everywhere transverse to $\dot\gamma$. Moreover, 
\begin{align*}
h(\theta+\epsilon\mu,\theta+\epsilon\mu)=2\epsilon h(\mu,\theta) + \epsilon^2 h(\mu\mu)
\end{align*}
which is negative if $\epsilon>0$ is sufficiently small. Assume now that $\tau>1$. In this case, there exists $t>0$ such that $f(t)=f(t+1)>0$, $\dot f(t)>0$ and $\dot f(t+1)<0$. We define $\theta$ to be the 1-periodic vector field along $\gamma$ such that $\theta|_{[t,t+1]}=\eta|_{[t,t+1]}$. Notice that $\theta$ is everywhere transverse to $\dot\gamma$, and satisfies
\begin{align*}
 h(\theta,\theta)
 &=
 \langle G_{vv}\,\dot\eta(t+1),\eta(t+1) \rangle 
 -
 \langle G_{vv}\,\dot\eta(t),\eta(t) \rangle\\
 &=
 \langle G_{vv}\,(\dot\eta(t+1)-\dot\eta(t)),\eta(t) \rangle\\
 &=(\dot f(t+1)-\dot f(t))\, f(t)\, \langle G_{vv}\,\mu(t),\mu(t) \rangle\\
 &<0.
\end{align*}
In both cases, we can approximate $\theta$ with a $C^0$-close 1-periodic smooth vector field $\zeta$. Such a $\zeta$ will still be everywhere transverse to $\dot\gamma$ and will still satisfy $h(\zeta,\zeta)<0$. This completes the proof of point~(vii).
\end{proof}

\subsection{Local homology}
\label{ss:local_homology}

The last index that is usually employed in critical point theory is the local homology, whose construction we now recall for closed geodesics of Finsler manifolds $(M,F)$. Actually, the theory of local homology is very general, and essentially does not see the difference between the Riemannian and the Finsler settings. We refer the reader to \cite{Rademacher:1992sw, Bangert:2010ak,  Asselle:2018aa}, and in particular to \cite[Section~3]{Asselle:2018aa},  for a more comprehensive treatment.

For any $\UU\subset\Lambda$, $U\subset\Lambda_k$, and $\ell>0$, we set
\begin{align*}
\UU^{<\ell}:=\big\{\gamma\in\UU\ \big|\ E(\gamma)<\ell^2\big\},
\qquad
U^{<\ell}:=\big\{\xx\in U\ \big|\ E_k(\xx)<\ell^2\big\}.
\end{align*}
Notice that $\UU^{<\ell}$ and $U^{<\ell}$ are sublevel sets of the energy functional $E$, whereas in~\eqref{e:W<ell} we denoted by $\WW^{<\ell}$ a sublevel set of the length functional $L$. Nevertheless, the notation is consistent: $\WW$ was indeed a subset of the space of unparametrized loops $\Pi$, and if we parametrize any $\gamma\in\WW$ with constant speed and period 1 we have $L(\gamma)^2=E(\gamma)$.

The energy functional $E$ is invariant under the circle action 
\[u\cdot\gamma=\gamma(u+\cdot)\in\Lambda,\qquad\forall u\in S^1,\ \gamma\in\Lambda.\] Therefore, every closed geodesic $\gamma\in\crit(E)\cap E^{-1}(\ell^2)$ (with $\ell>0$) belongs to a circle of critical points of $E$
\[
S^1\cdot\gamma:=\big\{\gamma(u+\cdot)\in\Lambda\ \big|\ u\in S^1\big\}
\] 
A closed geodesic $\gamma$ is said to be isolated when the critical circles of its iterates $S^1\cdot\gamma^m$ are isolated in $\crit(E)$. Under this assumption, the local homology of $\gamma$ and of $S^1\cdot\gamma$ are the relative homology groups
\begin{align*}
C_*(\gamma):=H_*(\Lambda^{<\ell}\cup\{\gamma\},\Lambda^{<\ell}),\qquad
C_*(S^1\cdot\gamma):=H_*(\Lambda^{<\ell}\cup S^1\cdot{\gamma},\Lambda^{<\ell}).
\end{align*}
As we already mentioned, we will specify the coefficient field in the notation only when we will need to employ a specific one.

Even though the energy function $E$ may not be $C^2$, the local homology of an isolated closed geodesic $C_*(\gamma)$ is isomorphic to the local homology of a smooth function on a finite dimensional manifold at an isolated critical point of index $\ind(\gamma)$ and nullity $\nul(\gamma)$. Indeed, if $k\in\N$ is large enough so that the isolated closed geodesic $\gamma\in\crit(E)\cap E^{-1}(\ell^2)$ belongs to $\Lambda_k$, the inclusion induces the homology isomorphism
\begin{align*}
H_*(\Lambda_k^{<\ell}\cup\{\gamma\},\Lambda_k^{<\ell})\toup^{\cong}
C_*(\gamma).
\end{align*}
The energy $E_k=E|_{\Lambda_k}$ is smooth in a neighborhood of the critical point $\gamma$ (indeed, $E_k$ is smooth at all those $\zeta\in\Lambda_k$ such that $\zeta(\tfrac ik)\neq \zeta(\tfrac {i+1}k)$ for all $i\in\Z_k$). Let $\Sigma\subset M$ be an embedded hypersurface intersecting $\gamma$ transversely at $x_0:=\gamma(0)$. We define the smooth hypersurface
\begin{align*}
 \Sigma_k:=\big\{ \zeta\in\Lambda_k\ \big|\ \zeta(0)\in\Sigma \big\}\subset\Lambda_k.
\end{align*}
It turns out that $\gamma$ is an isolated critical point of $E_k|_{\Sigma_k}$ of index $\ind(\gamma)$ and nullity $\nul(\gamma)$, and the inclusion induces the homology isomorphism
\begin{align}
\label{e:local_homology_Sigma_k}
H_*(\Sigma_k^{<\ell}\cup\{\gamma\},\Sigma_k^{<\ell})
\toup^{\cong}
C_*(\gamma),
\end{align}
see \cite[Prop.~3.1]{Asselle:2018aa}.

Since $\gamma$ is an isolated critical point of $E_k|_{\Sigma_k}$, its local homology can also be expressed by means of the so-called Gromoll-Meyer neighborhoods \cite{Gromoll:1969jy}: these are suitable arbitrarily small compact path-connected neighborhoods $W\subset\Sigma_k$ of $\gamma$ such that, for some $\delta'>0$ and for all $\delta\in[0,\delta']$, the inclusion induces the homology isomorphisms
\begin{align}
\label{e:Gromoll_Meyer_nbhd}
H_*(\Sigma_k^{<\ell}\cup\{\gamma\},\Sigma_k^{<\ell}) 
\otup^{\cong}
H_*(W^{<\ell}\cup\{\gamma\},W^{<\ell-\delta}) 
\toup^{\cong} 
H_*(W,W^{<\ell-\delta}).
\end{align}
Indeed, a homotopy inverse of these inclusion can be built by suitably ``pushing'' in the direction given by a pseudo-gradient of the energy functional $E$, see e.g.~\cite[Theorem~5.2]{Chang:1993ng}.
Gromoll-Meyer neighborhoods are particularly useful to prove certain technical statements concerning the local homology. For instance the following one that we will employ in the proof of Corollary~\ref{c:conjugate_points}.

\begin{lem}
\label{l:local_homology_mountain_pass}
Let $\gamma\in\crit(E)\cap E^{-1}(\ell^2)$, with $\ell>0$, be an isolated closed geodesic. Assume that, for any sufficiently small open neighborhood $\UU\subset\Lambda$ or $\UU\subset\Lambda_k$ of $\gamma$, the open subset $\UU^{<\ell}$ is not connected. Then the local homology $C_1(\gamma)$ is non-zero.
\end{lem}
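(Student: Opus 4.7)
My plan is to realize the local homology by a finite-dimensional Gromoll-Meyer model and reduce the non-vanishing of $C_1(\gamma)$ to a statement about path-components of a sublevel. Choose $k\in\N$ large enough that $\gamma\in\Lambda_k$, and apply the isomorphisms \eqref{e:local_homology_Sigma_k} and \eqref{e:Gromoll_Meyer_nbhd}: for any sufficiently small compact path-connected Gromoll-Meyer neighborhood $W\subset\Sigma_k$ of $\gamma$ and any sufficiently small $\delta>0$ one obtains
\begin{equation*}
C_*(\gamma)\cong H_*(W,W^{<\ell-\delta}).
\end{equation*}

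Since $W$ is path-connected, $\tilde H_0(W)=0$, and the tail of the long exact sequence of the pair produces a surjection
\begin{equation*}
H_1(W,W^{<\ell-\delta})\twoheadrightarrow \tilde H_0(W^{<\ell-\delta}).
\end{equation*}
Hence $C_1(\gamma)\neq 0$ would follow from exhibiting at least two path-components in $W^{<\ell-\delta}$. The only critical point of $E_k|_{\Sigma_k}$ in $W$ is $\gamma$, at level $\ell^2$, so the pseudo-gradient deformation of $E_k$ that underlies \eqref{e:Gromoll_Meyer_nbhd} also yields a homotopy equivalence $W^{<\ell-\delta}\simeq W^{<\ell}$. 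Consequently, it suffices to verify that $W^{<\ell}$ is itself disconnected.

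To conclude I would invoke the hypothesis with the open neighborhood $\mathcal{U}:=\mathrm{int}(W)$ of $\gamma$ in $\Lambda_k$: by assumption $\mathcal{U}^{<\ell}$ is disconnected. If $W$ has been chosen as a genuine Gromoll-Meyer neighborhood, its topological boundary does not introduce new joins between the components of $\mathcal{U}^{<\ell}$ inside $W^{<\ell}$; this is a built-in feature of the Gromoll-Meyer construction, whose ``exit set'' consists of points of energy at most $\ell^2-\delta$, so that $\partial W\cap\{E<\ell^2\}\subset W^{<\ell-\delta}$. Hence the disconnection of $\mathcal{U}^{<\ell}$ transfers to $W^{<\ell}$, completing the proof.

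The principal obstacle I anticipate is making this last transfer rigorous. A cleaner alternative is to bypass $W$ and argue directly with the excision identity $C_*(\gamma)\cong H_*(\mathcal{U}^{<\ell}\cup\{\gamma\},\mathcal{U}^{<\ell})$, valid because $\gamma$ has an open neighborhood in $\Lambda$ and $\gamma\notin\Lambda^{<\ell}$. The corresponding long exact sequence gives
\begin{equation*}
H_1(\mathcal{U}^{<\ell}\cup\{\gamma\},\mathcal{U}^{<\ell})\to \tilde H_0(\mathcal{U}^{<\ell})\to \tilde H_0(\mathcal{U}^{<\ell}\cup\{\gamma\}),
\end{equation*}
so the task reduces to showing that $\mathcal{U}^{<\ell}\cup\{\gamma\}$ is path-connected for $\mathcal{U}$ small. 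This in turn amounts to joining each path-component of $\mathcal{U}^{<\ell}$ to $\gamma$ by a path inside $\mathcal{U}^{<\ell}\cup\{\gamma\}$, which I would produce by combining the local path-connectedness of the open set $\mathcal{U}^{<\ell}\subset\Sigma_k$ with a concatenation of short paths along a sequence of points in the component accumulating on $\gamma$ (whose existence uses the hypothesis, which forces descending directions for $E$ at $\gamma$). Together with the hypothesis that $\mathcal{U}^{<\ell}$ has at least two components, this forces $\tilde H_0(\mathcal{U}^{<\ell}\cup\{\gamma\})=0$ and yields a non-trivial class in $C_1(\gamma)$.
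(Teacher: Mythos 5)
Your skeleton (Gromoll--Meyer model for $C_*(\gamma)$, tail of the long exact sequence, reduction to disconnectedness of a sublevel set) coincides with the paper's, but the step that carries the actual content of the lemma is not established in either of your routes. In the first route you invoke the hypothesis with $\mathcal{U}:=\mathrm{int}(W)$: this is not legitimate, because $W$ is a subset of $\Sigma_k=\{\zeta\in\Lambda_k\ |\ \zeta(0)\in\Sigma\}$, a proper submanifold of positive codimension in $\Lambda_k$, so its interior in $\Lambda_k$ (a fortiori in $\Lambda$) is empty, and the hypothesis of the lemma only concerns open neighborhoods in $\Lambda$ or $\Lambda_k$, not in $\Sigma_k$. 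The assertion that the boundary of $W$ ``does not introduce new joins'' because the exit set has energy at most $\ell^2-\delta$ is unsubstantiated and, more importantly, beside the point: the genuine difficulty is that a connected component of $\UU^{<\ell}$, for $\UU$ open in $\Lambda$ or $\Lambda_k$, need not meet $W$ at all, so disconnectedness of $\UU^{<\ell}$ does not transfer to $W^{<\ell}$ without a construction.

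Your ``cleaner alternative'' has the same gap in another guise: after excision you must show that at least two path-components of $\UU^{<\ell}$ can be joined to $\gamma$ by paths inside $\UU^{<\ell}\cup\{\gamma\}$. This is false for a general small $\UU$ (there may be components of $\UU^{<\ell}$ whose closure does not contain $\gamma$), and even for a component accumulating at $\gamma$ you still need an actual continuous path in the sublevel converging to $\gamma$: having $\gamma$ in the closure does not by itself give path-connectivity at $\gamma$, and your ``concatenation of short paths'' does not control where the connecting paths go. The paper deals with precisely these two issues: it first replaces $\UU$ by $\VV=\UU\setminus(\overline\UU_1\cup\dots\cup\overline\UU_r)$, discarding the components whose closure misses $\gamma$, so that the hypothesis applies again to $\VV$ and every relevant component touches $\gamma$; and it then proves the key claim that every component $\VV'$ of $\VV^{<\ell}$ meets the Gromoll--Meyer neighborhood $W\subset\Sigma_k$, via the explicit energy-non-increasing deformation $s\mapsto\zeta_s$ obtained by choosing $\zeta_0\in\VV'$ close to $\gamma$ with $\zeta_0(0)\in\Sigma$ and replacing the arcs $\zeta_0|_{[i/k,(i+s)/k]}$ by minimizing geodesics; this path stays in $\VV'$ (the energy does not increase and the curves remain near $\gamma$) and ends at a broken closed geodesic $\zeta_1\in\Sigma_k$ lying in $W$. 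Some argument of this kind, linking the ambient sublevels $\UU^{<\ell}$ to the sublevels of $W$ inside $\Sigma_k$ (or, in your second route, genuinely producing paths to $\gamma$ inside $\UU^{<\ell}\cup\{\gamma\}$ for two distinct components), is the heart of the proof and is missing from your proposal.
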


\begin{proof}
We prove the lemma in the infinite dimensional setting of $\Lambda$, the proof for the setting of $\Lambda_k$ being entirely analogous. Let $\UU_0\subset\Lambda$ be an open neighborhood of $\gamma$ such that, for every open neighborhood $\UU\subset\UU_0$ of $\gamma$, the open subset $\UU^{<\ell}$ is not connected. Let $\UU\subset\UU_0$ be one such open neighborhood. We consider the connected components $\UU_1,...,\UU_r\subset\UU^{<\ell}$ such that $\gamma\not\in\overline\UU_i$ for all $i=1,...,r$. The subset $\VV:=\UU\setminus(\overline\UU_1\cup...\cup\overline\UU_r)$ is still an open neighborhood of $\gamma$ contained in $\UU_0$, and therefore $\VV^{<\ell}$ is not connected. 

Let $W\subset\Sigma_k$ be a Gromoll-Meyer neighborhood of $\gamma$ that is small enough so that $W\subset\VV$. We claim that, for each connected component $\VV'\subset\VV^{<\ell}$, we have
\begin{align}
\label{e:VV'_cap_W}
 \VV'\cap W\neq\varnothing.
\end{align}
This implies that $W^{<\ell}$ is not path-connected. Since $W$ is path-connected, the long exact sequence
\begin{equation*}
\begin{tikzcd}[row sep=small]
 ... \arrow[r]
 &
 H_1(W,W^{<\ell}) \arrow[r] 
 &
 H_0(W^{<\ell}) \arrow[r]
 &
 H_0(W) \arrow[r] \equaldown
 &
 ... \\
 & & & 0 & 
\end{tikzcd}
\end{equation*}
implies that $H_1(W,W^{<\ell})$ is non-zero. This latter relative homology group is isomorphic to $C_1(\gamma)$, according to~\eqref{e:local_homology_Sigma_k} and~\eqref{e:Gromoll_Meyer_nbhd}.

It remains to establish~\eqref{e:VV'_cap_W}. Since $\gamma\in\partial\VV'$, there exists $\zeta_0\in\VV'$ arbitrarily close to $\gamma$ and such that $\zeta_0(0)\in\Sigma$. For each $s\in(0,1]$, we define $\zeta_s\in\Lambda$ to be the unique loop such that, for each $i=0,...,k-1$, the segment $\zeta_s|_{[i/k,(i+s)/k]}$ is a length-minimizing geodesic, while $\zeta_s|_{[(i+s)/k,(i+1)/k]}=\zeta_0|_{[(i+s)/k,(i+1)/k]}$. Notice that $s\mapsto\zeta_s$ is a continuous path in $\Lambda$, and $\zeta_1\in\Sigma_k$. Up to choosing the initial loop $\zeta_0$ to be sufficiently close to $\gamma$, every $\zeta_s$ is contained in the neighborhood $\VV$. Since $E(\zeta_s)\leq E(\zeta_0)<\ell$, we actually have that every $\zeta_s$ is contained in $\VV'$. Finally, if we choose $\zeta_0$ to be sufficiently close to $\gamma$, we have that $\zeta_1\in W$.
\end{proof}

The local homology groups of the critical circles of closed geodesics are the ``building blocks'' for the homology of the free loop space $\Lambda$. Indeed, if $\gamma\in\crit(E)\cap E^{-1}(\ell^2)$ is an isolated closed geodesic and the interval $(\ell,\ell+\epsilon)$ does not contain critical values of $E$, the inclusion induces an injective homomorphism
\begin{align*}
 C_*(S^1\cdot\gamma)
 \hookrightarrow
 H_*(\Lambda^{<\ell+\epsilon},\Lambda^{<\ell}),
\end{align*}
see, e.g., \cite[proof of Lemma~4]{Gromoll:1969gh}.

The local homology of an isolated closed geodesic $\gamma$ does not vary (up to a shift in degree) under iterations that preserve the nullity. In particular, if $\ind(\gamma)=\ind(\gamma^m)$ and $\nul(\gamma)=\nul(\gamma^m)$, the iteration map $\psi^m:\Lambda\hookrightarrow\Lambda$, $\psi^m(\zeta)=\zeta^m$ induces the local homology isomorphisms
\begin{align*}
 \psi^m_*:C_*(\gamma)\toup^{\cong} C_*(\gamma^m),
 \qquad
 \psi^m_*:C_*(S^1\cdot\gamma)\toup^{\cong} C_*(S^1\cdot\gamma^m).
\end{align*}
This is actually a consequence of a general Morse theoretic result due to Gromoll-Meyer \cite[Lemma~7]{Gromoll:1969jy}.

The local homology of an isolated closed geodesic often embeds into the local homology of its critical circle. More precisely, the following statement holds. A closed geodesic $\gamma\in\crit(E)\cap E^{-1}(0,\infty)$ is said to be prime when $\gamma=\zeta^m$ if and only if $\zeta=\gamma$ and $m=1$.

\begin{lem}
\label{l:Cgamma_emb_CS1gamma}
If $\gamma\in\crit(E)\cap E^{-1}(0,\infty)$ is an isolated  prime closed geodesic, then for all odd numbers $m\in\N$ the inclusion induces an injective homomorphism
\begin{align*}
C_*(\gamma^m;\Q)
\hookrightarrow
C_*(S^1\cdot\gamma^m;\Q).
\end{align*}
\end{lem}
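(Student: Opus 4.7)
My plan is to reduce the problem to a $\mathbb{Z}_m$-equivariant finite-dimensional model and apply the Wang exact sequence of a mapping torus. First, I would fix $k\in\N$ large enough so that $\gamma^m\in\Lambda_k$, choose a smooth hypersurface $\Sigma\subset M$ transverse to $\gamma$ at $\gamma(0)$, and form $\Sigma_k\subset\Lambda_k$ as in Section~\ref{ss:local_homology}. Primality of $\gamma$ guarantees that $\gamma^m$ has isotropy exactly $\mathbb{Z}_m=\langle1/m\rangle\subset S^1$; the ``rotate-and-re-base'' operation (rotate the loop parameter by $1/m$ and then re-base at the next transverse intersection with $\Sigma$) yields a smooth $\mathbb{Z}_m$-action with generator $g$ on a neighborhood of $\gamma^m$ in $\Sigma_k$. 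I would then select a $\mathbb{Z}_m$-invariant Gromoll--Meyer neighborhood $W\subset\Sigma_k$ of $\gamma^m$.

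Next, a standard $S^1$-equivariant slice argument identifies a neighborhood of the critical orbit $S^1\cdot\gamma^m$ in $\Lambda_k$ with the mapping torus $\hat W:=S^1\times_{\mathbb{Z}_m}W$, viewed as the $W$-bundle over $S^1$ with monodromy $g$. Under the excision isomorphisms
\[
C_*(\gamma^m;\Q)\cong H_*(W,W^{<\ell};\Q),\qquad C_*(S^1\cdot\gamma^m;\Q)\cong H_*(\hat W,\hat W^{<\ell};\Q),
\]
the map in the statement corresponds to the fiber-inclusion map $W\hookrightarrow\hat W$. The Wang exact sequence
\[
\dots\to H_q(W,W^{<\ell};\Q)\xrightarrow{\mathrm{id}-g_*}H_q(W,W^{<\ell};\Q)\xrightarrow{\iota_*}H_q(\hat W,\hat W^{<\ell};\Q)\to\dots
\]
then shows that $\iota_*$ is injective in degree $q$ if and only if $g_*$ acts trivially on $C_q(\gamma^m;\Q)$.

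It therefore remains to prove that $g_*=\mathrm{id}$ on $C_*(\gamma^m;\Q)$ for odd $m$. The equivariant Gromoll--Meyer splitting lemma provides a $\mathbb{Z}_m$-equivariant local chart near $\gamma^m$ in which $E_k$ takes the normal form $E_k(\gamma^m)-|x_-|^2+|x_+|^2+f(x_0)$, with $V_-,V_+,V_0$ the negative, positive and null eigenspaces of the Hessian and $f:V_0\to\R$ a smooth $\mathbb{Z}_m$-invariant function with isolated critical point at $0$. By the Künneth formula,
\[
C_*(\gamma^m;\Q)\cong H_*(D_-,\partial D_-;\Q)\otimes H_*(C,C_{<0};\Q),
\]
with $g_*$ splitting as a tensor product. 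On the left factor, which is one-dimensional in degree $\ind(\gamma^m)$, the action is multiplication by $\det(g|_{V_-})$: the relation $(g|_{V_-})^m=\mathrm{id}$ forces $\det(g|_{V_-})\in\{\pm1\}$, and since $+1$ is the only real $m$-th root of unity for $m$ odd, this factor contributes trivially. The hard part of the argument is the analogous triviality on the nullity factor $H_*(C,C_{<0};\Q)$: one must combine the order-$m$ rationality of the action, the analytic structure of $f$ coming from the iterated second-variation formula at a prime closed geodesic, and the constraints from Proposition~\ref{p:indices} on the index/nullity behavior under iteration to rule out non-trivial rational $\mathbb{Z}_m$-representations on the local homology of $f$ at $0$. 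Once $g_*=\mathrm{id}$ is established in every degree, the Wang sequence delivers the desired injectivity.
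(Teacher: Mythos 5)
Your setup is sound and in fact parallels the paper's argument: the Wang exact sequence of the orbit neighborhood is exactly the exact sequence
$0\to(\mu_*-\mathrm{id})\,C_*(\gamma^m;\Q)\to C_*(\gamma^m;\Q)\to C_*(S^1\cdot\gamma^m;\Q)$
that the paper quotes from the literature (with $\mu$ the rotation by $1/m$ playing the role of your $g$), and injectivity reduces in both treatments to showing that the generator of the $\Z_m$-symmetry acts as the identity on $C_*(\gamma^m;\Q)$. Your determinant argument on the negative eigenspace (finite odd order forces $\det(g|_{V_-})=1$) is a legitimate substitute for the paper's use of Bott's parity result, which guarantees that $\ind(\gamma)$ and $\ind(\gamma^m)$ have the same parity for odd $m$.

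The gap is the null factor, and it is not a technicality: it is the entire content of the statement. You write that one ``must combine'' the finite order of the action, the second-variation structure, and Proposition~\ref{p:indices} ``to rule out non-trivial rational $\Z_m$-representations on the local homology of $f$ at $0$'', but no argument is given, and no soft argument can exist. A finite-order homeomorphism preserving a function with an isolated critical point can perfectly well act non-trivially on rational local homology: take $f(z)=\mathrm{Re}(z^m)$ on $\R^2$ with the rotation by angle $2\pi/m$; the local homology in degree $1$ is $\Q^{m-1}$ (the reduced homology of the $m$ negative sectors) and the rotation acts by the non-trivial cyclic permutation representation. So triviality on the degenerate directions must come from the specific structure of the energy functional at an iterate of a \emph{prime} closed geodesic, which is precisely the cited computation $\mu_*=(-1)^{\ind(\gamma^m)-\ind(\gamma)}\,\mathrm{id}$ (Asselle--Mazzucchelli, going back to Gromoll--Meyer-type arguments); the paper's proof consists of invoking this together with Bott's parity statement. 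There are also smaller points you would need to nail down in your finite-dimensional model (that the ``rotate-and-re-base'' map is a genuine $\Z_m$-action of order exactly $m$ on a neighborhood in $\Sigma_k$, that the Gromoll--Meyer splitting can be chosen equivariantly, and that the neighborhood of the critical circle is the claimed mapping torus), but the missing proof of triviality on the nullity factor is what prevents your proposal from being a proof.
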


\begin{proof}
The proof of this fact is rather long, and we only provide its main steps. Let $m>0$ be a positive integer. We denote by $\mu:\Lambda\to\Lambda$ the continuous map $\mu(\gamma)=\gamma(\tfrac1m + \cdot)$. There is an exact sequence
\begin{align}
\label{e:les_loc_hom}
 0 
 \longrightarrow 
 (\mu_*-\mathrm{id})C_*(\gamma^m;\Q)
 \longrightarrow
 C_*(\gamma^m;\Q)
 \longrightarrow
 C_*(S^1\cdot\gamma^m;\Q),
\end{align}
where all the homomorphisms are induced by the inclusion, see~\cite[Lemma~3.4]{Asselle:2018aa}. The homomorphism $\mu_*:C_*(\gamma^m;\Q)\to C_*(\gamma^m;\Q)$ turns out to be equal to 
\begin{align}
\label{e:mu*}
\mu_*=(-1)^{\ind(\gamma^m)-\ind(\gamma)}\mathrm{id}, 
\end{align}
see~\cite[Lemma~3.5]{Asselle:2018aa}. Bott's iteration theory \cite{Bott:1956sp} implies that the Morse indices $\ind(\gamma)$ and $\ind(\gamma^m)$ have the same parity provided $m$ is odd. This, together with~\eqref{e:les_loc_hom} and~\eqref{e:mu*}, implies that the inclusion induces an injective homomorphism $C_*(\gamma^m;\Q)
\hookrightarrow
C_*(S^1\cdot\gamma^m;\Q)$ for all odd integers $m>0$.
\end{proof}

We close this section by proving the following proposition relating the local homology in $\Lambda=\W(S^1,M)$ to the one in $\Pi=\Emb(S^1,M)/\Diff(S^1)$ of Section~\ref{s:simply_closed_geodesics}. By applying the proposition to the three min-max values $\ell(h_i)$, for $i=1,2,3$, defined in~\eqref{e:ell_h_i}, we will infer Theorem~\ref{t:LS}(iii). 

\begin{prop}
\label{p:local_homology_min_max}
Let $(M,F)$ be a closed, orientable, reversible Finsler surface, $\rho>0$ a constant, and $h\in H_d(\Pi,\Pi^{<\rho})$ a non-trivial homology class. Assume that there are only finitely many simple closed geodesics of $(M,F)$ having length in a neighborhood of $\ell(h)$.
Then, there exists a simple closed geodesic $\gamma\in\crit(E)\cap E^{-1}(\ell(h)^{1/2})$ with non-zero local homology $C_d(\gamma)\neq0$.
\end{prop}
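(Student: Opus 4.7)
The plan is to combine Lemma~\ref{l:l_critical} with a finite-dimensional slice argument that identifies the relevant relative homology in $\Pi$ with $C_d(\gamma)$ for a suitable simple closed geodesic $\gamma$. First, since only finitely many simple closed geodesics have length in a neighborhood of $\ell := \ell(h)$, for $\epsilon > 0$ sufficiently small the hypothesis of Lemma~\ref{l:l_critical} is satisfied. Applying that lemma to $h$ produces a simple closed geodesic $\gamma$ of length $\ell$, a constant $\delta \in (0,\epsilon^2)$, and the connected component $\VV := \VV(\gamma,\epsilon)$ of $\WW(\ell,\epsilon)$ containing $[\gamma]$, such that the inclusion-induced homomorphism $H_d(\VV, \VV^{<\ell-\delta}) \to H_d(\Pi, \Pi^{<\ell})$ is non-zero; in particular $H_d(\VV, \VV^{<\ell-\delta}) \neq 0$.

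Next, I build a finite-dimensional local model in the spirit of Subsection~\ref{ss:local_homology}. Parametrize $\gamma$ with constant $F$-speed so that $E(\gamma) = \ell^2$, pick a smooth hypersurface $\Sigma \subset M$ through $\gamma(0)$ transverse to $\dot\gamma(0)$, and choose $k$ large enough so that $\gamma \in \Sigma_k := \{\xx \in \Lambda_k : x_0 \in \Sigma\}$. The finiteness hypothesis, together with the fact that every critical point of $E$ sufficiently close to $\gamma$ in $\Lambda$ is again a simple closed geodesic of length close to $\ell$, ensures that $\gamma$ is an isolated critical point of $E_k|_{\Sigma_k}$. By~\eqref{e:local_homology_Sigma_k} and~\eqref{e:Gromoll_Meyer_nbhd} there is then a Gromoll--Meyer neighborhood $W \subset \Sigma_k$ of $\gamma$ satisfying $C_*(\gamma) \cong H_*(W, W^{<\ell-\delta'})$ for all sufficiently small $\delta' > 0$.

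The crux is to identify $H_d(W, W^{<\ell-\delta'})$ with $H_d(\VV, \VV^{<\ell-\delta})$ via the continuous projection $q: \Sigma_k \to \Pi$, $\xx \mapsto [\gamma_{\xx}]$. I claim that $q$ restricts to a homeomorphism from a small neighborhood $W' \subset \Sigma_k$ of $\gamma$ onto a neighborhood of $[\gamma]$ in $\Pi$. Injectivity near $\gamma$ follows from broken-geodesic rigidity: any $\theta \in \Diff(S^1)$ with $\gamma_{\xx'} = \gamma_{\xx} \circ \theta$ and $\xx, \xx'$ close to $\gamma$ must satisfy $\theta(0)=0$ since nearby curves meet $\Sigma$ only at a point close to $\gamma(0)$; it must permute and hence pointwise fix the break set $\{i/k\}$ by monotonicity; and it must restrict to the identity on each segment, as the unique constant-speed reparametrization of a minimizing geodesic with fixed endpoints. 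Surjectivity onto a neighborhood uses the continuous retraction $\Emb(S^1,M) \to \Lambda_k$ followed by constant-speed reparametrization and an $S^1$-shift placing the basepoint in $\Sigma$, all well-defined because $\gamma$ is prime (being simple closed). Combining with the preceding steps yields $C_d(\gamma) \cong H_d(W', (W')^{<\ell-\delta}) \cong H_d(\VV, \VV^{<\ell-\delta}) \neq 0$.

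The principal obstacle lies in the third step, and specifically in the compatibility of the $L$-filtration on $\Pi$ with the $E_k$-filtration on $\Sigma_k$: by Cauchy--Schwarz one has $L(\gamma_{\xx})^2 \leq E_k(\xx)$ with equality precisely on equally-spaced $\xx$, so on non-equally-spaced broken geodesics the two filtrations differ. I expect to handle this either by restricting the Gromoll--Meyer representative to the equally-spaced submanifold of $\Sigma_k$, on which $L^2 = E_k$ and which still represents $C_*(\gamma)$ because $\gamma$ itself lies in this submanifold, or by using the curve-shortening semi-flow of Theorem~\ref{t:curve_shortening} to deform the two filtrations onto each other in a controlled way.
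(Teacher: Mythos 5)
Your first two steps match the paper, but the crux of your argument (step three) contains a genuine gap: the map $q:\Sigma_k\to\Pi$, $\xx\mapsto[\gamma_{\xx}]$, cannot restrict to a homeomorphism from a neighborhood $W'$ of $\gamma$ in $\Sigma_k$ onto a neighborhood of $[\gamma]$ in $\Pi$. First, a broken closed geodesic $\gamma_{\xx}$ has corners at the break points, so it is not a smooth embedding and $q$ does not even take values in $\Pi=\Emb(S^1,M)/\Diff(S^1)$ unless $\xx$ is itself a closed geodesic. Second, and decisively, $\Sigma_k$ is finite-dimensional while every neighborhood of $[\gamma]$ in $\Pi$ is infinite-dimensional (locally modeled on $C^\infty(S^1,(-\rho,\rho))$ by the normal-coordinate chart of Subsection~\ref{ss:short_time}), so no such homeomorphism exists. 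Your ``surjectivity'' argument via the retraction $\Emb(S^1,M)\to\Lambda_k$ only produces a many-to-one map, not an inverse, so it cannot rescue the claim; and the resulting isomorphism $C_d(\gamma)\cong H_d(\VV,\VV^{<\ell-\delta})$ that you aim for is in any case stronger than what the proposition requires. Moreover, the filtration mismatch between $L$ on $\Pi$ and $E_k$ on $\Sigma_k$, which you flag at the end, is left unresolved: restricting to the equally-spaced stratum of $\Sigma_k$ does not obviously preserve the local homology (Gromoll--Meyer deformations do not stay in that stratum), and the curve-shortening alternative is not carried out.

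What can be salvaged, and what the paper actually does, is a homotopy-level factorization rather than a local identification. One keeps the non-zero homomorphism $i_*:H_d(\VV(\gamma,\epsilon),\VV(\gamma,\epsilon)^{<\ell-\delta})\to H_d(\Pi,\Pi^{<\ell})$ from Lemma~\ref{l:l_critical}, builds a homotopy $r_t$ replacing $\zeta$ piece by piece with minimizing geodesic arcs so that $r_1$ lands in a Gromoll--Meyer neighborhood $W\subset\Sigma_k$ and satisfies $E(r_t(\zeta))\leq L(\zeta)^2$, and builds a smoothing map $c_{s_1}$ (convolution followed by the tubular-neighborhood projection of an embedding $M\hookrightarrow\R^3$) going back into $\Emb(S^1,M)$ with $L(c_{s_1}(\zeta))<E(\zeta)^{1/2}+\delta$. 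The two inequalities make the sublevel sets match up in the needed directions, the homotopy $h_t$ interpolating between the inclusion and $c_{s_1}\circ r_1$ shows the triangle commutes, and since $H_d(W,W^{<\ell-\delta})\cong C_d(\gamma)$ by \eqref{e:local_homology_Sigma_k} and \eqref{e:Gromoll_Meyer_nbhd}, the non-vanishing of $i_*$ forces $C_d(\gamma)\neq0$ without ever claiming the two local homologies are isomorphic. You should restructure your third step along these lines.
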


\begin{proof}
We first apply Lemma~\ref{l:l_critical} and obtain a simple closed geodesic $\gamma$ of length $\ell:=\ell(h)$ and, for every $\epsilon>0$, an open neighborhood $\VV(\gamma,\epsilon)\subset\Pi$ and a constant $\delta\in(0,\epsilon^2)$ such that the homomorphism \[H_*(\VV(\gamma,\epsilon),\VV(\gamma,\epsilon)^{<\ell-\delta})\to H_*(\Pi,\Pi^{<\ell})\] induced by the inclusion is non-zero. 

Let $\Sigma\subset M$ be an embedded open hypersurface (i.e.\ an open segment) intersecting $\gamma$ transversely. We choose $\Sigma$ and $\epsilon_0>0$ small enough so that every $\zeta\in\VV(\gamma,\epsilon_0)$ intersect $\Sigma$ in a single point and, by the implicit function theorem, the map
$\VV(\gamma,\epsilon_0)\to\Sigma$, $\zeta\mapsto\zeta\cap\Sigma$ is continuous. Throughout this section, we uniquely parametrize every $\zeta\in\VV(\gamma,\epsilon_0)$ so that
\begin{align*}
\zeta:S^1\to M,
\qquad
F(\zeta,\dot{\zeta})\equiv L(\zeta),
\qquad 
\zeta(0)\in\Sigma.
\end{align*}
With this choice of parametrizations, we identify $\VV(\gamma,\epsilon_0)$ with a subset of the space of embeddings $\Emb(S^1,M)$, endowed as usual with the $C^\infty$-topology. Notice that $\VV(\gamma,\epsilon_0)$ is relatively compact in the $C^1$ topology. Moreover, every $C^1$-open neighborhood of $\gamma$ contains $\VV(\gamma,\epsilon)$ for a sufficiently small $\epsilon\in(0,\epsilon_0]$.

Let us consider an embedding $M\hookrightarrow\R^3$, which exists since $M$ is an orientable closed surface. Let $U\subset\R^3$ be a tubular neighborhood of $M$ with associated smooth retraction $\pi:U\to M$. We consider a family of mollifiers $\theta_s(u)=\theta(u/s)/s$, where $s\in(0,1)$ and $\theta:S^1\to[0,\infty)$ is a smooth function supported in $(-1/2,1/2)$ and with integral 1. We denote by $*$ the convolution operation. Since $\VV(\gamma,\epsilon_0)$ is relatively $C^1$-compact and $\theta_s$ tends to the Dirac delta as $s\to0$, there exists $s_0>0$ and $\epsilon_1\in(0,\epsilon_0]$ such that we have a well defined continuous map
\begin{align*}
c:[0,s_0]\times\VV(\gamma,\epsilon_1)\to \Emb(S^1,M),
\qquad
c(s,\zeta)(u)=c_s(\zeta)(u)=\pi(\zeta*\theta_s(u)).
\end{align*}
Notice that $c_0(\zeta)=\zeta$ for all $\zeta\in \VV(\gamma,\epsilon_1)$. Since the length function is continuous on the relatively $C^1$-compact subset $\VV(\gamma,\epsilon_1)$, there exists $s_1\in(0,s_0]$ such that
\begin{align}
\label{e:stretch_convolution_0}
 L(c_s(\zeta))<L(\zeta)+\delta/2,
 \qquad
 \forall s\in[0,s_1],\ \zeta\in\VV(\gamma,\epsilon).
\end{align}
By the continuity of the convolution, there exists an open subset $\UU\subset W^{1,2}(S^1,M)$ containing $\VV(\gamma,\epsilon)$ such that $c_{s_1}$ extends as a continuous map 
\begin{align*}
c_{s_1}:\UU\to \Emb(S^1,M),
\qquad
c_{s_1}(\zeta)(u)=\pi(\zeta*\theta_{s_1}(u)),
\end{align*}
and
\begin{align}
\label{e:stretch_convolution}
 L(c_{s_1}(\zeta))<L(\zeta)+\delta,
 \qquad
 \forall \zeta\in\UU.
\end{align}

We consider an integer 
\begin{align*}
 k > \frac{\ell+\epsilon_1^2}{\injrad(M,F)}
\end{align*}
that we will soon fix, and the space $\Sigma_k$ of broken closed geodesics intersecting $\Sigma$ at time 0. We define a continuous homotopy
\begin{align*}
r_t:\VV(\gamma,\epsilon_1)\to W^{1,2}(S^1,M),\ t\in[0,1],
\end{align*}
as follows: we uniquely parametrize every $\zeta\in\VV(\gamma,\epsilon_0)$ so that
\begin{align*}
F(\zeta,\dot{\zeta})\equiv L(\zeta),
\qquad 
\zeta(0)\in\Sigma;
\end{align*}
for all $i=0,...,k-1$, we define \[r_t(\zeta)|_{[i/k,(i+1-t)/k]}:=\zeta|_{[i/k,(i+1-t)/k]},\] and $r_t(\zeta)|_{[(i+1-t)/k,(i+1)/k]}$ as the shortest geodesic of $(M,F)$ parametrized with constant speed and joining its endpoints. We require $k$ to be large enough so that every $r_t$ has image inside the open subset $\UU\subset W^{1,2}(S^1,M)$. Clearly, 
\[E(r_t(\zeta))\leq E(\zeta)=L(\zeta)^2,\qquad \forall t\in[0,1].\] 

We consider a Gromoll-Meyer neighborhood $W\subset\Sigma_k\cap\UU$ of $\gamma=r_1(\gamma)$. Notice that, by~\eqref{e:stretch_convolution}, we have
\begin{align*}
L(c_{s_1}(\zeta))< L(\zeta)+\delta \leq E(\zeta)^{1/2}+\delta,\qquad\forall\zeta\in W,
\end{align*}
and in particular $c_{s_1}(W^{<\ell-\delta})\subset\Pi^{<\ell}$.
We fix a constant $\epsilon_2\in(0,\epsilon_1]$ small enough so that $r_1(\VV(\gamma,\epsilon_1))\subset W$. Overall, we have the homomorphisms
\begin{equation}
\label{e:commuting_diagram_local_homologies}
\begin{tikzcd}[row sep=large]
  H_*(\VV(\gamma,\epsilon_2),\VV(\gamma,\epsilon_2)^{<\ell-\delta}) 
  \arrow[rr,"i_*"] \arrow[dr,"(r_1)_*"'] 
  & &
  H_*(\Pi,\Pi^{<\ell})
  \\
  &
  H_*(W,W^{<\ell-\delta}) \arrow[ur,"(c_{s_1})_*"']
  &
\end{tikzcd}
\end{equation}
where $i_*$ is the non-zero homomorphism induced by the inclusion (see the first paragraph of the proof). All we need to do in order to complete the proof is to show that the diagram~\eqref{e:commuting_diagram_local_homologies} commutes. This is a consequence of the fact that the inclusion $i$ is homotopic to the composition $c_{s_1}\circ r_1$ via the continuous homotopy
\begin{align*}
h_t:\VV(\gamma,\epsilon_2)\to \Pi,\qquad 
h_t=
\left\{
  \begin{array}{@{}lll}
    c_{2ts_1}, &   \mbox{if }t\in[0,1/2], \\ 
    c_{s_1}\circ r_{2t-1}, &   \mbox{if } t\in[1/2,1], 
  \end{array}
\right.
\end{align*}
which satisfies $h_0=i$, $h_1=c_{s_1}\circ r_{1}$, and  $h_t(\VV(\gamma,\epsilon_2)^{<\ell-\delta})\subset\Pi^{<\ell}$ for all $t\in[0,1]$ according to~\eqref{e:stretch_convolution_0} and~\eqref{e:stretch_convolution}.
\end{proof}

\section{Infinitely many closed geodesics}
\label{s:infinitely_many}

\subsection{The Birkhoff map}
Let $(S^2,F)$ be a reversible Finsler sphere, and $SS^2=\{(x,v)\ |\ F(x,v)=1\}$ its Finsler unit tangent bundle with base projection $\pi:SM\to M$, $\pi(x,v)=x$. As we already recalled in the proof of Proposition~\ref{p:indices}, $SS^2$ admits the contact form $\alpha=G_v\,\diff \pi$, where $G(x,v)=\tfrac12F(x,v)^2$, and the associated Reeb vector field $X$ on $SM$ defined by $\alpha(X)\equiv1$ and $\diff\alpha(X,\cdot)\equiv0$. The flow $\phi_t:SM\to SM$ of $X$ is precisely the geodesic flow of $(S^2,F)$.

Let $\gamma:S^1\hookrightarrow S^2$ a simple closed geodesic of $(S^2,F)$. Without loss of generality, let us assume that $F(\gamma,\dot\gamma)\equiv1$. The complement $S^2\setminus\gamma$ is the disjoint union of two open balls $B_0,B_1\subset S^2$. We consider the open annuli
\begin{align*}
A_i:=\big\{(x,v)\in SS^2\ \big|\ x\in\gamma(S^1),\ v\pitchfork\dot\gamma(t)\mbox{ and points inside }B_i\big\},\quad
i=0,1.
\end{align*}
Since the Reeb vector field $X$ is transverse to $A_i$, we readily see that \[\diff\alpha|_{A_i}=(X\lrcorner(\alpha\wedge\diff\alpha))|_{A_i}\]
is a symplectic form on $A_i$. We assume that the first return time
\begin{align*}
 \tau_i:A_i\to(0,\infty],
 \qquad
 \tau_i(x,v):=\inf\big\{t>0\ \big|\ \phi_t(x,v)\in A_{1-i} \big\}\in(0,+\infty]
\end{align*}
is finite for all $(x,v)\in A_i$ (here, we adopt the usual convention $\inf\varnothing=+\infty$). Under this assumption, there is a well defined first return map
\begin{align*}
 \psi_i:A_i\to A_{1-i},
 \qquad
 \psi_i(x,v)=\phi_{\tau_i(x,v)}(x,v),
\end{align*}
which is a diffeomorphism. Since
\begin{align*}
\psi_i^*\alpha-\alpha
=
\phi_{t}^*\alpha|_{t=\tau_i}+\alpha(\partial_t\phi_t(z))|_{t=\tau_i}\diff\tau_i-\alpha
=
\alpha+\alpha(X)\diff\tau_i-\alpha
=
\diff\tau_i,
\end{align*}
the first return map is an exact symplectomorphism $\psi_i:(A_i,\diff\alpha)\to(A_{1-i},\diff\alpha)$. Notice that 
\[\partial A_0=\partial A_1=\{(\gamma(t),\dot\gamma(t)),(\gamma(t),-\dot\gamma(t)) \ |\ t\in S^1\},\] 
and we readily see that $\diff\alpha|_{\overline A_i}$ vanishes on $\partial A_i$.

For each $t\in [0,1)$, we choose a non-zero $w_t\in\ker\diff\pi(\tilde\gamma(0))$ depending smoothly on $t$, and we extend it to a vector field 
\begin{align}
\label{e:eta} 
\tilde\eta_t(s)=(\eta_t(s),\dot\eta_t(s)):=\diff\phi_{s-t}(\tilde\gamma(t))w_t.
\end{align} 
Namely, $\eta_t$ is a non-trivial Jacobi vector field along $\gamma$ such that $G_v(\gamma,\dot\gamma)\eta_t\equiv0$ and $\eta_t(t)=0$. We recall that the points $\gamma(t),\gamma(s)$, with $t\neq s$, are conjugate when $\eta_t(s)=0$. For each $t\in[0,1)$, we set
\begin{align*}
 t_{-1}:=\sup\{s<t\ |\ \eta_t(s)=0\},
 \qquad
  t_1:=\inf\{s>t\ |\ \eta_t(s)=0\},
\end{align*}
and $t_{\pm2}:=(t_{\pm1})_{\pm1}$ (here, once again, we set $\sup\varnothing=-\infty$ and $\inf\varnothing=+\infty$). Namely, $t_{i}$ is the time of the $|i|$-th conjugate point to $\gamma(t)$ after $t$ if $i>0$, or before $t$ if $i<0$.

\begin{lem}
\label{l:extension}
Assume that, for some $t\in [0,1)$, $t_1$ is finite. Then, for all $t\in[0,1)$ both $t_1$ and $t_{-1}$ are finite, and the first return maps $\psi_i$ can be extended as homeomorphisms 
\begin{align}
\label{e:extension}
\psi_i:\overline A_i\to\overline A_{1-i},
\qquad
\psi_i(\gamma(t),\pm\dot\gamma(t))=(\gamma(t_{\pm1}),\pm\dot\gamma(t_{\pm1})).
\end{align}
\end{lem}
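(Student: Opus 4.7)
The plan splits into three tasks: (a) establishing that the conjugate times $t_{\pm 1}$ are finite for every $t \in [0,1)$, (b) checking that the formulas in \eqref{e:extension} define a continuous extension of $\psi_i$ to $\overline{A}_i$, and (c) verifying that this extension is a homeomorphism.

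For (a), observe that since $S^2$ is orientable, the line bundle over $\gamma(S^1)$ consisting of those tangent vectors $w$ satisfying $G_v(\gamma,\dot\gamma) w = 0$ is trivial, and hence admits a nowhere-vanishing 1-periodic smooth section $\mu$. Writing $\eta_t = f_t \mu$ reduces the Jacobi equation to a scalar second-order linear ODE with 1-periodic coefficients. The hypothesis provides a non-trivial solution $f_{t_\ast}$ with two distinct zeros $t_\ast < (t_\ast)_1$. Using Sturm's separation theorem together with the fact that $s \mapsto f_{t_\ast}(s+1)$ is another solution of the same ODE, one deduces that $f_{t_\ast}$ itself has infinitely many zeros in both the forward and backward directions. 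A second application of Sturm's separation theorem then shows that every other non-trivial solution $f_s$ also has infinitely many zeros in both directions, so $s_1 < \infty$ and $s_{-1} > -\infty$ for every $s$.

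For (b), I would analyze the geodesic flow near a boundary point, say $(\gamma(t),\dot\gamma(t))$. For a sequence $(x_n,v_n) \in A_i$ converging to this point, write
\[
x_n = \gamma(t_n),
\qquad
v_n = a_n \dot\gamma(t_n) + \epsilon_n w_n,
\]
with $t_n \to t$, $a_n \to 1$, $\epsilon_n \to 0^+$, and $w_n$ bounded, transverse to $\dot\gamma(t_n)$, and pointing into $B_i$. Smooth dependence of solutions of ODEs on initial data yields, on any fixed compact time interval and in a fixed tubular chart around $\gamma(S^1)$, an expansion of the form
\[
\gamma_n(s) = \gamma(t_n + a_n s) + \epsilon_n g_n(s)\, \mu(t_n + a_n s) + o(\epsilon_n),
\]
where the scalar function $g_n$, viewed as a function of the reparameterized variable $u = t_n + a_n s$, converges uniformly on compact intervals to a non-zero scalar multiple of $f_t$. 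Because a non-trivial solution of a scalar second-order linear ODE has only simple zeros, the limit function changes sign at $t_1$, and by the intermediate value theorem combined with standard ODE stability, $g_n$ has a smallest positive zero $\tau_n$ with $t_n + a_n\tau_n \to t_1$ and $\dot\gamma_n(\tau_n) \to \dot\gamma(t_1)$. This is precisely the continuity of the extended $\psi_i$ at $(\gamma(t),\dot\gamma(t))$; the analysis at $(\gamma(t),-\dot\gamma(t))$ is identical up to signs.

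For (c), the same Jacobi-field analysis applied to $\psi_{1-i}$ extends it continuously to $\overline{A}_{1-i}$ and sends $(\gamma(t_1),\dot\gamma(t_1))$ back to $(\gamma(t),\dot\gamma(t))$. Since $\psi_i$ and $\psi_{1-i}$ are already mutually inverse diffeomorphisms of the interiors, the continuous extensions are mutually inverse and thus homeomorphisms of the closures. The main expected obstacle lies in task (b): one must ensure that $\tau_n$ corresponds to the \emph{first} return rather than a later one, which requires propagating the Jacobi-field expansion uniformly on the closed interval $[t_n, t_1 + \delta]$ and exploiting that the limit Jacobi field has no zeros in $(t, t_1)$ by the minimality in the definition of $t_1$, so that $g_n$ is bounded away from zero on compact subintervals of $(t_n, t_1 - \delta)$ for large $n$.
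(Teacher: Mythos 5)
Your step (a) and the core of step (b) are essentially the paper's argument (trivialize the normal line bundle, reduce to a scalar Sturm problem with $1$-periodic coefficients, and use the sign change of the Jacobi field just past $t_1$ to bound the return time from above), carried out with moving base points; that part is fine. Step (c), however, contains a genuine error: $\psi_i$ and $\psi_{1-i}$ are \emph{not} mutually inverse. Both are first-return maps for the \emph{forward} flow, and $\psi_{1-i}\circ\psi_i$ is the Birkhoff map $\psi$, which is in general far from the identity; consistently with this, your own boundary formula would send $(\gamma(t),\dot\gamma(t))\mapsto(\gamma(t_1),\dot\gamma(t_1))\mapsto(\gamma(t_2),\dot\gamma(t_2))$, not back to the starting point. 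The actual inverse of $\psi_i$ is the first-return map of the \emph{backward} flow from $A_{1-i}$ to $A_i$ (by reversibility it equals the conjugate of $\psi_i$ by the flip $(x,v)\mapsto(x,-v)$, not $\psi_{1-i}$). The repair is short and is the route the paper takes: once the extension is a continuous bijection $\overline A_i\to\overline A_{1-i}$ — injectivity/surjectivity on the boundary coming from $(t_1)_{-1}=t$ — compactness and Hausdorffness of the closed annuli force it to be a homeomorphism, with no need to extend any inverse. Note also that continuity of the extension requires handling boundary-to-boundary sequences as well, i.e., continuity (and monotonicity) of $t\mapsto t_{\pm1}$, which you never establish, although it follows easily from the simplicity of the zeros in your Sturm setup.

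There is also a gap in your treatment of the ``first return'' issue in (b). Your patch — that $g_n$ is bounded away from zero on compact subintervals of $(t_n,t_1-\delta)$ — deliberately excludes a neighborhood of $t_n$, and precisely there the leading term $\epsilon_n g_n$ is small and comparable to the unquantified $o(\epsilon_n)$ remainder of your expansion, so the expansion alone cannot rule out a crossing of $\gamma$ shortly after the start. You need a separate argument there: either define $g_n$ exactly as the normal coordinate divided by $\epsilon_n$ (no remainder) and use $C^1$-convergence to a positive multiple of $f_t$ near the left endpoint together with $\dot f_t(t)>0$, or argue as the paper does for the whole lower bound: if along a subsequence the crossing times converged to some $\sigma\in[0,t_1-t)$, then, since $\gamma|_{[t,t+\sigma]}$ has no conjugate points, $\exp_x$ is a local diffeomorphism at $\sigma v$, and locally the preimage of the simple curve $\gamma(S^1)$ is the line through $\dot\gamma(t)$, which would force the initial velocities to be parallel to $\dot\gamma(t)$, contradicting transversality. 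With these two points repaired, your proposal matches the paper's proof.
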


\begin{proof}
Let $\mu$ be a nowhere vanishing 1-periodic vector field along $\gamma$ such that $G_v(\gamma,\dot\gamma)\mu\equiv0$. The Jacobi fields $\eta_t$ can be written as $\eta_t(s)=f(t,s)\mu(s)$ for some smooth function $f:\R\times\R\to\R$. Up to replacing $\mu$ with $-\mu$, we can assume that $f(t,t+\epsilon)>0$ for all $t\in S^1$ and $\epsilon>0$ small enough. Since the Jacobi fields $\eta_t$ are non-trivial, we have $\partial_s f(t,s)\neq0$ whenever $f(t,s)=0$. This readily implies that, if $t_1$ is finite for some $t\in\R$, the same is true for all $t\in\R$, and the function $t\mapsto t_1$ is continuous and monotone increasing. Since $(t_1)_{-1}=t$, we infer that the function $t\mapsto t_{-1}$ is well defined, continuous and monotone increasing as well.

We fix an arbitrary $t\in[0,1)$ and $(x,v):=(\gamma(t),\dot\gamma(t))$. In order to complete the proof, we are left to show that, for each sequence $v_n\in S_{x}S^2$ of vectors pointing inside $A_i$ and such that $v_n\to \pm v$, we have $\tau_i(x,v_n)\to \pm(t_{\pm1}-t)$. Indeed, this implies that $\phi_{\tau_i(x,v_n)}(x,v_n)\to (\gamma(t_{\pm1}),\pm\dot\gamma(t_{\pm1}))$, and therefore the extension~\eqref{e:extension} of $\psi_i$ is continuous and bijective. Since the annuli $\overline A_i$ and $\overline A_{i-1}$ are compact and Hausdorff, such an extension is a homeomorphism.

Let us focus on the case $v_n\to v$, the other one being analogous.
We set $\gamma_n(s):=\exp_x((s-t)\,v_n)$ and $\sigma_n:=\tau_i(x,v_n)$. We claim that 
\[
\liminf_{n\to\infty} \sigma_n\geq t_1-t.
\]
Otherwise, we could extract a subsequence such that $\sigma_n\to\sigma\in(0,t_1-t)$; however, since the geodesic $\gamma|_{[t,t+\sigma]}$ has no conjugate points, this would contradict the fact that the exponential map $\exp_x$ is a local diffeomorphism at $\sigma v$. The fact that $f(t_1+\epsilon)<0$ if $\epsilon>0$ is small enough readily implies that $\gamma_n(t_1+\epsilon)\in A_{1-i}$ for all $n$ large enough, and therefore $\sigma_n<t_1-t+\epsilon$. This implies that $\sigma_n\to t_1-t$. 
\end{proof}

We set $A:=A_0$. The previous lemma implies that the annulus $A$ is a surface of section for the geodesic flow: a surface that is transverse to the vector field $X$ on its interior, and whose boundary is the union of periodic orbits of the flow. The composition $\psi:=\psi_1\circ\psi_0:A\to A$ is the first return map of the surface of section $A$, and extends to a homeomorphism of $\overline A$ as
\begin{align*}
\psi(\gamma(t),\pm\dot\gamma(t))=(\gamma(t_{\pm2}),\pm\dot\gamma(t_{\pm2})).
\end{align*}
As customary in the Riemannian literature, we will call $\psi$ the Birkhoff map of $\gamma$.
With a suitable change of coordinates, $A$ becomes the standard symplectic annulus.

\begin{figure}
\begin{center}
\begin{small}
\begingroup%
  \makeatletter%
  \providecommand\color[2][]{%
    \errmessage{(Inkscape) Color is used for the text in Inkscape, but the package 'color.sty' is not loaded}%
    \renewcommand\color[2][]{}%
  }%
  \providecommand\transparent[1]{%
    \errmessage{(Inkscape) Transparency is used (non-zero) for the text in Inkscape, but the package 'transparent.sty' is not loaded}%
    \renewcommand\transparent[1]{}%
  }%
  \providecommand\rotatebox[2]{#2}%
  \newcommand*\fsize{\dimexpr\f@size pt\relax}%
  \newcommand*\lineheight[1]{\fontsize{\fsize}{#1\fsize}\selectfont}%
  \ifx\svgwidth\undefined%
    \setlength{\unitlength}{113.28373596bp}%
    \ifx\svgscale\undefined%
      \relax%
    \else%
      \setlength{\unitlength}{\unitlength * \real{\svgscale}}%
    \fi%
  \else%
    \setlength{\unitlength}{\svgwidth}%
  \fi%
  \global\let\svgwidth\undefined%
  \global\let\svgscale\undefined%
  \makeatother%
  \begin{picture}(1,0.82715617)%
    \lineheight{1}%
    \setlength\tabcolsep{0pt}%
    \put(0,0){\includegraphics[width=\unitlength,page=1]{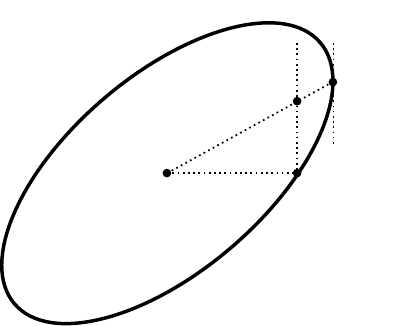}}%
    \put(0.86689061,0.60097931){\color[rgb]{0,0,0}\makebox(0,0)[lt]{\lineheight{1.25}\smash{\begin{tabular}[t]{l}$v$\end{tabular}}}}%
    \put(0.78331526,0.36059106){\color[rgb]{0,0,0}\makebox(0,0)[lt]{\lineheight{1.25}\smash{\begin{tabular}[t]{l}$v_0$\end{tabular}}}}%
    \put(0.63481157,0.57935263){\color[rgb]{0,0,0}\makebox(0,0)[lt]{\lineheight{1.25}\smash{\begin{tabular}[t]{l}$sv$\end{tabular}}}}%
    \put(0.36684054,0.41538721){\color[rgb]{0,0,0}\makebox(0,0)[lt]{\lineheight{1.25}\smash{\begin{tabular}[t]{l}$0$\end{tabular}}}}%
    \put(0.34131421,0.77281257){\color[rgb]{0,0,0}\makebox(0,0)[lt]{\lineheight{1.25}\smash{\begin{tabular}[t]{l}$S_xM$\end{tabular}}}}%
  \end{picture}%
\endgroup%
 
\caption{The value of $s\in[0,1]$ such that $v_0-s\,v\in\ker G_v(x,v)$ can be found geometrically: the parallel to the tangent line $\Tan_v (S_xM)$ passing through $v_0$ intersects the segment joining the origin and $v$ at $sv$.}
\label{f:sigma}
\end{small}
\end{center}
\end{figure}

\begin{lem}
\label{l:annulus}
There exists a smooth homeomorphism \[\sigma:S^1\times[-1,1]\to\overline A\] 
of the form 
$\sigma(t,s)=(\gamma(t),\nu(t,s))$,
where $\nu(t,s)\in S_{\gamma(t)}M$ is the unique tangent vector (pointing inside $B_0$ or tangent to $\gamma$) such that \[\dot\gamma(t)-s\,\nu(t,s)\in\ker G_v(\gamma(t),\nu(t,s)).\] 
The map $\sigma$ restrict to a diffeomorphism $\sigma:S^1\times(-1,1)\to A$, and
$\sigma^*\alpha= s\,\diff t$.
\end{lem}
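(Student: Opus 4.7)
The plan is to fix $(t,s) \in S^1\times[-1,1]$, set $x := \gamma(t)$ and $v := \dot\gamma(t)$, and observe that by Euler's identity $G_v(x,\nu)(\nu) = 2G(x,\nu)$, the defining relation $v - s\nu \in \ker G_v(x,\nu)$ restricted to $\nu \in S_xM$ becomes simply
\[
g_t(\nu) := G_v(x,\nu)(v) = s.
\]
Hence the whole lemma reduces to showing that $g_t$ is, on each of the two open arcs of $S_xM \setminus \{v,-v\}$, a smooth diffeomorphism onto $(-1,1)$.

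First I would compute the endpoint values: $2$-homogeneity of $G$ gives $g_t(v) = 2G(x,v) = 1$, while reversibility yields $G_v(x,-v) = -G_v(x,v)$ and hence $g_t(-v) = -1$. Next, for $\tau \in \Tan_\nu S_xM = \ker G_v(x,\nu)$, the differential is
\[
\diff g_t(\nu)[\tau] = G_{vv}(x,\nu)(\tau,v).
\]
Decomposing $v = \alpha\nu + \beta\tau$ with respect to the splitting $\Tan_xM = \R\nu \oplus \Tan_\nu S_xM$ and using $G_{vv}(x,\nu)(\tau,\nu) = G_v(x,\nu)(\tau) = 0$ (another consequence of Euler), this reduces to $\beta\, G_{vv}(x,\nu)(\tau,\tau)$. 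Positive definiteness of $G_{vv}(x,\nu)$ on $\Tan_\nu S_xM$ then shows that $\diff g_t$ vanishes only at $\nu = \pm v$ (where $\beta = 0$); so on each open arc between $v$ and $-v$ the function $g_t$ is strictly monotone and provides a smooth bijection onto $(-1,1)$. Defining $\nu(t,s)$ for $s \in (-1,1)$ as the unique solution in the arc pointing into $B_0$, and setting $\nu(t,\pm 1) := \pm v$, the implicit function theorem applied fibrewise in $t$ yields smoothness of $\sigma$ on $S^1 \times (-1,1)$, while the uniform strict monotonicity of $g_t$ together with its continuous dependence on $t$ gives continuity of $\sigma$ up to the boundary.

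It remains to verify that $\sigma$ is a homeomorphism onto $\overline A$ and to check the formula $\sigma^*\alpha = s\,\diff t$. The homeomorphism statement is formal: the image is $\overline A$ by construction, injectivity follows from injectivity of $g_t$ on the chosen arc, and a continuous bijection from the compact $S^1\times[-1,1]$ onto the Hausdorff space $\overline A$ is automatically a homeomorphism. For the form computation, from $\sigma(t,s) = (\gamma(t),\nu(t,s))$ one reads off $\diff\pi \circ \diff\sigma(\partial_t) = \dot\gamma(t)$ and $\diff\pi \circ \diff\sigma(\partial_s) = 0$ (varying $s$ moves only along the vertical fibre of $\pi\colon SM \to M$). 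Inserting into $\alpha = G_v\cdot \diff\pi$ and invoking the very definition of $\nu(t,s)$, which forces $G_v(\gamma(t),\nu(t,s))(\dot\gamma(t)) = s$, one obtains $\sigma^*\alpha = s\,\diff t$.

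The main obstacle is the strict monotonicity of $g_t$ on each arc of the indicatrix; once the sign analysis via the decomposition $v = \alpha\nu + \beta\tau$ is in place, everything else is bookkeeping. A minor subtlety is that $\sigma$ fails to be genuinely $C^\infty$ up to $s = \pm 1$ (the inverse of $g_t$ has square-root behaviour at the endpoints, since $\diff g_t$ has a simple zero there); this is why the lemma only asserts smoothness of $\sigma$ restricted to the open annulus together with mere continuity on the closure.
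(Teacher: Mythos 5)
Your proof is correct and follows essentially the same route as the paper: both rewrite the defining relation as $s=G_v(\gamma(t),\nu)\,\dot\gamma(t)$ and show this function is strictly monotone along each arc of the indicatrix, its derivative $\beta\,G_{vv}(x,\nu)[\tau,\tau]$ vanishing only at $\nu=\pm\dot\gamma(t)$ (the paper's coefficient $\lambda(r)$ in $v_0=s\,v(r)+\lambda(r)\dot v(r)$ is exactly your $\beta$), before reading off $\sigma^*\alpha=s\,\diff t$ from the verticality of $\partial_s$. Your closing remark about the square-root behaviour at $s=\pm1$ is also consistent with the paper's own proof, which likewise only produces a homeomorphism of the closed annulus restricting to a diffeomorphism of the open one.
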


\begin{proof}
We fix $x=\gamma(t)$ and $v_0=\dot\gamma(t)$. For each $v\in S_xM$ there is a unique $s(v)\in[-1,1]$ such that $v_0-s(v)v\in\ker G_v(x,v)$, see Figure~\ref{f:sigma}. Clearly,  $s(v)$ depends smoothly on $v$. We choose an arbitrary parametrization of the fiber \[v:[0,1]\toup^{\cong} \overline A\cap S_xM\] such that $v(0)=v_0$ and $v(1)=-v_0$. Notice that $\dot v(r)\in\ker G_v(x,v(r))$, and there exists $\lambda(r)\in\R$ such that $v_0=s(v(r))v(r)+\lambda(r)\dot v(r)$. By the strict convexity of $S_xM$, we have $\lambda(r)=0$ if and only if $r\in\{0,1\}$. Since \[G_v(x,v)v_0=G_v(x,v)s(v)v=s(v),\] we have
\begin{align*}
 \diff s(v(r))\dot v(r) 
 & = 
 \tfrac{\diff}{\diff r} s(v(r))
 =
 \tfrac{\diff}{\diff r} G_v(x,v(r))v_0
 =
 G_{vv}(x,v(r))\dot v(r)\,v_0\\
 & =
 s(v(r)) G_{vv}(x,v(r))\dot v(r)\,v(r)
 +
 \lambda(r) G_{vv}(x,v(r))\dot v(r)\,\dot v(r)\\
 & =
 s(v(r)) G_{v}(x,v(r))\dot v(r)
 +
 \lambda(r) G_{vv}(x,v(r))\dot v(r)\,\dot v(r)\\
 & =
  \lambda(r) G_{vv}(x,v(r))\dot v(r)\,\dot v(r).
\end{align*}
The last term is non-zero for all $r\in(0,1)$. Therefore, $s:\overline A\cap S_xM\to[-1,1]$ is a diffeomorphism that restricts to a diffeomorphism $s:A\cap S_xM\to(-1,1)$. We set $\sigma(x,\cdot)$ to be the inverse homeomorphism. The obtained map $\sigma:S^1\times[-1,1]\to\overline A$ is thus a homeomorphism that restricts to a diffeomorphism  $\sigma:S^1\times(-1,1)\to A$. The pull-back of the contact form $\alpha$ by $\sigma$ is
\[
 (\sigma^*\alpha)_{(t,s)}
 =
 G_v(\gamma(t),\nu(t,s))\dot\gamma(t)\,\diff t 
 =
 G_v(\gamma(t),\nu(t,s))s\,\nu(t,s)\,\diff t 
 = 
 s\,\diff t.
 \qedhere
\]
\end{proof}

From now on, the annulus $S^1\times[-1,1]$ will be implicitly equipped with the Euclidean area form $\diff s\wedge\diff t$. By means of Lemma~\ref{l:annulus}, we will always consider the Birkhoff map of a simple closed geodesic $\gamma$ as a homeomorphism $\psi:S^1\times[-1,1]\to S^1\times[-1,1]$ that restricts to a symplectomorphism of $(S^1\times(-1,1),\diff s\wedge\diff t)$ and acts on the boundary
as $\psi(t,\pm1)=(t_{\pm2},\pm1)$.

\subsection{Periodic points of twist maps}

Let $\psi:S^1\times[-1,1]\to S^1\times[-1,1]$ be an area preserving homeomorphism preserving the boundary components $S^1\times\{-1\}$ and $S^1\times\{1\}$. Such a $\psi$ is called a \textbf{twist map} when it admits a lift 
\begin{align}
\label{e:lift_twist}
\widetilde\psi:\R\times[-1,1]\to\R\times[-1,1],
\qquad
\widetilde\psi(t,s)=(a(t,s),b(t,s)),
\end{align}
satisfying the twist conditions $a(t,1)<t$ and $a(t,-1)>t$ for all $t\in\R$. If $\psi$ is the Birkhoff map of a simple closed geodesic $\gamma$ of $(S^2,F)$, the set of its periodic orbits in $S^1\times(-1,1)$ is in one-to-one correspondence with the set of closed geodesics intersecting $\gamma$ (other than $\gamma$ itself). In particular, the existence of infinitely many periodic points of $\psi$ implies the existence of infinitely many closed geodesics on $(S^2,F)$

By the celebrated Poincar\'e-Birkhoff theorem \cite{Birkhoff:1966xb}, any twist map has at least two fixed points in the interior of the annulus. Indeed, more is true: any lift~\eqref{e:lift_twist} satisfying the twist condition has at least two fixed points.
A simple argument due to Neumann \cite{Neumann:1977aa} further implies that any twist map $\psi$ has infinitely many periodic points. Indeed, consider the translation 
\[\tau:\R\times[-1,1]\to\R\times[-1,1], \qquad \tau(t,s)=(t+1,s).\]
For each integer $q>0$ there exists another relatively prime integer $p>0$ that is large enough so that
\begin{align*}
p \min_{x\in\R} \big( x-a(x,1) \big) > q.
\end{align*}
This condition guarantees that $\widetilde\phi:=\widetilde\psi^p\circ\tau^{q}$ is a lift of $\phi=\psi^p$ satisfying the twist condition, and therefore has at least a fixed point $z\in\R\times[-1,1]$. Such a $z$ projects to a $p$-periodic point $[z]$ of $\psi$, and since $p,q$ are relatively prime the minimal period of $[z]$ is $p$.

Let us now apply this results to the Birkhoff map of a simple closed geodesic $\gamma$ of $(S^2,F)$. For each $t\in S^1$, we denote $t\cdot\gamma:=\gamma(t+\cdot)$ the closed geodesic $\gamma$ with the parametrization translated by $t$, and by $\ind_\Omega(t\cdot\gamma)$ the Morse index of $t\cdot\gamma$ in its corresponding based loop space $\Omega_t=\{\zeta\in\Lambda\ |\ \zeta(0)=\gamma(t)\}$ (see Section~\ref{ss:index}). 

\begin{thm}
\label{t:twist}
If the simple closed geodesic $\gamma$ satisfies $\ind_\Omega(t\cdot\gamma)\geq2$ for all $t\in S^1$ and has a well defined Birkhoff map $\psi$, then $\psi$ is a twist map, and in particular $(S^2,F)$ has infinitely many closed geodesics.
\end{thm}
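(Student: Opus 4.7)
The plan is to produce a lift $\tilde\psi:\R\times[-1,1]\to\R\times[-1,1]$ of $\psi$ satisfying the pointwise twist conditions $a(t,1)<t$ and $a(t,-1)>t$, and then to invoke Neumann's argument recalled just before the statement. The Morse index hypothesis supplies the crucial conjugate point estimates. By the reduction carried out in the proof of Proposition~\ref{p:indices}(v), on an orientable surface $\ind_\Omega(t\cdot\gamma)$ equals the number of conjugate points to $\gamma(t)$ along $\gamma$ strictly inside $(t,t+1)$. The hypothesis $\ind_\Omega(t\cdot\gamma)\ge2$ therefore forces $t_2<t+1$ for every $t\in S^1$, and applying the same statement to the reversed geodesic yields $t_{-2}>t-1$. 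Continuity and compactness of $S^1$ promote both to uniform strict inequalities: there is $\varepsilon>0$ with $t_2\le t+1-\varepsilon$ and $t_{-2}\ge t-1+\varepsilon$ for every $t$.

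Using Lemma~\ref{l:annulus} I identify $\overline A$ with $S^1\times[-1,1]$, so that by Lemma~\ref{l:extension} the boundary action of $\psi$ reads $t\mapsto t_{\pm2}\bmod 1$ on $s=\pm1$. Writing $\psi=\psi_1\circ\psi_0$, each half-return $\psi_i$ admits a preferred lift $\tilde\psi_i$ obtained by continuous extension from the interior, where tracking the geodesic arc through $B_i$ determines an unambiguous continuous selection of representatives; set $\tilde\psi^{\mathrm{nat}}:=\tilde\psi_1\circ\tilde\psi_0$. The key topological claim is that on the boundary
\[
\tilde\psi^{\mathrm{nat}}(t,1)=(t_2,1),\qquad\tilde\psi^{\mathrm{nat}}(t,-1)=(t_{-2}+2,-1),
\]
with $t_2\in(t,t+1)$ and $t_{-2}\in(t-1,t)$ the continuous representatives. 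I would establish this by a deformation argument: along a $C^0$ path of reversible Finsler metrics connecting $F$ to a simple Zoll reference situation, the natural lift varies continuously; at the Zoll endpoint $\psi=\mathrm{id}$ and one computes directly $\tilde\psi^{\mathrm{nat}}=\tau$, which reads as $(t+1,\pm1)$ on the two boundaries, matching the formula since there $t_2=t+1$ and $t_{-2}=t-1$. Setting $\tilde\psi:=\tau^{-1}\circ\tilde\psi^{\mathrm{nat}}$, the boundary behaviour becomes
\[
a(t,1)=t_2-1<t,\qquad a(t,-1)=t_{-2}+1>t,
\]
which is exactly the pointwise twist.

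Neumann's argument now applies without change: given $q\in\N$, pick $p$ coprime to $q$ with $p\min_t(t-a(t,1))>q$; then $\tilde\psi^p\circ\tau^q$ is a lift of $\psi^p$ satisfying the twist hypothesis of the Poincar\'e--Birkhoff theorem and therefore admits an interior fixed point projecting to a periodic point of $\psi$ of minimal period $p$. The corresponding periodic orbit lifts to a closed geodesic of $(S^2,F)$ intersecting $\gamma$ and distinct from $\gamma$, and letting $p$ range over the infinitely many available values produces infinitely many geometrically distinct closed geodesics.

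The main obstacle is the topological identification of $\tilde\psi^{\mathrm{nat}}$ on the two boundary circles: it amounts to pinning down a discrete invariant of the Birkhoff section $\overline A\subset SS^2\cong\RP^3$, and matching it against the conjugate-point representatives is what produces the twist after the compensating shift $\tau^{-1}$. The deformation-to-Zoll approach makes this rigorous but requires care in keeping $\gamma$ a simple closed geodesic throughout the path of metrics; a more robust alternative is a direct local analysis of each $\psi_i$ near the two boundary circles, tracking the arc of a geodesic emanating tangentially to $\pm\dot\gamma$ into $B_i$ up to the first conjugate point and verifying that in the lifted annulus it produces exactly one positive increment of the angular coordinate. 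Once this discrete datum is fixed, the Morse index estimates of the first paragraph fit exactly into the pointwise twist conditions.
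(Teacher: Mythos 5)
Your overall skeleton matches the paper's: the Morse index theorem converts $\ind_\Omega(t\cdot\gamma)\geq2$ into $t_2-t<1$ and $t-t_{-2}<1$, one then needs a lift of $\psi$ whose boundary values are (a translate of) $t\mapsto t_{\pm2}$ so that these inequalities become the twist condition, and Neumann's argument finishes the job. The gap is in the middle step, which is the actual content of the theorem: you never construct the lift. Your ``natural lift'' $\tilde\psi^{\mathrm{nat}}$ is only described as ``tracking the geodesic arc through $B_i$'', but the naive assignment $t\mapsto t+a'+a''$ (with $a',a''\in(0,1]$ the return positions along $\gamma$) is \emph{not} continuous: the return positions are only defined mod $1$ and jump as the initial condition varies. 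The paper's proof exists precisely to handle this: it defines $a(t,s)=t+a'+a''+i'+i''-1$, where $i',i''$ are the algebraic self-intersection counts of the two geodesic arcs, and these correction terms are exactly what makes the lift continuous; the hypothesis $\ind_\Omega\geq2$ then gives $i'=i''=0$ and $a'+a''\in(0,1)$ (resp. $(1,2)$) for $s$ near $1$ (resp. $-1$), which is the twist. Your boundary formulas (after composing with $\tau^{-1}$) agree with this, but you offer no proof of them beyond the deformation argument.

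That deformation-to-Zoll argument does not work as stated, and the problem is more than ``care'': (i) there is no reason a path of reversible Finsler metrics from $F$ to a Zoll metric exists along which $\gamma$ stays a simple closed geodesic \emph{and} the Birkhoff map stays well defined --- well-definedness of $\psi$ is a global dynamical condition (every geodesic crossing $\gamma$ must return), which is not stable under deformation, and if it fails at one intermediate metric your ``natural lift'' is not even defined there, so the homotopy-invariance of the integer offset collapses; (ii) even granting the path, continuity of the return maps and of their boundary extensions in the metric needs uniform return-time bounds, which you do not address; (iii) the Zoll endpoint is exactly the degenerate case $t_2=t+1$, $a'+a''=1$, i.e.\ the borderline between the two integer offsets you are trying to distinguish, so the endpoint computation by itself does not pin down the invariant. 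The ``direct local analysis near the boundary circles'' you mention as an alternative is essentially the paper's argument, and it is the step you would have to carry out; without it (or the explicit formula with the self-intersection corrections), the twist condition --- and hence the theorem --- is not established.
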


\begin{proof}
Let us consider the family of Jacobi fields $\eta_t$ introduced in~\eqref{e:eta}. As we already mentioned in the proof of Proposition~\ref{p:indices}, the classical Morse index theorem \cite{Morse:1996ua} allows to relate $\ind_\Omega(t\cdot\gamma)$ to the zeros of $\eta_t$ by
\begin{align*}
\ind_\Omega(t\cdot\gamma)
=
\#\{s\in(t,t+1)\ |\ \eta_t(s)=0\}
=
\#\{s\in(t-1,t)\ |\ \eta_t(s)=0\}.
\end{align*}
Therefore, $\ind_\Omega(t\cdot\gamma)\geq2$ is equivalent to $t_{2}-t<1$ and $t-t_{-2}<1$. If this holds for all $t\in\R$, we claim that the Birkhoff map $\psi$ is a twist map. Indeed, $\psi$ can be lifted to a continuous map
\begin{align*}
\widetilde\psi:\R\times[-1,1]\to\R\times[-1,1],
\end{align*}
as follows. Let $\sigma:S^1\times[-1,1]\to\overline A$, $\sigma(t,s)=(\gamma(t),\nu(t,s))$ be the homeomorphism of Lemma~\ref{l:annulus}. For each $(t,s)\in \R\times(-1,1)$, we consider the geodesic ray $\zeta$ starting at $\zeta(0)=\gamma(t)$ with speed $\dot\zeta(0)=\nu(t,s)$. Let $a',a''\in(0,1]$ be such that the first intersection of $\zeta$ at positive time with $\gamma$ is at $\gamma(t+a')$, and the second one is at $\gamma(t+a'+a'')$. Let $0<b'<b''$ be the first positive times such that $\zeta(b')=\gamma(t+a')$ and $\zeta(b'')=\gamma(t+a'+a'')$. We denote by $i',i''\in\Z$ the algebraic count of (transverse) self-intersections of the geodesics $\zeta|_{(0,b')}$ and $\zeta|_{(b',b'')}$ respectively; here a double-point intersection is counted positively if and only if $\zeta$ crosses itself from left to right (up to isotoping $\zeta|_{[0,b'']}$ without moving $\zeta(0)$, $\zeta(b')$, and $\zeta(b'')$, we can assume that all the self-intersections of $\zeta|_{[0,b'']}$ are double points). 
\begin{figure}
\begin{center}
\begin{tiny}
\begingroup%
  \makeatletter%
  \providecommand\color[2][]{%
    \errmessage{(Inkscape) Color is used for the text in Inkscape, but the package 'color.sty' is not loaded}%
    \renewcommand\color[2][]{}%
  }%
  \providecommand\transparent[1]{%
    \errmessage{(Inkscape) Transparency is used (non-zero) for the text in Inkscape, but the package 'transparent.sty' is not loaded}%
    \renewcommand\transparent[1]{}%
  }%
  \providecommand\rotatebox[2]{#2}%
  \newcommand*\fsize{\dimexpr\f@size pt\relax}%
  \newcommand*\lineheight[1]{\fontsize{\fsize}{#1\fsize}\selectfont}%
  \ifx\svgwidth\undefined%
    \setlength{\unitlength}{121.82205148bp}%
    \ifx\svgscale\undefined%
      \relax%
    \else%
      \setlength{\unitlength}{\unitlength * \real{\svgscale}}%
    \fi%
  \else%
    \setlength{\unitlength}{\svgwidth}%
  \fi%
  \global\let\svgwidth\undefined%
  \global\let\svgscale\undefined%
  \makeatother%
  \begin{picture}(1,0.80647095)%
    \lineheight{1}%
    \setlength\tabcolsep{0pt}%
    \put(0,0){\includegraphics[width=\unitlength,page=1]{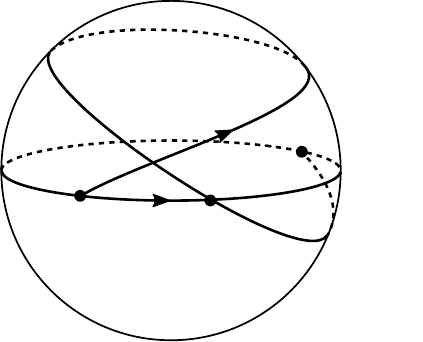}}%
    \put(0.14220831,0.27610225){\color[rgb]{0,0,0}\makebox(0,0)[lt]{\lineheight{1.25}\smash{\begin{tabular}[t]{l}$\gamma(t)$\end{tabular}}}}%
    \put(0.47466111,0.37460668){\color[rgb]{0,0,0}\makebox(0,0)[lt]{\lineheight{1.25}\smash{\begin{tabular}[t]{l}$\gamma(t')$\end{tabular}}}}%
    \put(0.64176689,0.47794835){\color[rgb]{0,0,0}\makebox(0,0)[lt]{\lineheight{1.25}\smash{\begin{tabular}[t]{l}$\gamma(t'')$\end{tabular}}}}%
    \put(0.47378147,0.52412225){\color[rgb]{0,0,0}\makebox(0,0)[lt]{\lineheight{1.25}\smash{\begin{tabular}[t]{l}$\zeta$\end{tabular}}}}%
    \put(0.37615704,0.2637892){\color[rgb]{0,0,0}\makebox(0,0)[lt]{\lineheight{1.25}\smash{\begin{tabular}[t]{l}$\gamma$\end{tabular}}}}%
  \end{picture}%
\endgroup%
 
\end{tiny}
\caption{Example of geodesic ray $\zeta$ on a reversible Finsler sphere intersecting the simple closed geodesic $\gamma$ at subsequent points $\gamma(t)$, $\gamma(t')$ and $\gamma(t'')$, with $t'=t+a'\in(t,t+1]$ and $t''=t'+a''\in(t',t'+1]$, such that $i'=1$ and $i''=0$.}
\label{f:lift}
\end{center}
\end{figure}
We define the lift
\begin{align*}
\widetilde\psi(t,s)=(a(t,s),b(t,s)),
\end{align*}
by setting the first component to be
\begin{align*}
a(t,s)=t+a'+a''+i'+i''-1.
\end{align*}
It is straightforward to verify that such a function $a$ is continuous.
Since $\ind_\Omega(t\cdot\gamma)\geq2$, if $|s|$ is close to 1 (that is, if $\dot\zeta(0)$ is close to $\pm\dot\gamma(0)$), we have $i'=i''=0$. Moreover, if $s$ is close to $1$ we have $a'+a''\in(0,1)$, whereas if $s$ is close to $-1$ we have $a'+a''\in(1,2)$. Therefore
\begin{align*}
a(t,1)-t  = t_2-t-1\in(-1,0),\qquad
a(t,-1)-t  =1- (t-t_{-2})\in(0,1),
\end{align*}
namely $\widetilde\psi$ satisfies the twist condition.
Since $\psi$ is a twist map, it has infinitely many periodic points corresponding to infinitely many closed geodesics of $(S^2,F)$.
\end{proof}

\subsection{Hingston's theorems}
A celebrated theorem due to Hingston \cite{Hingston:1993ou}, that extends previous results of Bangert \cite{Bangert:1980ho, Bangert:1993wo}, implies the existence of infinitely many closed geodesics on $(S^2,F)$ when there is a simple closed geodesic with non-zero local homology in degree 3 and a Birkhoff map not of twist type. Hingston's original proof was phrased for Riemannian manifolds, but is valid as well in the Finsler setting, and indeed even in the non-reversible Finsler setting. We include the full argument here for the reader's convenience.

\begin{thm}
\label{t:Hingston}
Let $(M,F)$ be a closed Finsler manifold of dimension $d\geq2$, and $\gamma\in\crit(E)$ a closed geodesic satisfying the following two conditions:
\begin{itemize}
\item[(i)] The local homology $C_{i}(\gamma)$ with coefficient in some field is non-zero in degree $i=\ind(\gamma)+\nul(\gamma)$.
\item[(ii)] $\ind(\gamma^m)+\nul(\gamma^m)\leq m(\ind(\gamma)+\nul(\gamma))-(d-1)(m-1)$ for all $m\in\N$.
\end{itemize}
Then $(M,F)$ has infinitely many closed geodesics.
\end{thm}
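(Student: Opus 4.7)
The plan is to argue by contradiction, assuming $(M,F)$ has only finitely many prime closed geodesics $\zeta_1,\ldots,\zeta_r$, so that every critical $S^1$-orbit of $E$ with positive energy is of the form $S^1\cdot\zeta_j^p$. The strategy, going back to Bangert and Hingston, is to manufacture out of the top local-homology class at $\gamma$ provided by (i) a sequence of non-trivial relative classes $h_m\in H_{d_m}(\Lambda,\Lambda^{<a_m};\F)$ whose degrees $d_m$ grow linearly in $m$, and then to exploit the index bound (ii) to show that no critical circle at energy below $a_m^2$ can support such a class, producing the contradiction.

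First, we may assume $\gamma$ is prime: otherwise we replace it by its primitive root, noting that (ii), being linear in $m$, is inherited by the root up to an additive constant and that the non-triviality in (i) persists along some iterate. Since $i=\ind(\gamma)+\nul(\gamma)$ is the top degree in which $C_*(\gamma;\F)$ can be non-zero, a Gysin-type argument for the essentially free $S^1$-action on $\Lambda$ near $S^1\cdot\gamma$ promotes the non-triviality to a class $c\in C_{i+1}(S^1\cdot\gamma;\F)$, which represents through inclusion a non-zero element $h\in H_{i+1}(\Lambda^{\leq\ell},\Lambda^{<\ell};\F)$, where $\ell=E(\gamma)^{1/2}$.

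The heart of the argument is to iterate this class. Using the concatenation ``pair-of-pants'' on the free loop space, carefully set up inside the finite-dimensional Morse approximations $\Lambda_k$ of Subsection~\ref{ss:energy} (to bypass the $C^{1,1}$-regularity issue of the Finsler energy), I would construct, for each $m\geq 1$, a class $h_m\in H_{d_m}(\Lambda,\Lambda^{<a_m};\F)$ with $a_m<m\ell$ and $d_m=m(i+1)-(m-1)(d-1)-1$. Roughly, $h_m$ is represented by a family of loops built by concatenating $m$ copies of a representing cycle of $h$, with the $(m-1)(d-1)$-dimensional reduction accounting for the free choice of $m-1$ basepoints on $M$. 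Proving $h_m\neq 0$ is the main obstacle: it is done by a subordination argument, detecting $h_m$ against a cohomological structure on $\Lambda$ (typically pulled back from the loop-product or from the $S^1$-equivariant cohomology) whose behaviour under iteration propagates non-triviality from $h$ to $h_m$.

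Finally, one analyses the min-max critical value $c_m:=\ell(h_m)\leq m\ell$. By definition of min-max, $c_m^2=E(\zeta_j^p)$ for some $j$ and some $p\leq m$, and a localization lemma of the type proved in Section~\ref{s:simply_closed_geodesics} shows that $h_m$ concentrates near the critical circle $S^1\cdot\zeta_j^p$. However, the top degree of $C_*(S^1\cdot\zeta_j^p;\F)$ does not exceed $\ind(\zeta_j^p)+\nul(\zeta_j^p)+1$, which by hypothesis~(ii) is at most $p\,i-(d-1)(p-1)+1$. Comparing with $d_m=m(i+1)-(m-1)(d-1)-1$, a direct computation shows that for $m$ sufficiently large the strict inequality $d_m>p\,i-(d-1)(p-1)+1$ holds for every $1\leq p\leq m$, so the finitely many available critical circles cannot carry $h_m$, contradicting the non-triviality established in the previous step. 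The finiteness assumption is therefore untenable.
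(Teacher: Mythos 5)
Your framework (contradiction with finitely many primes, concatenation inside the finite-dimensional approximations $\Lambda_k$, exploitation of the linear index bound (ii)) points in the right general direction, but two steps are genuinely broken. First, the crucial claim that the concatenated classes $h_m\in H_{d_m}(\Lambda,\Lambda^{<a_m};\F)$ with $a_m<m\ell$ are non-zero is left to an unspecified ``subordination argument''; this is exactly the hard point, and as formulated it is the wrong target. The Bangert--Hingston mechanism used in the paper is the opposite one: from (ii) one first gets $\ind_\Omega(\gamma)+\nul_\Omega(\gamma)=\ind(\gamma)+\nul(\gamma)-(d-1)$, which makes the evaluation map on the Gromoll--Meyer ball at $\gamma$ a submersion onto a transversal $\Sigma$; concatenating $m$ copies of that ball over a \emph{common} basepoint produces a ball $B_m$ of dimension exactly $i_m=\ind(\gamma^m)+\nul(\gamma^m)$ generating $C_{i_m}(\gamma^m;\Q)$, and then one proves that $[B_m]$ \emph{vanishes} in $H_{i_m}(\Lambda^{<m\ell+\epsilon/\ell},\Lambda^{<m\ell};\Q)$, by letting the basepoints of the two halves separate in $\Sigma\times\Sigma\times U^{\times m}$ (where the diagonal cycle is null-homologous rel boundary) while energy estimates keep the image below $(m\ell+\epsilon/\ell)^2$ and push the boundary strictly below $m^2\ell^2$. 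This homological vanishing, combined with the injection $C_*(\gamma^m;\Q)\hookrightarrow C_*(S^1\cdot\gamma^m;\Q)$ for odd $m$ (Lemma~\ref{l:Cgamma_emb_CS1gamma}, which replaces your Gysin-type shift to degree $i+1$, for which there is no general justification), forces a closed geodesic of length in $(m\ell,m\ell+\epsilon/\ell]$ for every sufficiently large odd $m$.

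Second, your concluding degree count applies hypothesis (ii) to the iterates $\zeta_j^p$ of the \emph{other} prime geodesics; but (ii) is an assumption about $\gamma$ alone, so nothing bounds $\ind(\zeta_j^p)+\nul(\zeta_j^p)$ (nor is there any reason for $p\leq m$), and a prime geodesic whose index grows linearly with the matching slope could perfectly well carry classes in your degrees $d_m$. Degree counting alone cannot separate the new geodesics from the finitely many available orbits; the contradiction must instead come from length arithmetic: the existence, for every small $\epsilon>0$, of closed geodesics with length in $(m\ell,m\ell+\epsilon]$ for all large $m$ in a dense subset of $\N$ is incompatible with finitely many prime closed geodesics (Lemma~\ref{l:arithmetic_existence}), and this final mechanism is absent from your proposal.
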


The proof of Theorem~\ref{t:Hingston} will employ the following arithmetic statement, which is also due to Hingston.

\begin{lem}
\label{l:arithmetic_existence}
Let $(M,F)$ be a Finsler manifold, $\ell>0$ a positive real number, and $\K\subseteq\N$ a $k$-dense subset  for some $k>0$, that is, 
\[(n-k,n+k)\cap\K\neq\varnothing,\qquad\forall n\in\N.\] 
Assume that for all $\epsilon>0$ there exists $\overline m>0$ and, for all $m\in\K$ with $m\geq\overline m$, a closed geodesic $\zeta_m\in\crit(E)$ whose length satisfies
\[m\ell < E(\zeta_m)^{1/2}\leq m\ell+\epsilon.\] Then $(M,F)$ has infinitely many closed geodesics.
\end{lem}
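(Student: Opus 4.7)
The plan is to argue by contradiction: assume $(M,F)$ admits only finitely many geometrically distinct closed geodesics $\gamma_1,\ldots,\gamma_r$, with prime lengths $L_i:=E(\gamma_i)^{1/2}$. Every closed geodesic is then an iterate $\gamma_i^n$ of length $nL_i$, so the hypothesis of the lemma translates into the arithmetic statement that, for every $\epsilon>0$, every sufficiently large $m\in\K$ must lie in the set
\[
G(\epsilon) \;:=\; \bigcup_{i=1}^r G_i(\epsilon),
\qquad
G_i(\epsilon) \;:=\; \bigl\{m\in\N \,\big|\, (m\ell,m\ell+\epsilon]\cap L_i\N \neq\varnothing\bigr\}.
\]

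The heart of the argument is then to estimate the asymptotic density of $G(\epsilon)$ from above and to compare it with a lower bound for the density of $\K$. For $\epsilon$ small, the condition $m\in G_i(\epsilon)$ is equivalent to $\{m\ell/L_i\}>1-\epsilon/L_i$, where $\{\cdot\}$ denotes the fractional part. I would analyze this condition in two cases. If $\ell/L_i$ is irrational, Weyl's equidistribution theorem gives that $G_i(\epsilon)$ has asymptotic density exactly $\epsilon/L_i$. If instead $\ell/L_i=p_i/q_i$ is rational in lowest terms, the fractional parts $\{m\ell/L_i\}$ take values only in the finite set $\{0,1/q_i,\ldots,(q_i-1)/q_i\}$, and so $G_i(\epsilon)$ is empty as soon as $\epsilon<L_i/q_i$. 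Choosing $\epsilon$ smaller than all such thresholds $L_i/q_i$ arising in the rational cases yields the upper density estimate
\[
\limsup_{N\to\infty} \frac{|G(\epsilon)\cap[1,N]|}{N} \;\leq\; \epsilon\sum_{i=1}^r\frac{1}{L_i}.
\]

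On the other hand, the $k$-density hypothesis for $\K$ implies the uniform lower bound $|\K\cap[1,N]|\geq N/(2k+1)$ for $N$ large, and the hypothesis of the lemma forces the inclusion $\K\cap[\overline m,\infty)\subseteq G(\epsilon)$. Combining the two inequalities produces
\[
\frac{1}{2k+1}\;\leq\;\epsilon\sum_{i=1}^r\frac{1}{L_i},
\]
which fails as soon as $\epsilon$ is chosen strictly smaller than $\min\bigl\{L_i/q_i, \,\bigl((2k+1)\sum_{i}1/L_i\bigr)^{-1}\bigr\}$. This contradiction establishes the lemma.

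The most delicate step is the density computation for each $G_i(\epsilon)$: the geometric reduction (that all lengths are positive integer multiples of finitely many $L_i$'s) and the final comparison of densities are routine, but one must be careful to separate the irrational and rational cases, to invoke Weyl equidistribution only in the former, and to keep the resulting upper bound linear in $\epsilon$ so that it can genuinely beat the positive lower density provided by the $k$-density of $\K$.
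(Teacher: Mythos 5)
Your argument is correct, but it is genuinely different from the one in the paper. You run a density/equidistribution argument: under the finiteness assumption every admissible $m$ must lie in the union of the sets $G_i(\epsilon)$, the rational-ratio geodesics contribute nothing once $\epsilon<L_i/q_i$, the irrational-ratio ones contribute upper density $\epsilon/L_i$ by Weyl equidistribution, and this linear-in-$\epsilon$ bound eventually drops below the positive lower density $\geq 1/(2k+1)$ that $k$-denseness forces on $\K$. (A cosmetic point: the condition $m\in G_i(\epsilon)$ is $\{m\ell/L_i\}\geq 1-\epsilon/L_i$ together with $\{m\ell/L_i\}\neq 0$, rather than a strict inequality; this changes nothing in either the rational-threshold or the density computation, and the open endpoint $m\ell<E(\zeta_m)^{1/2}$ is indeed what kills the rational case, exactly as in your threshold argument.) The paper instead argues by a finite pigeonhole-plus-Diophantine-separation scheme: after normalizing $\ell=1$ it fixes separation constants $\delta_1,\delta_2$ (a positive lower bound on $|m_1\ell_i-m_2|$ for bounded $m_1$ in the irrational case, and for all nonzero values in the rational case), takes $\epsilon$ below both, observes that only irrational-length geodesics can occur, and then uses $k$-denseness to find two values $m_1<m_2$ in $\K$ within a window of length $(s+1)(2k-2)$ hitting iterates of the same geodesic; comparing the resulting bounds on $\mu(m_2)-\mu(m_1)$ gives the contradiction. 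The trade-off is that the paper's proof is completely elementary and finitary (only two nearby elements of $\K$ with the same geodesic are needed, no limits), whereas yours invokes Weyl equidistribution but gives the cleaner conceptual statement that the set of $m$ admitting a geodesic of length in $(m\ell,m\ell+\epsilon]$ has upper density $O(\epsilon)$, which is too sparse to contain the tail of a $k$-dense set; both uses of the hypotheses (choosing $\epsilon$ from data independent of $\epsilon$, then invoking the assumption for that $\epsilon$) are sound.
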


\begin{proof}
Up to replacing the Finsler metric $F$ with $\ell^{-1}F$, we can assume that \[\ell=1.\]
We recall that a closed geodesic $\gamma\in\crit(E)$ is called prime when it is not the iterate of another closed geodesic, that is, $\gamma=\zeta^m$ implies $m=1$ and $\zeta=\gamma$. We prove the lemma by contradiction, by assuming that $(M,F)$ has only finitely many prime closed geodesics $\gamma_1,...,\gamma_r\in\crit(E)$. We denote by $\ell_i:=E(\gamma_i)^{1/2}$ the length of such closed geodesics, and we reorder them so that $\ell_i \not\in\Q$ for all $i=1,...,s$, and $\ell_i=p_i/q_i \in\Q$ for all $i=s+1,...,r$. Here, $p_i$ and $q_i$ are positive integers. 

If $s>0$, that is, there are closed geodesics of irrational length, we set 
\begin{align*}
c:= \frac{(s+1)(2k-2)+1}{\min\{\ell_1,...,\ell_s\}},
\end{align*}
and 
\begin{align*}
\delta_1 & := \min\Big\{|m_1\ell_i-m_2|\ \Big|\ m_1,m_2\in\N\mbox{ with }m_1\leq c,\ i=1,...,s\Big\}>0.
\end{align*}
If $s<r$, that is, there are closed geodesics of rational length, we have
\begin{align*}
 \delta_2 & :=\min\Big\{ \big|m_1\ell_i-m_2\big|\ \Big|\   m_1,m_2\in\N \mbox{ with }m_1\ell_i-m_2\neq0,\ i=s+1,...,r\Big\}\\
 &  \geq \frac1{\max\{q_{s+1},...,q_{r}\}}>0.
\end{align*}
If $s=0$ we simply set $\delta_1:=+\infty$, and likewise if $s=r$ we set $\delta_2:=+\infty$.

Now, we fix $\epsilon\in(0,\min\{\delta_1,\delta_2,1\})$. By the assumptions of the lemma, there exists $\overline m$ and, for all $m\in\K$ with $m\geq\overline m$, a closed geodesic $\zeta_m\in\crit(E)$ such that
\begin{align}
\label{e:final_inequality}
m 
< 
E(\zeta_m)^{1/2}
\leq m+\epsilon.
\end{align}
Since $\gamma_1,...,\gamma_r$ are the only prime closed geodesics in $(M,F)$, up to a shift in the parametrization each closed geodesic $\zeta_m$ must be of the form 
$\zeta_m=\gamma_{i}^{\mu}$ for some $i=i(m)\in\{1,...,r\}$ and $\mu=\mu(m)\in\N$. In particular, the length of $\zeta_m$ is
\begin{align*}
E(\zeta_m)^{1/2} = \mu(m)\ell_{i(m)}.
\end{align*}
We must have $i(m)\leq s$, that is, every $\gamma_{i(m)}$ must have irrational length $\ell_{i(m)}$; indeed, the inequality \eqref{e:final_inequality} implies
\begin{align*}
 0<\mu(m)\ell_{i(m)}-m<\epsilon<\delta_2.
\end{align*}
Since $\K$ is $k$-dense in $\N$, for any $m_1\in\K$ there exists $m_2\in\K$ such that 
\[m_1 < m_2 \leq m_1+2k-2.\]
This, together with the fact that $\gamma_1,...,\gamma_s$ are the only prime closed geodesics with irrational length, implies that we can find two integers $m_1,m_2\in\K$ both larger than $\overline m$ and such that $m_1<m_2\leq m_1+(s+1)(2k-2)$ and $i:=i(m_1)=i(m_2)$. The inequalities in~\eqref{e:final_inequality} applied to these two integers give
\begin{align}
\label{e:final_inequality_2}
m_1
< 
\mu(m_1)\ell_{i} 
\leq m_1+\epsilon,
\qquad
m_2
< 
\mu(m_2)\ell_{i} 
\leq m_2+\epsilon.
\end{align}
Therefore
\begin{align*}
| (\mu(m_2)-\mu(m_1))\ell_{i} - (m_2 - m_1) | \leq \epsilon < \delta_1,
\end{align*}
which implies 
\[\mu(m_2)-\mu(m_1)>c\] 
according to the definition of $\delta_1$. This gives a contradiction, since the inequalities~\eqref{e:final_inequality_2} imply
\[
\mu(m_2)-\mu(m_1)<\frac{m_2-m_1+\epsilon}{\ell_i}<\frac{(s+1)(2k-2)+1}{\min\{\ell_1,...,\ell_r\}}=c.
\qedhere
\]

\end{proof}

\begin{proof}[Proof of Theorem~\ref{t:Hingston}]
Condition~(ii), together with Lemma~\ref{l:elementary_ind}(ii,v), implies that
\begin{align*}
 \ind_\Omega(\gamma)+\nul_\Omega(\gamma)
 \leq
 \tfrac1m\big(\ind_\Omega(\gamma^m)+\nul_\Omega(\gamma^m)\big)
 \leq 
 \ind(\gamma)+\nul(\gamma) - \tfrac{m-1}{m}(d-1).
\end{align*}
In the limit $m\to\infty$ the latter term converges to $d-1$. This, together with the opposite inequality provided by Proposition~\ref{p:indices}(iv), implies 
\begin{align}
\label{e:relation_indices_Hingston}
 \ind_\Omega(\gamma)+\nul_\Omega(\gamma)
 =
 \ind(\gamma)+\nul(\gamma) - (d-1).
\end{align}

Let $\Sigma\subset M$ be an embedded hypersurface diffeomorphic to a compact $(d-1)$-dimensional disk intersecting in its interior the closed geodesic $\gamma$ transversely at $\gamma(0)$. As in Section~\ref{ss:local_homology}, we introduce the space
\begin{align*}
 \Sigma_k:=\{\zeta\in\Lambda_k\ |\ \zeta(0)\in\Sigma\},
\end{align*}
for an integer $k$ large enough so that $\gamma\in\Lambda_k$. Since we are looking for infinitely many closed geodesics, we can assume that $\gamma$ is an isolated closed geodesic.
Therefore, $\gamma$ is an isolated critical point of the restricted energy functional $E|_{\Sigma_k}$ with non-trivial local homology
\begin{align*}
H_{i}(\Sigma_k^{<\ell}\cup\{\gamma\},\Sigma_k^{<\ell})
\cong
H_{i}(\Lambda^{<\ell}\cup\{\gamma\},\Lambda^{<\ell})=C_{i}(\gamma),
\end{align*}
where $\ell^2:=E(\gamma)$. 

Since $i=\ind(\gamma)+\nul(\gamma)$, by the Morse-Gromoll-Meyer lemma \cite{Gromoll:1969jy} there exists a smooth embedded ball $B\subset\Sigma_k$ of dimension $i$ containing $\gamma$ in its interior, and such that $E|_{B\setminus\{\gamma\}}<\ell^2$. Any such ball $B$ represents a generator of the local homology $C_{i}(\gamma)$. This can be easily seen as follows. Let $N\subset\Sigma_k$ be a tubular neighborhood of $B$ diffeomorphic to the normal bundle of $B$ in $\Sigma_k$. The restriction of the energy functional $E$ to any fiber $F$ of $N$ has a non-degenerate local minimizer at $F\cap B$. Thus, the local homology of $E$ at $\gamma$ is isomorphic to the local homology of $E|_B$ at its local maximizer $\gamma$, and the local homology at a local maximizer is generated by the relative cycle covering the whole domain (see, e.g., \cite[Proposition~2.6]{Mazzucchelli:2013co} for a detailed proof of this general Morse-theoretic fact). This argument is independent of the choice of the coefficient field, and in particular $[B]\neq0$ in $C_i(\gamma;\Q)$ as well.

We consider the evaluation map $\ev:B\to \Sigma$, $\ev(\zeta)=\zeta(0)$, whose differential has the form
\[\diff\,\ev(\gamma):\Tan_\gamma B\to\Tan_{\gamma(0)}\Sigma, \qquad\diff\,\ev(\gamma)\xi=\xi(0).\] 
We claim that
$\diff\,\ev(\gamma)$ is surjective.
Indeed, if as usual $\Omega=\big\{\zeta\in\Lambda\ |\ \zeta(0)=\gamma(0)\big\}$ denotes the based loop space, we have
\[\ker(\diff\,\ev(\zeta))=\Tan_\gamma\Omega\cap\Tan_{\gamma}B.\] Since the Hessian $\diff^2 E(\gamma)$ is negative semi-definite on $\Tan_{\gamma}B$, Equation~\eqref{e:relation_indices_Hingston} implies
\begin{align*}
 \dim\ker(\diff\,\ev(\zeta))
 &\leq
 \ind_{\Omega}(\gamma)+\nul_\Omega(\gamma)\\
 &\leq
 \ind(\gamma)+\nul(\gamma)-(d-1)\\
 &=
 \dim(B)-(d-1).
\end{align*}
Since $\dim(\Sigma)=d-1$, we infer that $\diff\,\ev(\gamma)$ is surjective.
By the implicit function theorem, up to shrinking $B$ around $\gamma$, we find a diffeomorphism $\phi:\Sigma\times U\to B$ such that $\ev\circ\phi(x,y)=x$.

If $\zeta_i:[0,\tau_i]\to M$ are continuous paths such that $\zeta_i(\tau_i)=\zeta_{i+1}(0)$ for all $i\in\Z_m$, we define $\zeta:=\zeta_0*...*\zeta_{m-1}\in\Lambda$ to be the 1-periodic curve obtained by first concatenating the $\zeta_i$'s with their original parametrization, and then by linearly reparametrizing the resulting curve so that it becomes 1-periodic. Namely, \[\zeta(t)=\tilde\zeta((\tau_0+...+\tau_{m-1})t),\] 
where
\begin{align*}
\tilde\zeta(\tau_0+...+\tau_{i-1}+u)=\zeta_i(u),
\qquad\forall u\in[0,\tau_i].
\end{align*}
If the $\zeta_i$'s are $W^{1/2}$ paths, the energy of $\zeta_0*...*\zeta_{m-1}$ is 
\begin{align}
\label{e:energy_concatenation}
E(\zeta_0*...*\zeta_{m-1})
=
(\tau_0+...+\tau_{m-1})
\sum_{i=0}^{m-1}
\int_0^{\tau_i} F(\zeta_i,\dot\zeta_i)^2\,\diff t.
\end{align}

We now employ $\phi$ to construct a relative cycle representing a non-zero element of the local homology group of $\gamma^m$. We first define the smooth embedding
\begin{align*}
\phi_m:\Sigma\times U^{\times m}\hookrightarrow\Sigma_{mk},
\qquad
\phi_m(x,y_0,...,y_{m-1})=\phi(x,y_0) * ... * \phi(x,y_{m-1}),
\end{align*}
where $U^{\times m}=U\times...\times U$ denotes the $m$-fold cartesian product. The fact that that $\phi_m$ is a smooth embedding can be easily seen if we identify the loops $\zeta_i=\phi_i(x,y_i)\in\Sigma_k$ with the tuple $\xx_i=(\zeta_i(0),\zeta_i(\tfrac 1k),...,\zeta_i(\tfrac{k-1}k))$ as explained in Section~\ref{ss:energy}: indeed, the curve $\phi_m(x,y_0,...,y_{m-1})\in\Sigma_{mk}$ is then identified with the juxtaposition $(\xx_0,...,\xx_{m-1})$.
The image of $\phi_m$ is a smooth embedded ball 
\[B_m:=\phi_m(\Sigma\times U^{\times m})\subset\Sigma_{mk}\] 
containing $\gamma^m$ in its interior. By assumption~(ii) of the lemma, its dimension is bounded from below as
\begin{align}
\label{e:dim_B_m}
 \dim(B_m) = d-1+m(i-(d-1))\geq\ind(\gamma^m)+\nul(\gamma^m).
\end{align}
Since our $\zeta_i$'s are 1-periodic loops (that is, we consider them as closed paths parametrized on $[0,1]$), Equation~\eqref{e:energy_concatenation} reduces to  \[E(\zeta_0*...*\zeta_{m-1})=m\big(E(\zeta_0)+...+E(\zeta_{m-1})\big).\] Since $E(\zeta_i)<E(\gamma)$, we have
\[E|_{B_m\setminus\{\gamma^m\}}<E(\gamma^m)=m^2\ell^2.\] 
This, together with~\eqref{e:dim_B_m}, implies that $\dim(B_m) =\ind(\gamma^m)+\nul(\gamma^m)=:i_m$. Therefore, as we explained above for $B$, the ball $B_m$ represents a generator of the local homology group 
\[
H_{i_m}(\Sigma_{mk}^{<m\ell}\cup\{\gamma^m\},\Sigma_{mk}^{<m\ell};\Q)
\cong
C_{i_m}(\gamma^m;\Q).
\]

We claim that, for each $\epsilon>0$ sufficiently small, there exists $\overline m=\overline m_\epsilon\in\N$ such that, for all integers $m\geq\overline m$, the homomorphism
\[
C_{i_m}(\gamma^m;\Q)\to H_{i_m}(\Lambda^{<m\ell+\epsilon/\ell},\Lambda^{<m\ell};\Q)
\]
induced by the inclusion is the zero one.
Indeed, let $\epsilon\in(0,1)$ be small enough so that 
\begin{align*}
\max_{\Sigma\times\partial U} E\circ\phi < \ell^2-\epsilon.
\end{align*}
If needed, we shrink $\Sigma$ around $\gamma(0)$ so that
\begin{align*}
\diam(\Sigma):=
\max_{x_1,x_2\in\Sigma} d(x_1,x_2)<\frac{\epsilon}{2(\ell^2+2)},
\end{align*}
where $d:M\times M\to[0,\infty)$ denotes the (possibly non-symmetric) distance \eqref{e:Finsler_distance} induced by the Finsler metric $F$. Let $\delta>0$ be such that
\begin{align*}
\max_{\partial\Sigma\times U} E\circ\phi = \ell^2-\delta,
\end{align*}
and notice that
\begin{align*}
\max_{\partial\Sigma\times U^{\times m}} E\circ\phi_m = m^2( \ell^2-\delta).
\end{align*}
We define the continuous map
\begin{gather*}
\psi_m:\Sigma\times\Sigma\times U^{\times \lfloor m/2\rfloor}\times U^{\times \lceil m/2\rceil}\to\Lambda,
\\
\psi_m(x_1,x_2,y_1,y_2)=\phi_{\lfloor m/2\rfloor}(x_1,y_1)*\gamma_{x_1 x_2}*\phi_{\lceil m/2\rceil}(x_2,y_2)*\gamma_{x_2 x_1},
\end{gather*}
where $\gamma_{x_ix_j}:[0,d(x_i,x_j)]\to M$ is the shortest geodesic parametrized with unit speed joining $x_i$ and $x_j$. Let us compute the composition $E\circ\psi_m$. If we set 
\[\zeta_1:=\phi_{\lfloor m/2\rfloor}(x_1,y_1),\quad \zeta_2:=\phi_{\lceil m/2\rceil}(x_2,y_2),\quad \zeta:=\psi_m(x_1,x_2,y_1,y_2),\] 
we have
\begin{align*}
E(\zeta)
&=
\big(m+d(x_1,x_2)+d(x_2,x_1)\big)
\left( \frac{ E(\zeta_1)}{\lfloor \tfrac m2\rfloor} + d(x_1,x_2)  +\frac{ E(\zeta_2)}{\lceil \tfrac m2\rceil} +d(x_2,x_1) \right)\\
&\leq \big(m+2\diam(\Sigma)\big)\big(m\ell^2+2\diam(\Sigma)\big)\\
&<m^2\ell^2 + 2\diam(\Sigma)(m+\ell^2m + 2\diam(\Sigma))\\
&<m^2\ell^2+m\epsilon
<
(m\ell+\epsilon/\ell)^2
.
\end{align*}
If $y_1\in\partial U^{\times \lfloor m/2\rfloor}$ or $y_2\in\partial U^{\times \lceil m/2\rceil}$, we have the estimate
\begin{align*}
E(\zeta)
&<
\big(m+2\diam(\Sigma)\big)
\left( m\ell^2-\epsilon + 2\diam(\Sigma) \right)\\
&=
m^2\ell^2+ m\big(2\diam(\Sigma)(1+\ell^2)-\epsilon\big) + 2\diam(\Sigma)\big(2\diam(\Sigma)-\epsilon\big)\\
&<m^2\ell^2.
\end{align*}
If instead $x_1\in\partial\Sigma$ or $x_2\in\partial\Sigma$, we have
\begin{align*}
 E(\zeta)
 & \leq
 \big(m+2\diam(\Sigma)\big)
\left(m\ell^2 - \lfloor \tfrac m2\rfloor\delta + 2\diam(\Sigma) \right)\\
 & \leq
m^2\ell^2
+ \big(2\diam(\Sigma)(2+\ell^2)-\lfloor\tfrac m2 \rfloor\delta\big)m\\
 & \leq
 m^2\ell^2 + \underbrace{\big( \epsilon -  \lfloor \tfrac m2\rfloor\delta \big)}_{(*)}m,
\end{align*}
and the term $(*)$ is negative for $m\geq \overline m=2\epsilon/\delta+2$. Summing up, our map $\psi_m$ satisfies 
\begin{align}
\label{e:E_low_boundary}
 E\circ\psi_m|_{\partial (\Sigma\times\Sigma\times U^m)}<m^2\ell^2.
\end{align}
The relative cycle $\mathrm{diag}_\Sigma \times U^m$ is null-homologous in \[H_*(\Sigma\times\Sigma\times U^m,\partial (\Sigma\times\Sigma\times U^m);\Q),\] since it is homologous to a relative cycle contained in $\partial (\Sigma\times\Sigma\times U^m)$, see Figure~\ref{f:cube}. Therefore, \eqref{e:E_low_boundary} implies that the relative cycle $B_m=\phi_m(\Sigma\times U^m)=\psi_m(\mathrm{diag}_\Sigma \times U^m)$ is null-homologous in 
$H_{i_m}(\Lambda^{<m\ell+\epsilon/\ell},\Lambda^{<m\ell};\Q)$, i.e. 
\begin{align}
\label{e:homological_vanishing}
[B_m]=0 \mbox{ in } H_{i_m}(\Lambda^{<m\ell+\epsilon/\ell},\Lambda^{<m\ell};\Q). 
\end{align}

\begin{figure}
\begin{center}
\begin{small}
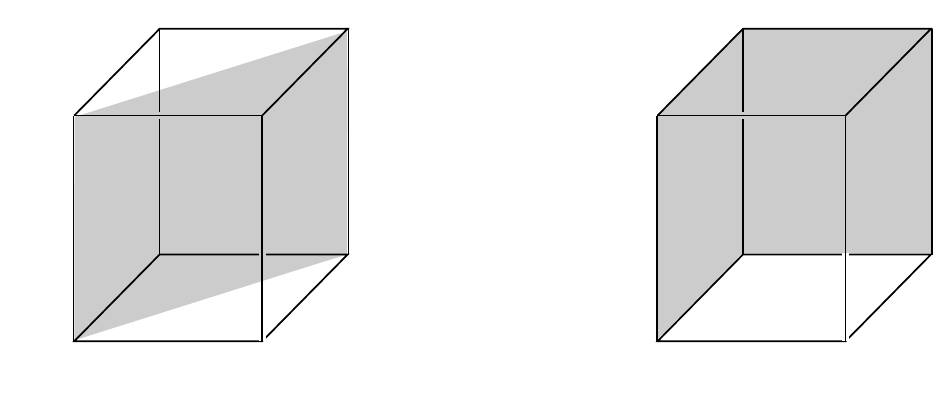 
\caption{The shaded region in \textbf{(a)} is the relative cycle $\mathrm{diag}_\Sigma \times U^m$, which is null-homologous in $H_*(\Sigma\times\Sigma\times U^{\times m},\partial(\Sigma\times\Sigma\times U^{\times m});\Q)$, for instance because it is homologous to the shaded region in \textbf{(b)}.}
\label{f:cube}
\end{small}
\end{center}
\end{figure}

From now on, we assume that our integer $m\geq\overline m$ is odd, and employ Morse theory. Since $m$ is odd, Lemma~\ref{l:Cgamma_emb_CS1gamma} implies that the inclusion induces an injective homomorphism 
$C_*(\gamma^{m};\Q)\hookrightarrow C_*(S^1\cdot\gamma^{m};\Q)$. 
Therefore, the commutative diagram 
\begin{equation*}
\begin{tikzcd}[row sep=large]
  C_{i_{m}}(\gamma^{m};\Q)\, 
  \arrow[r,hookrightarrow] 
  \arrow[dr,"0"'] &
  C_{i_{m}}(S^1\cdot\gamma^{m};\Q)   
  \arrow[d,"\iota_*"] 
  \\
   &
  H_{i_{m}}(\Lambda^{<m\ell+\epsilon/\ell},\Lambda^{<m\ell};\Q)
\end{tikzcd}
\end{equation*}
whose homomorphisms are all induced by the inclusion, implies that $\iota_*$ is not injective. This, in turn, implies that there exists a closed geodesic $\zeta_m\in\crit(E)$ of length 
$E(\zeta_m)^{1/2}\in(m\ell,m\ell+\epsilon/\ell]$. 

Summing up, we showed that for every $\epsilon>0$ small enough there exists $\overline m>0$ and, for all odd integers $m\geq\overline m$, a closed geodesic $\zeta_m\in\crit(E)$ such that $m\ell < E(\zeta_m)^{1/2}\leq m\ell+\epsilon$. We can now invoke Lemma~\ref{l:arithmetic_existence} with $\K$ being the set of odd positive integers, and conclude that $(M,F)$ has infinitely many closed geodesics.
\end{proof}

We now derive the two corollaries that we will need for proving Theorem~\ref{t:multiplicity}.

\begin{cor}
\label{c:Hingston}
Let $(M,F)$ be an orientable Finsler surface, and $\gamma\in\crit(E)$ a closed geodesic such that $\ind_\Omega(\gamma)=\nul_\Omega(\gamma)=1$ and the local homology $C_3(\gamma)$ with some coefficient field is non-zero. Then $(M,F)$ has infinitely many closed geodesics.
\end{cor}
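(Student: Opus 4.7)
The plan is to verify the two hypotheses of Theorem~\ref{t:Hingston} with $d=2$, and then directly invoke it.

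First I would determine $\ind(\gamma)+\nul(\gamma)$. Since the local homology $C_i(\gamma)$ vanishes for $i>\ind(\gamma)+\nul(\gamma)$, the assumption $C_3(\gamma)\neq0$ forces $\ind(\gamma)+\nul(\gamma)\geq3$. On the other hand, Proposition~\ref{p:indices}(iv) gives
\[
\ind(\gamma)+\nul(\gamma)\leq\ind_\Omega(\gamma)+\nul_\Omega(\gamma)+\dim(M)-1=1+1+1=3,
\]
using $\ind_\Omega(\gamma)=\nul_\Omega(\gamma)=1$ and $\dim(M)=2$. Hence $\ind(\gamma)+\nul(\gamma)=3$, and condition~(i) of Theorem~\ref{t:Hingston} is satisfied.

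Next I would check the iteration inequality. Since $\nul_\Omega(\gamma)=1$, Proposition~\ref{p:indices}(v) applies and yields
\[
\ind_\Omega(\gamma^m)=m\,\ind_\Omega(\gamma)+m-1=2m-1,\qquad \nul_\Omega(\gamma^m)=1
\]
for every $m\in\N$, so $\ind_\Omega(\gamma^m)+\nul_\Omega(\gamma^m)=2m$. Applying Proposition~\ref{p:indices}(iv) to the iterate $\gamma^m$ then gives
\[
\ind(\gamma^m)+\nul(\gamma^m)\leq\ind_\Omega(\gamma^m)+\nul_\Omega(\gamma^m)+\dim(M)-1=2m+1,
\]
which is precisely $m(\ind(\gamma)+\nul(\gamma))-(d-1)(m-1)=3m-(m-1)=2m+1$. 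Thus condition~(ii) of Theorem~\ref{t:Hingston} is also satisfied.

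Finally, Theorem~\ref{t:Hingston} yields infinitely many geometrically distinct closed geodesics of $(M,F)$. No obstacle is expected, as the argument is essentially a matter of assembling the index/nullity inequalities from Proposition~\ref{p:indices} and plugging them into Hingston's theorem.
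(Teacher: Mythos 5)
Your proposal is correct and follows essentially the same route as the paper: both verify hypotheses (i) and (ii) of Theorem~\ref{t:Hingston} with $d=2$ by combining the constraint $C_3(\gamma)\neq0$ with the index estimates of Proposition~\ref{p:indices}(iv) and~(v). The only difference is cosmetic: by pinning down $\ind(\gamma)+\nul(\gamma)=3$ directly from the two-sided bound and then applying Proposition~\ref{p:indices}(iv) to every iterate $\gamma^m$, you avoid the paper's case distinction between $\nul(\gamma)=1$ and $\nul(\gamma)=2$ (where the paper additionally invokes Proposition~\ref{p:indices}(vi)), which is a mild streamlining rather than a different argument.
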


\begin{proof}
Since $C_3(\gamma)$ is non-trivial, we have 
\begin{align*}
 \ind(\gamma)\leq3\leq\ind(\gamma)+\nul(\gamma).
\end{align*}
We now employ Proposition~\ref{p:indices}. Since $\nul_\Omega(\gamma)=1$, we have
\begin{align*}
\ind_\Omega(\gamma^m) = m\,\ind_\Omega(\gamma) + (m-1)\nul_\Omega(\gamma)= 2m-1,
\qquad
\nul_\Omega(\gamma^m)=1.
\end{align*}
Moreover
\begin{align*}
\ind(\gamma)\leq\ind_\Omega(\gamma)+1=2,
\qquad
\nul(\gamma)\geq3-\ind(\gamma)\geq1.
\end{align*}
If $\nul(\gamma)=2$, then $\nul(\gamma^m)=2$ and $\ind(\gamma^m)$ is odd for all $m\in\N$; since $\ind_\Omega(\gamma^m)\leq\ind(\gamma^m)\leq\ind_\Omega(\gamma^m)+1$,
we infer
\begin{align*}
\ind(\gamma^m)=\ind_\Omega(\gamma^m)=2m-1.
\end{align*}
If instead $\nul(\gamma)=1$, then $\ind(\gamma)=2$, and
\begin{align*}
\ind(\gamma^m) + \nul(\gamma^m)
&\leq
\ind_\Omega(\gamma^m) + \nul_\Omega(\gamma^m)+1\\
&=
2m+1\\
&=
m(\ind(\gamma)+\nul(\gamma))-(m-1). 
\end{align*}
In both cases, $\gamma$ satisfies the assumptions of Theorem~\ref{t:Hingston}, and we infer that $(S^2,F)$ has infinitely many closed geodesics.
\end{proof}

The second corollary of Theorem~\ref{t:Hingston} was established in the Riemannian case by Bangert \cite{Bangert:1980ho, Bangert:1993wo}. Even though we present it here as a corollary of Theorem~\ref{t:Hingston}, Bangert's proof came historically earlier than \cite{Hingston:1993ou}. Let us recall, once again, the classical notion of conjugate points: two points $\gamma(t)$ and $\gamma(s)$ along a geodesic $\gamma:[t,s]\to M$ are conjugate when there exists a Jacobi field along $\gamma$ that is not identically zero, but vanishes at both $\gamma(t)$ and $\gamma(s)$. When $\dim(M)=2$ this condition can be expressed in terms of the Jacobi field $\eta_t$ introduced in Equation~\eqref{e:eta}: $\gamma(t)$ and $\gamma(s)$ are conjugate points if and only if $\eta_t(s)=0$. A closed geodesic $\gamma$ on a Finsler surface has no conjugate points if and only if $\ind_\Omega(t\cdot\gamma^m)=\nul_\Omega(t\cdot\gamma^m)=0$ for all $t\in S^1$ and $m\in\N$; equivalently, $\ind(\gamma^m)=0$ for all $m\in\N$, according to Lemma~\ref{l:elementary_ind}(iii) and Proposition~\ref{p:indices}(iii).

\begin{cor}
\label{c:conjugate_points}
Any reversible Finsler 2-sphere with a simple closed geodesic without conjugate points possesses infinitely many closed geodesics.
\end{cor}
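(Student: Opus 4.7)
The plan is to argue by contradiction: assume $(S^2,F)$ has only finitely many geometrically distinct closed geodesics. Under this standing assumption, Theorem~\ref{t:LS}(iii) yields three simple closed geodesics $\gamma_1,\gamma_2,\gamma_3$ of strictly increasing lengths with $C_i(\gamma_i;\Z_2)\neq 0$ in degree $i=1,2,3$, and I would focus on $\gamma_3$.

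From $C_3(\gamma_3;\Z_2)\neq 0$ one has $\ind(\gamma_3)+\nul(\gamma_3)\geq 3$; Proposition~\ref{p:indices}(iv) then yields $\ind_\Omega(\gamma_3)+\nul_\Omega(\gamma_3)\geq 2$, and since $\nul_\Omega(\gamma_3)\leq 1$ by Proposition~\ref{p:indices}(ii), one concludes $\ind_\Omega(\gamma_3)\geq 1$. The argument splits into two cases. If $\ind_\Omega(\gamma_3)=1$, then necessarily $\nul_\Omega(\gamma_3)=1$, and Corollary~\ref{c:Hingston} applied directly to $\gamma_3$ produces infinitely many closed geodesics, contradicting the standing assumption. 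If $\ind_\Omega(\gamma_3)\geq 2$, then $\gamma_3$ has conjugate points along itself; Lemma~\ref{l:extension} extends its Birkhoff map continuously to the boundary of the annulus $A$, and Theorem~\ref{t:twist} applied to $\gamma_3$ again produces infinitely many closed geodesics.

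The hypothesized geodesic $\gamma$ without conjugate points enters in the second case, to guarantee that the Birkhoff map of $\gamma_3$ is defined globally on $A$, i.e., that the first return time is finite everywhere. Indeed, the absence of conjugate points of $\gamma$ gives, via Proposition~\ref{p:indices}(iv), (vi), and the contrapositive of (vii), the rigid index data $\ind(\gamma^m)=0$ and $\nul(\gamma^m)\in\{0,1\}$ for every $m\geq 1$, so that every iterate of $\gamma$ is a local minimum of the energy on $\Lambda$. Combined with the compactness of $SS^2$ and the standing finiteness assumption, this minimum behavior should prevent a geodesic trajectory transverse to $\gamma_3$ from spiraling away without returning: a hypothetical non-returning trajectory would accumulate on a closed invariant subset of the geodesic flow, producing further closed orbits and contradicting either the standing finiteness or the rigid structure forced by $\gamma$.

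The main obstacle is making this last global-dynamics step rigorous. I expect the argument to follow the classical Birkhoff analysis of surfaces of section: the transversal $\gamma$ and the annulus of $\gamma_3$ should jointly partition $SS^2$ into relatively compact regions that must be revisited by every geodesic trajectory in bounded time, after which the Poincaré--Birkhoff machinery underlying Theorem~\ref{t:twist} can be invoked without change.
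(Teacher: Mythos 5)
There is a genuine gap, and it sits exactly where you flag it: the claim that the existence of a simple closed geodesic $\gamma$ without conjugate points forces the Birkhoff map of the third Lusternik--Schnirelmann geodesic $\gamma_3$ to be globally defined (finite first-return times on all of $A$) is unsupported, and it is not true in general. The index data $\ind(\gamma^m)=0$, $\nul(\gamma^m)\le 1$ say nothing about return times of trajectories crossing a \emph{different} geodesic $\gamma_3$; a geodesic ray transverse to $\gamma_3$ can perfectly well stay trapped in one of the disks bounded by $\gamma_3$ even in the presence of $\gamma$. In fact the logical architecture of the paper runs in the opposite direction: Corollary~\ref{c:conjugate_points} is precisely the tool used (through Theorem~\ref{t:Bangert}) to handle the case in which \emph{no} simple closed geodesic has a well-defined Birkhoff map, and the proof of Theorem~\ref{t:Bangert} manufactures a simple closed geodesic without conjugate points and then invokes this corollary. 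So reducing the corollary to the twist-map machinery of Theorem~\ref{t:twist} is backwards and, if one then wanted Theorem~\ref{t:Bangert} to cover the non-returning case, circular. Two smaller problems: $\ind(\gamma^m)=0$ does not by itself make $\gamma^m$ a local minimizer of $E$ on the free loop space (degeneracy must be dealt with, which is exactly what the paper's case distinction does), and Theorem~\ref{t:twist} needs $\ind_\Omega(t\cdot\gamma_3)\ge 2$ for \emph{every} $t\in S^1$, not just at one basepoint.

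The paper's proof works directly with $\gamma$ and never touches Theorem~\ref{t:LS}(iii), Birkhoff maps, or twist maps. Since $\gamma$ has no conjugate points, each translate $t\cdot\gamma$ is a nondegenerate local minimizer of $E$ restricted to the based loop space $\Omega_t$, whence every loop sufficiently close to the critical circle that meets $\gamma(S^1)$ has energy $\ge E(\gamma)$. Then two cases. If arbitrarily close to $\gamma$ there are loops of smaller energy on both sides $B_0$, $B_1$, the sublevel set of every small neighborhood is disconnected, so $C_1(\gamma;\Q)\neq 0$ by Lemma~\ref{l:local_homology_mountain_pass}; combined with $\ind(\gamma^m)=0$ and Proposition~\ref{p:indices}(vi) one gets $\nul(\gamma^m)=1$ for all $m$, and Hingston's Theorem~\ref{t:Hingston} applies to $\gamma$ itself. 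If instead on one side $B_i$ no nearby loop has smaller energy (a property that propagates to all iterates $\gamma^m$), a min-max over contractions of $\gamma^m$ inside $B_i$ produces, for each $m$, a mountain-pass closed geodesic $\zeta_m$ with $C_1(\zeta_m;\Q)\neq 0$; these are either infinitely many distinct geodesics, or iterates of a single geodesic $\zeta$ whose index and nullity are then constant under iteration, so Theorem~\ref{t:Hingston} applies to $\zeta$. If you want to salvage your outline, the fix is to abandon the reduction to $\gamma_3$ and the twist map entirely and exploit the no-conjugate-points hypothesis through this local-minimizer/mountain-pass analysis feeding into Theorem~\ref{t:Hingston}.
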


\begin{proof}
Let $\gamma\in\crit(E)\cap E^{-1}(\ell^2)$ be a simple closed geodesic without conjugate points in the reversible Finsler 2-sphere $(S^2,F)$. 
We claim that there exists a neighborhood $\UU\subset \Lambda$ of the critical circle $S^1\cdot\gamma:=\{t\cdot\gamma\ |\ t\in S^1\}$ such that every $\zeta\in\UU$ that intersects the curve $\gamma$ has energy $E(\zeta)\geq E(\gamma)$. Indeed, if this were not true, we could find a sequence $\zeta_n\in\Lambda$ such that $\zeta_n(0)=\gamma(t_n)$,  $E(\zeta_n)<E(\gamma)$, $t_n\to t$ and $\zeta_n\to t\cdot\gamma$ as $n\to\infty$. We consider the based loop spaces
\[\Omega_s:=\{\zeta\in\Lambda\ |\ \zeta(0)=\gamma(s)\},\qquad s\in S^1,\] 
and the space of broken closed geodesics $\Lambda_k$ introduced in Section~\ref{ss:energy}. Here $k\in\N$ must be large enough so that $\gamma\in\Lambda_k$. Since $\gamma$ has no conjugate points, $\ind_\Omega(s\cdot\gamma)=\nul_\Omega(s\cdot\gamma)=0$. Therefore, every  $s\cdot\gamma$ is a non-degenerate local minimizer of $E|_{\Lambda_k\cap\Omega_s}$. Since $E_k:=E|_{\Lambda_k}$ is smooth in a neighborhood of the critical circle of $\gamma$, we can apply the parametric Morse lemma, which provides an $\epsilon>0$ and an open neighborhood $U\subset\Lambda_k$ of $\gamma$ such that, for all $t\in(-\epsilon,\epsilon)$, $t\cdot\gamma$ is the unique global minimizer of $E|_{U\cap\Omega_t}$. Let $\gamma_n\in\Lambda_k\cap\Omega_{t_n}$ be the sequence of broken closed geodesics such that $\gamma_n(i/k)=\zeta_n(i/k)$ for all $i\in\Z_k$, which have energy $E(\gamma_n)\leq E(\zeta_n)<E(\gamma)$. Since $\zeta\to t\cdot\gamma$ in $\Lambda$, we would have that $\gamma_n\to\gamma$ in $\Lambda_k$, and in particular $\gamma_n\in U$ for all $n\in\N$ large enough, contradicting the fact that $E|_{U\cap\Omega_{t_n}}$ has a strict global minimizer at $t_n\cdot\gamma$.

We denote by $B_0$ and $B_1$ the connected components of $S^2\setminus\gamma$, and by $\BB_i\subset\Lambda$ the open subset of those $\zeta\in\Lambda$ such that $\zeta(S^1)\subset B_i$. Since we are looking for infinitely many closed geodesics, we can assume that $\gamma$ is an isolated closed geodesic (i.e., the critical circle of each iterate $\gamma^m$ is isolated in $\crit(E)$). We set $\ell^2:=E(\gamma)$. We have two possible cases, which we deal with separately.

\vspace{5pt}

\emph{Case 1:} For every open neighborhood $\VV\subset\Lambda$ of $\gamma$, the intersections $\VV\cap\BB_0\cap\Lambda^{<\ell}$ and $\VV\cap\BB_1\cap\Lambda^{<\ell}$ are both non-empty. If we choose $\VV$ contained in the above open subset $\UU$, we infer that every connected component of $\VV^{<\ell}:=\VV\cap\Lambda^{<\ell}$ is contained in either $\VV^{<\ell}\cap\BB_0$ or $\VV^{<\ell}\cap\BB_1$. In particular $\VV^{<\ell}$ is not connected. Since $\VV$ can be chosen arbitrarily small, Lemma~\ref{l:local_homology_mountain_pass} implies that the local homology $C_1(\gamma;\Q)$ is non-zero. Since $\ind(\gamma^m)=0$, Proposition~\ref{p:indices}(vi) implies that $\nul(\gamma^m)<2$ for all $m\in\N$. Since the local homology $C_1(\gamma;\Q)$ is non-zero, we must have $\ind(\gamma)+\nul(\gamma)\geq1$, and thus $\nul(\gamma)=1$. Since $\nul(\gamma)\leq\nul(\gamma^m)<2$, we infer that $\nul(\gamma^m)=1$ for all $m\in\N$. Therefore, $\gamma$ satisfies the assumptions of Theorem~\ref{t:Hingston}, which implies that $(S^2,F)$ has infinitely many closed geodesics.

\begin{figure}
\begin{center}
\begin{small}
\begingroup%
  \makeatletter%
  \providecommand\color[2][]{%
    \errmessage{(Inkscape) Color is used for the text in Inkscape, but the package 'color.sty' is not loaded}%
    \renewcommand\color[2][]{}%
  }%
  \providecommand\transparent[1]{%
    \errmessage{(Inkscape) Transparency is used (non-zero) for the text in Inkscape, but the package 'transparent.sty' is not loaded}%
    \renewcommand\transparent[1]{}%
  }%
  \providecommand\rotatebox[2]{#2}%
  \newcommand*\fsize{\dimexpr\f@size pt\relax}%
  \newcommand*\lineheight[1]{\fontsize{\fsize}{#1\fsize}\selectfont}%
  \ifx\svgwidth\undefined%
    \setlength{\unitlength}{320.75531083bp}%
    \ifx\svgscale\undefined%
      \relax%
    \else%
      \setlength{\unitlength}{\unitlength * \real{\svgscale}}%
    \fi%
  \else%
    \setlength{\unitlength}{\svgwidth}%
  \fi%
  \global\let\svgwidth\undefined%
  \global\let\svgscale\undefined%
  \makeatother%
  \begin{picture}(1,0.43494224)%
    \lineheight{1}%
    \setlength\tabcolsep{0pt}%
    \put(0,0){\includegraphics[width=\unitlength,page=1]{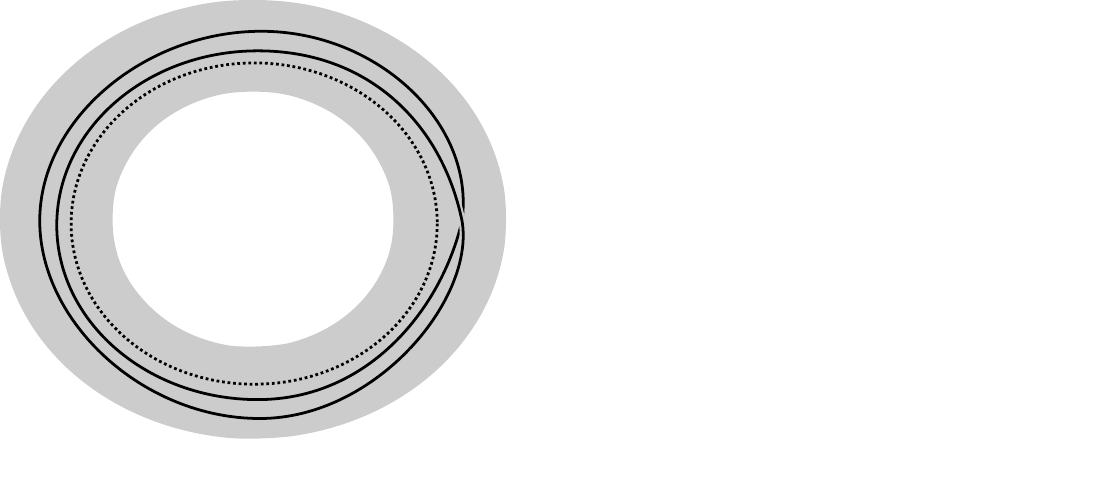}}%
    \put(0.07987228,0.27144286){\color[rgb]{0,0,0}\makebox(0,0)[lt]{\lineheight{1.25}\smash{\begin{tabular}[t]{l}$\gamma$\end{tabular}}}}%
    \put(0.07051935,0.35912652){\color[rgb]{0,0,0}\makebox(0,0)[lt]{\lineheight{1.25}\smash{\begin{tabular}[t]{l}$\zeta$\end{tabular}}}}%
    \put(0,0){\includegraphics[width=\unitlength,page=2]{annulus.pdf}}%
    \put(0.62552669,0.27144286){\color[rgb]{0,0,0}\makebox(0,0)[lt]{\lineheight{1.25}\smash{\begin{tabular}[t]{l}$\gamma$\end{tabular}}}}%
    \put(0.2129693,0.00524884){\color[rgb]{0,0,0}\makebox(0,0)[lt]{\lineheight{1.25}\smash{\begin{tabular}[t]{l}\textbf{(a)}\end{tabular}}}}%
    \put(0.7507626,0.00524884){\color[rgb]{0,0,0}\makebox(0,0)[lt]{\lineheight{1.25}\smash{\begin{tabular}[t]{l}\textbf{(b)}\end{tabular}}}}%
  \end{picture}%
\endgroup%
 
\caption{\textbf{(a)} The simple closed geodesic $\gamma$ (dotted), and a curve $\zeta\in\Lambda$ close to $\gamma^2$ with a self-intersection. \textbf{(b)} The support of the curve $\zeta$ can be decomposed as the union of $\zeta_1$ (dashed curve) and $\zeta_2$ (solid curve), both close to $\gamma$.}
\label{f:intersections}
\end{small}
\end{center}
\end{figure}

\vspace{5pt}

\emph{Case 2:} For some $i\in\{0,1\}$, there exists an open neighborhood $\VV\subset\Lambda$ of $\gamma$ such that $\VV\cap\BB_i\cap\Lambda^{<\ell}=\varnothing$. This implies the analogous property for $\gamma^m$: there exists an open neighborhood $\VV_m\subset\Lambda$ of $\gamma^m$ such that $\VV_m\cap\BB_i\cap\Lambda^{<\ell}=\varnothing$. Indeed, since we are on an orientable surface, a tubular neighborhood of the simple closed geodesic $\gamma$ is diffeomorphic to the annulus $S^1\times(-1,1)$, $\gamma$ being its zero section $S^1\times\{0\}$. Therefore, any curve $\zeta$ sufficiently close to the iterated curve $\gamma^m$ has at least $m-1$ self-intersections counted with multiplicity (see Figure~\ref{f:intersections}). The support of $\zeta$ can be decomposed as the union of the supports of $\zeta_1,...,\zeta_m\in\VV$, and $E(\zeta)=m(E(\zeta_1)+...+E(\zeta_m))$, see for instance  \cite[Lemma~4.2]{Abbondandolo:2015lt}. If $E(\zeta)< E(\gamma^m)=m^2E(\gamma)$, we would have $E(\zeta_j)<E(\gamma)$ for some $j$, contradicting the fact that $\VV\cap\BB_i\cap\Lambda^{<\ell}=\varnothing$.

Since $\gamma$ is an isolated closed geodesic, we can choose $\VV_m$ to be small enough so that $\VV_m\cap\BB_i\cap\crit(E)=\varnothing$. This implies that \[E(\zeta)>E(\gamma^m),\qquad \forall \zeta\in\VV_m\cap\BB_i.\] 
Indeed, by the previous paragraph we know that $E(\zeta)\geq E(\gamma^m)$ for any $\zeta\in\VV_m\cap\BB_i$. If we had equality $E(\zeta)= E(\gamma^m)$ then $\zeta$ would be a local minimizer, and in par\-tic\-u\-lar $\zeta\in\crit(E)$.

Let us fix a homotopy $u:[0,1]\to\BB_i\cup\{\gamma^m\}$, $u(t)=u_t$, such that $u_0=\gamma^m$ and $E(u_1)=0$; namely, $u_t$ defines a contraction of $\gamma$ to a point within the disk $B_i$. We choose an integer 
\[k > \frac{\max\{E\circ u\}}{\injrad(S^2,F)^2},\] 
and consider the space of broken closed geodesics $\Lambda_k$ and the restricted energy functional $E_k=E|_{\Lambda_k}$. We have the associated retraction
\begin{align*}
r:\Lambda^{<\sqrt k\,\injrad(S^2,F)}\to\Lambda_k,
\qquad 
r(\zeta)=\tilde\zeta,
\end{align*}
where $\tilde\zeta\in\Lambda_k$ is the broken closed geodesics such that $\tilde\zeta(i/k)=\zeta(i/k)$ for all $i\in\Z_k$. We recall that $E_k\circ r\leq E$.
Since the boundary of the disk $B_i$ is geodesic, we readily see that $r$ preserves $\BB_i$. Consider an open neighborhood $V\subset\Lambda_k$ of $\gamma^m$ with compact closure $\overline V\subset\VV_m$. Since $\partial V$ is compact, we have
\begin{align*}
b^2:=\min_{\partial V \cap \BB_i} E_k > E_k(\gamma^m).
\end{align*}
Let $W\subset V$ be a small enough open neighborhood of $\gamma^m$ such that
\begin{align*}
E_k(\gamma^m)<a^2:=\max_{\overline W} E_k<b^2.
\end{align*}
We consider
\begin{align*}
 c^2:=\inf_{v} \max \{E_k\circ v\},
\end{align*}
where the infimum ranges over all homotopies $v:[0,1]\to \Lambda_k\cap\BB_i\cup\{\gamma^m\}$, $v(t)=v_t$, such that $v_0=\gamma^m$ and $E_k(v_1)=0$. Notice that the space of such homotopies is non-empty, as it contains $r\circ u$.
We have $c\geq b$, since every such a homotopy $v$ must eventually intersect $\partial V\cap \BB_i$. We fix an arbitrary $d\in(c,k\,\injrad(S^2,F)^{1/2})$. Notice that $E_k^{-1}[a^2,d^2]\cap\BB_i$ is compact, since it is a closed subset of the compact set $E_k^{-1}[a^2,d^2]\setminus W$. Therefore, the classical min-max theorem implies that $c^2$ is a critical value of $E_k$. Since we are looking for infinitely many closed geodesics, we can assume that $(S^2,F)$ has only isolated closed geodesic (i.e. any critical circle is isolated in $\crit(E)\cap E^{-1}(0,\infty)$).
Under this assumption, there exists at least one closed geodesic $\zeta_m\in\crit(E_k)\cap E_k^{-1}(c^2)\cap\BB_i$ such that every open neighborhood $Z\subset\Lambda_k\cap\BB_i$ of $\zeta_m$ has a non-connected intersection $Z^{<c}=Z\cap \Lambda_k^{<c}$. Indeed, if no closed geodesic in $\crit(E_k)\cap E_k^{-1}(c^2)\cap\BB_i$ satisfied this property, we could find a homotopy $v$ as above such that $\max\{E_k\circ v\}<c^2$, contradicting the definition of the min-max value $c^2$. Lemma~\ref{l:local_homology_mountain_pass} implies that the local homology $C_1(\zeta_m;\Q)$ is non-trivial.

Now, either the family $\{\zeta_m\ |\ m\in\N\}$ that we found contains infinitely many geometrically distinct closed geodesics, or there exists a non-iterated closed geodesic $\zeta$, an infinite subset $\K\subset\N$, and a function $\mu:\K\to\N$ such that $\mu(m)\to\infty$ as $m\to\infty$ and $\zeta_m=\zeta^{\mu(m)}$ for all $m\in\N$. Since every iterate $\zeta^{\mu(m)}$ has non-trivial local homology $C_1(\zeta^{\mu(m)})$, we have $\ind(\zeta^{\mu(m)})\leq1$ for all $m\in\K$, and therefore $\ind(\zeta^m)=0$ for all $m\in\N$ according to Lemma~\ref{l:elementary_ind}(iii). We cannot have $\nul(\zeta)=0$, for otherwise $\zeta$ would be a local minimizer of $E_k$, and the same would be true for all its iterates according to analogous argument of Figure~\ref{f:intersections}. Therefore $1\leq\nul(\zeta)\leq\nul(\zeta^m)\leq2$ for all $m\in\N$. By Proposition~\ref{p:indices}(vi), since the Morse indices $\ind(\zeta^m)$ vanish, we must have $\nul(\zeta^m)=1$ for all $m\in\N$. 

Since $\ind(\zeta)=\ind(\zeta^m)$ and $\nul(\zeta)=\nul(\zeta^m)$ for all $m\in\N$, we have an isomorphism of local homology groups
$C_*(\zeta;\Q) \cong
C_*(\zeta^m;\Q)$.
In particular $C_1(\zeta;\Q)$ does not vanish. Therefore, $\zeta$ satisfies the assumptions of Theorem~\ref{t:Hingston}, which implies that $(S^2,F)$ has infinitely many closed geodesics.
\end{proof}

\subsection{Bangert's theorem}

As it turns out, the statements proved so far allow us to conclude the existence of infinitely many closed geodesics on any reversible $(S^2,F)$, except when none of its simple closed geodesics has a well-defined Birkhoff map. We recall that a simple closed geodesic $\gamma$ of a reversible $(S^2,F)$ does not have a well defined Birkhoff map when, for some $x=\gamma(t)$ and $v\in\Tan_x S^2$ transverse to $\dot\gamma(t)$, the geodesic $\zeta(t)=\exp_x(tv)$ does not intersect $\gamma$ at any positive time $t>0$.
In this section, we show that this last case is covered by Corollary~\ref{c:conjugate_points}. For Riemannian 2-spheres, this is a theorem due to Bangert \cite{Bangert:1993wo}.

\begin{thm}\label{t:Bangert}
Any reversible Finsler 2-sphere having a simple closed geodesic without a well-defined Birkhoff map possesses infinitely many closed geodesics.
\end{thm}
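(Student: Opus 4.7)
The plan is to produce a simple closed geodesic $\gamma'$ on $(S^2,F)$ with no conjugate points, so that Corollary~\ref{c:conjugate_points} applies. Let $\gamma$ be the given simple closed geodesic whose Birkhoff map is not well defined. Unwinding the definition, there exists $(x,v)\in A_0$ whose forward geodesic $\zeta(t):=\exp_x(tv)$ never crosses $\gamma$ transversely at positive times; after an initial instant $\zeta$ enters one of the two open disks of $S^2\setminus\gamma$, call it $B_0$, and stays in $\overline{B_0}$ for all $t\geq0$.

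First I would analyze the $\omega$-limit set $\mathcal{L}\subset SS^2$ of $\dot\zeta$: a non-empty, compact, geodesic-flow-invariant subset whose base projection $K:=\pi(\mathcal{L})$ lies in $\overline{B_0}$. The crucial dichotomy is whether $K$ meets $\gamma$. In the degenerate case $K\cap\gamma\neq\emptyset$, tangential accumulation forces the orbit through some $\pm\dot\gamma(t_0)$ to belong to $\mathcal{L}$, and a shadowing argument shows that $\gamma$ itself must have no conjugate points, so Corollary~\ref{c:conjugate_points} applies with $\gamma'=\gamma$. I focus on the non-degenerate case $K\cap\gamma=\emptyset$, where $K$ is a compact subset of the open disk $B_0$ with $\dist(K,\gamma)>0$.

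Fix a thin open collar $U\subset\overline{B_0}$ of $\gamma$ disjoint from $K$, and consider the family $\mathcal{F}$ of smooth simple closed curves in $\overline{B_0}\setminus U$ that are non-contractible in $\overline{B_0}\setminus K$. The class $\mathcal{F}$ is non-empty, and $\ell_0:=\inf_{c\in\mathcal{F}}L(c)>0$ since every $c\in\mathcal{F}$ must encircle $K$. To produce a minimizer I would run the curve shortening semi-flow $\psi_t$ of Theorem~\ref{t:curve_shortening} on a length-minimizing sequence. The semi-flow preserves embeddedness; the closed geodesic $\gamma$ acts as a stationary barrier by the intersection-non-creation property of the Oaks flow, so evolving loops stay in $\overline{B_0}$; and the algebraic intersection number with any fixed auxiliary curve encircling $K$ is preserved, keeping the homotopy class in $\overline{B_0}\setminus K$ intact. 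Applying Theorem~\ref{t:curve_shortening}(iv)--(v) together with a compactness extraction then yields a simple closed geodesic $\gamma'\subset\overline{B_0}$ with $L(\gamma')=\ell_0$ and $\gamma'\in\mathcal{F}$.

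Finally I would verify that $\gamma'$ has no conjugate points. The annulus $A:=\overline{B_0}\setminus K$ has universal cover $\widetilde A$ diffeomorphic to a strip, and $\gamma'$ lifts to a bi-infinite geodesic $\widetilde{\gamma'}:\R\to\widetilde A$ invariant under the deck generator. Because $\gamma'$ minimizes length in its free homotopy class in $A$, every finite subarc of $\widetilde{\gamma'}$ is length-minimizing in $\widetilde A$, which by a standard second-variation argument forbids conjugate points along $\widetilde{\gamma'}$ (an interior conjugate point would allow one to shorten a sufficiently long subsegment). Translating back yields $\ind_\Omega((\gamma')^m)=\nul_\Omega((\gamma')^m)=0$ for all $m\in\N$, which is precisely the characterization of no conjugate points recalled just before Corollary~\ref{c:conjugate_points}; invoking that corollary then delivers infinitely many closed geodesics. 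The principal obstacles I foresee are (a)~showing the semi-flow preserves the homotopy class defining $\mathcal{F}$, which requires a careful intersection-number analysis with $\gamma$ and with an auxiliary curve encircling $K$, and (b)~treating the tangential accumulation case with full rigor; both ingredients are present in Bangert's Riemannian argument, but here one relies on the Oaks semi-flow of Section~\ref{s:curve_shortening} in place of Grayson's flow.
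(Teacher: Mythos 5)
Your overall blueprint (reduce to producing a simple closed geodesic without conjugate points, split according to whether the trapped ray accumulates on $\gamma$, and in the non-degenerate case minimize in a class of curves encircling the limit set $K$) is the right one, and your ``tangential accumulation'' case is exactly Lemma~\ref{l:no_conjugate_points} of the paper, used there in contrapositive form after first reducing to the case where $\gamma$ \emph{has} conjugate points. However, the execution of the main case has a genuine gap. Your class $\mathcal{F}$ (curves avoiding a fixed collar $U$ of $\gamma$) is not preserved by the curve shortening semi-flow, and nothing in Theorem~\ref{t:curve_shortening}(iv)--(v) prevents the evolving minimizing sequence from sliding into the collar and degenerating onto $\gamma$ itself; in the only case that matters (when $\gamma$ has conjugate points, since otherwise Corollary~\ref{c:conjugate_points} already applies), this produces no new geodesic and your conclusion fails. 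The missing ingredient is precisely the one the paper extracts from Proposition~\ref{p:indices}(vii): because $\gamma$ has conjugate points, it can be pushed into the adjacent component $U$ of $B\setminus K$ with a \emph{strict} drop in energy, so the minimization takes place strictly below the level of $\gamma$ and the limit cannot be $\gamma$; your proposal never uses the conjugate-point hypothesis at this stage, and without it the degenerate limit cannot be excluded. A second unproven point is that the flow keeps the curves inside $\overline{B_0}\setminus K$ (needed to retain non-contractibility in $\overline{B_0}\setminus K$): $K$ is a compact invariant set laminated by generally non-closed geodesics, and avoidance of such a set is not among the stated properties of the semi-flow and would require a separate maximum-principle argument. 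The paper sidesteps both issues by not using the curve shortening flow here at all: it minimizes the energy over broken closed geodesics in the class $W$ of loops in $U$ non-contractible in $U$ with energy below $E(\gamma_0)$, using a ``weak convexity'' of $U$ and a midpoint argument to show the limit is a (simple) closed geodesic distinct from $\gamma_0$.

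Your final step also does not hold as stated: minimality of $\gamma'$ in its free homotopy class in $A$ does not imply that finite subarcs of a lift are length-minimizing in $\widetilde A$ (shortcuts in the strip need not be equivariant), so the ``no conjugate points by second variation of subarcs'' deduction is unjustified. The correct implication is again Proposition~\ref{p:indices}(vii) in contrapositive form: if the new geodesic had conjugate points, a nowhere vanishing transverse periodic field with negative second variation would produce a freely homotopic nearby curve of smaller energy inside the annulus bounded by the new geodesic and $\gamma$, contradicting minimality. This repairs the last step, but only once one actually has a minimizer in the class with the quantitative energy gap, which brings you back to the first gap.
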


The proof is based on the following lemma of independent interest.

\begin{lem}
\label{l:no_conjugate_points}
Let $(M,F)$ be a (not necessarily reversible) Finsler surface, and $\gamma:[-T,T]\to M$ a geodesic parametrized with constant speed. If there exists a sequence of geodesics $\gamma_n:[-T,T]\to M$ parametrized with constant speed, not intersecting $\gamma$, and such that $(\gamma_n(0),\dot\gamma_n(0))\to(\gamma(0),\dot\gamma(0))$ in $\Tan M$, then $\gamma|_{(-T,T)}$ has no conjugate points. 
\end{lem}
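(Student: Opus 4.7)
The strategy is to argue by contradiction: assume $\gamma|_{(-T,T)}$ admits a pair of conjugate points, and use the sequence $\gamma_n$ to construct a non-trivial Jacobi field along $\gamma$ that has constant sign on $[-T,T]$ yet must vanish somewhere in $(-T,T)$, contradicting simplicity of zeros for solutions of a linear second-order ODE.

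First I would fix an auxiliary orientation and Riemannian metric on a neighborhood of $\gamma([-T,T])$ and introduce Fermi coordinates $(u,s)$ along $\gamma$, in which $\gamma$ corresponds to $\{s=0\}$. Continuous dependence on initial data for the geodesic flow yields that $\gamma_n\to\gamma$ in $C^2([-T,T],M)$, so for $n$ large one can uniquely write $\gamma_n(t)=(u_n(t),s_n(t))$. The non-intersection hypothesis $\gamma_n([-T,T]) \cap \gamma([-T,T]) = \varnothing$ becomes $s_n(t)\neq 0$ for all $t\in[-T,T]$, so $s_n$ has constant sign on $[-T,T]$; set $\mu_n := \max_{[-T,T]} |s_n|>0$, which satisfies $\mu_n\to 0$, and define $y_n := s_n/\mu_n$, so that $\|y_n\|_{C^0([-T,T])}=1$ and $y_n$ has constant sign.

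Next I would write out the Finsler geodesic equation for $(u_n,s_n)$ in Fermi coordinates and Taylor-expand around the trivial solution $(t,0)$, using that $s\equiv 0$ is itself a geodesic (so the zeroth-order term in $s$ vanishes). This yields a second-order quasilinear ODE of the form $\ddot s_n = A_n(t)\,s_n + B_n(t)\,\dot s_n + O(s_n^2+\dot s_n^2)$, with $A_n,B_n$ converging uniformly to the coefficients $A,B$ of the \emph{perpendicular Jacobi equation} of $\gamma$ (which, in dimension 2, is a scalar linear second-order ODE since the normal bundle of $\gamma$ is $1$-dimensional). Dividing by $\mu_n$, the rescaled function $y_n$ satisfies
\[
\ddot y_n = A_n(t)\,y_n + B_n(t)\,\dot y_n + r_n(t),
\]
with $A_n,B_n\to A,B$ uniformly and $r_n\to 0$ uniformly on $[-T,T]$. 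To extract a limit Jacobi field, I would first show that $M_n:=\max(|y_n(0)|,|\dot y_n(0)|)$ is bounded: if $M_n\to\infty$ along a subsequence, then $\tilde y_n := y_n/M_n$ satisfies the same ODE with vanishing source and bounded nonzero initial data; linear-ODE continuous dependence forces $\tilde y_n\to\tilde y_\infty\neq 0$ in $C^2$, yet $\|\tilde y_n\|_\infty=1/M_n\to 0$ gives $\tilde y_\infty\equiv 0$, a contradiction. With $M_n$ bounded, Arzel\`a-Ascoli yields a subsequence converging in $C^2$ to $y_\infty$ solving $\ddot y_\infty + B\dot y_\infty - Ay_\infty=0$, with $\|y_\infty\|_\infty=\lim\|y_n\|_\infty=1$ and constant (non-strict) sign on $[-T,T]$.

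Finally, suppose by contradiction that $\gamma(t_1)$ and $\gamma(t_2)$ are conjugate along $\gamma$ for some $-T<t_1<t_2<T$; then there is a non-trivial perpendicular Jacobi solution $y_J$ with $y_J(t_1)=y_J(t_2)=0$. Sturm's separation theorem applied to the linear second-order scalar Jacobi ODE forces every non-trivial solution to have a zero in $[t_1,t_2]\subset(-T,T)$: indeed, if $y_\infty$ is linearly independent of $y_J$, Sturm separation places a zero of $y_\infty$ strictly between two consecutive zeros of $y_J$ inside $[t_1,t_2]$, while if $y_\infty$ is proportional to $y_J$ it already vanishes at $t_1\in(-T,T)$. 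Hence $y_\infty$ has an interior zero $t_\ast\in(-T,T)$. Since $y_\infty$ has constant sign on $[-T,T]$, the value $y_\infty(t_\ast)=0$ is a local extremum, so $\dot y_\infty(t_\ast)=0$; by uniqueness for the linear second-order ODE this gives $y_\infty\equiv 0$, contradicting $\|y_\infty\|_\infty=1$. The main obstacle in this plan is the second paragraph: verifying that the rescaled perpendicular displacement $y_n$ in Fermi coordinates does indeed converge to a solution of the perpendicular Jacobi equation of $\gamma$, with non-degenerate control on the initial data; once this is in place the contradiction is extracted by Sturm theory in a standard way.
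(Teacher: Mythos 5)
Your overall strategy --- renormalize the transverse displacement of the non-intersecting geodesics to produce a non-trivial, sign-definite solution of the scalar (perpendicular) Jacobi equation, and then contradict the existence of conjugate points via Sturm separation plus the ``zero of a one-signed solution forces vanishing derivative'' argument --- is exactly the strategy of the paper, and your final Sturm step is fine. The genuine gap is the one you yourself flag, and it is not merely cosmetic: with the normalization $\mu_n=\max_{[-T,T]}|s_n|$, the rescaled remainder contains the term $\dot s_n^2/\mu_n=\mu_n\,\dot y_n^2$, so your claim $r_n\to0$ uniformly (and, for Arzel\`a--Ascoli, the boundedness of $\dot y_n=\dot s_n/\mu_n$) requires an a priori bound of the type $\max|\dot s_n|=O(\max|s_n|)$, which nothing in your set-up provides. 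Your subsequent argument that $M_n=\max(|y_n(0)|,|\dot y_n(0)|)$ stays bounded is circular, because it applies continuous dependence to the equation ``with vanishing source'' before the source has been shown to vanish. (A second, smaller point: since each $\gamma_n$ carries its own constant-speed parametrization, the $s$-equation is coupled to $(u_n,\dot u_n)$; this is harmless because the $s$-component of the geodesic spray vanishes, together with its $u$- and $\dot u$-derivatives, whenever $(s,\dot s)=(0,0)$, but this should be said, e.g.\ after reparametrizing $\gamma_n$ as a graph over $\gamma$.)

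The fix is to normalize by a quantity that also controls the derivative, e.g.\ $\rho_n=|s_n(0)|+|\dot s_n(0)|>0$ (positive since $s_n$ never vanishes), and to use a two-sided Gronwall estimate for the small solutions of $\ddot s_n=O(|s_n|+|\dot s_n|)$ to get $\max(|s_n|+|\dot s_n|)$ comparable to $\rho_n$; then the remainder is $O(\rho_n)\to0$, compactness applies, and the limit is a non-trivial constant-sign solution, after which your argument closes. This derivative-versus-displacement control is precisely where the paper does its real work, in a slightly different implementation: it shifts the base points of $\gamma_n$ onto a transversal in $\ker G_v(x,v)$, normalizes the difference of initial conditions $(x_n,v_n)-(x,v)$, proves boundedness of $(v_n-v)/\|x_n-x\|$ using the non-intersection hypothesis together with uniform bounds on the second derivative of $\gamma_n-\gamma$, and then obtains the Jacobi field directly as $\diff\phi_t(x,v)$ applied to the limiting unit direction, so that no quasilinear expansion in Fermi coordinates is needed; the sign property and the ``nowhere vanishing'' conclusion are then drawn exactly as in your last paragraph.
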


\begin{proof}
Since the problem is local to $\gamma$, we can assume without loss of generality that $M=\R^2$ and $\gamma(t)=(t,0)$ for all $t\in[-T,T]$, so that we can write expressions in coordinates. Without loss of generality, we can assume that $F(\gamma,\dot\gamma)\equiv1$. We reparametrize the geodesics $\gamma_n$ so that they have speed $F(\gamma_n,\dot\gamma_n)\equiv1$. By doing this, we change the interval of definition of $\gamma_n$: the reparametrized curve has the form $\gamma_n:[-T_n,T_n]\to M$ with $T_n\to T$ as $n\to\infty$.

We set $(x,v)=(\gamma(0),\dot\gamma(0))=(0,\dot\gamma(0))$, $G=\tfrac12 F^2$, and consider the line
\begin{align*}
\Sigma:=\{y\in\R^2\ |\ y\in\ker G_v(x,v)\}.
\end{align*}
For all $n\in\N$ large enough, the geodesic $\gamma_n:[-T_n,T_n]\to\R^2$ intersects $\Sigma$ in a unique point $x_n$, and clearly $x_n\to x$. We shift the parametrization of $\gamma_n$, so that we have a sequence of geodesics $\gamma_n:[-T_n+\epsilon_n,T_n+\epsilon_n]\to\R^2$ not intersecting $\gamma$ and such that $\epsilon_n\to0$ and $v_n:=\dot\gamma_n(0)\to v$. Up to extracting a subsequence, we can assume that each $\gamma_n$ lies on the same side of $\gamma$. Therefore we have a well defined non-zero vector
\begin{align*}
w':=\frac{x_n-x}{\|x_n-x\|}\in\Sigma
\end{align*}
independent of $n$. Here, $\|\cdot\|$ denotes the Euclidean norm.
We consider the vectors
\begin{align*}
z_n:=\frac{v_n-v}{\|x_n-x\|}.
\end{align*}
Since the geodesics $\gamma_n$ and $\gamma$ do not intersect on the time interval $[-T/2,T/2]$ and the second derivative of $\gamma_n-\gamma$ is uniformly bounded on $[-T/2,T/2]$ independently of $n$, we readily obtain that the sequence $\|z_n\|$ is uniformly bounded from above. In particular, up to extracting a subsequence, we have $z_n\to z'$ as $n\to\infty$.

We set
$\lambda:= (1+\|z'\|^2)^{-1/2}$, $w:=\lambda w'$, and $z:=\lambda z'$,
so that
\begin{align*}
\lim_{n\to\infty} \frac{(x_n,v_n)-(x,v)}{\|(x_n,v_n)-(x,v)\|} = \lim_{n\to\infty} \frac{(x_n,v_n)-(x,v)}{\|x_n-x\| \sqrt{1+\|z_n\|^2}} = (w,z) \in \Tan_{(x,v)}S\R^2,
\end{align*}
where $S\R^2=\{(x',v')\in\Tan\R^2\ |\ F(x',v')=1\}$. Therefore, 
\begin{align*}
\lim_{n\to\infty} \frac{\phi_t(x_n,v_n)-\phi_t(x,v)}{\|(x_n,v_n)-(x,v)\|} = \diff\phi_t(x,v)(w,z).
\end{align*}
The Jacobi field $\zeta:(-T,T)\to\R^2$ along $\gamma$ defined by
\[(\zeta(t),\dot\zeta(t))=\diff\phi_t(x,v)(w,z)\]
satisfies $G_v(\gamma(t),\dot\gamma(t))\zeta(t)=0$.

We claim that $\zeta$ is nowhere vanishing. Indeed, let $\nu:(-T,T)\to\R^{2}$ the smooth vector field along $\gamma$ defined by $\nu(t)\in\ker G_v(\gamma(t),\dot\gamma(t))$, $\|\nu(t)\|=1$, and $\nu(t)$ pointing to the the side of $\gamma$ containing the $\gamma_n$'s. We can write $\zeta(t)=z(t)\nu(t)$ for some continuous function $z=(-T,T)\to\R$. Notice that
\begin{align*}
\zeta(t)
=
\lim_{n\to\infty} \frac{\gamma_n(t)-\gamma(t)}{\|(x_n,v_n)-(x,v)\|}.
\end{align*}
If $\zeta(t)=0$ for some $t$, since $\gamma_n(t)-\gamma(t)$ and $\nu(t)$ point to the same side of $\gamma$, we readily obtain that $z(t)=0$ and $\dot z(t)=0$. But this would imply that $\zeta(t)=\dot\zeta(t)=0$, and since $\zeta$ is a Jacobi field we would conclude that $\zeta$ vanishes identically, contradicting $\zeta(0)=w$.

If $\gamma$ had conjugate points $\gamma(t_1),\gamma(t_2)$ for some $-T<t_1<t_2<T$, there would exists a Jacobi field $\eta:[t_1,t_2]\to\R^2$ such that $\eta(t_1)=\eta(t_2)=0$, $\dot\eta(t_1)\neq0$, and $\eta(t)\in\ker G_v(\gamma(t),\dot\gamma(t))$ for all $t\in[t_1,t_2]$. Since we are on a surface, $\ker G_v(\gamma(t),\dot\gamma(t))$ is 1-dimensional. Therefore, by the Sturm separation theorem, $\eta$ and $\zeta$ would have alternating zeroes, contradicting the fact that $\zeta$ is nowhere vanishing. 
\end{proof}

\begin{proof}[Proof of Theorem~\ref{t:Bangert}]
Let $\gamma_0\in\crit(E)\cap E^{-1}(0,\infty)$ be a simple closed geodesic that does not have a well defined Birkhoff map. We only need to consider the case in which $\gamma_0$ has conjugate points (i.e.\ $\ind_\Omega(\gamma_0^m)>0$ for some integer $m\geq1$), for otherwise the existence of infinitely many closed geodesics is already provided by Corollary~\ref{c:conjugate_points}. The fact that $\gamma_0$ does not have a well defined Birkhoff map means that, for some $x_0=\gamma_0(t_0)$ and $v_0\in S_xS^2$ transverse to $\dot\gamma_0(t_0)$, the geodesic $\zeta:(0,\infty)\to S^2$, $\zeta(t)=\exp_{x_0}(tv_0)$ does not intersect $\gamma_0$ in positive time, and therefore stays trapped in a connected component $B\subset S^2\setminus\gamma_0(S^1)$. We consider the compact subset
\begin{align*}
 K := \bigcap_{t>0} \overline{\zeta[t,\infty)} \subset \overline B.
\end{align*}

We claim that
\[K\cap\gamma_0(S^1)=\varnothing.\] 
Otherwise we can find a sequence $t_n\to\infty$ and $s\in \R$ such that $\zeta(t_n)\to\gamma_0(s)$. Since $\zeta$ does not intersect $\gamma_0$ in positive time, up to extracting a subsequence we must have $\dot\zeta(t_n)\to\dot\gamma_0(s)$. Since $\gamma_0$ has conjugate points, there exists $\delta>0$ such that $\gamma_0|_{(s-\delta,s+\delta)}$ has conjugate points. Lemma~\ref{l:no_conjugate_points} thus provides a contradiction: since $\zeta|_{[t_n-\delta,t_n+\delta]}$ does not intersect $\gamma_0|_{[s-\delta,s+\delta]}$, $\gamma_0|_{(s-\delta,s+\delta)}$ cannot have conjugate points.

Let $U\subset B\setminus K$ be the connected component whose closure contains $\gamma_0(S^1)$. One would expect this open set to be locally geodesically convex. We prove a slightly weaker convexity: for all $x,y\in U$ that can be joined by means of an absolutely continuous curve in $U$ of length strictly less than $\rho:=\injrad(S^2,F)$, the shortest geodesic joining $x$ and $y$ is entirely contained in $U$. Indeed, let $\gamma_{x,y}:[0,1]\to S^2$, $\gamma_{x,y}(t)=\exp_x(t\exp_x^{-1}(y))$ be such a geodesic, and assume by contradiction that some $z=\gamma_{x,y}(s)$ belongs to $K$. Then, by the definition of $K$, there exists a sequence $t_n\to\infty$ such that $\zeta(t_n)\to z$. Up to extracting a subsequence, the sequence $\dot\zeta(t_n)$ converges to some $w\in S_zS^2$ that is transverse to $\dot\gamma_{x,y}(s)$, since the geodesic $\theta:\R\to S^2$, $\theta(t)=\exp_z(tw)$ is entirely contained in $K$. We denote the geodesic balls centered at $z$ by
\begin{align*}
B(z,r):= \big\{z'\in S^2\ \big|\ d(z,z')<r \big\},\qquad r>0,
\end{align*}
where $d:S^2\times S^2\to[0,\infty)$ is the distance~\eqref{e:Finsler_distance} induced by the Finsler metric $F$. The points $x$ and $y$ are contained in different connected components of $B(z,\rho)\setminus\zeta(-\rho,\rho)$. Therefore, every continuous curve $\theta:[0,1]\to U$ such that $\theta(0)=x$ and $\theta(1)=y$ must leave the geodesic ball $B(z,\rho)$; since $d(x,z)+d(z,y)<\rho$, we readily obtain that the length of such a $\theta$ is larger than $\rho$, contradicting our assumption on $x,y$.

We consider the space
\begin{align*}
 W:=\big\{\gamma\in\Lambda\ \big|\ \gamma(S^1)\subset U,\ \gamma\mbox{ not contractible in }U,\ E(\gamma)<E(\gamma_0)  \big\}
\end{align*}
We claim that $W$ is not empty. Indeed, since $\gamma_0$ has conjugate points, by Proposition~\ref{p:indices}(vii) there exists a nowhere vanishing 1-periodic vector field $\xi$ along $\gamma$ such that $\diff^2E(\gamma)[\xi,\xi]<0$, and $\xi(t)$ points inside $U$ for all $t\in S^1$. We define $\gamma_s\in \Lambda$ by 
\[\gamma_s(t)=\exp_{\gamma_0(t)}(s\xi(t)).\] 
If $s>0$ is small enough, then $\gamma_s$ is contained in $U$, non-contractible in $U$ (since it is homotopic to $\gamma_0$ within $U\cup\gamma_0(S^1)$), and since
\begin{align*}
E(\gamma_s) \leq E(\gamma_0) + \tfrac12 s^2\,\diff^2E(\gamma)[\xi,\xi] + o(s^2)
\end{align*}
we have $E(\gamma_s)<E(\gamma_0)$. Thus any such $\gamma_s$ belongs to $W$.

We fix $k\in\N$ large enough so that $\gamma_0$ is contained in the space of broken closed geodesics $\Lambda_k\subset\Lambda$. We define the continuous map $r:W\to\Lambda_k$ by  $r(\gamma)(\tfrac ik)=\gamma(\tfrac ik)$ for all $i\in\Z_k$. The above convexity property of $U$ implies that, for each $\gamma\in W$, $r(\gamma)$ is a curve contained in $U$ and homotopic to $\gamma$ within $U$. Therefore, $r$ is a retraction $r:W\to W\cap\Lambda_k$. Since $E(r(\gamma))\leq E(\gamma)$, we have
\begin{align*}
\ell^2 :=\inf_W E = \inf_{W\cap\Lambda_k} E .
\end{align*}

We choose a sequence $\gamma_n\in W\cap\Lambda_k$ such that $E(\gamma_n)\to \ell^2$. We can assume that each $\gamma_n$ is without self-intersection. Indeed, if $\gamma_n$ has self-intersections, we can find an interval $[a,b]\subsetneq[0,1]$ such that $\gamma_n|_{[a,b]}$ is a non-contractible loop in $U$. If $i_0,i_1$ are positive integers such that
\[
[\tfrac{i_0+1}{k},\tfrac{i_1-1}{k}]
\subseteq
[a,b]
\subseteq
[\tfrac{i_0}{k},\tfrac{i_1}{k}],
\]
we define $\tilde\gamma_n\in W\cap\Lambda_k$ by setting $\tilde\gamma_n(\tfrac{i}{k})=\gamma_n(a)$ for all $i\in\{0,...,i_0\}\cup\{i_1,...,k-1\}$, and $\tilde\gamma_n(\tfrac{i}{k})=\gamma_n(\tfrac{i}{k})$ for all $i\in\{i_0+1,...,i_1-1\}$. The curve $\tilde\gamma_n$ has less self-intersections than $\gamma_n$, and energy $E(\tilde\gamma_n)\leq E(\gamma_n)$. Since a broken closed geodesic has only finitely many self intersections, by repeating this procedure a finite number of times we eliminate all of them.

Since $\overline {W\cap\Lambda_k}$ is compact, up to extracting a subsequence we have that \[\gamma_n\to\gamma\in\overline {W\cap\Lambda_k},\]
and $E(\gamma)=\ell^2$.
We claim that $\gamma$ is a closed geodesic. This is clear if $\gamma$ is contained in  $W\cap\Lambda_k$, for in this case it would be a critical point of the energy functional $E$. Assume now that $\gamma\in\partial (W\cap\Lambda_k)$, and consider the unique $\theta,\theta_n\in \Lambda_k$ such that \[\theta(\tfrac ik)=\gamma(\tfrac {i + 1/2}k),\quad\theta_n(\tfrac ik)=\gamma_n(\tfrac {i + 1/2}k),\qquad\forall i\in\Z_k.\] 
Clearly, $\theta_n\to\theta$. Moreover, $E(\theta)\leq E(\gamma)$ with equality if and only if $\gamma$ is a closed geodesic. The above convexity property of $U$ implies that $\theta_n\in W\cap\Lambda_k$. Therefore $E(\theta_n)\geq \inf E|_{W}=E(\gamma)$ and $E(\theta)=E(\gamma)$, and we conclude that $\gamma$ is a closed geodesic.

Since the approximating loops $\gamma_n$ are without self-intersections, $\gamma$ is a simple closed geodesic. Therefore, the union $\gamma_0(S^1)\cup\gamma(S^1)$ bounds an open annulus $A\subset U$. Since $E(\gamma)=\inf E|_{W}$, in particular there is no $\tilde\gamma\in\Lambda$ freely homotopic to $\gamma$ with energy $E(\tilde\gamma)<E(\gamma)$ and support $\tilde\gamma(S^1)\subset A$. Therefore, by applying Proposition~\ref{p:indices}(vii) as above, we infer that $\gamma$ has no conjugate points. Corollary~\ref{c:conjugate_points} implies that $(S^2,F)$ has infinitely many closed geodesics.
\end{proof}

\begin{proof}[Proof of Theorem~\ref{t:multiplicity}]
By Theorem~\ref{t:LS}, if $(S^2,F)$ has only finitely many simple closed geodesics, there exists at least one simple closed geodesic $\gamma\in\crit(E)$ with non-zero local homology $C_3(\gamma;\Z_2)$. If $\gamma$ does not have a well defined Birkhoff map, Theorem~\ref{t:Bangert} implies that there are infinitely many closed geodesics. Assume now that $\gamma$ has a well defined Birkhoff map. Since $C_3(\gamma;\Z_2)$ is non-zero, $C_3(t\cdot\gamma;\Z_2)$ is non-zero as well, and
\[
\ind(t\cdot\gamma)\leq3\leq\ind(t\cdot\gamma)+\nul(t\cdot\gamma),
\qquad\forall t\in S^1.
\]
By Proposition~\ref{p:indices}(iv), we have
\begin{align}
\label{e:the_last}
\ind_\Omega(t\cdot\gamma)+\nul_\Omega(t\cdot\gamma)
\geq
\ind(t\cdot\gamma)+\nul(t\cdot\gamma)-1
\geq
2,
\qquad
\forall t\in S^1.
\end{align}
Since $\nul_\Omega(t\cdot\gamma)\leq1$ according to Proposition~\ref{p:indices}(ii), the inequality in~\eqref{e:the_last} implies that $\ind_\Omega(t\cdot\gamma)\geq1$. If $\ind_\Omega(t\cdot\gamma)\geq2$ for all $t\in S^1$, Theorem~\ref{t:twist} implies that there are infinitely many closed geodesics. If instead $\ind_\Omega(t\cdot\gamma)=1$ for some $t\in S^1$, the above inequality implies that $\nul_\Omega(t\cdot\gamma)=1$, and Corollary~\ref{c:Hingston} implies that there are infinitely many closed geodesics.
\end{proof}

\bibliography{_biblio}
\bibliographystyle{amsalpha}

\end{document}